\documentclass[11pt,reqno]{amsart}

\usepackage[a4paper,margin=30mm,headheight=14pt]{geometry}
\usepackage{amsmath,amssymb,amsthm,mathtools,mathrsfs}
\usepackage{xcolor}
\usepackage{tikz}
\usetikzlibrary{arrows.meta,calc,patterns,positioning}
\usepackage{enumitem}
\usepackage{etoolbox}
\usepackage{placeins}
\usepackage{flafter}
\usepackage{microtype}
\usepackage[T1]{fontenc}
\usepackage{lmodern}
\usepackage{hyperref}
\usepackage{aliascnt}
\usepackage[nameinlink,capitalize,noabbrev]{cleveref}

\hypersetup{
  colorlinks=true,
  linkcolor=blue!45!black,
  citecolor=blue!45!black,
  urlcolor=blue!45!black,
  pdftitle={Tangent-cone cancellation and Maz'ya's Phi-inequalities on finitely cornered planar domains},
  pdfauthor={Zhouyu Long and Wenming Zou}
}

\numberwithin{equation}{section}

\newtheorem{theorem}{Theorem}[section]
\newaliascnt{proposition}{theorem}
\newtheorem{proposition}[proposition]{Proposition}
\aliascntresetthe{proposition}
\newaliascnt{lemma}{theorem}
\newtheorem{lemma}[lemma]{Lemma}
\aliascntresetthe{lemma}
\newaliascnt{corollary}{theorem}
\newtheorem{corollary}[corollary]{Corollary}
\aliascntresetthe{corollary}
\theoremstyle{definition}
\newaliascnt{definition}{theorem}
\newtheorem{definition}[definition]{Definition}
\aliascntresetthe{definition}
\theoremstyle{remark}
\newaliascnt{remark}{theorem}
\newtheorem{remark}[remark]{Remark}
\aliascntresetthe{remark}
\theoremstyle{plain}

\crefname{theorem}{Theorem}{Theorems}
\Crefname{theorem}{Theorem}{Theorems}
\crefname{proposition}{Proposition}{Propositions}
\Crefname{proposition}{Proposition}{Propositions}
\crefname{lemma}{Lemma}{Lemmas}
\Crefname{lemma}{Lemma}{Lemmas}
\crefname{corollary}{Corollary}{Corollaries}
\Crefname{corollary}{Corollary}{Corollaries}
\crefname{definition}{Definition}{Definitions}
\Crefname{definition}{Definition}{Definitions}
\crefname{remark}{Remark}{Remarks}
\Crefname{remark}{Remark}{Remarks}

\newcommand{\R}{\mathbb R}
\newcommand{\Sph}{\mathbb S}
\newcommand{\supp}{\operatorname{supp}}
\newcommand{\dist}{\operatorname{dist}}
\newcommand{\diam}{\operatorname{diam}}
\newcommand{\sgn}{\operatorname{sgn}}
\newcommand{\cM}{\mathcal M}

\newcommand{\cD}{\mathcal D}

\newcommand{\cQ}{\mathcal Q}

\newcommand{\dd}{\,\mathrm d}
\newcommand{\eps}{\varepsilon}
\newcommand{\one}{\mathbf 1}
\newcommand{\wtK}{\widetilde K}
\newcommand{\Mp}{M_p}
\newcommand{\Nfield}{\mathfrak N}

\definecolor{modelblue}{RGB}{220,232,242}
\definecolor{modelgold}{RGB}{244,232,196}
\tikzset{
  geometric boundary/.style={draw=black,line width=.9pt},
  selected branch/.style={draw=black,line width=1.35pt},
  assigned cube/.style={draw=black,line width=.8pt},
  enlarged cube/.style={draw=black,densely dashed,line width=.7pt,rounded corners=3pt},
  kernel annulus/.style={draw=black,densely dotted,line width=.8pt},
  construction line/.style={draw=black!65,dash pattern=on 3pt off 1.5pt,line width=.55pt},
  inward normal/.style={-{Stealth[length=2.2mm,width=1.5mm]},line width=.75pt},
  outward normal/.style={-{Triangle[open,length=2.2mm,width=1.7mm]},line width=.75pt},
  map arrow/.style={-{Stealth[length=2.4mm,width=1.6mm]},line width=.75pt},
  figure label/.style={font=\scriptsize,inner sep=1.2pt,fill=white,fill opacity=.88,
    text opacity=1},
  panel title/.style={font=\footnotesize\bfseries,anchor=west},
  figure point/.style={circle,fill=black,inner sep=1.25pt}
}

\title[Tangent-cone cancellation]{Tangent-cone cancellation and Maz'ya's $\Phi$-inequalities\\on finitely cornered planar domains}
\author[Zhouyu Long]{\bf Zhouyu Long$^{*}$}
\author[Wenming Zou]{\bf Wenming Zou$^{\dagger}$}
\thanks{$^{*}$Department of Mathematical Sciences, Tsinghua University,
Beijing 100084, China.\newline
E-mail: \href{mailto:longzy25@mails.tsinghua.edu.cn}{\mbox{\nolinkurl{longzy25@mails.tsinghua.edu.cn}}}}
\thanks{$^{\dagger}$Department of Mathematical Sciences, Tsinghua University,
Beijing 100084, China.\newline
E-mail: \href{mailto:zou-wm@mail.tsinghua.edu.cn}{\mbox{\nolinkurl{zou-wm@mail.tsinghua.edu.cn}}}}

\makeatletter
\patchcmd{\@setauthors}{\centering\footnotesize}{\centering\large}{}%
  {\ClassError{amsart}{Unable to set author font}{Check the amsart author layout.}}
\patchcmd{\@setauthors}{\MakeUppercase{\authors}}{\authors}{}%
  {\ClassError{amsart}{Unable to preserve author case}{Check the amsart author layout.}}
\def\author@andify{\nxandlist{\unskip,\quad}{\unskip,\quad}{\unskip,\quad}}
\let\addresses\@empty
\makeatother
\date{}

\begin{document}

\maketitle

\vskip0.6in

\begin{center}
\begin{minipage}{140mm}

{\bf Abstract:} Let $0<\alpha<2$, $p=2/(2-\alpha)$, and let
$K:\R^2\setminus\{0\}\to\R^m$ and $\Phi:\R^m\to\R$ be positively
homogeneous of degrees $\alpha-2$ and $p$, respectively, with Lipschitz
angular parts. On bounded finitely cornered piecewise-$C^{1,\beta}$
planar domains $\Omega$, $0<\beta\leq1$, we characterize the inequality
\[
 \left|\int_\Omega\Phi(K*f)\,\dd x\right|
 \leq C_{\Omega,K,\Phi}\|f\|_{L^1(\R^2)}^p,
 \qquad f\in L_c^\infty(\R^2;\R).
\]
It holds precisely when signed angular cancellation holds on the plane,
tangent half-planes, and complete vertex cones. An analogous criterion
holds on infinite sectors for such densities of mean zero.
For domains admitting a fixed finite exact ambient conformal-sector
atlas $\mathcal A$, we construct a constant-preserving linear extension
\[
 E_{\Omega,\mathcal A}:C^\infty(\overline\Omega;\R)
 \longrightarrow C(\R^2)\cap W^{1,2}_{\mathrm{loc}}(\R^2)
\]
with $E_{\Omega,\mathcal A}u=u$ on $\Omega$ and
$E_{\Omega,\mathcal A}u-u_\Omega$ compactly supported in $W^{1,2}(\R^2)$,
where $u_\Omega=|\Omega|^{-1}\int_\Omega u$.
Its Laplacian is a finite signed Radon measure satisfying
\[
 \|\Delta E_{\Omega,\mathcal A}u\|_{\cM(\R^2)}
 \leq C_{\Omega,\mathcal A}
 \bigl(\|\Delta u\|_{L^1(\Omega)}
       +\|\partial_nu\|_{L^1(\partial\Omega)}\bigr).
\]
For the Newton kernel $K(x)=x/(2\pi|x|^2)$, this yields the
necessary-and-sufficient tangent-model criterion for
\[
 \left|\int_\Omega\Phi(\nabla u)\,\dd x\right|^{1/2}
 \leq C_{\Omega,\Phi}
 \bigl(\|\Delta u\|_{L^1(\Omega)}
       +\|\partial_nu\|_{L^1(\partial\Omega)}\bigr)
\]
on this geometric class, for every
$u\in C^\infty(\overline\Omega;\R)$.
We also classify the quadratic cancellation locus in polygon moduli.
Thus the critical Maz'ya--Stolyarov theory on planar domains admits a
sharp tangent-cone formulation: at a genuine corner, the full vertex
cone, rather than merely its incident tangent half-planes, carries an
additional cancellation obstruction.

\vskip0.23in
{\bf Keywords:}  Maz'ya inequality, endpoint fractional integration, finitely cornered domain, planar sector, tangent cone, generalized-circle domain, quadratic rigidity, measure-valued Laplacian extension
\vskip0.231in

{\bf  MSC Classification:} Primary 26D10, 42B20; Secondary 35J05, 31A10, 46E35
			
\vskip0.23in					
\end{minipage}
\end{center}


\newpage

\section{\bf Introduction}

Maz'ya formulated the critical nonlinear endpoint problem in
\cite{Mazya2010}; it was recorded again as Problem~5.1 in
\cite{MazyaProblems}. Related sharp dilation-invariant inequalities were
developed by Maz'ya and Shaposhnikova in \cite{MazyaShaposhnikova}.
For an integer $d\geq2$, the model whole-space inequality is
\begin{equation}\label{eq:intro-mazya}
 \left|\int_{\R^d}\Phi(\nabla u(x))\,\dd x\right|^{(d-1)/d}
 \lesssim\|\Delta u\|_{L^1(\R^d)},\qquad u\in C_c^\infty(\R^d),
\end{equation}
where $\Phi$ is positively $d/(d-1)$-homogeneous and has vanishing
spherical average. Stolyarov proved the corresponding whole-space
convolution-kernel theorem in \cite{StolyarovWhole}. His domain results
\cite[Theorems~1.2 and~1.3]{StolyarovDomains} treat bounded
$C^{1,\beta}$ domains and half-spaces, and
\cite[Corollary~1.1]{StolyarovDomains} gives the smooth bounded-domain
Newton-kernel PDE estimate. These results identify the interior and
smooth-boundary cancellation conditions. The present paper determines
the additional obstruction at a nonflat planar corner and constructs
the extension needed to pass from the density criterion to a PDE
inequality on domains with such corners.

\vskip0.3in

The corner obstruction already appears for a quadratic nonlinearity.
For the Newton kernel and $0<\theta<2\pi$, $\theta\ne\pi$, set
\begin{equation}\label{eq:intro-vertex-form}
 \Phi_\theta(\xi)
 =(\xi\cdot e_{\theta/2})^2-(\xi\cdot e_{\theta/2+\pi/2})^2,
 \qquad e_t=(\cos t,\sin t).
\end{equation}
Then
\[
 \int_0^{2\pi}\Phi_\theta(e_t)\,\dd t
 =\int_\gamma^{\gamma+\pi}\Phi_\theta(e_t)\,\dd t=0
 \quad(\gamma\in\R),\qquad
 \int_0^\theta\Phi_\theta(e_t)\,\dd t=\sin\theta\ne0.
\]
Thus cancellation on the plane and on every oriented half-plane leaves
a nonzero moment on the vertex cone. Since $\Phi_\theta$ is even, this
failure occurs for both signs of the kernel. A vertex therefore changes
the space of admissible nonlinearities.

\vskip0.3in
This obstruction also explains the difficulty in the proof. The
smooth-boundary argument compares a localized annulus with a single
tangent half-plane at the same scale. When the scale is comparable to
the distance from a vertex, the comparison region meets both incident
branches. It must then be compared with the complete vertex cone, and
this configuration can recur over arbitrarily many scales. We use signed
cone cancellation to remove the leading annular term. The remaining
moment is taken about a fixed vertex and is summed directly by
Tonelli's theorem and a geometric series; see
\cref{lem:fixed-vertex-sum}. The geometric part of the argument chooses
these cone models compatibly with the half-plane models away from the
vertices. Curved sides contribute the summable $C^{1,\beta}$
tangent-model error. \Cref{fig:tangent-models} depicts the relevant
blow-up configurations.

\vskip0.3in
The main analytic result is the tangent-model density criterion for the
full range $0<\alpha<2$. A second result is a constant-preserving
extension with a controlled Laplacian measure on domains admitting a
finite exact ambient conformal-sector atlas. Together, these results
give the necessary-and-sufficient Newton-kernel PDE criterion on that
geometric class, which includes finite generalized-circle domains.
The quadratic classification in polygon moduli is a structural
consequence of the analytic criterion. The formal statements of the
density, extension, and PDE results are collected in
\cref{sec:kernels-domains}.

\vskip0.3in
\paragraph{\bf Density criteria.}
Let $0<\alpha<2$, $p=2/(2-\alpha)$, and let $K$ and $\Phi$ satisfy the
homogeneity and angular Lipschitz assumptions in
\eqref{eq:standing-analytic-hypothesis}. For a bounded finitely cornered
piecewise-$C^{1,\beta}$ planar domain $\Omega$, $0<\beta\leq1$, write
$\mathfrak T(\Omega)$ for its distinct origin-based tangent models:
the plane, the oriented tangent half-planes along the smooth boundary,
and the complete vertex cones. Put $\wtK=K|_{\Sph^1}$, and let $\sigma$
be arclength measure on $\Sph^1$. The estimate
\[
 \left|\int_\Omega\Phi(K*f)\,\dd x\right|
 \leq C_{\Omega,K,\Phi}\|f\|_{L^1(\R^2)}^p
 \qquad\bigl(f\in L_c^\infty(\R^2;\R)\bigr)
\]
is equivalent to
\[
 \int_{G^0\cap\Sph^1}\Phi\bigl(\varsigma\wtK(\omega)\bigr)
 \,\dd\sigma(\omega)=0
 \qquad\bigl(G^0\in\mathfrak T(\Omega),\ \varsigma\in\{-1,1\}\bigr);
\]
see \cref{thm:curved-density}. The polygonal case is
\cref{thm:polygon-density}. On these bounded domains the source is
allowed anywhere in $\R^2$, and no zero-mean condition is required.
For the infinite sector
$W_\theta=\{re_t:r>0,\ 0<t<\theta\}$, $0<\theta<2\pi$,
\cref{thm:sector-density} gives the analogous equivalence for bounded
compactly supported densities of mean zero, including absolute
integrability of $\Phi(K*f)$ on $W_\theta$. In every case the estimate
constant is independent of the location and radius of the source support.
\vskip0.3in

\paragraph{\bf Relation to operator cancellation and conic elliptic theory.}
A related principle in endpoint analysis is that an obstruction visible
under homogeneous blow-up determines whether a critical estimate can
hold. For a homogeneous constant-coefficient linear differential operator
$\mathbb A$ of order $k\geq1$ on $\R^d$, $d\geq2$, acting between
finite-dimensional real vector spaces, the estimate
\[
 \|D^{k-1}u\|_{L^{d/(d-1)}(\R^d)}
 \lesssim\|\mathbb A u\|_{L^1(\R^d)},
 \qquad u\in C_c^\infty(\R^d;V),
\]
where the test function $u$ takes values in $V$, holds exactly when $\mathbb A$ is elliptic
and satisfies the cancellation condition
\[
 \bigcap_{\xi\in\Sph^{d-1}}\operatorname{im}\mathbb A(\xi)=\{0\},
\]
where $\mathbb A(\xi)$ is the homogeneous symbol; see
\cite{VanSchaftingen2013} and the preceding differential-constraint
estimates in \cite{VanSchaftingen2008}. For a fixed half-space with unit
normal $\nu$, the criterion in
\cite[Theorem~1.1]{GmeinederRaitaVanSchaftingen} uses real ellipticity
and the boundary-ellipticity condition
$\ker_{\mathbb C}\mathbb A(\xi+i\nu)=\{0\}$ for every $\xi\in\R^d$.
Under real ellipticity, this condition implies full-space cancellation
\cite[Proposition~5.1]{GmeinederRaitaVanSchaftingen}.

\vskip0.3in
In the present problem the obstruction is the scalar angular moment
\[
 C_{G^0}^{\varsigma}(K,\Phi)
 :=\int_{G^0\cap\Sph^1}\Phi\bigl(\varsigma\wtK(\omega)\bigr)
 \,\dd\sigma(\omega).
\]
It depends jointly on the kernel, the nonlinear integrand, and the
complete tangent model. At a nonflat vertex, it retains both incident
sides in one cone functional, as the example
\eqref{eq:intro-vertex-form} demonstrates. Classical corner and conic
elliptic theory studies weighted regularity, Fredholm properties, and
singular asymptotics of linear boundary-value problems
\cite{Kondratev1967,Grisvard,KozlovMazyaRossmann}. Here the cone supplies
the local model for a nonlinear endpoint integral: its angular moment
cancels the leading term, and a fixed-vertex summation controls its
recurrence over scales. The density proof thus develops the nonlinear
convolution theory of \cite{StolyarovWhole,StolyarovDomains}.
For further endpoint differential-constraint and domain-embedding
context, see \cite{BourgainBrezis,Spector,GmeinederRaitaDomains}.

\vskip0.3in
\paragraph{\bf Finite-Laplacian-measure extension.}
Let $\Omega$ admit an exact atlas $\mathcal A$ in the sense of
\cref{def:exact-sector-atlas}. For
$u\in C^\infty(\overline\Omega;\R)$, set
\[
 u_\Omega:=|\Omega|^{-1}\int_\Omega u,\qquad
 N_\Omega(u):=\|\Delta u\|_{L^1(\Omega)}
             +\|\partial_nu\|_{L^1(\partial\Omega)}.
\]
Here $C^\infty(\overline\Omega)$ denotes restrictions of functions smooth
on an open neighborhood of $\overline\Omega$, and $\partial_n$ is the
outward normal derivative on the open boundary pieces.
\Cref{thm:global-extension} constructs a constant-preserving linear operator
\[
 E_{\Omega,\mathcal A}:C^\infty(\overline\Omega;\R)
 \longrightarrow C(\R^2)\cap W^{1,2}_{\mathrm{loc}}(\R^2)
\]
which agrees with $u$ on $\Omega$ and has
$E_{\Omega,\mathcal A}u-u_\Omega\in W^{1,2}(\R^2)$ compactly supported.
Its Laplacian is a compactly supported finite signed Radon measure, with
\[
 \|\Delta E_{\Omega,\mathcal A}u\|_{\cM(\R^2)}
 \leq C_{\Omega,\mathcal A}N_\Omega(u),
\]
where $\cM$ carries the total-variation norm. The atlas, partition of unity, and local extension operators
are fixed independently of $u$. Choosing one atlas gives the
domainwise notation $E_\Omega$ and $C_\Omega$; the bound retains its
dependence on the chosen chart geometry.

\vskip0.3in
The smooth-domain extension in
\cite[Proposition~5.1]{StolyarovDomains} uses comparison of interior and
exterior Dirichlet-to-Neumann maps. At a nonflat corner, our construction
uses finite-energy strip comparison, measure-valued normal traces on
the physical rays, and a logarithmic-cutoff argument excluding a possible
vertex atom. The extension theorem depends only on the domain and the
fixed atlas. Its conclusion controls the scalar Laplacian measure.
The stronger full-Hessian quantities arising in $W^{2,1}$ and
bounded-Hessian extension theory are discussed in
\cite{AdamsFournier,Demengel}; Ornstein's non-inequality prevents their
control by the $L^1$ Laplacian alone, even for compactly supported smooth
data \cite{Ornstein}. General Lipschitz-domain Poisson and Neumann
estimates \cite{FabesMendezMitrea,ChoiKim,Geng} enter the low-order
patching, while the sector construction supplies the measure-level
comparison through the corner.

\vskip0.3in
Finite generalized-circle domains, including holes and nonconvex
examples, satisfy the exact-atlas hypothesis; see
\cref{cor:generalized-circle-extension-pde}. The hypothesis is an analytic
restriction even along smooth boundary arcs, so it does not include all
smooth domains. \Cref{rem:exact-atlas-scope} explains this restriction
and gives a polynomial-image square beyond the line--circle class.
The extension problem for general curved corners remains open here.

\vskip0.3in
\paragraph{\bf Newton PDE criterion and its proof.}
For the Newton kernel $K(x)=x/(2\pi|x|^2)$ and a positively
two-homogeneous $\Phi:\R^2\to\R$ that is Lipschitz on $\Sph^1$,
\cref{thm:pde-mazya} characterizes
\[
 \left|\int_\Omega\Phi(\nabla u)\,\dd x\right|^{1/2}
 \leq C_{\Omega,\Phi}N_\Omega(u)
 \qquad\bigl(u\in C^\infty(\overline\Omega;\R)\bigr)
\]
on every exact sector-chart domain. The necessary and sufficient
condition is
\[
 \int_{G^0\cap\Sph^1}\Phi(\varsigma e)\,\dd\sigma(e)=0
 \qquad\bigl(G^0\in\mathfrak T(\Omega),\ \varsigma\in\{-1,1\}\bigr).
\]
Its specialization to finite generalized-circle domains is included in
\cref{cor:generalized-circle-extension-pde}.

\vskip0.3in
The density and extension results meet in the proof of sufficiency.
For $V=E_{\Omega,\mathcal A}u-u_\Omega$, take smooth mollifications
$V_\eps$. The identity $K*\Delta V_\eps=\nabla V_\eps$ allows the density
inequality to be applied with source $\Delta V_\eps$, whose $L^1$ norm
is bounded by $\|\Delta V\|_{\cM}$. Strong $L^2$ convergence of the
gradients then gives the nonlinear limit. The converse follows
independently from the signed logarithmic tangent tests in
\cref{prop:pde-tangent-test}. This direct mollification proves the PDE
criterion. The finite-energy measure closure theorem in
\cref{app:finite-energy-closure} records the same limiting principle
for a broader class of compactly supported measures.

\vskip0.3in
\paragraph{\bf Proof mechanisms and their antecedents.}
The localization in the density proof follows Stolyarov's domain
argument. The finite-range strip identity and indicator-source
decomposition occur in
\cite[proof of Theorem~4.1, equations~(4.11)--(4.13)]{StolyarovDomains};
our implementation fixes nested adjacent lattices at all integer scales
and bounds the assignment multiplicity for the entire sequence. The boundary
moment and dyadic energy estimates in
\cite[Lemmas~4.3 and~4.4]{StolyarovDomains} underlie the formulation relative to a fixed closed set
used here. The corner contribution is the complete cone
model, its fixed-vertex summation, and its compatibility with the
smooth-side estimates.

\vskip0.09in
We use a known mixed estimate. Its $1<p<2$ case is taken
from \cite[Theorem~2.3]{StolyarovWhole}. For $p\geq2$,
\cref{app:superquadratic-mixed} gives a unified proof based on positive
kernel overlap and Jensen's inequality, extending the quadratic
argument. The earlier paper already discusses the $p>2$ estimate with
modifications of its proof. These estimates give the full density range
$0<\alpha<2$. Detailed attributions accompany the relevant lemmas;
specific equation and theorem numbers for the two Stolyarov papers
refer to the author preprints linked in the bibliography.

\vskip0.3in
\paragraph{\bf A structural consequence: quadratic rigidity.}
For the Newton kernel, take a symmetric matrix
$A\in\R^{2\times2}$ and set $\Phi_A(\xi)=\xi^{\mathsf T}A\xi$.
The plane and half-plane conditions leave the two-dimensional trace-free
family. By \cref{cor:quadratic-rigidity}, the full tangent-model
conditions reduce to
\[
 \operatorname{tr}A=0,\qquad \Phi_A(b_v)=0
 \quad\text{for every vertex }v,
\]
where $b_v$ is a unit vector along the internal angle-bisector line at
$v$. A nonzero admissible form exists exactly when all unoriented
vertex-bisector lines belong to one orthogonal pair.
\Cref{thm:quadratic-polygon-moduli} gives the resulting classification
on labelled simple nonflat $N$-gons, with labels in cyclic boundary order,
modulo orientation-preserving similarities. The locus admitting a
nonzero admissible quadratic form is empty for odd $N$ and has
codimension $N-2$ for even $N$; among convex nonflat polygons it consists
precisely of parallelograms. Consequently quadratic rigidity is open,
dense, and of full Lebesgue measure in the normalized coordinates
$z_1=0$, $z_2=1$. The full smooth angular class remains infinite-dimensional
for each fixed polygon by \cref{prop:nonquadratic-abundance}.

\vskip0.13in
{\it The paper is organized as follows:}   Sections~2--10 establish the density criteria. Sections~11--14 prove,
in order, quadratic rigidity, the PDE tangent obstruction, the
Laplacian-measure extension, and the PDE criterion. Section~15 discusses
the curved-corner problem and limitations.
Appendix~\ref{app:superquadratic-mixed} proves the mixed estimate for
$p\geq2$; Appendix~\ref{app:finite-energy-closure} treats finite-energy
measure closure.

\vskip0.2in

\paragraph{\bf Notations.} \(B(a,r)\) is the open Euclidean ball and \(B_r:=B(0,r)\);
\(\ell(Q)\) is the side length of a square \(Q\);
\(\mathcal L^2\) and \(\mathcal H^1\) are Lebesgue and one-dimensional
Hausdorff measure; and \(\sigma:=\mathcal H^1\lfloor\Sph^1\).
For measurable $E\subset\R^2$, write $|E|:=\mathcal L^2(E)$.
The symbol \(\triangle\) denotes symmetric difference, and
\(L_c^\infty(D;E)\) denotes the essentially bounded \(E\)-valued functions
with compact support in \(D\). We write \(\one_E\) for the indicator of
\(E\); \(A\lesssim B\) means \(A\leq CB\) with \(C\) depending only on the
fixed parameters of the current statement, and \(A\simeq B\) means both
inequalities; a subscript records the permitted parameter dependence of the
implicit constants. The notation \(A\Subset B\) means that \(\overline A\)
is a compact subset of \(B\). We identify $\R^2$ with $\mathbb C$ whenever
complex notation is used, and set $\dist(E,\varnothing):=+\infty$.
For a bounded Lipschitz domain $\Omega$, $n$ denotes the
$\mathcal H^1$-almost-everywhere defined outward unit normal on
$\partial\Omega$; no value is assigned at corner points. We write
$C^\infty(\overline\Omega;\R)$ for restrictions to $\overline\Omega$ of
smooth functions on an open neighborhood of $\overline\Omega$.
For a locally compact space $X$ and a finite-dimensional Euclidean space
$E$, $\cM(X;E)$ is the space of finite $E$-valued Radon measures with its
total-variation norm, and $\cM(X):=\cM(X;\R)$. For a planar vector
distribution $G$, our convention is
$\operatorname{curl}G=\partial_1G_2-\partial_2G_1$.

\vskip0.2in
\paragraph{\bf Constant convention.}
The analytic data $m,\alpha,p,K,\Phi$ and the domain or sector appearing in
the statement under consideration are fixed before any density, scale, or
truncation parameter is chosen. Unless a subscript says otherwise, implicit
constants may depend on these fixed data, on the displayed finite geometric
atlas, and on the fixed localization and adjacent-grid parameters. In the stated density, extension, and PDE inequalities, these constants
are independent of $f$, $u$, a dyadic scale $n$, a truncation threshold,
a support radius removed by scaling, and an approximation index.
Auxiliary bounds for a fixed datum may depend on that datum, as
indicated by a subscript or stated explicitly in the local argument. Constants attached to
an exact sector-chart construction may also depend on the fixed atlas, its
finite overlap and cutoff $C^2$ bounds, the chart distortion bounds, and the
finitely many sector widths; no uniformity is asserted as a sector width
tends to $0$ or $2\pi$, or as the distance from a fixed cutoff support to the boundary of its chart tends to zero.

\vskip0.6in
\section{\bf Kernels, domains, and tangent-model cancellation}
\label{sec:kernels-domains}

Assume throughout that
\begin{equation*}\tag{$\mathrm H_{K,\Phi}$}\label{eq:standing-analytic-hypothesis}
 \begin{gathered}
  m\geq1\ \text{is an integer},\qquad 0<\alpha<2,\qquad
  p=\dfrac{2}{2-\alpha}\in(1,\infty),\\
  K:\R^2\setminus\{0\}\to\R^m,\qquad
  K(x)=|x|^{\alpha-2}\wtK(x/|x|),\qquad
  \wtK\in\operatorname{Lip}(\Sph^1;\R^m),\\
  \Phi:\R^m\to\R,\qquad
  \Phi(tz)=t^p\Phi(z)\quad(t>0,\ z\in\R^m),\qquad
  \Phi|_{\Sph^{m-1}}\in\operatorname{Lip}(\Sph^{m-1}).
 \end{gathered}
\end{equation*}
Positive homogeneity implies $\Phi(0)=0$; no parity condition is imposed.
Since $0<\alpha<2$, $K$ is locally integrable; throughout we assign it the
value $K(0)=0$. This convention does not affect Lebesgue integrals or
the almost-everywhere convolution identities used below.

\vskip0.2in
The mixed estimate used below is stated in \cref{thm:stolyarov-mixed}. Its
range $1<p<2$ is taken from \cite{StolyarovWhole}; its range $p\geq2$
is proved directly in \cref{app:superquadratic-mixed}. Thus it is a theorem
under the standing assumptions, not an additional hypothesis.

\subsection{Polygonal domains}\label{subsec:polygonal-domains}

Let $P\subset\R^2$ be a bounded connected Lipschitz polygon with finitely many boundary components; holes are allowed. Every boundary component has finitely many edges and vertices. We assume that the listed vertices are nonflat; redundant collinear vertices may always be removed. All polygonal tangent-model normals in this subsection are oriented into $P$, including on inner boundary components. Write
\(\mathcal E(P)\) for the set of open edges and \(\mathcal V(P)\) for the
vertex set. For $e\in\mathcal E(P)$, let $n_e^{\mathrm{in}}$ be the inward unit normal and define the origin-based linear half-plane
\[
 H_e^0=\{z\in\R^2:z\cdot n_e^{\mathrm{in}}>0\}.
\]
Thus, for every $a$ in the relative interior of $e$, the local affine tangent model is $a+H_e^0$. At a vertex $v$, let $C_v^0$ be the origin-based linear tangent cone, so that the local affine model is $v+C_v^0$. After a rotation,
\[
 C_v^0=\{r e_t:r>0,\ 0<t<\theta_v\},
 \qquad e_t=(\cos t,\sin t),
 \qquad 0<\theta_v<2\pi,
\]
with $\theta_v\neq\pi$.

\vskip0.23in

\subsection{Finitely cornered piecewise-\texorpdfstring{$C^{1,\beta}$}{C1beta} domains}

Fix $0<\beta\leq1$. A bounded connected Lipschitz domain $\Omega$ is called a
\emph{finitely cornered piecewise-$C^{1,\beta}$ domain} if there is a finite set
$\mathcal V(\Omega)\subset\partial\Omega$ such that
$\partial\Omega\setminus\mathcal V(\Omega)$ is the disjoint union of finitely many connected regular
embedded $C^{1,\beta}$ boundary sheets. Here regular means that each sheet
admits local parametrizations with nonvanishing derivative. A sheet is either
an embedded closed $C^{1,\beta}$ curve, or an open arc whose
two ends converge to points of $\mathcal V(\Omega)$ and whose parametrization extends regularly in
$C^{1,\beta}$ to those endpoints; the two limiting vertices are allowed to coincide.
At every $v\in\mathcal V(\Omega)$ exactly two one-sided $C^{1,\beta}$ branches meet, and their tangent
lines are distinct. Thus $v$ has a nondegenerate, nonflat interior tangent cone $C_v^0$.
Smooth closed boundary components, components with a single corner, and finitely many holes are allowed.

\vskip0.19in
For the finitely cornered domains considered here, a \emph{smooth boundary
point} means a point of $\partial\Omega\setminus\mathcal V(\Omega)$,
where the boundary has the stated $C^{1,\beta}$ regularity.
For such a point \(a\), let \(n_a^{\mathrm{in}}\) be the inward
unit normal and put
\[
 H_a^0:=\{z\in\R^2:z\cdot n_a^{\mathrm{in}}>0\},
 \qquad
 d_\Omega(a):=\dist(a,\mathcal V(\Omega)).
\]
With the convention $d_\Omega(a)=+\infty$ when $\mathcal V(\Omega)=\varnothing$, the natural geometric
class above has the quantitative approximation used in the proof.

\vskip0.2in
\begin{lemma}[Uniform tangent approximation]\label{lem:piecewise-model-bounds}
For every finitely cornered piecewise-$C^{1,\beta}$ domain $\Omega$ there are
$r_0,C_0,c_0>0$ such that
\begin{align}
 |(\Omega\mathbin\triangle(a+H_a^0))\cap B(a,r)|&\leq C_0r^{2+\beta}
 &&\text{for every smooth }a\text{ and }0<r<\min\{r_0,c_0d_\Omega(a)\},
 \label{eq:smooth-model}\\
 |(\Omega\mathbin\triangle(v+C_v^0))\cap B(v,r)|&\leq C_0r^{2+\beta}
 &&\text{for every }v\in\mathcal V(\Omega)\text{ and }0<r<r_0.
 \label{eq:vertex-model}
\end{align}
Here $H_a^0$ and $C_v^0$ are the origin-based linear tangent half-plane and tangent cone; their affine
realizations at the boundary points are $a+H_a^0$ and $v+C_v^0$. The restriction
$r<c_0d_\Omega(a)$ prevents a smooth-point comparison ball from crossing a corner; when the comparison involves a corner, the vertex model in \eqref{eq:vertex-model} is used instead. A
truncated sector belongs to this class with $\beta=1$, and the two estimates
above hold with exponent $3$.
\end{lemma}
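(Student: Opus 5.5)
The plan is to reduce both estimates to the local graph description of each boundary sheet near a point, together with compactness of $\partial\Omega$.

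\textbf{Smooth points.} Since each sheet is a regular embedded $C^{1,\beta}$ curve extending regularly up to its limiting vertices, there are $r_1>0$ and $M<\infty$ with the following property. For every point $a$ of a sheet, after a rigid motion taking $a$ to $0$ and $n_a^{\mathrm{in}}$ to $e_2$, the sheet near $a$ is the graph $\{x_2=g_a(x_1)\}$ over $|x_1|<r_1$. Here $g_a(0)=g_a'(0)=0$ and $[g_a']_{C^{0,\beta}}\le M$. Hence $|g_a(x_1)|\le M|x_1|^{1+\beta}$.

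Uniformity in $a$ follows by covering the finitely many closed parametrized arcs, including their endpoints, with finitely many charts. The Hölder bound on the unit tangent then gives uniform graph radii. Uniformity is valid only while the graph piece stays on a single sheet. This is the role of $c_0 d_\Omega(a)$: if $r<c_0 d_\Omega(a)$ with $c_0<1/2$, the ball $B(a,r)$ contains no vertex.

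Next, $\Omega$ is a Lipschitz domain lying locally on one side of its boundary, and the sheets are finitely many disjoint compact pieces away from the vertices. So one can choose $r_0$ small enough that $B(a,r)\cap\partial\Omega$ is exactly the graph portion, and $\Omega\cap B(a,r)=\{x_2>g_a(x_1)\}\cap B(a,r)$. This needs a uniform separation between different sheets, and between distant parts of the same sheet, at scale comparable to $d_\Omega(a)$ near corners. It is the step that needs care. I would handle it by taking $r_0$ below half the minimal distance between sheet pieces outside small vertex balls, and by using near each vertex the two branch graphs over the respective tangent lines.

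The symmetric difference in the ball is then contained in $\{|x_1|<r,\ |x_2|\le M|x_1|^{1+\beta}\}$. Its area is at most $2M\int_{-r}^r|x_1|^{1+\beta}\,\dd x_1\lesssim r^{2+\beta}$, which gives \eqref{eq:smooth-model}.

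\textbf{Vertices.} Fix $v$ and work in polar coordinates at $v$. Each of the two branches is a $C^{1,\beta}$ curve $\gamma_i(s)=v+s\tau_i+O(s^{1+\beta})$ parametrized by arclength, with distinct tangent directions $\tau_i$. The angular deviation of $\gamma_i(s)$ from the ray $v+\R_+\tau_i$ is therefore $O(s^\beta)$, and $|\gamma_i(s)-v|\simeq s$. For $r<r_0$ small, $B(v,r)\cap\partial\Omega$ consists of these two branch pieces only, by finiteness of $\mathcal V(\Omega)$ and the separation argument above. Each branch is then a polar graph $t=t_i+h_i(\rho)$ with $|h_i(\rho)|\le M\rho^\beta$.

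The two polar graphs bound a curvilinear sector which, by the Lipschitz local one-sidedness, equals $\Omega\cap B(v,r)$. Its orientation matches that of $C_v^0$, because the interior tangent cone is defined as the blow-up limit of $\Omega$ at $v$. The symmetric difference lies in $\bigcup_i\{\rho e_t:\rho<r,\ |t-t_i|\le M\rho^\beta\}$, whose area is at most $2\cdot 2M\int_0^r\rho^{1+\beta}\,\dd\rho\lesssim r^{2+\beta}$, which is \eqref{eq:vertex-model}. Finitely many vertices give uniform constants.

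\textbf{Truncated sector.} For a truncated sector the sides are straight segments and a circular arc. At vertices and straight points the symmetric difference vanishes for small $r$. On the arc the graph function is $g(x_1)=R-\sqrt{R^2-x_1^2}=O(x_1^2)$, so the same computations apply with $\beta=1$ and give the exponent $3$.

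The main obstacle is the uniform local-graph and one-sidedness claim near corners: points $a$ close to a vertex on one branch must not have the other branch enter $B(a,r)$. This follows because $r<c_0 d_\Omega(a)$, and because the two branches separate at least linearly in the distance to the vertex, their tangent lines being distinct.
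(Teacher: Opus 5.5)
Your proposal is correct and follows essentially the paper's argument. Like the paper, you use uniform $C^{1,\beta}$ graph charts from a finite atlas, a separation radius for nonlocal boundary pieces, linear separation of the two incident branches near each vertex (the paper's constant $c_{\mathrm{br}}$, which is what forces $c_0$ small), integration of the $O(|x|^{1+\beta})$ graph deviation, and a vertex bound obtained by adding the two one-sided branch errors; your polar-wedge computation is an equivalent way of doing this last step.

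There is one small slip. For the truncated sector the symmetric difference vanishes at the central vertex, but not at the two outer quarter-plane corners, where one incident branch is the circular arc; there your own vertex computation, with angular deviation $O(\rho)$, gives the $r^3$ bound.
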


\begin{proof}
The finite regular $C^{1,\beta}$ atlas gives uniform $\rho_*,M_*>0$ such
that each local branch is, after a rigid motion, a graph $y=h_a(x)$ with
$h_a(0)=h_a'(0)=0$ and $[h_a']_{C^{0,\beta}}\leq M_*$. The finitely many fixed compact portions of the smooth boundary sheets
used away from the chosen vertex neighborhoods are uniformly separated
from nonlocal boundary pieces, while vertex
transversality gives $c_{\rm br}>0$ such that
\[
 \dist(a,\text{the other incident branch})\geq c_{\rm br}d_\Omega(a)
\]
near every vertex. Choose $r_0>0$ below the common graph, vertex, and
nonlocal-separation radii, and then choose $c_0>0$ sufficiently small in terms
of $c_{\rm br}$ and the finite atlas. Each admissible comparison ball then
meets only the relevant branch. Since
$|h_a(x)|\lesssim M_*|x|^{1+\beta}$,
\[
 |(\Omega\mathbin\triangle(a+H_a^0))\cap B(a,r)|
 \lesssim\int_{-r}^{r}|x|^{1+\beta}\,\dd x\lesssim r^{2+\beta}.
\]
At a vertex the two one-sided errors are added; finiteness makes all
constants uniform. Choosing $C_0$ larger than the finitely many resulting
implicit constants proves both estimates.
\end{proof}

\vskip0.29in

For a polygon or finitely cornered piecewise-$C^{1,\beta}$ domain $D$ as above
(in particular, for a truncated sector), let
\[
 \mathfrak T(D)=\{\R^2\}
 \cup\{H_a^0:a\in\partial D\setminus\mathcal V(D)\}
 \cup\{C_v^0:v\in\mathcal V(D)\},
\]
where for a polygon $H_a^0=H_e^0$ on the open edge containing $a$, and coincident models are retained
only once.

\begin{definition}[Signed tangent-model cancellation]\label{def:signed-tangent-model}
The pair $(K,\Phi)$ satisfies \emph{signed tangent-model cancellation} on $D$ if
\begin{equation}\label{eq:signed-model-cancel}
 \int_{G^0\cap\Sph^1}\Phi\bigl(\varsigma\wtK(\omega)\bigr)\,\dd\sigma(\omega)=0
 \quad\text{for every }G^0\in\mathfrak T(D)
 \quad\text{and every }\varsigma\in\{-1,1\}.
\end{equation}
\end{definition}

\vskip0.29in

\subsection{Infinite sectors}

For $0<\theta<2\pi$, set
\begin{equation}\label{eq:sector-def}
 W_\theta=\{r e_t:r>0,\ 0<t<\theta\}.
\end{equation}
For the extension argument below, the \emph{physical complementary sector} is
\begin{equation}\label{eq:complementary-sector-def}
 W_\theta^{\mathrm c}
 :=\R^2\setminus\overline{W_\theta}
 =\{r e_{\theta+s}:r>0,\ 0<s<2\pi-\theta\}
 =e^{i\theta}W_{2\pi-\theta}.
\end{equation}
The superscript \(\mathrm c\) denotes this open complementary sector; it is
not the literal complement \(\R^2\setminus W_\theta\), which also contains
the origin and the two boundary rays. In particular,
\(W_\theta^{\mathrm c}\), rather than the unrotated sector
\(W_{2\pi-\theta}\), is the other open angular sector bounded by the
directions \(t=0\) and \(t=\theta\).
For $v=(v_1,v_2)\in\R^2$, write
$v^\perp=(-v_2,v_1)$ for the counterclockwise quarter-turn.
For $\varrho>0$ we call
\begin{equation}\label{eq:truncated-sector-def}
 \Omega_{\theta,\varrho}=W_\theta\cap B_\varrho
 =\{r e_t:0<r<\varrho,\ 0<t<\theta\}
\end{equation}
a \emph{truncated sector}; rigid images of these domains are included in the same terminology.
Its tangent models are the plane at interior points, tangent half-planes along the open radial sides and
the open circular arc, the model $W_\theta$ at the central point, and quarter-plane cones at the two outer
endpoints. The central point is a nonflat vertex only when $\theta\neq\pi$; for $\theta=\pi$ the two
radial sides form one smooth diameter and the central model is the same half-plane as the smooth-boundary
model.

\vskip0.29in

\begin{definition}[Finite generalized-circle domains]
\label{def:generalized-circle-domain}
A \emph{generalized circle} means a Euclidean circle or a line. A
\emph{finite generalized-circle domain} is a bounded connected Lipschitz
domain with finitely many boundary components, each either a Euclidean circle
or a Jordan curve obtained by cyclically concatenating finitely many embedded
open line segments and circular arcs. Distinct arc closures meet only at
prescribed consecutive endpoints (when there are two arcs, they may share
both endpoints), and the two supporting generalized circles at every listed
endpoint have distinct tangent lines. Subdivisions lying on one supporting
generalized circle are merged; holes are allowed.
\end{definition}

\vskip0.29in

\begin{definition}[Exact conformal-sector atlases]
\label{def:exact-sector-atlas}
A bounded connected finitely cornered piecewise-$C^{1,1}$ domain $\Omega$ is
said to admit a \emph{finite exact conformal-sector atlas} if there are
boundary points $a_j$, bounded open sets
\[
 a_j\in V_j\Subset U_j\Subset\mathcal O_j,\qquad 1\leq j\leq J,
\]
angles $0<\theta_j<2\pi$, conformal or anticonformal diffeomorphisms
$F_j:\mathcal O_j\to F_j(\mathcal O_j)$, and radii $R_j>0$ such that the
$V_j$ cover $\partial\Omega$ and
\begin{equation}\label{eq:exact-sector-atlas}
 \begin{gathered}
 F_j(a_j)=0,\qquad
 F_j(\Omega\cap U_j)=W_{\theta_j}\cap F_j(U_j),\\
 \overline{B_{R_j}}\subset F_j(U_j),\qquad
 F_j(V_j)\subset B_{R_j/6}.
 \end{gathered}
\end{equation}
A fixed collection of these data is denoted by $\mathcal A$ and called an
\emph{exact atlas} for $\Omega$. We call such an $\Omega$ an
\emph{exact sector-chart domain}.
\end{definition}

\vskip0.29in

Thus exact sector-chart domains lie in the geometric range of
\cref{thm:curved-density}; the atlas replaces the tangent-sector
approximation by the exact identity \eqref{eq:exact-sector-atlas}.
\Cref{prop:generalized-circle-atlas} shows that finite generalized-circle
domains admit such atlases. Tangential switches between distinct
supporting circles are excluded from that concrete class.

\vskip0.29in

\begin{remark}[Analytic restriction and an explicit curved example]
\label{rem:exact-atlas-scope}
An exact atlas imposes an analytic restriction even away from the
vertices. Indeed, every smooth boundary point has a neighborhood in
which the boundary is the inverse image of a line segment under an
ambient conformal or anticonformal diffeomorphism. The inverse map is
real analytic and has nonvanishing derivative, so this boundary arc is
real analytic. For instance, a boundary containing the graph
\[
 h(x)=
 \begin{cases}
  0,&x\leq0,\\
  e^{-1/x^2},&x>0
 \end{cases}
\]
near the origin admits no exact atlas there: an analytic graph with
all derivatives zero at the origin would vanish locally. Thus the
extension theorem below allows nonflat corners, but its geometric
range does not contain all smooth domains.

For an example beyond the line--circle class, identify $\R^2$ with $\mathbb C$, set
$S=\{x+iy:0<x<1,\ 0<y<1\}$, and let
\[
 P(z)=z+\frac{z^2}{16},\qquad \Omega=P(S).
\]
The identity
$P(z)-P(w)=(z-w)(1+(z+w)/16)$ and the formula
$P'(z)=1+z/8$ show that $P$ is a conformal diffeomorphism
on $B_2$, a neighborhood of $\overline S$.
Composing $P^{-1}$ with the local rigid sector charts of the square,
then shrinking and taking a finite subcover, gives an exact atlas for
$\Omega$ in the sense of \cref{def:exact-sector-atlas}.
However, its boundary contains the parabolic arc
$P(it)=-t^2/16+it$, $0<t<1$. Substitution in the equation of
a line gives a nonzero polynomial of degree at most two, and
substitution in $x^2+y^2+ax+by+c=0$ gives a polynomial with
nonzero $t^4$ coefficient. Each line or circle therefore meets this
arc in finitely many points, so no finite union of generalized
circles contains it. Hence $\Omega$ is not a finite
generalized-circle domain.
\end{remark}

\vskip0.29in

To state the sector condition without an implicit convention, set
\[
 \begin{gathered}
  \Gamma_j=\{r e_j:r>0\},\qquad
  H_j^0=\{z\in\R^2:z\cdot n_j^{\mathrm{in}}>0\},
  \qquad j\in\{0,\theta\},\\
  n_0^{\mathrm{in}}=e_{\pi/2},\qquad
  n_\theta^{\mathrm{in}}=e_{\theta-\pi/2}.
 \end{gathered}
\]
Thus $H_j^0$ is the origin-based inward tangent half-plane along the open ray $\Gamma_j$, and locally
at a noncentral point $a\in\Gamma_j$ the sector agrees with $a+H_j^0$.

\vskip0.29in

\begin{definition}[The condition $(\mathrm{TC}_\theta)$]\label{def:tc-sector}
Let $\mathfrak T_\theta$ be the set of distinct elements of
\[
 \{\R^2,H_0^0,H_\theta^0,W_\theta\}.
\]
The pair $(K,\Phi)$ satisfies $(\mathrm{TC}_\theta)$ if
\[
 \int_{G^0\cap\Sph^1}\Phi\bigl(\varsigma\wtK(\omega)\bigr)\,\dd\sigma(\omega)=0
 \qquad(G^0\in\mathfrak T_\theta,\ \varsigma\in\{-1,1\}).
\]
When $\theta=\pi$, one has $H_0^0=H_\pi^0=W_\pi$, so this common
half-plane condition is imposed only once.
\end{definition}


\vskip0.39in

\subsection{The density criterion and its special cases}

We first state the bounded-domain density criterion, its polygonal
special case, and the infinite-sector version. The extension and PDE
criteria on exact sector-chart domains follow in the next subsection.

\vskip0.12in

\begin{theorem}[Piecewise $C^{1,\beta}$ domain with finitely many corners]\label{thm:curved-density}
Assume \eqref{eq:standing-analytic-hypothesis}. Let $0<\beta\leq1$ and let
$\Omega$ be a finitely cornered piecewise-$C^{1,\beta}$ domain. Consider the
following two conditions:
\begin{enumerate}[label=\textup{(\roman*)},leftmargin=2.2em]
\item \eqref{eq:signed-model-cancel} holds on every distinct interior,
smooth-boundary, and vertex tangent model and for both signs
$\varsigma\in\{-1,1\}$;
\item there exists $C_{\Omega,K,\Phi}<\infty$ such that every
$f\in L_c^\infty(\R^2;\R)$ satisfies
\begin{equation}\label{eq:curved-density}
 \left|\int_\Omega\Phi(K*f(x))\,\dd x\right|
 \leq C_{\Omega,K,\Phi}\|f\|_{L^1(\R^2)}^p.
\end{equation}
\end{enumerate}
Then conditions \textup{(i)} and \textup{(ii)} are equivalent throughout
$0<\alpha<2$.
\end{theorem}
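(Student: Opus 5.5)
We prove the two implications separately. Necessity, (ii)$\Rightarrow$(i), goes by blow-up at a point of $\overline\Omega$ whose tangent model is $G^0$: an interior point for $\R^2$, a smooth boundary point $a$ for $H_a^0$, a vertex $v$ for $C_v^0$. Sufficiency, (i)$\Rightarrow$(ii), is the localized Stolyarov argument, with one new ingredient: the complete vertex-cone comparison together with a fixed-vertex summation.

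\emph{Necessity.} Fix a base point $a\in\overline\Omega$ and a sign $\varsigma$. Take $f_\delta=\varsigma\,\delta^{-2}\one_{B(a,\delta)}/|B_1|$, so that $\|f_\delta\|_{L^1}=1$. For $\delta\ll|x-a|$ we have
\[
K*f_\delta(x)=\varsigma K(x-a)+O\bigl(\delta|x-a|^{\alpha-3}\bigr),
\]
and by homogeneity $\Phi(\varsigma K(x-a))=|x-a|^{-2}\Phi(\varsigma\wtK(\omega))$. The Lipschitz bound on $\Phi|_{\Sph^{m-1}}$ controls the error by $\delta|x-a|^{-3}$ on the annulus $\delta R<|x-a|<r_1$, and $\int\delta|x-a|^{-3}\dd x\lesssim 1/R$ there. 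The ball $|x-a|<\delta R$ contributes $O_R(1)$ by scaling. Lemma~\ref{lem:piecewise-model-bounds} lets us replace $\Omega\cap B(a,r_1)$ by the model $a+G^0$, at a cost of $\int_0^{r_1}r^{-2}\,r^{1+\beta}\,\dd r<\infty$. Hence
\[
\int_\Omega\Phi(K*f_\delta)=C_{G^0}^\varsigma\log(1/\delta)+O(1).
\]
Since (ii) bounds the left side uniformly in $\delta$, we get $C_{G^0}^\varsigma=0$. For smooth points we take $\delta\to0$ with $a$ fixed, which keeps $r_1<c_0d_\Omega(a)$ valid.

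\emph{Sufficiency.} By scaling it suffices to treat $\|f\|_{L^1}=1$, and by positive homogeneity and density we may assume $f$ is a finite combination of indicator sources on a fine dyadic lattice. Following \cite[proof of Theorem~4.1]{StolyarovDomains}, we write $\Phi(K*f)$ through the finite-range strip/telescoping identity over the nested adjacent lattices. This gives a main term: a sum over dyadic squares $Q$ of $\Phi$ evaluated on the single-square field $K*(f\one_Q)$, integrated over an annular region around $Q$ intersected with $\Omega$. The remaining cross terms are controlled by the mixed estimate \cref{thm:stolyarov-mixed}, together with the bounded assignment multiplicity of the lattices. In each main term we replace $K*(f\one_Q)$ by $(\int_Q f)\,K(\cdot-c_Q)$, with error governed by the first moment, and then replace $\Omega$ near $Q$ by the appropriate tangent model. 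There are three regimes. (a) The annulus lies inside $\Omega$: the plane condition kills the leading term. (b) The annulus meets one smooth branch at a scale $<c_0d_\Omega$: we compare with $a+H_a^0$, the half-plane condition kills the leading term, and the error is the summable $r^{\beta}$ factor from \eqref{eq:smooth-model}, handled as in the boundary moment and dyadic energy lemmas \cite[Lemmas~4.3, 4.4]{StolyarovDomains}. (c) The scale is comparable to or larger than the distance to a vertex $v$: we compare with $v+C_v^0$, not with either half-plane, and use the vertex condition in \eqref{eq:signed-model-cancel}. Both signs $\varsigma$ are needed because $\int_Q f$ has either sign.

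\emph{Main obstacle.} Regime (c) is the hard step. A given source square can sit in the vertex regime at arbitrarily many dyadic scales, so a per-scale error of size $|\int_Qf|^p$ would diverge. The plan is to recentre the model at $v$ rather than at $c_Q$ once the scale $2^n$ exceeds $\dist(Q,v)$. After cone cancellation removes the leading term $|\int_Qf|^p\,C^\varsigma_{C_v^0}$, the remainder is bounded by
\[
|f|(Q)^p\,\bigl(\dist(Q,v)/2^n\bigr)^{\gamma}+|f|(Q)^p\,2^{n\beta},
\]
with $\gamma>0$ supplied by the Lipschitz angular parts, plus the $C^{1,\beta}$ deviation from \eqref{eq:vertex-model}. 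Summing over $n$ by Tonelli's theorem gives geometric series bounded independently of $Q$; this is the content of \cref{lem:fixed-vertex-sum}. Finally, $\sum_Q|f|(Q)^p\le\|f\|_1^p$ since $p>1$. The delicate points are the transition between regimes (b) and (c), chosen compatibly via $c_0$ and $c_{\rm br}$ from \cref{lem:piecewise-model-bounds}, and making sure each square–scale pair is assigned to exactly one model.
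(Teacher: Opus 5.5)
Necessity is the paper's argument: the normalized indicator of $B(a,\delta)$ in place of the bump $\rho_{\eps,a}$ is immaterial. The architecture of sufficiency also matches the paper: annular splitting, the mixed estimate, strip localization of each $\Phi(K_n*f)$, and comparison with $\R^2$, a tangent half-plane, or the full cone $v+C_v^0$. The gap is in regime (c), which you rightly single out as the hard step, and it does not work as written.

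The strip identity of \cref{lem:two-d-local} splits $\Phi(K_n*f)$ only into pieces $g_R=f\one_{E_R}$ with $E_R\subset R$ and $\ell(R)=16\cdot2^{-n}$, comparable to the range of $K_n$. Because $\Phi$ is nonlinear, there is no exact splitting over a partition finer than the kernel range. So no fixed source square is carried through many kernel scales: the localized square changes with $n$. For an actual piece, replacing $K_n*g_R$ by $(\int g_R)K_n(\cdot-v)$ costs $\ell(R)^{-1}M_R^{p-1}\int_R|y-v|\,|f(y)|\,\dd y$; this is \cref{lem:local-model-defect} with $a=v$.

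Whether your $Q$ is a small fixed square or $R$ itself, $|f|(Q)^p(\dist(Q,v)/2^n)^\gamma$ does not dominate this cost. Since $|y-v|\geq\dist(Q,v)$ on $Q$, your decay factor underestimates the true one. It even vanishes when $Q\ni v$. Yet for $f$ concentrated near $y_0$ with $|y_0-v|=d$, and a vertex square of side $\ell\gg d$ containing $y_0$ and $v$, the error is in general of size $\|f\|_1^p\,d/\ell$. Likewise, $\sum_Q|f|(Q)^p\leq\|f\|_1^p$ holds only over a single partition. Over the vertex squares of all scales, $\sum_n\sum_R M_R^p$ grows like $\log(1/d)\,\|f\|_1^p$ in the same example. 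So the decay cannot be attached to the position of a square. It must come from how $|f|$ is distributed inside each square relative to $v$.

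The mechanism that works is Tonelli in the source variable. Bound $M_R^{p-1}\leq\|f\|_1^{p-1}$. For fixed $y\neq v$, the vertex squares containing $y$ (one per scale in each of the nine lattices, each carrying at most four indices) satisfy $|y-v|\leq(L+\sqrt2)\ell(R)$. Hence $\sum_{R\ni y}|y-v|/\ell(R)\leq2(L+\sqrt2)$ is a geometric series, and integrating against $|f(y)|\,\dd y$ gives $\lesssim\|f\|_1^p$. This is what \cref{lem:fixed-vertex-sum} proves. The $C^{1,\beta}$ defects $M_R^p2^{-\beta n}$ are summed separately, scale by scale, using $\sum_{Q\in\cQ_n}\int|g_Q|\leq9\|f\|_1$.

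Two further corrections:
\begin{itemize}
\item The strip identity is exact at each scale. The mixed estimate therefore absorbs only the cross-scale terms of the annular telescoping, not anything produced by the localization.
\item In regimes (a) and (b), the moment must be taken about a cube-dependent minimizing point; a barycentre also works. If $c_Q$ is the geometric centre of $Q$, the normalized moment does not decay along scales. These moments must then be summed relative to a fixed closed set, $\R^2$ or $\partial\Omega$, as in \cref{thm:closed-anchor-forest}. That estimate needs no regularity of the set, which matters here because $\partial\Omega$ has corners.
\end{itemize}
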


\vskip0.12in

\begin{corollary}[Bounded polygon]\label{thm:polygon-density}
Assume \eqref{eq:standing-analytic-hypothesis}, and let
$P\subset\R^2$ be a bounded connected Lipschitz polygon with finitely many
boundary components and only nonflat vertices. Consider the following two
conditions:
\begin{enumerate}[label=\textup{(\roman*)},leftmargin=2.2em]
\item \eqref{eq:signed-model-cancel} holds for every distinct tangent model
of $P$ and both signs $\varsigma\in\{-1,1\}$;
\item there exists $C_{P,K,\Phi}<\infty$ such that every
$f\in L_c^\infty(\R^2;\R)$ satisfies
\begin{equation}\label{eq:polygon-density}
 \left|\int_P\Phi(K*f(x))\,\dd x\right|
 \leq C_{P,K,\Phi}\|f\|_{L^1(\R^2)}^p.
\end{equation}
\end{enumerate}
Then conditions \textup{(i)} and \textup{(ii)} are equivalent throughout
$0<\alpha<2$.
\end{corollary}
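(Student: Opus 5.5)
The plan is to deduce \cref{thm:polygon-density} directly from \cref{thm:curved-density}, by checking that every polygon $P$ in the statement is a finitely cornered piecewise-$C^{1,\beta}$ domain (with $\beta=1$) and that the tangent models match.

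First, the geometric hypotheses. $P$ is a bounded connected Lipschitz domain with finitely many boundary components, each a closed polygonal curve with finitely many nonflat vertices. Take $\mathcal V(\Omega):=\mathcal V(P)$. Then $\partial P\setminus\mathcal V(P)$ is the disjoint union of the finitely many open edges. Each edge is a regular embedded $C^{1,1}$ (indeed affine) open arc whose parametrization extends affinely, hence regularly, to its two endpoint vertices.

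At each vertex exactly two edges meet, because $P$ is a Lipschitz domain and its boundary components are Jordan polygonal curves. Their tangent lines are distinct because the vertex is nonflat. A boundary component with no vertices cannot occur for a polygon, and this causes no problem: the definition allows but does not require smooth closed components. Hence $P$ belongs to the class of \cref{thm:curved-density} with $\beta=1$. In \cref{lem:piecewise-model-bounds} the symmetric differences are in fact zero for small $r$, but we only need the class membership.

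Second, the tangent models. For $a$ in an open edge $e$, the inward normal $n_a^{\mathrm{in}}$ equals $n_e^{\mathrm{in}}$, so $H_a^0=H_e^0$. At a vertex, the cone $C_v^0$ defined in \cref{subsec:polygonal-domains} is exactly the interior tangent cone of the general definition. By the convention stated after \cref{lem:piecewise-model-bounds}, $\mathfrak T(P)$ is the same set whether $P$ is viewed as a polygon or as a finitely cornered domain. Consequently condition (i) of the corollary is literally condition (i) of \cref{thm:curved-density}.

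Condition (ii) is likewise identical: \eqref{eq:polygon-density} is \eqref{eq:curved-density} with $\Omega=P$, for the same class of $f$ and with a constant depending on the same data. The equivalence therefore follows for all $0<\alpha<2$.

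The only point requiring care is the orientation convention on holes. For inner boundary components, the polygon section orients normals into $P$, and the general definition also uses the inward normal to $\Omega$, so the two conventions agree. I do not expect any real obstacle beyond this bookkeeping.
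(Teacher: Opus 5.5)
Your proposal is correct and is essentially the paper's argument. The paper likewise observes that every such polygon is a finitely cornered piecewise-$C^{1,1}$ domain with the same tangent-model family, so the corollary is the $\beta=1$ case of \cref{thm:curved-density}, with necessity coming from \cref{lem:tangent-concentration}.
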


\vskip0.18in

\begin{theorem}[Infinite sector]\label{thm:sector-density}
Assume \eqref{eq:standing-analytic-hypothesis}, and let $0<\theta<2\pi$.
Consider the following two conditions:
\begin{enumerate}[label=\textup{(\roman*)},leftmargin=2.2em]
\item $(\mathrm{TC}_\theta)$ holds;
\item there exists $C_{\theta,K,\Phi}<\infty$ such that, for every
$f\in L_c^\infty(\R^2;\R)$ satisfying
\begin{equation}\label{eq:zero-mean}
 \int_{\R^2}f=0,
\end{equation}
one has $\Phi(K*f)\in L^1(W_\theta)$ and
\begin{equation}\label{eq:sector-density}
 \left|\int_{W_\theta}\Phi(K*f(x))\,\dd x\right|
 \leq C_{\theta,K,\Phi}\|f\|_{L^1(\R^2)}^p.
\end{equation}
\end{enumerate}
Then conditions \textup{(i)} and \textup{(ii)} are equivalent throughout
$0<\alpha<2$.
\vskip0.1in
Moreover,  for the implication
\textup{(ii)}$\Rightarrow$\textup{(i)}, it is enough to require
\textup{(ii)} only for zero-mean $f\in C_c^\infty(\R^2;\R)$. The reverse
implication holds for all data in the class stated in \textup{(ii)}.
\end{theorem}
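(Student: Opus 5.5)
The plan is to prove the necessity (ii)$\Rightarrow$(i) with explicit smooth zero-mean logarithmic test densities, and the sufficiency (i)$\Rightarrow$(ii) by combining a far-field decay estimate coming from the zero mean with a rerun of the localized density argument behind \cref{thm:curved-density}. In the sector that argument involves only exact tangent models and no curvature errors. The estimate is dilation invariant, which is what makes the constants independent of the location and radius of the support. If $f_\lambda=\lambda^{-2}f(\cdot/\lambda)$, then $\|f_\lambda\|_{L^1}=\|f\|_{L^1}$ and $K*f_\lambda=\lambda^{\alpha-2}(K*f)(\cdot/\lambda)$. Hence $\int_{W_\theta}\Phi(K*f_\lambda)=\lambda^{(\alpha-2)p+2}\int_{W_\theta}\Phi(K*f)$, and the exponent $(\alpha-2)p+2$ vanishes. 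Only translations really move the support relative to the vertex.

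\emph{Necessity.} Fix $G^0\in\mathfrak T_\theta$ and $\varsigma$, and choose a base point $a$: an interior point of $W_\theta$ for $G^0=\R^2$, a point of $\Gamma_0$ or $\Gamma_\theta$ for $H_0^0$ or $H_\theta^0$, and $a=0$ for the cone $W_\theta$. Fix a point $b$ in the open complementary sector $W_\theta^{\mathrm c}$, at distance at least $1$ from $\overline{W_\theta}$. Put $f_\eps=\varsigma(\eta_\eps(\cdot-a)-\eta_\eps(\cdot-b))$, where $\eta_\eps$ is a standard mollifier at scale $\eps$. Then $f_\eps$ is smooth with zero mean, and $\|f_\eps\|_{L^1}=2$. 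On the annulus $\eps\ll|x-a|\ll 1$ we have
\[
 K*f_\eps(x)=\varsigma K(x-a)+O\bigl(\eps|x-a|^{\alpha-3}+1\bigr),
\]
and $W_\theta$ coincides with $a+G^0$ there exactly; for the cone this holds at every scale. Writing the integral in polar coordinates and using homogeneity together with the Lipschitz bound on $\Phi$, the annulus contributes
\[
 C_{G^0}^{\varsigma}(K,\Phi)\log(1/\eps)+O(1).
\]
The remaining regions contribute $O(1)$: near $a$ by scaling, near $b$ because $b\notin\overline{W_\theta}$, and at infinity because the zero mean gives $|K*f_\eps(x)|\lesssim|x|^{\alpha-3}$ and $p(3-\alpha)>2$. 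Comparing with \eqref{eq:sector-density} as $\eps\to0$ forces $C_{G^0}^{\varsigma}=0$.

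\emph{Sufficiency, far field.} By scaling, assume $\supp f\subset B(c,1)$. Since $\int f=0$, Taylor expansion of $K$ around $c$ gives
\[
 |K*f(x)|\lesssim\|f\|_{L^1}|x-c|^{\alpha-3}\qquad(|x-c|\geq2).
\]
Integrating the $p$-th power outside $B(c,2)$ yields $\lesssim\|f\|_{L^1}^p$, a scale-free bound because the integrand is integrable at infinity. This proves $\Phi(K*f)\in L^1$ away from the support and disposes of the far field. Local integrability near the support follows from the mixed estimate \cref{thm:stolyarov-mixed}, or simply from $f\in L^\infty_c$.

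\emph{Sufficiency, near field.} It remains to bound $\int_{W_\theta\cap B(c,2)}\Phi(K*f)$. I would not apply \cref{thm:curved-density} to a truncated sector: its outer corners create quarter-plane models whose cancellation is not assumed. Instead I would run the dyadic lattice localization of that proof directly on $W_\theta$, with all cubes of side at most a fixed multiple of $1$. The leading annular terms are then compared with exact models: the plane for cubes away from $\partial W_\theta$, $H_0^0$ or $H_\theta^0$ near the rays away from the vertex, and $W_\theta$ itself when the scale is comparable to the distance to the vertex. Each such term is killed by $(\mathrm{TC}_\theta)$. The cone terms recur over all scales between the distance of the support to the vertex and $1$, and they are summed by the fixed-vertex argument of \cref{lem:fixed-vertex-sum}. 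The smooth-side terms carry no $r^{2+\beta}$ curvature error, since the models are exact.

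The main obstacle is the junction between this near-field decomposition and the far-field cutoff at radius $\simeq1$. The localized argument produces boundary terms on $\partial B(c,2)$, and these must be controlled by the same decay estimate using the zero mean, not by the boundedness of the domain. I would handle this by splitting $f$ into its lattice pieces at the top scale $\simeq1$. Every piece except the aggregate has zero mean only after subtracting averaged indicator sources, so I would carry the aggregate mean correction explicitly and use the zero total mean only once, at the top scale. This is where the uniformity in $c$ has to be checked, in particular when $c$ is far from the vertex compared with the support radius but the ball $B(c,2)$ still meets a ray.
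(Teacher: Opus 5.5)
Your proposal is correct in outline, and the necessity half is essentially the paper's argument. The paper balances $\varsigma\rho_{\eps,a}$ with a fixed bump $g$ satisfying $\dist(a,\supp g)>0$, rather than a second concentrating bump outside $\overline{W_\theta}$; this changes nothing.

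For sufficiency you take a different route at large scales. The paper does not split into near and far fields. Since $W_\theta$ is dilation invariant, the interior/ray/central classification of \cref{lem:sector-classification} uses exact models at every $n\in\mathbb Z$. Hence the annular bound \eqref{eq:fine-sector} holds over all integer scales on the whole cone without using the zero mean; coarse central squares are compared with $W_\theta$ itself and summed by \cref{lem:fixed-vertex-sum}. In the paper the zero mean enters only qualitatively: it gives $\Phi(K*f)\in L^1(W_\theta)$ and the low-scale limit \eqref{eq:sector-low-tail}, which \cref{lem:annular-summation} needs on the unbounded set. You instead use the zero mean quantitatively. After normalizing $\supp f\subset B(c,1)$, dipole decay bounds the region outside $B(c,2)$ by $C\|f\|_1^p$, and what remains is a bounded-domain problem. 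This bypasses \eqref{eq:lp-scale}--\eqref{eq:sector-low-tail} and the coarse-scale classification. The paper's version buys something different: it isolates the zero mean as a pure convergence condition and shows the large-scale cancellation is carried by the vertex cone.

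The obstacle in your last paragraph is not real, and the mean-correction bookkeeping you propose for it is unnecessary. Apply \cref{lem:annular-summation} on $D=W_\theta\cap B(c,2)$.
\begin{itemize}
\item For $2^{-n}\le1/2$, both $K_n*f$ and every localized piece $K_n*g_Q$ vanish outside $\overline{B(c,3/2)}$. So $\int_D\Phi(K_n*f)=\int_{W_\theta}\Phi(K_n*f)$, and the circle $\partial B(c,2)$ is never seen. No tangent models along it are needed, and the sector classification controls these scales with constants independent of $c$.
\item For $2^{-n}>1/2$, the trivial bound $|\int_D\Phi(K_n*f)|\lesssim|D|\,2^{2n}\|f\|_1^p$ is summable.
\item $K_{\le N}*f\to0$ in $L^p(D)$ as $N\to-\infty$ is automatic because $D$ is bounded.
\item The mixed terms are dominated by their full-plane counterparts in \cref{thm:stolyarov-mixed}.
\end{itemize}
The localization acts on kernel scales, not on the domain, so there is no junction to repair.

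The one step you leave implicit is the passage from smooth data to general zero-mean $f\in L_c^\infty$. The localization, and for $1<p<2$ the mixed estimate, are stated for $C_c^\infty$ data. \cref{lem:density-approx} supplies this step, with your far-field decay giving $K*f_j\to K*f$ in $L^p(\R^2)$.
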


\vskip0.18in

In these three density statements,  the  constant of the inequality  is independent of the datum and
of the location and radius of its compact support.

The equivalences above hold under the standing assumptions throughout $0<\alpha<2$; see
\cref{eq:Mp,thm:stolyarov-mixed,app:superquadratic-mixed}. The Newton kernel
corresponds to $\alpha=1$ and $p=2$.

\vskip0.38in

\subsection{Measure extension and the PDE criterion}

For an exact sector-chart domain and $u\in C^\infty(\overline\Omega;\R)$,
write
\begin{equation}\label{eq:neumann-budget}
 u_\Omega:=|\Omega|^{-1}\int_\Omega u,\qquad
 N_\Omega(u):=\|\Delta u\|_{L^1(\Omega)}
             +\|\partial_nu\|_{L^1(\partial\Omega)}.
\end{equation}
The following extension is independent of the kernel and the nonlinearity.

\vskip0.28in

\begin{theorem}[Measure-valued Laplacian extension on exact sector-chart domains]
\label{thm:global-extension}
Let $\Omega$ be an exact sector-chart domain, and fix once and for all an
exact atlas $\mathcal A$ as in \cref{def:exact-sector-atlas}. There exist a
linear operator
\[
 E_{\Omega,\mathcal A}:C^\infty(\overline\Omega;\R)
 \longrightarrow C(\R^2)\cap W^{1,2}_{\mathrm{loc}}(\R^2)
\]
and a finite constant $C_{\Omega,\mathcal A}$ such that the following hold
for every $u\in C^\infty(\overline\Omega;\R)$, with $u_\Omega$ and
$N_\Omega(u)$ as in \eqref{eq:neumann-budget}:
\begin{enumerate}[label=\textup{(\roman*)},leftmargin=2.2em]
\item $E_{\Omega,\mathcal A}u=u$ in $\Omega$;
\item constants are preserved;
\item $E_{\Omega,\mathcal A}u-u_\Omega$ is compactly supported and belongs
to $W^{1,2}(\R^2)$;
\item $\Delta E_{\Omega,\mathcal A}u\in\cM(\R^2)$ is compactly supported and
\begin{equation}\label{eq:global-extension}
 \|\Delta E_{\Omega,\mathcal A}u\|_{\cM(\R^2)}
 \leq C_{\Omega,\mathcal A}N_\Omega(u).
\end{equation}
\end{enumerate}
Choosing one atlas for the domain gives the operator and constant
denoted by $E_\Omega$ and $C_\Omega$ in domainwise statements.
\end{theorem}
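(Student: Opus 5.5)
The construction is local. We build the extension chart by chart in the sector model, transport it by the conformal maps $F_j$, and glue with a fixed partition of unity. First subtract the mean and work with $w=u-u_\Omega$. We then need an operator that is linear, annihilates constants up to the final re-addition of $u_\Omega$, and whose Laplacian measure is controlled by $N_\Omega(u)$.

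The first ingredient is a low-order bound: $\|w\|_{W^{1,2}(\Omega)}\lesssim N_\Omega(u)$. We plan to obtain it from the Neumann problem on the Lipschitz domain $\Omega$, using the $L^1$-data Poisson and Neumann estimates cited in the introduction \cite{FabesMendezMitrea,ChoiKim,Geng}. The plan is to split $w=w_1+w_2$. Here $w_1$ is the Newtonian potential of $\Delta u\,\one_\Omega$, which is controlled in $W^{1,q}$ for $q<2$. The remainder $w_2$ is harmonic in $\Omega$ with Neumann data in $L^1(\partial\Omega)$. Since only cutoff commutators enter, some $W^{1,q}$ or weighted-energy control at low order should suffice. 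Fix the cover $V_j$, a partition of unity $\sum_j\chi_j=1$ near $\partial\Omega$ with $\chi_j\in C_c^\infty(V_j)$, and an interior piece $\chi_0$. For each $j$, set $g_j=(\chi_j w)\circ F_j^{-1}$ on $W_{\theta_j}\cap B_{R_j}$. Conformality gives $\Delta g_j=|(F_j^{-1})'|^2(\Delta(\chi_j w))\circ F_j^{-1}$, and the normal derivative transforms by the boundary Jacobian. Consequently the $L^1$ norms of the sector Laplacian and Neumann data are comparable to those of $\chi_j w$. Those norms are in turn bounded by $N_\Omega(u)$ plus commutator terms $\|\nabla\chi_j\cdot\nabla w\|_{L^1}+\|w\Delta\chi_j\|_{L^1}$, which the low-order estimate controls.

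The core step is a sector extension lemma. Given $g$ on $W_\theta$, supported in $B_{R/6}$, smooth up to the rays away from $0$, with $\Delta g\in L^1$, $\partial_n g\in L^1(\partial W_\theta)$ and finite energy, we want an extension $\tilde g$ to $\R^2$. It should be continuous, lie in $W^{1,2}$, be supported in $B_R$, and satisfy $\|\Delta\tilde g\|_{\cM}\lesssim\|\Delta g\|_{L^1}+\|\partial_ng\|_{L^1}+\|g\|_{W^{1,2}}$. On the complementary sector $W_\theta^{\mathrm c}$ we take a harmonic function $h$ with Dirichlet data equal to the trace of $g$ on the two rays, cut off at radius $R$. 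The Laplacian of the glued function is then the jump of normal derivatives along the rays, $\partial_n g-\partial_n h$, plus a possible atom at the vertex. To control $\partial_n h$ in $L^1$ of the rays by the interior data, we use the strip picture $z\mapsto\log z$. It maps $W_\theta$ and $W_\theta^{\mathrm c}$ to strips of widths $\theta$ and $2\pi-\theta$, where Dirichlet-to-Neumann maps are Fourier multipliers $\xi\coth(\theta\xi)$ and $\xi\coth((2\pi-\theta)\xi)$. Their comparison, a bounded operator with integrable kernel in the strip variable, should bound the exterior normal derivative measure by the interior one plus the finite energy. That is the finite-energy strip comparison. Because this estimate is only at the level of measures, the normal traces on the physical rays will be defined as measure-valued traces, via Green's formula against test functions.

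The main obstacle is the vertex. We must show that $\Delta\tilde g$ has no atom at $0$ and that the logarithmic scales accumulating there sum finitely. To exclude the atom, test $\Delta\tilde g$ against the logarithmic cutoffs $\eta_\eps(z)=\min\{1,\log(|z|/\eps^2)/\log(1/\eps)\}^+$, whose Dirichlet energy tends to $0$. Since $\tilde g\in W^{1,2}$, we get $\langle\Delta\tilde g,1-\eta_\eps\rangle=-\int\nabla\tilde g\cdot\nabla\eta_\eps\to0$, and hence $|\Delta\tilde g|(\{0\})=0$. After this, $E_{\Omega,\mathcal A}u:=u_\Omega+\chi_0 w+\sum_j\widetilde{g_j}\circ F_j$. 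The transported Laplacians stay finite measures, since conformal pullback multiplies by a bounded Jacobian on compact sets. Property (i) follows because $\sum\chi_j=1$ on $\Omega$. Properties (ii)–(iii) hold because $w$ vanishes for constant $u$ and all pieces are compactly supported and lie in $W^{1,2}$. Continuity follows from that of the harmonic Dirichlet extensions with continuous data. Property (iv) then follows by summing the local bounds together with the low-order estimate.
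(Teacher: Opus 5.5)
Your outline matches the paper's architecture: low-order bound, partition of unity, conformal transport to sectors, logarithmic strip comparison of Dirichlet-to-Neumann maps, and logarithmic cutoffs at the vertex. However, the quantitative estimate (iv) does not close. Your sector lemma carries the term $\|g\|_{W^{1,2}}$, and you pay for it with $\|u-u_\Omega\|_{W^{1,2}(\Omega)}\lesssim N_\Omega(u)$. That bound is false in the plane. For $a\in\Omega$ and $d=\dist(a,\partial\Omega)$, the functions $u_\eps=\log|\cdot|*\rho_{\eps,a}$ from \eqref{eq:pde-tangent-test} satisfy $\|\Delta u_\eps\|_{L^1(\Omega)}=2\pi$ and have $\eps$-independent Neumann data when $\eps<d$. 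Yet $\|\nabla u_\eps\|_{L^2(\Omega)}^2\geq2\pi\log(d/\eps)\to\infty$.

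Your fallback to $W^{1,q}$ with $q<2$ is the correct low-order statement. The paper proves the $W^{1,1}(\Omega)$ and $L^1(\partial\Omega)$ bounds of \cref{prop:low-order} by duality with Geng's $W^{1,3}$ Neumann estimate and Morrey's embedding. That bound controls the commutators $2\nabla\chi_j\cdot\nabla u$ and $\bar u\,\Delta\chi_j$, and also the boundary term $\bar u\,\partial_n\chi_j$, which you omitted. But no such bound can pay for an energy term inside the sector lemma. Energy may be used only qualitatively, for gluing across the rays and for excluding the vertex atom; your logarithmic-cutoff argument for the latter is fine.

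The missing ingredient is an energy-free measure bound for the interior Dirichlet-to-Neumann map of the trace. Write $\gamma(t)=(g(e^te_0),g(e^te_\theta))$ for the ordered ray trace of your sector function $g$ in logarithmic coordinates. For a test pair $\psi\in C_c^\infty(\R;\R^2)$, let $\Psi$ be the pullback to $W_\theta$ of its strip harmonic extension $H_\theta\psi$. Since $H_\theta\gamma-g\circ F_\theta\in H^1_0(S_\theta)$, harmonic orthogonality and Green's formula give $\langle\Lambda_\theta\gamma,\psi\rangle=-\int_{W_\theta}\Psi\,\Delta g\,\dd x+\sum_{j\in\{0,\theta\}}\int_0^\infty\psi_j(\log r)\,\partial_ng(re_j)\,\dd r$. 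The maximum principle $\|\Psi\|_\infty\leq\|\psi\|_\infty$ together with Riesz--Markov then yields $\|\Lambda_\theta\gamma\|_{\mathcal M_\oplus}\leq\|\Delta g\|_{L^1}+\|\partial_ng\|_{L^1}$. The width comparison must be applied to this $\Lambda_\theta\gamma$, not to $\partial_n g$, since $g$ is not harmonic.

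The comparison itself is a convolution with finite measures. The strip map is a $2\times2$ multiplier, not a scalar $\xi\coth(\theta\xi)$: it is diagonal on the even and odd parts of the pair, with symbols $|\xi|\tanh(a|\xi|/2)$ and $|\xi|\coth(a|\xi|/2)$. The quotients satisfy $r_\pm-1\in\mathcal S(\R)$, and the vanishing even symbol absorbs the common vertex value without division at the zero mode. Because this measure bound is taken over all $t\in\R$, the accumulation of scales at the vertex needs no separate summation.

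Finally, the cutoff of the complementary harmonic function at a fixed radius must also be paid for without energy. The paper writes $\nabla E=K*\Delta E$ for the uncut global extension (\cref{lem:global-newton-local}), which gives $\int_{2<|x|<3}|\nabla E|\lesssim\|\Delta E\|_{\cM}$. It then bounds $\int_{2<|x|<3}|E|$ by an angular Poincar\'e inequality, using that $E$ vanishes on both rays for $r>1$.
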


\vskip0.28in

\begin{theorem}[Maz'ya criterion on exact sector-chart domains]
\label{thm:pde-mazya}
Let $\Omega$ be an exact sector-chart domain, and let
$\Phi:\R^2\to\R$ be positively two-homogeneous and Lipschitz on $\Sph^1$.
Then the following are equivalent:
\begin{enumerate}[label=\textup{(\roman*)},leftmargin=2.2em]
\item for every $G^0\in\mathfrak T(\Omega)$ and
every $\varsigma\in\{-1,1\}$,
\[
 \int_{G^0\cap\Sph^1}\Phi(\varsigma e)\,\dd\sigma(e)=0;
\]
\item there exists $C_{\Omega,\Phi}<\infty$ such that every
$u\in C^\infty(\overline\Omega;\R)$ satisfies
\begin{equation}\label{eq:pde-mazya}
 \left|\int_\Omega\Phi(\nabla u)\,\dd x\right|^{1/2}
 \leq C_{\Omega,\Phi}
 \left(\|\Delta u\|_{L^1(\Omega)}
       +\|\partial_n u\|_{L^1(\partial\Omega)}\right).
\end{equation}
\end{enumerate}
\end{theorem}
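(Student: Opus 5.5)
The plan is to prove the two implications separately, taking \cref{thm:curved-density} and \cref{thm:global-extension} as given. Throughout, $K(x)=x/(2\pi|x|^2)$ is the Newton kernel, so $\alpha=1$, $p=2$, $m=2$, and $\wtK(\omega)=\omega/(2\pi)$. By two-homogeneity,
\[
 \Phi(\varsigma\wtK(\omega))=(2\pi)^{-2}\Phi(\varsigma\omega).
\]
Hence condition (i) of \cref{thm:pde-mazya} is exactly signed tangent-model cancellation \eqref{eq:signed-model-cancel} for this pair $(K,\Phi)$. An exact sector-chart domain is finitely cornered and piecewise-$C^{1,1}$, so \cref{thm:curved-density} applies with $\beta=1$.

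\textbf{Sufficiency, (i)$\Rightarrow$(ii).} Fix $u\in C^\infty(\overline\Omega;\R)$ and set $V=E_{\Omega,\mathcal A}u-u_\Omega$. By \cref{thm:global-extension}, $V\in W^{1,2}(\R^2)$ has compact support, $V=u-u_\Omega$ on $\Omega$, and $\mu:=\Delta V$ is a compactly supported finite measure with $\|\mu\|_{\cM}\leq C N_\Omega(u)$.

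We mollify: $V_\eps=V*\eta_\eps\in C_c^\infty$, and $f_\eps:=\Delta V_\eps=\mu*\eta_\eps\in L_c^\infty$ with $\|f_\eps\|_{L^1}\leq\|\mu\|_{\cM}$. For compactly supported smooth $V_\eps$ one has $\nabla V_\eps=K*\Delta V_\eps$. To check this, note that $K=\nabla\Gamma$ with $\Gamma=(2\pi)^{-1}\log|x|$, so $K*\Delta V_\eps=\nabla(\Gamma*\Delta V_\eps)=\nabla V_\eps$.

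\cref{thm:curved-density} then gives
\[
 \left|\int_\Omega\Phi(\nabla V_\eps)\right|\leq C\|\mu\|_{\cM}^2 .
\]
To pass to the limit, use $\nabla V_\eps\to\nabla V$ in $L^2(\R^2)$. Since $|\Phi(a)-\Phi(b)|\lesssim(|a|+|b|)|a-b|$ for a two-homogeneous $\Phi$ that is Lipschitz on $\Sph^1$, we get $\int_\Omega\Phi(\nabla V_\eps)\to\int_\Omega\Phi(\nabla u)$. Taking square roots yields \eqref{eq:pde-mazya}.

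\textbf{Necessity, (ii)$\Rightarrow$(i).} Fix a tangent model $G^0$ at a point $a$: an interior point, a smooth boundary point, or a vertex. Fix a sign $\varsigma$. Near $a$ we test with the logarithmic profile
\[
 u_\delta(x)=\varsigma\log\max\{|x-a|,\delta\},
\]
cut off smoothly at a fixed radius $r$, smoothed at scale $\delta$, and made smooth on $\overline\Omega$. Then $\nabla u_\delta=\varsigma(x-a)/|x-a|^2$ on the annulus $\delta<|x-a|<r$, and
\[
 \int_{\Omega\cap\{\delta<|x-a|<r\}}\Phi(\nabla u_\delta)\approx\log(r/\delta)\int_{G^0\cap\Sph^1}\Phi(\varsigma\omega)\,\dd\sigma .
\]
The error is $O(1)$ because of the tangent approximation in \cref{lem:piecewise-model-bounds}: the symmetric difference contributes $\int r^{1+\beta}\cdot r^{-2}\,\dd r<\infty$.

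The right-hand side stays bounded uniformly in $\delta$. The function $\log|x-a|$ is harmonic away from $a$, and the smoothing ball contributes $\|\Delta u_\delta\|_{L^1}=O(1)$ (flux $2\pi$, or the cone-angle fraction of it). For $\partial_n u_\delta$ on the boundary rays or sides through $a$: at a vertex or a flat side the radial field is tangential, so $\partial_n$ vanishes there. On curved sides, $|\partial_n\log|x-a||\lesssim|x-a|^{\beta-1}$, which is integrable. The cutoff region contributes $O(1)$.

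Letting $\delta\to0$ in \eqref{eq:pde-mazya} forces the moment to vanish. This is the content of the tangent tests in \cref{prop:pde-tangent-test}, which I would invoke directly. The main obstacle is this necessity step: verifying that $\|\partial_nu_\delta\|_{L^1(\partial\Omega)}$ remains bounded at genuine corners and curved sides, and that the $\delta$-scale smoothing keeps the Laplacian mass bounded. The sufficiency direction is essentially formal once the extension theorem is available.
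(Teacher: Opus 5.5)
Your proposal is correct and follows the paper's proof. Sufficiency goes through the extension $V=E_{\Omega,\mathcal A}u-u_\Omega$, mollification with $K*\Delta V_\eps=\nabla V_\eps$, \cref{thm:curved-density}, and strong $L^2$ convergence of the gradients; necessity goes through the logarithmic tangent tests of \cref{prop:pde-tangent-test}. Your cut-off and smoothed $\log\max\{|x-a|,\delta\}$ is only a cosmetic variant of the paper's global mollified potential $\log|\cdot|*\rho_{\eps,a}$, which already equals $\log|x-a|$ for $|x-a|\geq\eps$ and needs no outer cutoff.
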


\vskip0.18in

\vskip0.18in

\begin{corollary}[Finite generalized-circle domains]
\label{cor:generalized-circle-extension-pde}
Every finite generalized-circle domain admits the constant-preserving
Laplacian-measure extension of \cref{thm:global-extension} and satisfies
the necessary-and-sufficient Newton PDE criterion of \cref{thm:pde-mazya}.
After fixing one atlas, the extension operator and its bound may be
denoted by $E_\Omega$ and $C_\Omega$.
\end{corollary}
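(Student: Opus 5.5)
The corollary has two claims: an exact atlas exists, and the two theorems then apply. For the first, I will invoke \cref{prop:generalized-circle-atlas}, announced earlier, which shows that every finite generalized-circle domain $\Omega$ admits a finite exact conformal-sector atlas in the sense of \cref{def:exact-sector-atlas}. If I had to prove it directly, I would build local charts as follows.

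\emph{At a smooth point $a$ on a segment.} A rigid motion sends $a$ to $0$ and the segment to the real axis with $\Omega$ locally above it. This gives $\theta=\pi$.

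\emph{At a smooth point on a circular arc.} A M\"obius map sending the supporting circle to a line, with $a\mapsto0$, works. Composing with a rotation (or with a reflection, giving an anticonformal chart) puts $\Omega$ on the correct side, again with $\theta=\pi$.

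\emph{At a corner $v$.} The two supporting generalized circles meet at $v$ with distinct tangents. They therefore meet transversally, hence at exactly two points or at one point and $\infty$. A M\"obius map sending the second intersection point to $\infty$ and $v$ to $0$ maps both circles to lines through $0$, and a rotation places them on the rays of angle $0$ and $\theta_v$. Conformality preserves angles, so $\theta_v$ is the interior angle. Since $\Omega$ is locally one of the two complementary sectors, we obtain $F_j(\Omega\cap U_j)=W_{\theta_v}\cap F_j(U_j)$ for $U_j$ small.

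Having chosen $U_j$ small enough to avoid the pole and all nonlocal boundary pieces, I will pick $R_j$ with $\overline{B_{R_j}}\subset F_j(U_j)$ and set $V_j=F_j^{-1}(B_{R_j/6})$. Compactness of $\partial\Omega$ then gives a finite subcover. The domain is piecewise $C^{1,1}$ and finitely cornered because its arcs are analytic and its corners are transversal. That places it in the class of \cref{def:exact-sector-atlas}.

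The remaining step is pure citation. Fix one such atlas $\mathcal A$. \Cref{thm:global-extension} then yields $E_{\Omega,\mathcal A}$ with properties (i)--(iv), and writing $E_\Omega$, $C_\Omega$ is the notational convention stated there. \Cref{thm:pde-mazya} applies verbatim because its hypothesis is exactly that $\Omega$ be an exact sector-chart domain. Its tangent-model set $\mathfrak T(\Omega)$ is the one defined for finitely cornered domains, which this $\Omega$ is.

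The only point I expect to need care is the exactness of the corner chart in the case where the two supporting circles meet at $v$ and at a second nearby point $w$. The neighborhood $U_j$ must avoid $w$, and the image of $\Omega\cap U_j$ must be exactly the sector piece rather than the complementary sector. I will first try to settle this by choosing $U_j$ inside a ball about $v$ of radius less than both $|v-w|$ and the separation from the other boundary arcs. Inside that ball the images of the two arcs are exactly the two rays, and the side of $\Omega$ is then fixed by continuity from a single interior point.
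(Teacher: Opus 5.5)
Your proposal is correct and follows the paper's argument. The paper also obtains an exact atlas from \cref{prop:generalized-circle-atlas}, fixes it, and applies \cref{thm:global-extension,thm:pde-mazya}; your direct sketch of the charts (rigid motions on segments, M\"obius maps on arcs, and sending the second transverse intersection point to $\infty$ at corners) mirrors the paper's proofs of that proposition and of \cref{lem:simultaneous-mobius-straightening}.
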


\begin{proof}
By \cref{prop:generalized-circle-atlas}, $\Omega$ is an exact sector-chart
domain. Fix one resulting atlas and apply
\cref{thm:global-extension,thm:pde-mazya}.
\end{proof}

Throughout the sufficiency arguments up to \cref{lem:density-approx}, we first take
$f\in C_c^\infty(\R^2;\R)$. The stated class of bounded compactly supported densities is recovered by
mollification in \cref{lem:density-approx}.

\newpage

\section{\bf Necessity and the logarithmic coefficient}

\begin{lemma}[Concentration at a tangent model]\label{lem:tangent-concentration}
Let $\Omega$ be either a bounded polygon or, for some $0<\beta\leq1$, a
finitely cornered piecewise-$C^{1,\beta}$ domain. A
\emph{geometric stratum} means the interior, one fixed open polygonal edge,
one fixed smooth $C^{1,\beta}$ boundary sheet (either an open arc or an
embedded closed curve), or one singleton vertex. Let $A\subset\R^2$ be
nonempty and compact, and assume that $A$ is contained in one such stratum.
In the interior case $\dist(A,\partial\Omega)>0$; for an open edge or
open-arc sheet, $A$ has positive distance from its endpoint set, while a
closed sheet has empty relative boundary. Let $G_a^0$ denote the tangent model at
$a\in A$. Choose $\rho\in C_c^\infty(B_1)$ with $\rho\geq0$ and $\int\rho=1$, and put
\[
 \rho_{\eps,a}(x)=\eps^{-2}\rho\!\left(\frac{x-a}{\eps}\right).
\]
There are $\eps_A>0$ and $C_A<\infty$ such that, for every $a\in A$, every
$0<\eps<\eps_A$, and every $\varsigma\in\{-1,1\}$,
\begin{equation}\label{eq:tangent-log}
 \left|
 \int_\Omega\Phi\bigl(K*(\varsigma\rho_{\eps,a})\bigr)(x)\,\dd x
 -\log\frac1\eps
 \int_{G_a^0\cap\Sph^1}\Phi\bigl(\varsigma\wtK(\zeta)\bigr)\,\dd\sigma(\zeta)
 \right|\leq C_A.
\end{equation}
The constants may depend on $A$, on the fixed domain $\Omega$ (and, in the
curved case, on a fixed finite regular $C^{1,\beta}$ graph atlas), and on
$K$, $\Phi$, and $\rho$, but not on $a$, $\eps$, or $\varsigma$. No
uniformity is asserted as $A$ approaches another stratum or as the domain
geometry degenerates.
\end{lemma}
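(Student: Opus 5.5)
Translate to the origin and rescale by $\eps$. Put $g_\eps(y):=K*(\varsigma\rho_{\eps,a})(a+\eps y)$. Homogeneity of $K$ of degree $\alpha-2$ gives
\[
 g_\eps(y)=\eps^{\alpha-2}\,\varsigma(K*\rho)(y).
\]
Positive $p$-homogeneity of $\Phi$, with $p(\alpha-2)=-2$, then gives
\[
 \int_\Omega\Phi\bigl(K*(\varsigma\rho_{\eps,a})\bigr)\,\dd x
 =\int_{\eps^{-1}(\Omega-a)}\Phi\bigl(\varsigma K*\rho\bigr)(y)\,\dd y .
\]
So everything reduces to the fixed profile $h:=\varsigma K*\rho$, integrated over the blown-up domain $\Omega_{a,\eps}:=\eps^{-1}(\Omega-a)$.

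The profile has two regimes. Near the origin, $K\in L^1_{\mathrm{loc}}$ and $\rho\in C_c^\infty$, so $h$ is bounded on $B_2$, and $|\Phi(h)|\lesssim|h|^p$ is bounded there. For $|y|\geq2$, Taylor expansion of $K$ around $y$, together with $\int\rho=1$ and the Lipschitz bound on $\wtK$, gives
\[
 |h(y)-\varsigma K(y)|\lesssim|y|^{\alpha-3}.
\]
Since $\Phi$ is locally Lipschitz away from zero, with $|\Phi(z)-\Phi(w)|\lesssim(|z|+|w|)^{p-1}|z-w|$, and since $(\alpha-2)(p-1)+\alpha-3=-3$, this yields
\[
 \bigl|\Phi(h(y))-\Phi(\varsigma K(y))\bigr|\lesssim|y|^{-3},
\]
which is integrable on $\{|y|\geq2\}$. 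Moreover $\Phi(\varsigma K(y))=|y|^{-2}\Phi(\varsigma\wtK(y/|y|))$ by homogeneity.

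Split $\Omega_{a,\eps}$ at radius $R/\eps$, where $R=R_A$ is a fixed radius chosen below. On $B_2$ the contribution is $O(1)$. On $\{|y|>R/\eps\}$, undo the scaling. The contribution is $\int_{\Omega\setminus B(a,R)}|\Phi(K*\rho_{\eps,a})|$, and there $|K*\rho_{\eps,a}(x)|\lesssim R^{\alpha-2}$ uniformly. Hence this piece is at most $C|\Omega|R^{-2}$. On the annulus $2\leq|y|\leq R/\eps$, replace $\Phi(h)$ by $\Phi(\varsigma K)$ at cost $\int|y|^{-3}=O(1)$. It remains to compare the annulus intersected with $\Omega_{a,\eps}$ against the annulus intersected with the cone $G_a^0$. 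On the cone, polar coordinates give exactly
\[
 \log\frac{R}{2\eps}\int_{G_a^0\cap\Sph^1}\Phi(\varsigma\wtK)\,\dd\sigma,
\]
which is $\log(1/\eps)$ times the moment, plus $O(1)$.

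The main step is the comparison of $\Omega_{a,\eps}$ with $G_a^0$. It requires choosing $R$ and $\eps_A$ uniformly over $a\in A$, and this is where the stratum hypothesis enters. The error is
\[
 \int_{\{2\leq|y|\leq R/\eps\}\cap(\Omega_{a,\eps}\triangle G_a^0)}|y|^{-2}\,\dd y .
\]
Decompose it into dyadic shells $|y|\sim2^k$ and scale back to the shells $|x-a|\sim r=2^k\eps$. The shell at radius $r$ contributes at most
\[
 r^{-2}\,\bigl|(\Omega\triangle(a+G_a^0))\cap B(a,2r)\bigr|.
\]
In the cases that need it, \cref{lem:piecewise-model-bounds} bounds this by $C_0r^\beta$, and the sum over dyadic $r\leq R$ is $O(R^\beta)=O(1)$. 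The uniform choice of $R$ goes as follows.
\begin{itemize}
\item Interior stratum: take $R=\dist(A,\partial\Omega)$. Then the symmetric difference is empty on $B(a,R)$.
\item Open polygonal edge: take $R$ below the distance from $A$ to the endpoints of the edge and to the other edges. Then $\Omega$ coincides with $a+H_e^0$ on $B(a,R)$.
\item Smooth sheet: take $R<\min\{r_0,c_0\dist(A,\mathcal V(\Omega))\}$. This distance is positive by the hypothesis on $A$, and for a closed sheet $d_\Omega(a)$ is bounded below anyway. Then \eqref{eq:smooth-model} applies.
\item Vertex stratum: $A=\{v\}$, and \eqref{eq:vertex-model} applies with $R<r_0$.
\end{itemize}
Finally set $\eps_A=R/4$. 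All constants depend on $A$, $\Omega$, $K$, $\Phi$ and $\rho$, but not on $a$, $\eps$ or $\varsigma$. Collecting the $O(1)$ terms gives \eqref{eq:tangent-log}.
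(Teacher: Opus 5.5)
Your proposal is correct and is essentially the paper's argument: after your global rescaling the same three regions appear (the ball of radius $2\eps$ about $a$ handled by critical scaling, the region outside a fixed ball handled by boundedness of the field, and the annulus where first the kernel and then the domain are replaced by their tangent models), with your dyadic shells playing the role of the paper's layer-cake bound for $\int_{E_a}|x-a|^{-2}\,\dd x$. One small adjustment: your shell bound uses $B(a,2r)$ for dyadic $r\le R$, so take $R$ below half of the admissible model radii (the paper takes $4r_A$ below them); this keeps \eqref{eq:smooth-model} and \eqref{eq:vertex-model} applicable at radius $2R$.
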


\begin{proof}
Fix $\varsigma\in\{-1,1\}$ and put $K_\varsigma:=\varsigma K$. Apply the
calculation below with $K_\varsigma$ in place of $K$ and
$\varsigma\wtK$ in place of $\wtK$; to keep notation light, these
substitutions are left implicit. All constants use only the common
quantities $\|\widetilde K\|_\infty$ and
$\operatorname{Lip}(\widetilde K)$ (besides the already displayed fixed
data); taking the maximum over the two signs gives one $C_A$.
If $\Omega$ is curved, fix $r_0,C_0,c_0$ furnished by
\cref{lem:piecewise-model-bounds}.
Compactness of $A$ in one stratum gives $r_A>0$ such that the relevant
exact polygonal model or the bounds \eqref{eq:smooth-model}--
\eqref{eq:vertex-model} hold in $B(a,r_A)$ uniformly for $a\in A$.
If $\Omega$ is curved and $A$ lies in a smooth boundary sheet, take
$4r_A<\min\{r_0,c_0\inf_{a\in A}d_\Omega(a)\}$; if it is a curved
vertex stratum, take $4r_A<r_0$.  For polygonal and interior strata take
$r_A$ below the corresponding positive separation and model radii.
Fix $\eps_A<r_A/4$.

\vskip0.18in

Set
\[
 A_{\eps,a}:=\{x:2\eps<|x-a|<r_A\},\qquad
 E_a:=(\Omega\mathbin\triangle(a+G_a^0))\cap B(a,r_A).
\]
If $|y-a|\leq\eps$ and $|x-a|>2\eps$, homogeneity and angular Lipschitz
regularity give the annular finite-difference bound
\[
 |K(x-y)-K(x-a)|\lesssim |y-a|\,|x-a|^{\alpha-3}.
\]
Averaging this bound against $\rho_{\eps,a}$ gives, on $A_{\eps,a}$,
\begin{equation}\label{eq:approx-delta}
 K*\rho_{\eps,a}(x)=K(x-a)+O(\eps|x-a|^{\alpha-3}).
\end{equation}
Since $|x-a|>2\eps$, \eqref{eq:approx-delta} and the homogeneity of $K$
also give
\[
 |K*\rho_{\eps,a}(x)|+|K(x-a)|
 \lesssim |x-a|^{\alpha-2}.
\]
Moreover,
\begin{equation}\label{eq:phi-lip}
 |\Phi(z)-\Phi(w)|
 \leq C_\Phi |z-w|(|z|+|w|)^{p-1}.
\end{equation}
Because $(\alpha-3)+(\alpha-2)(p-1)=-3$,
\eqref{eq:phi-lip} gives
\[
 \bigl|\Phi(K*\rho_{\eps,a}(x))-\Phi(K(x-a))\bigr|
 \lesssim\eps|x-a|^{-3}.
\]
Polar integration then gives, uniformly for $a\in A$,
\[
 \int_{\Omega\cap A_{\eps,a}}
 \bigl|\Phi(K*\rho_{\eps,a})-\Phi(K(\,\cdot-a))\bigr|\,\dd x
 \lesssim \eps\int_{2\eps}^{r_A}r^{-2}\,\dd r\lesssim1.
\]
For the curved case put $F_a(r)=|E_a\cap B(a,r)|$. Uniformly on $A$,
$F_a(r)\lesssim r^{2+\beta}$, so
$r^{-2}F_a(r)=O(r^\beta)\to0$ as $r\downarrow0$. Hence layer-cake
integration gives
\[
 \int_{E_a}|x-a|^{-2}\,\dd x
 =r_A^{-2}F_a(r_A)+2\int_0^{r_A}r^{-3}F_a(r)\,\dd r\lesssim1.
\]
Here the integral at the origin is finite because it is bounded by a constant
multiple of $\int_0^{r_A}r^{\beta-1}\,\dd r$.
For a polygon the discrepancy is zero. Thus, in either case,
$|\Phi(K(x-a))|\lesssim|x-a|^{-2}$ permits replacement of $\Omega$ by
$a+G_a^0$ with an additional uniformly bounded error. Consequently,
\begin{align*}
 \int_{\Omega\cap A_{\eps,a}}\Phi(K*\rho_{\eps,a})\,\dd x
 &=\int_{(a+G_a^0)\cap A_{\eps,a}}
      \Phi(K(x-a))\,\dd x+O(1)\\
 &=\log\frac{r_A}{2\eps}
   \int_{G_a^0\cap\Sph^1}\Phi(\wtK(\zeta))\,\dd\sigma(\zeta)+O(1)\\
 &=\log\frac1\eps
   \int_{G_a^0\cap\Sph^1}\Phi(\wtK(\zeta))\,\dd\sigma(\zeta)+O(1).
\end{align*}
The last remainder is uniform in both $a$ and the sign, since the relevant
angular coefficients are bounded uniformly by
\[
 2\pi\max_{\varsigma\in\{-1,1\}}
       \sup_{\omega\in\Sph^1}|\Phi(\varsigma\wtK(\omega))|.
\]

Finally, critical scaling gives
\[
\begin{aligned}
 &\int_{\Omega\cap B(a,2\eps)}
   |\Phi(K*\rho_{\eps,a}(x))|\,\dd x\\
 &\qquad\leq\int_{|x-a|\leq2\eps}
   |\Phi(K*\rho_{\eps,a}(x))|\,\dd x\\
 &\qquad=\int_{|z|\leq2}|\Phi(K*\rho(z))|\,\dd z=O(1),
\end{aligned}
\]
where $p(2-\alpha)=2$; the fixed field $K*\rho$ is bounded on $B(0,2)$
because $K\in L^1_{\rm loc}$ and $\rho\in C_c^\infty$.  For
$x\in\Omega\setminus B(a,r_A)$ and
$y\in\supp\rho_{\eps,a}\subset B(a,\eps)$ one has
$|x-y|\geq3r_A/4$. Since $\Omega$ is bounded and $A$ is compact, the
homogeneity and angular bound of $K$ give a bound for
$|\Phi(K*\rho_{\eps,a}(x))|$ uniform in $a\in A$ and
$0<\eps<\eps_A$. Thus the outer region contributes $O(1)$.
The same bounds hold for both signs, with all constants fixed
before $a,\eps,\varsigma$; this proves \eqref{eq:tangent-log}.
\end{proof}

Let $D$ denote the bounded domain under consideration. For each
$G^0\in\mathfrak T(D)$ choose and fix $a\in\overline D$ with
$G_a^0=G^0$. For each $\varsigma\in\{-1,1\}$, apply
\cref{lem:tangent-concentration} with $A=\{a\}$ to
$\varsigma\rho_{\eps,a}$. The assumed
estimate is bounded uniformly in $\eps$, whereas \eqref{eq:tangent-log} is
the corresponding angular coefficient times $\log(1/\eps)$ up to $O(1)$.
Letting $\eps\downarrow0$ forces that coefficient to vanish. This proves
necessity in \cref{thm:polygon-density,thm:curved-density}. For the infinite
sector, fix an arbitrary distinct model $G^0\in\mathfrak T_\theta$. Choose
and then fix a representative point
$a=a_{G^0}\in\overline{W_\theta}$ with $G_a^0=G^0$: one may take
$a\in W_\theta$ for $G^0=\R^2$, $a\in\Gamma_j$ for $G^0=H_j^0$, and
$a=0$ for $G^0=W_\theta$; when models coincide, retain only one
representative. After $a$ is fixed, choose
$g=g_{G^0}\in C_c^\infty(\R^2)$ with $g\geq0$, $\int g=1$, and
$\dist(a,\supp g)>0$. For $\varsigma\in\{-1,1\}$ put
\[
 f_\eps^\varsigma=\varsigma(\rho_{\eps,a}-g),
 \qquad A_\eps^\varsigma=\varsigma K*\rho_{\eps,a},
 \qquad B^\varsigma=-\varsigma K*g.
\]
Then $\int f_\eps^\varsigma=0$ and $\|f_\eps^\varsigma\|_1\leq2$. Choose
$r_{\rm loc}>0$ so small that
\[
 2r_{\rm loc}<\dist(a,\supp g),\qquad
 W_\theta\cap B(a,2r_{\rm loc})
 =(a+G_a^0)\cap B(a,2r_{\rm loc}).
\]
Then both fields $B^\varsigma$ are bounded on $B(a,2r_{\rm loc})$.
All implicit constants below may depend on the fixed $G^0,a,g$ and
$r_{\rm loc}$, but not on $\eps$ or $\varsigma$. For the remainder take
$0<\eps<r_{\rm loc}/2$. Splitting at $2\eps$ gives
$|A_\eps^\varsigma(x)|\lesssim(|x-a|+\eps)^{\alpha-2}$ for
$|x-a|<r_{\rm loc}$. Hence \eqref{eq:phi-lip} and
$(\alpha-2)(p-1)=-\alpha$ give
\[
 \int_{W_\theta\cap B(a,r_{\rm loc})}
 |\Phi(A_\eps^\varsigma+B^\varsigma)-\Phi(A_\eps^\varsigma)|\,\dd x
 \lesssim\int_0^{r_{\rm loc}}\bigl(1+(r+\eps)^{-\alpha}\bigr)r\,\dd r
 \lesssim1,
\]
uniformly in $\eps$ and $\varsigma$, because $0<\alpha<2$. On bounded
sets away from $a$ the fields are uniformly bounded. Choose $R>0$ such
that
\[
 \supp g\cup
 \bigcup_{0<\eps<r_{\rm loc}/2}\supp\rho_{\eps,a}\subset B_R.
\]
For $|x|>2R$, the zero-mean identity gives
\[
 K*f_\eps^\varsigma(x)
 =\int_{\R^2}[K(x-y)-K(x)]f_\eps^\varsigma(y)\,\dd y.
\]
Homogeneity and angular Lipschitz regularity imply
\[
 |K(x-y)-K(x)|\lesssim |y|\,|x|^{\alpha-3}
 \qquad(|x|>2R,\ |y|\leq R).
\]
The first moments of $f_\eps^\varsigma$ are uniformly bounded, and hence
$|K*f_\eps^\varsigma(x)|\lesssim|x|^{\alpha-3}$. Therefore
\[
 \int_{W_\theta\setminus B_{2R}}
   |\Phi(K*f_\eps^\varsigma)|\,\dd x
 \lesssim\int_{2R}^{\infty}r^{1-p(3-\alpha)}\,\dd r
 \lesssim1,
\]
because $p(3-\alpha)>2$. Hence all nonconcentrating regions contribute
$O(1)$, and the local argument in
\cref{lem:tangent-concentration} gives directly
\[
 \int_{W_\theta}\Phi(K*f_\eps^\varsigma)\,\dd x
 =\log\frac1\eps
  \int_{G_a^0\cap\Sph^1}\Phi\bigl(\varsigma\wtK(\zeta)\bigr)\,\dd\sigma(\zeta)
  +O(1),
\]
uniformly in $\eps$ and $\varsigma$. Since $\|f_\eps^\varsigma\|_1\leq2$,
the assumed estimate forces the coefficient to vanish. Since
$G^0\in\mathfrak T_\theta$ was arbitrary, this proves every distinct
cancellation condition, with duplicate half-planes omitted when
$\theta=\pi$.

\vskip0.38in

\section{\bf Annular decomposition and the nonlinear increment}

Define $K_n(0)=0$ and, for $x\ne0$, define the sharp annular truncations
\begin{equation}\label{eq:annular-kernels}
 K_n(x)=K(x)\one_{\{2^{-n-1}\leq|x|<2^{-n}\}},
 \qquad
 K_{\leq n}=\sum_{j\leq n}K_j.
\end{equation}
The annuli are disjoint, so the latter sum is pointwise well defined and
has at most one nonzero summand at each $x$.
They satisfy
\begin{equation}\label{eq:annular-scaling}
 K_n(x)=2^{(2-\alpha)n}K_0(2^n x).
\end{equation}
For $1<p<\infty$, define the symmetric mixed quantity
\begin{equation}\label{eq:Mp}
 \Mp(s,t)=
 \begin{cases}
  \min\{s^{p-1}t,st^{p-1}\},&1<p\leq2,\\[2pt]
  \dfrac12\bigl(s^{p-1}t+st^{p-1}\bigr),&2<p<\infty,
 \end{cases}
 \qquad s,t\geq0.
\end{equation}

\begin{lemma}[Nonlinear annular summation]
\label{lem:annular-summation}
Let $D\subset\R^2$ be measurable. Suppose that
$U_n,V_{n+1}\in L^p(D;\R^m)$ satisfy
$U_{n+1}=U_n+V_{n+1}$ for every $n\in\mathbb Z$, and that, for some
$U\in L^p(D;\R^m)$,
\[
 U_n\longrightarrow0\quad(n\to-\infty),\qquad
 U_n\longrightarrow U\quad(n\to+\infty)
 \quad\text{in }L^p(D).
\]
Then
\begin{align}
 \left|\int_D\Phi(U)\,\dd x\right|
 &\leq
 \sum_{n\in\mathbb Z}
 \left|\int_D\Phi(V_{n+1})\,\dd x\right| \notag\\
 &\quad+C_\Phi\sum_{n\in\mathbb Z}
 \int_D\Mp(|U_n|,|V_{n+1}|)\,\dd x,
 \label{eq:annular-summation}
\end{align}
with the convention that the assertion is automatic when the right-hand
side is infinite.
\end{lemma}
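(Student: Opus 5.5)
The plan is to telescope. By $L^p$ convergence and continuity of $U\mapsto\int_D\Phi(U)$ on $L^p(D)$, we have $\int_D\Phi(U)=\lim_{N\to\infty}\int_D\Phi(U_N)$ and $\int_D\Phi(U_{-M})\to0$ as $M\to\infty$. That continuity comes from \eqref{eq:phi-lip} and H\"older:
\[
\left|\int_D\Phi(X)-\int_D\Phi(Y)\right|\leq C_\Phi\|X-Y\|_p\bigl(\|X\|_p+\|Y\|_p\bigr)^{p-1}.
\]
Since $\Phi$ is continuous and $|\Phi(z)|\lesssim|z|^p$, each $\Phi(U_n)$ is integrable. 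Hence
\[
\int_D\Phi(U)=\lim_{M,N\to\infty}\sum_{n=-M}^{N-1}\int_D\bigl[\Phi(U_n+V_{n+1})-\Phi(U_n)\bigr],
\]
and it suffices to bound each increment.

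For each $n$ we write
\[
\Phi(U_n+V_{n+1})-\Phi(U_n)=\Phi(V_{n+1})+R_n,
\]
so that $\bigl|\int_D\Phi(U_n+V_{n+1})-\Phi(U_n)\bigr|\leq\bigl|\int_D\Phi(V_{n+1})\bigr|+\int_D|R_n|$. The key pointwise estimate to establish is
\[
|\Phi(a+b)-\Phi(a)-\Phi(b)|\leq C_\Phi\,\Mp(|a|,|b|),\qquad a,b\in\R^m.
\]
By homogeneity we normalize $|a|+|b|=1$ and split into cases.
\begin{itemize}
\item If $|b|\leq|a|$, then \eqref{eq:phi-lip} gives $|\Phi(a+b)-\Phi(a)|\lesssim|b|\,|a|^{p-1}$, and $|\Phi(b)|\lesssim|b|^p\leq|b|\,|a|^{p-1}$. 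The total is $\lesssim|a|^{p-1}|b|$.
\item If $|a|\leq|b|$, the symmetric argument gives $\lesssim|a|\,|b|^{p-1}$.
\end{itemize}
Combining, the error is $\lesssim\min\{|a|^{p-1}|b|,\,|a||b|^{p-1}\}$ when $p\leq2$. Indeed, when $|b|\leq|a|$ and $p\leq2$ we have $|a|^{p-1}|b|\geq|a||b|^{p-1}$ is false in general, so we must check which term is the smaller one. Since $p-1\leq1$ and $|b|\leq|a|$, we get $|a|^{p-1}|b|\leq|a||b|^{p-1}$, so the case bound is exactly the minimum. For $p>2$ the case bound (the larger-norm variable carries the power $p-1$) is dominated by the sum $|a|^{p-1}|b|+|a||b|^{p-1}$. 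This matches the definition \eqref{eq:Mp} up to the constant $C_\Phi$, which we enlarge as needed.

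Summing over $-M\leq n<N$ and letting $M,N\to\infty$, the partial sums are bounded by the full nonnegative series on the right of \eqref{eq:annular-summation}. If that series diverges the assertion is automatic; otherwise the limit passes to give the lemma.

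The main point requiring care is the endpoint passage: justifying $\int\Phi(U_N)\to\int\Phi(U)$ and $\int\Phi(U_{-M})\to0$ from $L^p$ convergence alone. The H\"older continuity bound above handles this, since the $L^p$ norms stay bounded along convergent sequences. The pointwise inequality is elementary once cases are split by which of $|a|,|b|$ is larger.
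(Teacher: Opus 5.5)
Your proposal is correct and follows the paper's proof essentially verbatim. Both arguments rest on the same three steps: the pointwise increment bound $|\Phi(a+b)-\Phi(a)-\Phi(b)|\leq C_\Phi\Mp(|a|,|b|)$, obtained from \eqref{eq:phi-lip} by splitting on $|b|\leq|a|$; telescoping over a finite range of scales; and passage to the endpoints through the H\"older continuity estimate \eqref{eq:phi-lp-continuity}. One editorial point: delete the self-contradictory sentence in your case analysis and keep only the correct observation that $|b|\leq|a|$ and $p\leq2$ give $|a|^{p-1}|b|\leq|a|\,|b|^{p-1}$.
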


\begin{proof}
For $a,b\in\R^m$, \eqref{eq:phi-lip} and
$|\Phi(z)|\lesssim|z|^p$ give
\begin{equation}\label{eq:pointwise-increment}
 |\Phi(a+b)-\Phi(a)-\Phi(b)|
 \leq C_\Phi\Mp(|a|,|b|).
\end{equation}
Indeed, if $|b|\leq|a|$, the left side is at most
$C|a|^{p-1}|b|$; this is the minimum branch of $\Mp$ when $p\leq2$
and one of its two summands when $p>2$. Exchange $a,b$ otherwise. Each
mixed integral over a finite scale interval is finite by H\"older's
inequality, since $U_n,V_{n+1}\in L^p(D)$.
Integrating \eqref{eq:pointwise-increment} and telescoping from $N$ to
$M-1$ gives
\begin{align*}
 \left|\int_D\Phi(U_M)-\int_D\Phi(U_N)\right|
 &\leq\sum_{n=N}^{M-1}\left|\int_D\Phi(V_{n+1})\right|\\
 &\quad+C_\Phi\sum_{n=N}^{M-1}
 \int_D\Mp(|U_n|,|V_{n+1}|).
\end{align*}
Finally, the endpoint convergence and
\begin{equation}\label{eq:phi-lp-continuity}
 \|\Phi(v)-\Phi(w)\|_{L^1(D)}
 \leq C_\Phi\|v-w\|_{L^p(D)}
       (\|v\|_{L^p(D)}+\|w\|_{L^p(D)})^{p-1}
\end{equation}
permit $N\to-\infty$ and $M\to+\infty$.
\end{proof}

\vskip0.23in

The density proof uses the known whole-space mixed estimate for $1<p<2$
with lower summation index zero, namely
\cite[Theorem~2.3]{StolyarovWhole}. For comparison,
\cite[equation~(3.7)]{StolyarovDomains} defines the two-branch function
$M_p$, \cite[equation~(3.8)]{StolyarovDomains} records the corresponding
increment inequality, and
\cite[equation~(3.9)]{StolyarovDomains} records the all-scale mixed
estimate. In the range $p>2$, the latter reference points back to
\cite[Theorem~2.3]{StolyarovWhole}, while
\cite[Section~5, after equation~(5.4)]{StolyarovWhole} describes the
required modifications of the proof rather than presenting them as a
separately numbered argument. We give a unified, self-contained proof
of the all-scale estimate for $p\geq2$ in
\cref{app:superquadratic-mixed}, following the positive-kernel overlap
method of the quadratic case. This is a proof simplification rather
than a new exponent range.

The normalized estimate used for $1<p<2$ asserts that there exists
$C_{K,p}<\infty$ such that, for every real-valued
$h\in C_c^\infty(\R^2;\R)$,
\begin{equation}\label{eq:threshold-mixed}
 \sum_{j\geq0}\int_{\R^2}
 \Mp(|K_{\leq j}*h|,|K_{j+1}*h|)\,\dd x
 \leq C_{K,p}\|h\|_1^p.
\end{equation}
Here $C_{K,p}$ is independent of $h$, $\Phi$, and all domain parameters.
For $1<p<2$, this is exactly
\cite[Theorem~2.3, equation~(2.20)]{StolyarovWhole} in dimension two;
that theorem is stated for every $C_c^\infty$ datum and imposes neither a
mean-zero nor a support-radius normalization. Its constant depends only on
$p$, the dimensions, and the fixed homogeneous kernel through its angular
Lipschitz data. The mixed estimate contains no $\Phi$; any cancellation
assumptions in the surrounding framework of the cited paper may be met by
taking $\Phi\equiv0$, without restricting $K$. The passage from
this normalization to arbitrary integer scales is included next to
make uniformity in the scale explicit. For $p\geq2$, the appendix supplies the
all-scale conclusion directly, with constant
$C_p\|\wtK\|_\infty^p$.

\begin{theorem}[All-scale mixed estimate]\label{thm:stolyarov-mixed}
Under \eqref{eq:standing-analytic-hypothesis}, every real-valued
$f\in C_c^\infty(\R^2;\R)$ satisfies
\begin{equation}\label{eq:high-mixed}
 \sum_{n\in\mathbb Z}\int_{\R^2}
 \Mp(|K_{\leq n}*f|,|K_{n+1}*f|)\,\dd x
 \leq C_{K,p}\|f\|_1^p.
\end{equation}
Consequently the same constant controls every truncated sum $n\geq n_0$,
uniformly in $n_0\in\mathbb Z$.
\end{theorem}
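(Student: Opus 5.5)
The plan is to reduce \eqref{eq:high-mixed} to the normalized estimate \eqref{eq:threshold-mixed} by scaling. We treat the two ranges of $p$ separately: for $p\geq2$ we simply invoke the appendix (\cref{app:superquadratic-mixed}), which gives the all-scale sum directly with constant $C_p\|\wtK\|_\infty^p$. So the substantive case is $1<p<2$, where only the sum over $j\geq0$ is available from \cite[Theorem~2.3]{StolyarovWhole}.

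Fix $f\in C_c^\infty(\R^2;\R)$ and an integer $n_0$. Put $h(x)=2^{2n_0}f(2^{n_0}x)$, so that $\|h\|_1=\|f\|_1$ and $h\in C_c^\infty$. From \eqref{eq:annular-scaling}, $K_{k}(x)=2^{(2-\alpha)n_0}K_{k-n_0}(2^{n_0}x)$. A change of variables then gives
\[
K_k*f(x)=2^{(2-\alpha)n_0}\,(K_{k-n_0}*h)(2^{n_0}x),
\]
and the same identity holds for $K_{\leq k}$ with $K_{\leq k-n_0}$.

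Since $\Mp$ is positively $p$-homogeneous jointly (both branches have total degree $p$), we obtain
\[
\int_{\R^2}\Mp(|K_{\leq n}*f|,|K_{n+1}*f|)\,\dd x
=2^{(2-\alpha)pn_0-2n_0}\int_{\R^2}\Mp(|K_{\leq n-n_0}*h|,|K_{n-n_0+1}*h|)\,\dd x .
\]
The prefactor equals $1$ because $p(2-\alpha)=2$. Summing over $n\geq n_0$ and applying \eqref{eq:threshold-mixed} to $h$ gives
\[
\sum_{n\geq n_0}\int_{\R^2}\Mp(|K_{\leq n}*f|,|K_{n+1}*f|)\,\dd x\leq C_{K,p}\|f\|_1^p,
\]
with a constant independent of $n_0$.

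Letting $n_0\to-\infty$, monotone convergence of the nonnegative partial sums yields \eqref{eq:high-mixed}. The truncated-sum claim is then immediate, since the summands are nonnegative.

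The only point needing care is justifying that the convolutions make sense and that the scaling identity is exact for the sharp truncations. The identity is exact because the annuli scale to annuli under $x\mapsto2^{n_0}x$, and each $K_k$ is bounded with compact support, so every integral is well defined; the convention $K_k(0)=0$ affects only a null set. I expect no real obstacle beyond confirming that the cited normalized theorem applies to every $C_c^\infty$ datum without support normalization, which the text already records.
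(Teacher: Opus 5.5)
Your argument is correct and is essentially the paper's proof: the range $p\geq2$ comes from the appendix, and for $1<p<2$ an $L^1$-preserving dilation moves the $j\geq0$ estimate to the sum over $n\geq n_0$, after which monotone convergence gives the full sum. One small slip: the identity $K_k*f(x)=2^{(2-\alpha)n_0}(K_{k-n_0}*h)(2^{n_0}x)$ holds for $h(x)=2^{-2n_0}f(2^{-n_0}x)$, which is the paper's $f^{[n_0]}$, not for $h(x)=2^{2n_0}f(2^{n_0}x)$ as you wrote; once you correct that sign, the rest of your argument goes through unchanged.
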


\begin{proof}
For $p\geq2$, \eqref{eq:high-mixed} is
\cref{thm:superquadratic-mixed-direct}. Suppose henceforth that
$1<p<2$. Fix $k\in\mathbb Z$ and define the $L^1$-preserving dilation
\[
 f^{[k]}(x):=2^{-2k}f(2^{-k}x).
\]
Homogeneity of $K$ and the definition of the sharp annular truncations give, for every
$j\in\mathbb Z$,
\begin{align*}
 (K_{\leq j}*f^{[k]})(2^kx)
 &=2^{k(\alpha-2)}(K_{\leq j+k}*f)(x),\\
 (K_{j+1}*f^{[k]})(2^kx)
 &=2^{k(\alpha-2)}(K_{j+k+1}*f)(x).
\end{align*}
Since $\Mp$ is positively $p$-homogeneous and $p(2-\alpha)=2$, the factor
$2^{kp(\alpha-2)}$ is cancelled exactly by the Jacobian $2^{2k}$ under
$y=2^kx$. Applying \eqref{eq:threshold-mixed} to $f^{[k]}$ and setting
$n=j+k$ therefore yields
\[
 \sum_{n\geq k}\int_{\R^2}
 \Mp(|K_{\leq n}*f|,|K_{n+1}*f|)\,\dd x
 \leq C_{K,p}\|f\|_1^p.
\]
Letting $k\to-\infty$ and using monotone convergence proves
\eqref{eq:high-mixed}. No zero-mean or support-radius normalization is used.
\end{proof}

\vskip0.3in

\section{\bf A local tangent-model estimate}

Since $\wtK$ is Lipschitz, $K_0\in BV_c(\R^2;\R^m)$; its
variation includes the two circular jumps. Scaling and the componentwise
$BV$ translation inequality give, for $n\in\mathbb Z$ and $h\in\R^2$,
\begin{equation}\label{eq:l1-translation}
 |DK_n|(\R^2)\lesssim2^{(1-\alpha)n},
 \qquad
 \|K_n(\cdot-h)-K_n\|_1
 \lesssim2^{(1-\alpha)n}|h|.
\end{equation}

\begin{lemma}[Local tangent-model estimate]\label{lem:local-model-defect}
Let $D\subset\R^2$ be measurable, and let $G^0\subset\R^2$ be a measurable
dilation-invariant cone, $tG^0=G^0$ for every $t>0$ (in applications, an
origin-based tangent model). Assume that
\[
 \int_{G^0\cap\Sph^1}\Phi(\varsigma\wtK)\,\dd\sigma=0
 \qquad(\varsigma\in\{-1,1\}).
\]
Let $g\in L^1(\R^2;\R)$ vanish almost everywhere outside a square $R$, and
put $M=\int_R|g|$ and $s=\int_Rg$. Then, for every $a\in\R^2$ and
$n\in\mathbb Z$,
\begin{align}
 \left|\int_D\Phi(K_n*g)\,\dd x\right|
 &\leq C2^nM^{p-1}\int_R|y-a||g(y)|\,\dd y \notag\\
 &\quad+C M^p2^{2n}
 \left|(D\mathbin\triangle(a+G^0))
       \cap(a+\supp K_n)\right|.
 \label{eq:local-model-defect}
\end{align}
If $M=0$, both sides are understood to be zero.
Here $C$ depends only on the fixed analytic data $p,K,\Phi$; in particular,
it is independent of $D,G^0,R,g,a$, and $n$.
\end{lemma}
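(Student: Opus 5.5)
The plan is to compare $K_n*g$ with the concentrated field $sK_n(\cdot-a)$, and then to use the cone cancellation to kill the main term $\int_{a+G^0}\Phi(sK_n(x-a))\,\dd x$. We split
\[
\int_D\Phi(K_n*g)=\int_D\bigl[\Phi(K_n*g)-\Phi(sK_n(\cdot-a))\bigr]
+\int_{D}\Phi(sK_n(\cdot-a)),
\]
and treat the two pieces separately. If $M=0$ then $g=0$ a.e.\ and there is nothing to prove, so assume $M>0$.

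\emph{Main term.} The function $\Phi(sK_n(x-a))$ is supported in $a+\supp K_n$, and there
\[
|\Phi(sK_n(x-a))|\lesssim |s|^p2^{(2-\alpha)np}=|s|^p2^{2n}\le M^p2^{2n}.
\]
Replacing $D$ by $a+G^0$ therefore costs at most $CM^p2^{2n}|(D\triangle(a+G^0))\cap(a+\supp K_n)|$, which is the second term of \eqref{eq:local-model-defect}. On $a+G^0$ we write $s=\varsigma|s|$ and use $p$-homogeneity of $\Phi$ and $(\alpha-2)$-homogeneity of $K$. In polar coordinates about $a$, dilation invariance of $G^0$ gives
\[
\int_{a+G^0}\Phi(sK_n(x-a))\,\dd x=|s|^p\int_{2^{-n-1}}^{2^{-n}}r^{(\alpha-2)p+1}\,\dd r\int_{G^0\cap\Sph^1}\Phi(\varsigma\wtK)\,\dd\sigma .
\]
The angular factor vanishes by the assumed cancellation. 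When $s=0$ the term is identically zero anyway.

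\emph{Difference term.} By \eqref{eq:phi-lip},
\[
|\Phi(K_n*g)-\Phi(sK_n(\cdot-a))|\le C_\Phi|K_n*g-sK_n(\cdot-a)|\,\bigl(|K_n*g|+|s||K_n(\cdot-a)|\bigr)^{p-1}.
\]
The scaling \eqref{eq:annular-scaling} gives $\|K_n\|_\infty\lesssim2^{(2-\alpha)n}$, so the second factor is at most $(CM2^{(2-\alpha)n})^{p-1}$. For the first factor we have
\[
K_n*g(x)-sK_n(x-a)=\int_R\bigl[K_n(x-y)-K_n(x-a)\bigr]g(y)\,\dd y .
\]
Taking the $L^1$ norm in $x$ and applying Tonelli with the translation bound \eqref{eq:l1-translation} yields
\[
\|K_n*g-sK_n(\cdot-a)\|_{L^1}\lesssim 2^{(1-\alpha)n}\int_R|y-a||g(y)|\,\dd y .
\]
Multiplying the two bounds, the exponent is $(1-\alpha)n+(2-\alpha)(p-1)n$. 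Since $(2-\alpha)p=2$, this equals $(1-\alpha)n+2n-(2-\alpha)n=n$. The product is therefore $C2^nM^{p-1}\int_R|y-a||g(y)|\,\dd y$, which is the first term. We integrate over all of $\R^2$ here, so no use of $D$ is needed.

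\emph{Main obstacle.} The only delicate point is that $K_n$ has jump discontinuities on its two circles. We therefore cannot use a pointwise Lipschitz difference and must use the $BV$ $L^1$-translation inequality \eqref{eq:l1-translation} instead. This is why the estimate is organized in $L^1\times L^\infty$ form rather than pointwise. Constants depend only on $p$, $\|\wtK\|_\infty$, $\operatorname{Lip}\wtK$ and $C_\Phi$, as claimed.
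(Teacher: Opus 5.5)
Your proposal is correct and follows the paper's proof essentially step for step. You replace $K_n*g$ by $sK_n(\cdot-a)$ using the $L^\infty$ annular size bound, the $BV$ translation inequality \eqref{eq:l1-translation} and \eqref{eq:phi-lip}, with exponent $(2-\alpha)(p-1)+(1-\alpha)=1$. Signed cone cancellation then makes $\int_{a+G^0}\Phi(sK_n(x-a))\,\dd x$ vanish, leaving only the symmetric-difference term bounded by $CM^p2^{2n}$.
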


\begin{proof}
The annular size bound gives
\[
 \|K_n*g\|_\infty+\|sK_n(\cdot-a)\|_\infty
 \lesssim2^{(2-\alpha)n}M.
\]
Moreover, \eqref{eq:l1-translation} and Minkowski's inequality give
\[
 \|K_n*g-sK_n(\cdot-a)\|_1
 \lesssim2^{(1-\alpha)n}\int_R|y-a||g(y)|\,\dd y.
\]
Thus \eqref{eq:phi-lip} and
$(2-\alpha)(p-1)+(1-\alpha)=1$ bound the nonlinear replacement error by
the first term. Signed tangent-model cancellation and positive
$p$-homogeneity give, when $s\ne0$,
\[
 \int_{G^0}\Phi(sK_n(z))\,\dd z
 =|s|^p\!\left(\int_{2^{-n-1}}^{2^{-n}}
 r^{p(\alpha-2)+1}\,\dd r\right)
 \int_{G^0\cap\Sph^1}\Phi((\sgn s)\wtK)\,\dd\sigma=0.
\]
After translation this says
\[
 \int_{a+G^0}\Phi(sK_n(x-a))\,\dd x=0;
\]
when $s=0$ this is immediate, and otherwise the sign is
$\varsigma=\sgn s$.  Hence the absolute value of the remaining model integral is bounded
by the integral of $|\Phi(sK_n(x-a))|$ over the displayed symmetric difference. On $a+\supp K_n$,
\[
 |\Phi(sK_n(x-a))|
 \leq C|s|^p2^{p(2-\alpha)n}
 \leq CM^p2^{2n},
\]
where $p(2-\alpha)=2$. This proves \eqref{eq:local-model-defect}.
\end{proof}

\vskip0.36in

\section{\bf Exact finite-range dyadic localization}

Fix
\[
 c=\frac{16}{3},\qquad L_n=c2^{-n},
\]
and, for $\omega\in\{0,1,2\}^2$, let
\begin{equation}\label{eq:explicit-shifted-grid}
 \cD^\omega
 :=\left\{
  2^{-k}\bigl([0,1)^2+z+(-1)^k\omega/3\bigr):
  k\in\mathbb Z,\ z\in\mathbb Z^2
 \right\}.
\end{equation}
Each $\cD^\omega$ is a nested half-open dyadic lattice: in units of the
finer side length, the change of shift between consecutive generations is
the integer vector $(-1)^k\omega$.

\vskip0.1in

The strip identity and indicator-source decomposition below are those of
\cite[proof of Theorem~4.1, equations~(4.11)--(4.13)]{StolyarovDomains}.
We implement them on fixed nested adjacent lattices for every integer
scale and verify the resulting assignment multiplicity over all scales.

\vskip0.2in
\begin{lemma}[Exact finite-range localization]\label{lem:two-d-local}
\label{lem:assignment-multiplicity}
For every $f\in C_c^\infty(\R^2;\R)$, there exists a sequence
$(\cQ_n)_{n\in\mathbb Z}$ of finite index sets. Each index
$Q\in\cQ_n$ is associated with a sign $\epsilon_Q$, a source set $E_Q$, a function $g_Q$,
a lattice index $\omega_Q$, and a square $R_Q$. These data satisfy the
following properties, with \textup{(i)--(iv)} holding for every
$n\in\mathbb Z$:
\begin{enumerate}[label=\textup{(\roman*)},leftmargin=2.2em]
\item $\epsilon_Q\in\{-1,1\}$,
      $g_Q=f\one_{E_Q}$, and
      $E_Q\subset R_Q\in\cD^{\omega_Q}$;
\item $\ell(R_Q)=16\,2^{-n}$;
\item pointwise almost everywhere, and hence after integration over every
      measurable set $E$,
\begin{equation}\label{eq:two-d-local}
 \Phi(K_n*f)=\sum_{Q\in\cQ_n}\epsilon_Q\Phi(K_n*g_Q);
\end{equation}
\item $\sum_{Q\in\cQ_n}\one_{E_Q}\leq9$, so
      $\sum_{Q\in\cQ_n}\|g_Q\|_1\leq9\|f\|_1$;
\item over all $n\in\mathbb Z$, at most four pairs $(n,Q)$ with
      $Q\in\cQ_n$ are assigned to each pair $(\omega,R)$.
\end{enumerate}
At a fixed scale, the dependence of the attached data on $n$ is suppressed
in the notation. Across scales, multiplicities are counted with respect
to $(n,Q)$, even when the associated sets coincide.
\end{lemma}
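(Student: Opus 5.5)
The plan is to combine the finite range of $K_n$ with a one-dimensional inclusion–exclusion identity (the strip identity of \cite[(4.11)--(4.13)]{StolyarovDomains}), take its tensor product over the two coordinates, and then fit every resulting source block into a square of one of the nine shifted lattices \eqref{eq:explicit-shifted-grid}.

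\textbf{Step 1: the one-dimensional identity.} Since $\supp K_n\subset\overline{B_{2^{-n}}}$, we have $K_n*f(x)=K_n*(f\one_{B(x,2^{-n})})(x)$. Fix $n$ and put $\lambda:=L_n/2=\tfrac83 2^{-n}\geq2\cdot2^{-n}$. Partition $\R$ into half-open intervals $J_j=[j\lambda,(j+1)\lambda)$, $j\in\mathbb Z$. Let $I_x=[x_1-2^{-n},x_1+2^{-n}]$ be the horizontal window of $x=(x_1,x_2)$. Because $\lambda\geq2\cdot2^{-n}$, the window $I_x$ meets at most two consecutive intervals $J_k,J_{k+1}$. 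In one variable, with the family
\[
 \mathcal S:=\{J_j\cup J_{j+1}\}_j\ \text{(sign $+1$)}\ \cup\ \{J_j\}_j\ \text{(sign $-1$)},
\]
we then check that for each fixed $x$,
\[
 \sum_{S\in\mathcal S}\pm\,\one_{S\cap I_x}=\one_{I_x},
\]
where only finitely many terms are nonzero. Indeed, the unique pair covering the whole window contributes the full window. Every other pair meeting $I_x$ contributes exactly the part of the window lying in one interval, and that part is cancelled by the single with the same trace. Pairs and singles missing $I_x$ contribute nothing.

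\textbf{Step 2: the two-dimensional identity.} Take the product family in two variables: sources $E=S\times S'$ with product sign $\epsilon=\pm$. These are $2\times2$, $2\times1$, $1\times2$ and $1\times1$ blocks of cells of side $\lambda$, restricted to those meeting $\supp f$, so $\cQ_n$ is finite. The product of the two one-dimensional identities gives, for each $x$, a finite signed family of sets $E\cap(I_x\times I'_x)$. Exactly one of them is the whole window $I_x\times I'_x$ with sign $+1$, and the remaining nonzero traces cancel in equal pairs. Since $\Phi(0)=0$, applying $\Phi$ termwise yields
\[
 \Phi(K_n*f)(x)=\sum_Q\epsilon_Q\Phi\bigl(K_n*(f\one_{E_Q})\bigr)(x)
\]
at every $x$, which is (iii). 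Each point lies in two pairs and one single per axis, hence in $3\times3=9$ product sources; this gives (iv).

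\textbf{Step 3: lattice assignment.} Each source has diameter at most that of a square of side $2\lambda=\tfrac{16}{3}2^{-n}=\ell/3$, where $\ell=16\cdot2^{-n}$. By the standard one-third-shift property of the adjacent lattices $\cD^\omega$, every square of side $\leq\ell/3$ is contained in some square $R\in\cD^\omega$ with $\ell(R)=\ell$, for some $\omega\in\{0,1,2\}^2$. The plan is to verify this per coordinate using the alternating shifts $(-1)^k\omega/3$. Choosing $(\omega_Q,R_Q)$ by a fixed canonical rule gives (i) and (ii).

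\textbf{Main obstacle: the multiplicity bound (v).} Since $\ell(R)$ determines $n$, only scale $n$ can be assigned to a given $(\omega,R)$. A square $R$ of side $\ell$, however, contains many blocks of side $\leq\ell/3$, so the canonical rule must be rigid. The plan is to align the cell grid $\lambda\mathbb Z^2$ with the lattices, so that $\ell=6\lambda$, and to assign each block by the residue of its lower-left cell modulo the lattice period. The rule should then send, to a given $(\omega,R)$, only blocks with one prescribed anchor cell: at most one block of each of the four shapes, giving at most $4$. Making this anchoring consistent with the shifts $(-1)^k\omega/3$ is where I expect the real bookkeeping. If it fails, the fallback is to enlarge the index set $\omega$ so that each anchor residue class has its own lattice.
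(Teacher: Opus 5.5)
Your Steps 1 and 2 are sound and are the paper's strip identity. The paper writes it as $\Phi(\sum_jA_j)=\sum_j\Phi(A_j+A_{j+1})-\sum_j\Phi(A_j)$ and iterates in the two coordinates; your trace-matching argument is equivalent. One small correction: when the window lies inside a single interval, two pairs cover it, so ``unique'' should be dropped; one of those pairs is cancelled by the single.

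The genuine gap is (v), and it comes from your cell size rather than from bookkeeping. With $\lambda=L_n/2=\ell/6$ and $\ell=16\,2^{-n}$, each cell anchors four blocks. So the scale-$n$ sources have density $4/\lambda^2=144/\ell^2$ per unit area. The pairs $(\omega,R)$ with $\ell(R)=\ell$ have density only $9/\ell^2$, because their lower-left corners are exactly the grid $\frac{\ell}{3}\mathbb Z^2=2\lambda\mathbb Z^2$, one pair per corner. Now take $f$ nonvanishing on a large disc. By pigeonhole, every assignment satisfying (i)--(ii) gives some pair more than four indices, asymptotically sixteen. Hence no anchoring rule can achieve the bound $4$. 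Your fallback of adding lattices changes the fixed family \eqref{eq:explicit-shifted-grid}, and so changes the statement. Your scheme does give (v) with $16$ in place of $4$, which would suffice downstream, but it does not prove the lemma as stated.

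The missing step is to use strips of width $L_n$ itself. Finite range permits this, since the window length $2\cdot 2^{-n}$ is less than $L_n$, so each window still meets at most two consecutive strips. Singles then have side $\ell/3$ and doubles side $2\ell/3$. The doubles exceed the threshold of the generic one-third containment, but every block is aligned to $L_n\mathbb Z^2=\frac{\ell}{3}\mathbb Z^2$. Consequently the four blocks $E_{\mathbf j,\eta}$ anchored at the cell $P_{n,\mathbf j}=I_{n,j_1}\times I_{n,j_2}$ all lie in its concentric triple $3P_{n,\mathbf j}$. That triple has side $3L_n=\ell$ and lower-left corner $((j_1-1)L_n,(j_2-1)L_n)$. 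With $k=n-4$, it belongs to $\cD^\omega$ for the unique $\omega$ with $j_i-1\equiv(-1)^k\omega_i \pmod 3$. The map $\mathbf j\mapsto(\omega,3P_{n,\mathbf j})$ is injective: the side recovers $n$ and the corner recovers $\mathbf j$. So only the four shapes $\eta\in\{0,1\}^2$ share a pair, which is exactly (v).
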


\begin{proof}
Since $\supp K_n\subset\overline{B(0,2^{-n})}$ and
$2^{1-n}<L_n/2$, the convolution at any point receives contributions from at most two adjacent
strips in either coordinate. Set
\[
 I_{n,j}=[jL_n,(j+1)L_n),\qquad
 I_{n,j}^{0}=I_{n,j},\qquad
 I_{n,j}^{1}=I_{n,j}\cup I_{n,j+1}.
\]
For $\mathbf j=(j_1,j_2)\in\mathbb Z^2$ and
$\eta=(\eta_1,\eta_2)\in\{0,1\}^2$, put
\[
 E_{\mathbf j,\eta}
 =I_{n,j_1}^{\eta_1}\times I_{n,j_2}^{\eta_2},\qquad
 g_{\mathbf j,\eta}=f\one_{E_{\mathbf j,\eta}},\qquad
 \epsilon_{\mathbf j,\eta}=s_{\eta_1}s_{\eta_2},
 \quad s_0=-1,\ s_1=1.
\]
Writing $f_j=f\one_{I_{n,j}\times\R}$ and
$A_j(x)=K_n*f_j(x)$, finite range ensures that at most two of the vectors
$A_j(x)$ are nonzero, and if two are nonzero their indices are consecutive.
Since $\Phi(0)=0$, direct inspection of the cases of zero, one, or two
nonzero vectors gives
\[
 \Phi\!\left(\sum_jA_j\right)
 =\sum_j\Phi(A_j+A_{j+1})-\sum_j\Phi(A_j).
\]
Iteration in the two coordinates yields \eqref{eq:two-d-local}. A point
lies in one single and two double strips in each coordinate, proving the
overlap bound $3^2=9$.

Let $P_{n,\mathbf j}=I_{n,j_1}\times I_{n,j_2}$ and define its concentric
triple
\begin{equation}\label{eq:explicit-target-triple}
 R_{n,\mathbf j}:=3P_{n,\mathbf j}
 =[(j_1-1)L_n,(j_1+2)L_n)
  \times[(j_2-1)L_n,(j_2+2)L_n).
\end{equation}
Then every $E_{\mathbf j,\eta}\subset R_{n,\mathbf j}$ and
\[
 \ell(R_{n,\mathbf j})=3L_n=16\,2^{-n}=2^{4-n}.
\]
Put $k=n-4$. In each coordinate choose the unique
$\omega_i\in\{0,1,2\}$ such that
\[
 j_i-1\equiv(-1)^k\omega_i\pmod3.
\]
The lower-left corner in units of the side length of the assigned square is
$((j_1-1)/3,(j_2-1)/3)$, so \eqref{eq:explicit-shifted-grid} gives
$R_{n,\mathbf j}\in\cD^\omega$. Its side determines $n$, and its
lower-left corner then determines $\mathbf j$; only the four values of
$\eta$ remain. Hence at most four pairs $(n,Q)$ are assigned to each pair $(\omega,R)$ over all scales.
This prescription uses the fixed lattices at every integer scale and thus
defines the entire sequence simultaneously. At each scale, discarding indices with zero source functions makes the index set finite.
\end{proof}

Thus throughout the sequel
\begin{equation}\label{eq:assigned-scale-comparison}
 c_*=C_*=16,\qquad
 c_*2^{-n}\leq\ell(R_Q)\leq C_*2^{-n},
 \qquad C_{\rm asg}=4.
\end{equation}
We put $\rho_K=1$; by definition,
$\supp K_n\subset\overline{B(0,\rho_K2^{-n})}$.
For a Euclidean square $R$ and $A>0$, write
\begin{equation}\label{eq:cube-enlargement}
 R^{[A]}=\{x\in\R^2:\dist(x,R)<A\ell(R)\}.
\end{equation}
All grid squares are half-open. Saying that an $L^1$ function is supported
in $R$ means that it vanishes almost everywhere outside $R$; its essential
(equivalently, distributional) closed support is contained in $\overline R$,
and the enlargement estimates are unchanged because
$\dist(\cdot,R)=\dist(\cdot,\overline R)$.

\vskip0.3in
\vskip0.1in
\vskip0.1in

\section{\bf Dyadic moment estimates and indexed summation}

The boundary moment and dyadic boundary energy estimates in
\cite[Lemmas~4.3 and~4.4]{StolyarovDomains} provide the antecedents of
this summation step. We formulate the estimate relative to a fixed closed set, for
finite measures and arbitrary dyadic subfamilies; a fixed vertex
admits the elementary geometric-series proof given below.

For a dyadic cube $R$, write
\[
 m_R(f)=\int_R|f|.
\]

The case of a fixed reference point has a direct proof that does not use dyadic energy.

\begin{lemma}[Summation about a fixed point]\label{lem:fixed-vertex-sum}
Let $p>1$, let $\mu$ be a finite positive Borel measure on $\R^2$,
let $\cD$ be a half-open dyadic lattice, let $v\in\R^2$, and let
$A\geq0$. Every subfamily
$\mathscr A\subset\{Q\in\cD:\dist(Q,v)\leq A\ell(Q)\}$ satisfies
\begin{equation}\label{eq:fixed-vertex-sum}
 \sum_{Q\in\mathscr A}\frac{\mu(Q)^{p-1}}{\ell(Q)}
 \int_Q|x-v|\,\dd\mu(x)
 \leq 2(A+\sqrt2)\mu(\R^2)^p.
\end{equation}
No support or moment assumption is required beyond finiteness of $\mu$.
\end{lemma}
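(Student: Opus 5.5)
The plan is to bound $\mu(Q)^{p-1}\leq\mu(\R^2)^{p-1}$ at the outset. This reduces the claim to the linear estimate
\[
 \sum_{Q\in\mathscr A}\frac1{\ell(Q)}\int_Q|x-v|\,\dd\mu(x)
 \leq 2(A+\sqrt2)\,\mu(\R^2).
\]
All terms are nonnegative, so Tonelli's theorem lets us exchange the sum and the integral. The left side then equals
\[
 \int_{\R^2}|x-v|\,S(x)\,\dd\mu(x),
 \qquad
 S(x):=\sum_{Q\in\mathscr A,\ Q\ni x}\ell(Q)^{-1}.
\]
It therefore suffices to prove the pointwise bound $|x-v|\,S(x)\leq 2(A+\sqrt2)$ for every $x$. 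The point $x=v$ contributes nothing because of the factor $|x-v|$.

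Fix $x\ne v$. Since the lattice is nested and half-open, for each dyadic side length $2^{-k}$ there is exactly one $Q\in\cD$ of that side containing $x$. So $S(x)$ is a sum over a subset of scales $k$, with at most one square per scale. If $Q\ni x$ lies in $\mathscr A$, then
\[
 |x-v|\leq\dist(Q,v)+\diam Q\leq (A+\sqrt2)\,\ell(Q),
\]
because $\dist(Q,v)\leq A\ell(Q)$ and $\diam Q=\sqrt2\,\ell(Q)$. Hence every contributing square has $\ell(Q)\geq|x-v|/(A+\sqrt2)$. The admissible side lengths $2^{-k}$ are then bounded below, and the reciprocals $2^{k}$ form a geometric series bounded by twice its largest term:
\[
 S(x)\leq\sum_{2^{-k}\geq|x-v|/(A+\sqrt2)}2^{k}\leq 2\,\frac{A+\sqrt2}{|x-v|}.
\]
Multiplying by $|x-v|$ gives the pointwise bound. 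Integrating against $\mu$ and restoring the factor $\mu(\R^2)^{p-1}$ then yields \eqref{eq:fixed-vertex-sum}.

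The main point requiring care is the uniqueness of the square per scale, which is what makes the series geometric with no multiplicity constant. Here I would use that $\cD$ is a single half-open nested lattice, so squares of a given side partition $\R^2$. If $\cD$ were only a family of half-open squares with one side per generation, the partition property still holds, so that is all the argument needs. Measurability of $S$ is immediate because it is a countable sum of indicators. No support or moment hypothesis on $\mu$ enters, since the bound is pointwise.
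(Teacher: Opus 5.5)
Your proposal is correct and follows essentially the same route as the paper's proof. Both bound $\mu(Q)^{p-1}$ by $\mu(\R^2)^{p-1}$ and use the half-open partition to get at most one admissible square per scale containing $x$. In both, admissibility forces $\ell(Q)\geq|x-v|/(A+\sqrt2)$, so the geometric series gives the pointwise bound $2(A+\sqrt2)$, and Tonelli's theorem finishes the argument.
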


\begin{proof}
Put $M=\mu(\R^2)$ and $B=A+\sqrt2$; the case $M=0$ is immediate.
If $x\in Q\in\mathscr A$, then $|x-v|\leq B\ell(Q)$.
For $x\ne v$, the half-open partition supplies at most one cube at
each scale, and admissibility forces $\ell(Q)\geq |x-v|/B$.
The resulting geometric series gives
\[
 \sum_{\substack{Q\in\mathscr A\\x\in Q}}
 \frac{|x-v|}{\ell(Q)}\leq2B.
\]
The sum is zero at $x=v$. Since $\mu(Q)^{p-1}\leq M^{p-1}$,
Tonelli's theorem proves \eqref{eq:fixed-vertex-sum}.
\end{proof}

For a general fixed closed set, the moment-minimizing point can vary
with the cube. The following estimate also accounts for that variation.

\begin{theorem}[Dyadic moment estimate for a fixed closed set]
\label{thm:closed-anchor-forest}
Let $p>1$, let $\mu$ be a finite positive Borel measure on $\R^2$, let
$\cD$ be a half-open dyadic lattice, let $F\subset\R^2$ be nonempty and
closed, and let $A\geq0$. For every subfamily
\[
 \mathscr A\subset
 \{Q\in\cD:\dist(Q,F)\leq A\ell(Q)\}
\]
one has
\begin{equation}\label{eq:closed-anchor-forest}
 \sum_{Q\in\mathscr A}
 \frac{\mu(Q)^{p-1}}{\ell(Q)}
 \inf_{a\in F}\int_Q|x-a|\,\dd\mu(x)
 \leq C_{p,A}^{\rm anc}\mu(\R^2)^p,
\end{equation}
where one may take
\[
 C_{p,A}^{\rm anc}
 =2(A+\sqrt2)+2\sqrt2\max\{1,(p-1)^{-1}\}.
\]
The constant is independent of $\mu$, $F$, $\mathscr A$, the number of dyadic generations,
and the dyadic shift.
\end{theorem}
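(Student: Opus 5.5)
The plan is to reduce to the fixed-point argument of \cref{lem:fixed-vertex-sum} by choosing, for each $Q\in\mathscr A$, a single anchor $a_Q\in F$ adapted to the geometry of $F$ inside $Q$. The extra term $2\sqrt2\max\{1,(p-1)^{-1}\}$ would pay for the fact that $a_Q$ varies with $Q$. Put $M=\mu(\R^2)$ and $B=A+\sqrt2$; the case $M=0$ is trivial. I would not fix the anchor by nearest-point projection, since that only gives $\int_Q|x-a_Q|\,\dd\mu\le B\ell(Q)\mu(Q)$, which does not sum over scales. Instead I would build, for each $Q\in\mathscr A$, a descending chain $Q=Q^0\supset Q^1\supset Q^2\supset\cdots$ of dyadic descendants. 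Each $Q^{k+1}$ is a child of $Q^k$ chosen among those children $Q'$ with $\dist(\overline{Q'},F)\le \sqrt2\,\ell(Q')$; such a child exists once $Q^k$ meets $F$, and at the first step the admissibility $\dist(Q,F)\le A\ell(Q)$ gives an anchor within distance $B\ell(Q)$. By closedness of $F$ the nested closures shrink to a point $a_Q\in F$. Where there is a choice, I would take the child of largest $\mu$-mass; this is the natural choice for the telescoping below.

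For $x\in Q^k\setminus Q^{k+1}$ one has $|x-a_Q|\lesssim \ell(Q^k)=2^{-k}\ell(Q)$, with explicit constant $\sqrt2$ plus the distance bound (and $B$ at $k=0$). Hence
\[
 \frac{1}{\ell(Q)}\int_Q|x-a_Q|\,\dd\mu
 \le B\,\mu(Q\setminus Q^1)+\sqrt2\sum_{k\ge1}2^{-k}\mu(Q^k\setminus Q^{k+1}),
\]
with a zero contribution from the limit point. I would then split the sum over $\mathscr A$ into two parts. The first is the contribution of points $x$ whose distance to $F$ is comparable to $\ell(Q^k)$. For these, the fixed-point geometric series of \cref{lem:fixed-vertex-sum} applies verbatim with $v$ replaced by $\dist(x,F)$: at each scale at most one cube contains $x$, and admissibility forces $\ell(Q)\gtrsim\dist(x,F)/B$. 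With the crude bound $\mu(Q)^{p-1}\le M^{p-1}$ and Tonelli, this part yields the term $2(A+\sqrt2)M^p$.

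The second part consists of points lying deep in the chain, i.e.\ close to $F$ relative to $\ell(Q)$. The crude bound $\mu(Q)^{p-1}\le M^{p-1}$ is useless here, because the same mass near $F$ is charged by every ancestor. The plan is to use the mass-greedy choice of the chain together with the telescoping estimate
\[
 \sum_k\mu(Q^k)^{p-1}\bigl(\mu(Q^k)-\mu(Q^{k+1})\bigr)
 \le\max\{1,(p-1)^{-1}\}\,\mu(Q)^p.
\]
The case $p\ge2$ is the trivial comparison with a sum of differences. The case $1<p<2$ is the Riemann-sum comparison with $\int_0^{\mu(Q)}t^{p-1}\,\dd t$ and the lower bound $t^{p-2}$ on the increments. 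The weights $\mu(Q^k)^{p-1}$ for consecutive chain cubes should then be reorganized along maximal chains. A cube $Q^k$ belonging to the chain of one ancestor is itself a candidate member of $\mathscr A$, and the factor $2^{-k}$ absorbs the multiplicity with which a given chain segment is reused by ancestors at different heights. This should produce the term $2\sqrt2\max\{1,(p-1)^{-1}\}M^p$.

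The main obstacle is this second bookkeeping step. The chains attached to different $Q\in\mathscr A$ must be made compatible, so that a descendant chain is an initial segment of its parents' chain (for instance by fixing one global greedy child-selection rule on the lattice). Only then do the sums over $\mathscr A$ collapse to sums over a forest of chains, where the telescoping bound applies once per chain. I would first try to fix the child rule globally on $\cD$ and verify that the resulting forest has disjoint "leftover" sets $Q^k\setminus Q^{k+1}$. Failing that, I would fall back on comparing $\mu(Q)^{p-1}$ with $\mu(Q^k)^{p-1}$ and absorbing the loss into the geometric factor $2^{-k}$.
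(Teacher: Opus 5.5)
Your proposal leaves the decisive step open, and the tools you name for it do not close it. Fix a global child rule, so that the chosen child $c(S)$ of each cube $S$ is well defined. Up to constants, what remains after your chain bound is
\[
 \sum_{Q\in\mathscr A}\mu(Q)^{p-1}\sum_{k\ge0}2^{-k}\mu(Q^k\setminus Q^{k+1})
 \le\sum_{S}\mu\bigl(S\setminus c(S)\bigr)\sum_{k\ge0}2^{-k}\mu(Q_k(S))^{p-1}.
\]
Here $Q_k(S)$ is the ancestor $k$ generations above $S$ whose chain passes through $S$. Each leftover is therefore weighted by the mass of the \emph{top} cube.

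\begin{itemize}
\item Your telescoping inequality never involves this weight. It holds with constant $1$ for every $p>1$, simply because $\mu(Q^k)\le\mu(Q)$. The Riemann-sum comparison you describe runs the wrong way, so it cannot be where $(p-1)^{-1}$ comes from.
\item Your fallback $2^{-k}\mu(Q)^{p-1}\lesssim\mu(Q^k)^{p-1}$ is false. The chain is forced toward $F$ while the mass may sit away from $F$, so $\mu(Q^k)/\mu(Q)$ has no lower bound. Even for $F=\R^2$ with the heaviest-child rule, one only has $\mu(Q^k)\ge4^{-k}\mu(Q)$, which the factor $2^{-k}$ absorbs only for $p<3/2$.
\item The construction also needs repair. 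Maintaining $\dist(\overline{Q^k},F)\le\sqrt2\,\ell(Q^k)$ does not guarantee an admissible child at the next step, and $\overline Q$ need not meet $F$.
\item You assign the two terms of $C^{\rm anc}_{p,A}$ to your two parts without deriving them.
\end{itemize}

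The missing idea, and the one the paper uses, is to avoid choosing an anchor at all. For each $y\in Q$, apply the triangle inequality through a nearest point of $F$ to $y$, then average over $y$:
\[
 \inf_{a\in F}\int_Q|x-a|\,\dd\mu(x)
 \le\frac1{\mu(Q)}\iint_{Q\times Q}|x-y|\,\dd\mu(x)\,\dd\mu(y)
 +\int_Q\dist(y,F)\,\dd\mu(y).
\]
The second term is your first part, and it gives exactly $2(A+\sqrt2)\mu(\R^2)^p$. The first term does not involve $F$. Its weight $\mu(Q)^{p-2}$ decreases as the cube grows when $p<2$. The paper then proceeds as follows.
\begin{itemize}
\item Bound $\iint_{Q\times Q}|x-y|\le\sqrt2\sum_{S\subseteq Q}\ell(S)\Delta_2(S)$, where $\Delta_2(S)$ is the mass of pairs first separated by the children of $S$.
\item Exchange the sums, and use $\mu(Q)^{p-2}\le\mu(S)^{p-2}$ together with $\sum_{Q\supseteq S}\ell(S)/\ell(Q)<2$.
\item Close with the branching inequality $\mu(S)^{p-2}\Delta_2(S)\le(p-1)^{-1}\Delta_p(S)$. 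Here $\Delta_p(S)=\mu(S)^p-\sum_C\mu(C)^p$, and these sum over $S$ to at most $\mu(\R^2)^p$.
\item For $p\ge2$, use $\mu(Q)^{p-2}\le\mu(\R^2)^{p-2}$ instead.
\end{itemize}

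This per-cube comparison with $\Delta_p$ is what your reorganization along chains lacks. A heaviest-admissible-child rule can in fact be made to work, but only by importing it:
\begin{itemize}
\item write $\mu(Q_k(S))^{p-1}\le\mu(S)^{p-2}\mu(Q_k(S))$ for $p<2$;
\item telescope $\mu(Q_k(S))$ along the chain;
\item charge admissible leftovers and cross terms to $\Delta_p$ or $\Delta_2$ of the cube where they separate;
\item send inadmissible leftovers to the distance term.
\end{itemize}
That is a different argument from the one you sketch.
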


\begin{proof}
For $k_0\in\mathbb Z$, first retain only cubes in $\mathscr A$ of generation
at least $k_0$, where generation $k$ means side length $2^{-k}$. The
generation-$k_0$ cubes form a countable half-open
partition of $\R^2$; group every retained cube under its unique
generation-$k_0$ cube $T$. A cube of zero mass contributes nothing, so assume
$m_T>0$. Put $m_Q=\mu(Q)$,
$d_F(x)=\dist(x,F)$, and $B=A+\sqrt2$. For $m_Q>0$, the triangle
inequality with a nearest point of $F$ to each fixed $y\in Q$, followed
by averaging the resulting scalar inequality in $y$, gives
\[
 \inf_{a\in F}\int_Q|x-a|\,\dd\mu(x)
 \leq \frac1{m_Q}\iint_{Q\times Q}|x-y|\,\dd\mu(x)\dd\mu(y)
      +\int_Qd_F(y)\,\dd\mu(y).
\]
Thus no measurable selection is used; zero-mass cubes contribute zero.

If $y\in Q\in\mathscr A$, then $d_F(y)\leq B\ell(Q)$. For
$d_F(y)>0$, admissibility forces $\ell(Q)\geq d_F(y)/B$; hence the
ratios $d_F(y)/\ell(Q)$ over the selected cubes form a subseries of a
geometric series over the admissible dyadic scales along the chain of cubes containing $y$.
Its sum is less than $2B$ (and the sum is zero when $d_F(y)=0$), so
\[
 \sum_{\substack{Q\in\mathscr A,\ Q\subseteq T\\y\in Q}}
 \frac{d_F(y)}{\ell(Q)}\leq2B.
\]
Since $m_Q\leq m_T$, Tonelli therefore bounds the distance part below
$T$ by $2B m_T^p$.

For every $S\in\cD$ with $S\subseteq T$, whether or not $S\in\mathscr A$,
and every $q>1$, set
\[
 \Delta_q(S)=m_S^q-
 \sum_{C\in\operatorname{ch}_{\cD}(S)}m_C^q.
\]
Here $\operatorname{ch}_{\cD}(S)$ denotes the four dyadic children of $S$
in $\cD$.
The half-open children partition $S$; hence $\Delta_q(S)\geq0$, and
finite-level telescoping followed by monotone convergence gives
$\sum_{S\subseteq T}\Delta_q(S)\leq m_T^q$. Classifying ordered
pairs by the first dyadic children which separate them gives
\[
 \Delta_2(S)=\sum_{\substack{C,C'\in\operatorname{ch}_{\cD}(S)\\ C\ne C'}}
 m_Cm_{C'},
\]
the $\mu\times\mu$-mass of ordered pairs in distinct children of $S$.
Thus, for every $Q\subseteq T$,
\[
 \iint_{Q\times Q}|x-y|\,\dd\mu(x)\dd\mu(y)
 \leq\sqrt2\sum_{S\subseteq Q}\ell(S)\Delta_2(S).
\]
The diagonal contributes zero, and every off-diagonal pair is separated
at a unique first generation. Reversing the resulting nonnegative sums
yields
\[
\begin{aligned}
 &\sum_{\substack{Q\in\mathscr A,\ Q\subseteq T\\m_Q>0}}
 \frac{m_Q^{p-2}}{\ell(Q)}
 \iint_{Q\times Q}|x-y|\,\dd\mu(x)\dd\mu(y)\\
 &\quad\leq\sqrt2
 \sum_{\substack{S\subseteq T\\m_S>0}}\Delta_2(S)
 \sum_{\substack{Q\in\mathscr A\\S\subseteq Q\subseteq T}}
 \frac{\ell(S)}{\ell(Q)}m_Q^{p-2}.
\end{aligned}
\]
The geometric ancestor weights sum to less than $2$. If $p\geq2$,
the last inner sum is at most $2m_T^{p-2}$, and the displayed sum is at
most $2\sqrt2m_T^p$. If $1<p<2$, it is at most
$2m_S^{p-2}$; writing $t_C=m_C/m_S$ and using
\[
 1-\sum_Ct_C^p
 =\sum_Ct_C(1-t_C^{p-1})
 \geq(p-1)\sum_Ct_C(1-t_C)
 =(p-1)\Bigl(1-\sum_Ct_C^2\Bigr)
\]
shows that
\[
 m_S^{p-2}\Delta_2(S)\leq(p-1)^{-1}\Delta_p(S).
\]
The pair part is therefore at most
$2\sqrt2\max\{1,(p-1)^{-1}\}m_T^p$.

Combining the two parts, summing over the disjoint cubes $T$, and using
$\sum_Tm_T=\mu(\R^2)$ and
$\sum_Tm_T^p\leq(\sum_Tm_T)^p$ proves the truncated estimate. Letting
$k_0\to-\infty$, the truncated left-hand sides increase to the full one;
monotone convergence finishes the proof.
\end{proof}

\begin{remark}[The reference set must remain fixed]
The hypothesis cannot be replaced by unrelated sets $\mathcal A_Q$ with
only $\dist(Q,\mathcal A_Q)\lesssim\ell(Q)$. For
$Q_k=[0,2^{-k})^2$, $\mu=\delta_0$, and
$\mathcal A_{Q_k}=\{(2^{-k},2^{-k})\}$, every normalized moment equals
$\sqrt2$ and the chain sum diverges. The fixed set $F$ is precisely what
makes the distance term summable along the dyadic chain.
\end{remark}

For a half-open square $R$ and every finite positive measure $\mu$, metric
projection onto $\overline R$ and density of $R$ in $\overline R$ give
\begin{equation}\label{eq:interior-projection-identity}
 \inf_{a\in\R^2}\int_R|x-a|\,\dd\mu(x)
 =\inf_{a\in\overline R}\int_R|x-a|\,\dd\mu(x)
 =\inf_{a\in R}\int_R|x-a|\,\dd\mu(x).
\end{equation}
Thus \cref{thm:closed-anchor-forest} with $F=\R^2$ or a fixed boundary
set controls the interior and smooth-boundary moments below.
For a fixed vertex, the simpler \cref{lem:fixed-vertex-sum} suffices.

\begin{proposition}[Indexed summation relative to fixed closed sets]
\label{prop:global-bookkeeping}
Let $p>1$ and $f\in L^1(\R^2;\R)$. Let $\mathscr I$ be a finite or
countable index set. Each $\iota\in\mathscr I$ is equipped with
$n(\iota)\in\mathbb Z$, a lattice index
$\omega_\iota\in\{0,1,2\}^2$, a square
$R_\iota\in\cD^{\omega_\iota}$ of side $16\,2^{-n(\iota)}$, a measurable
set $E_\iota\subset R_\iota$, and $g_\iota=f\one_{E_\iota}$. Assume that
at most $C_{\rm asg}=4$ indices are assigned to
each pair $(\omega,R)$, and put $M_\iota=\int|g_\iota|$.
Suppose the index set is partitioned into finitely many classes
$\mathscr I_\lambda$ and, for each class, a fixed nonempty closed set
$F_\lambda$ and a number $A_\lambda\geq0$ satisfy
\[
 \dist(R_\iota,F_\lambda)\leq A_\lambda\ell(R_\iota)
 \qquad(\iota\in\mathscr I_\lambda).
\]
Define
\[
 \mathcal P_\iota:=
 \frac{M_\iota^{p-1}}{\ell(R_\iota)}
 \inf_{a\in F_\lambda}
 \int_{R_\iota}|x-a||g_\iota(x)|\,\dd x,
 \qquad \iota\in\mathscr I_\lambda.
\]
For each class set
\[
 C_\lambda=
 \begin{cases}
 2(A_\lambda+\sqrt2),&\text{if $F_\lambda$ is a singleton},\\
 C_{p,A_\lambda}^{\rm anc},&\text{otherwise}.
 \end{cases}
\]
Then
\begin{equation}\label{eq:global-bookkeeping}
 \sum_{\iota\in\mathscr I}\mathcal P_\iota
 \leq 9C_{\rm asg}
 \left(\sum_\lambda C_\lambda\right)\|f\|_1^p.
\end{equation}
Moreover, if $(\epsilon_n)_{n\in\mathbb Z}$ is nonnegative with
$\sum_{n\in\mathbb Z}\epsilon_n<\infty$, and if there is a finite constant
$N_{\rm loc}\geq0$ such that
\[
 \sum_{\iota:n(\iota)=n}M_\iota
 \leq N_{\rm loc}\|f\|_1\qquad(n\in\mathbb Z),
\]
then
\begin{equation}\label{eq:global-model-errors}
 \sum_{\iota\in\mathscr I}\epsilon_{n(\iota)}M_\iota^p
 \leq N_{\rm loc}\left(\sum_n\epsilon_n\right)\|f\|_1^p.
\end{equation}
\end{proposition}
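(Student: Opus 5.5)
The plan is to reduce \eqref{eq:global-bookkeeping} to \cref{lem:fixed-vertex-sum} and \cref{thm:closed-anchor-forest}. We apply them class by class and lattice by lattice, with one measure per application. Fix a class $\lambda$ and a lattice index $\omega\in\{0,1,2\}^2$, and take $\mu=|f|\,\dd x$, a finite positive measure with $\mu(\R^2)=\|f\|_1$. For $\iota\in\mathscr I_\lambda$ with $\omega_\iota=\omega$ we have $|g_\iota|\leq|f|\one_{R_\iota}$, so $M_\iota\leq\mu(R_\iota)$. Moreover, for each fixed $a$,
\[
 \int_{R_\iota}|x-a||g_\iota(x)|\,\dd x\leq\int_{R_\iota}|x-a|\,\dd\mu(x),
\]
and taking infima over $a\in F_\lambda$ preserves this. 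Since $p>1$, this gives
\[
 \mathcal P_\iota\leq\frac{\mu(R_\iota)^{p-1}}{\ell(R_\iota)}\inf_{a\in F_\lambda}\int_{R_\iota}|x-a|\,\dd\mu(x).
\]
The right-hand side depends only on the square $R_\iota$, not on $\iota$ itself.

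Next we group the indices in $\mathscr I_\lambda$ with $\omega_\iota=\omega$ according to their square $R\in\cD^\omega$. By the assignment hypothesis each square carries at most $C_{\rm asg}=4$ indices. The set $\mathscr A_{\lambda,\omega}$ of squares so used satisfies $\dist(R,F_\lambda)\leq A_\lambda\ell(R)$. The lattice $\cD^\omega$ is a single nested half-open dyadic lattice, which is exactly the hypothesis of \cref{lem:fixed-vertex-sum} and \cref{thm:closed-anchor-forest}. If $F_\lambda=\{v\}$ is a singleton, we apply \cref{lem:fixed-vertex-sum}, noting that the infimum is then just evaluation at $v$. Otherwise we apply \cref{thm:closed-anchor-forest}. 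In either case the sum over $\mathscr I_\lambda$ restricted to lattice $\omega$ is at most $4C_\lambda\|f\|_1^p$.

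Summing over the $9$ lattice indices $\omega$ and then over the finitely many classes gives \eqref{eq:global-bookkeeping}. This is where the factor $9C_{\rm asg}$ comes from; the $9$ counts lattices, not overlap. The step I would check most carefully is that each $\cD^\omega$ really is a half-open dyadic lattice in the sense used earlier. Its cubes of each generation partition the plane, and each cube is the disjoint union of its four children; this follows from the consecutive-generation shift being the integer vector $(-1)^k\omega$. One must also confirm that the two summation results allow an arbitrary shift, which \cref{thm:closed-anchor-forest} states explicitly.

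For \eqref{eq:global-model-errors}, fix $n$. We use the elementary inequality $\sum_iM_i^p\leq(\sum_iM_i)^{p-1}\sum_iM_i$, which holds because $M_i\leq\sum_jM_j$ and $p>1$. Applying it to the indices with $n(\iota)=n$ gives
\[
 \sum_{n(\iota)=n}M_\iota^p\leq\Bigl(\sum_{n(\iota)=n}M_\iota\Bigr)^p\leq N_{\rm loc}^p\|f\|_1^p.
\]
This yields $N_{\rm loc}^p$ rather than $N_{\rm loc}$. To get the stated linear dependence on $N_{\rm loc}$, I would instead bound $M_\iota^{p-1}\leq\|f\|_1^{p-1}$, since $M_\iota\leq\|f\|_1$. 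Then
\[
 \sum_{n(\iota)=n}M_\iota^p\leq\|f\|_1^{p-1}\sum_{n(\iota)=n}M_\iota\leq N_{\rm loc}\|f\|_1^p.
\]
Multiplying by $\epsilon_n$ and summing over $n$ completes the proof.
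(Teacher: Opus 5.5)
Your proposal is correct and is essentially the paper's proof. You dominate each $\mathcal P_\iota$ by the moment of $\dd\mu=|f|\,\dd x$ on its square and absorb the assignment multiplicity $C_{\rm asg}$. You then apply \cref{lem:fixed-vertex-sum} or \cref{thm:closed-anchor-forest} for each class and each of the nine lattices. For \eqref{eq:global-model-errors} you use $M_\iota^p\leq\|f\|_1^{p-1}M_\iota$, summing first in each scale and then in $n$; your preliminary bound through $(\sum M_\iota)^p$ is superseded by this step and can simply be dropped.
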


\begin{proof}
Fix a class $\lambda$, a lattice index $\omega$, and group the indices by their
assigned pair $(\omega,R)$. By hypothesis, each pair corresponds to at most
$C_{\rm asg}$ indices. For each such index,
$M_\iota\leq m_R(f)$ and, for every $a$,
\[
 \int_R|x-a||g_\iota(x)|\,\dd x
 \leq\int_R|x-a||f(x)|\,\dd x.
\]
Thus the total contribution assigned to $R$ is bounded by $C_{\rm asg}$ times
the corresponding moment for $\dd\mu=|f|\,\dd x$. Sum over the distinct squares using
\cref{lem:fixed-vertex-sum} when $F_\lambda$ is a singleton and
\cref{thm:closed-anchor-forest} otherwise, and then over the nine
lattices and the finitely many classes. This proves the explicit bound in
\eqref{eq:global-bookkeeping}. Finally,
$M_\iota^p\leq\|f\|_1^{p-1}M_\iota$; summation first at each scale and then
in $n$ proves \eqref{eq:global-model-errors}.
\end{proof}

\newpage

\section{\bf Finitely cornered curved domains}

Throughout this section, fix $0<\beta\leq1$ and a finitely cornered
piecewise-$C^{1,\beta}$ domain $\Omega$, together with the constants
$r_0,C_0,c_0$ from \cref{lem:piecewise-model-bounds}.

\begin{figure}[!htbp]
\centering
\begin{tikzpicture}[x=1.25cm,y=1.25cm,line cap=round,line join=round]
\foreach \shift/\rad/\tag/\txt in
 {0/.35/{(a) Fine scale}/{$r\ll d$},
  3.8/.95/{(b) Transition scale}/{$r\simeq d$},
  7.6/1.30/{(c) Full cone model}/{$r>d$}} {
 \begin{scope}[xshift=\shift cm]
  \fill[modelblue] (0,0) rectangle (2.25,1.4);
  \draw[geometric boundary] (0,1.4)--(0,0)--(2.25,0);
  \draw[kernel annulus] (.9,0) circle (\rad);
  \node[figure point,label=below left:$v$] at (0,0) {};
  \node[figure point,label=below right:$a$] at (.9,0) {};
  \draw[<->,thin] (0,-1.55)--(.9,-1.55)
     node[midway,below,font=\scriptsize] {$d=|a-v|$};
  \node[font=\scriptsize\bfseries,align=center] at (.85,1.77) {\tag};
  \node[font=\scriptsize] at (1.85,-1.55) {\txt};
 \end{scope}
}
\end{tikzpicture}
\caption[The transition from one side to the vertex cone]{Three scales
near the same vertex. In the quadrant pictured, a ball of radius $r<d$
about the smooth-side point $a$ meets only the horizontal side, so the
half-plane model is exact there. For $r>d$, the ball meets the second side and the complete cone is needed. The balls
represent the neighborhood sampled by a localized annulus; the proof
uses expanded source cubes with fixed comparison constants.
\Cref{lem:curved-arc,prop:curved-annular} implements this choice for
curved sides, with the quantified tangent-model error.}
\label{fig:tangent-models}
\end{figure}
\FloatBarrier

\begin{lemma}[Boundary classification at small scales]\label{lem:curved-arc}
Fix $\kappa>\rho_K/c_*$. There are $L>1$ and $n_0\in\mathbb Z$, chosen
in that order, with the following dependencies: $L$ depends only on
$c_0,c_*,\kappa$, while $n_0$ may additionally depend on $r_0,C_*,L$ and
the finite vertex configuration. They have the following property. Let
$n\geq n_0$, let $R$ be an
assigned scale-$n$ square, and let $g_R\in L^1(\R^2;\R)$ be supported in
$R$ up to a null set. Then
\[
 \supp(K_n*g_R)\subset R^{[\kappa]}.
\]
Call $R$ interior if $R^{[\kappa]}\subset\Omega$, exterior if
$R^{[\kappa]}\cap\Omega=\varnothing$, and boundary otherwise. A boundary
square is a vertex square if
$\dist(R,\mathcal V(\Omega))\leq L\ell(R)$, and a smooth square otherwise.
These classes are disjoint and exhaustive. Every vertex square has a unique
associated vertex $v(R)$.

Every boundary square satisfies
\begin{equation}\label{eq:curved-boundary-proximity}
 \dist(R,\partial\Omega)\leq\kappa\ell(R).
\end{equation}
If $R$ is smooth, $M_R=\int_R|g_R|>0$, and $a\in\partial\Omega$ minimizes
$b\mapsto\int_R|x-b||g_R(x)|\,\dd x$, then $a$ is a smooth boundary point,
\begin{equation}\label{eq:curved-anchor-separation}
 \dist(a,R)\leq C_{\rm anc}\ell(R),\qquad
 2^{-n}<r_0,\qquad
 2^{-n}<c_0d_\Omega(a)
 \quad\text{when }\mathcal V(\Omega)\ne\varnothing,
\end{equation}
where $C_{\rm anc}=\kappa+\sqrt2$. If $\mathcal V(\Omega)=\varnothing$, the last
restriction is void and only $2^{-n}<r_0$ is asserted.
\end{lemma}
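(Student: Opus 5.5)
The plan is to verify the claims in the order stated, fixing $L$ first and then $n_0$. The support inclusion is immediate. The kernel satisfies $\supp K_n\subset\overline{B(0,\rho_K2^{-n})}$, and $\ell(R)\geq c_*2^{-n}$ by \eqref{eq:assigned-scale-comparison}. Hence every point of $\supp(K_n*g_R)$ lies within $\rho_K2^{-n}\leq(\rho_K/c_*)\ell(R)<\kappa\ell(R)$ of $\overline R$, and so lies in $R^{[\kappa]}$.

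The classification needs only a little bookkeeping. Interior, exterior and boundary are disjoint and exhaustive by definition. Within the boundary class, the vertex and smooth classes are split by a single inequality, so they are also disjoint and exhaustive. A boundary square meets both $\Omega$ and its complement inside the connected set $R^{[\kappa]}$, so $R^{[\kappa]}$ contains a point of $\partial\Omega$. This gives \eqref{eq:curved-boundary-proximity}. For uniqueness of $v(R)$, choose $n_0$ so large that $2(L+\sqrt2)C_*2^{-n_0}$ is smaller than the minimal distance between distinct vertices. Then two vertices cannot both lie within $L\ell(R)$ of $R$.

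For a smooth square, I will first locate the anchor. The minimizer $a$ exists because the functional is continuous and coercive on the closed set $\partial\Omega$. For any $x\in R$ we have $|x-a|\geq\dist(a,R)-0$, while a boundary point $b$ with $\dist(b,R)\leq\kappa\ell(R)$ satisfies $|x-b|\leq(\kappa+\sqrt2)\ell(R)$. Comparing the two integrals, with $M_R>0$ making the comparison of averages legitimate, gives $\dist(a,R)\leq(\kappa+\sqrt2)\ell(R)=C_{\rm anc}\ell(R)$. The bound $2^{-n}<r_0$ follows by taking $n_0$ with $2^{-n_0}<r_0$.

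The remaining issue, which I expect to be the only real constraint, is separation from the vertices. Suppose $\mathcal V(\Omega)\neq\varnothing$. Since $R$ is not a vertex square, every vertex $v$ satisfies $\dist(R,v)>L\ell(R)$. The triangle inequality then gives
\[
 d_\Omega(a)\geq\dist(R,\mathcal V(\Omega))-\dist(a,R)-\sqrt2\ell(R)>(L-C_{\rm anc}-\sqrt2)\ell(R)\geq(L-\kappa-2\sqrt2)c_*2^{-n}.
\]
I will choose $L$ so that $c_0(L-\kappa-2\sqrt2)c_*>1$, which yields $2^{-n}<c_0d_\Omega(a)$; this $L$ depends only on $c_0,c_*,\kappa$, as required. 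The same inequality shows $d_\Omega(a)>0$, so $a$ is not a vertex and is therefore a smooth boundary point. When there are no vertices, every boundary point is smooth and only $2^{-n}<r_0$ is needed. Finally, $n_0$ is fixed after $L$, so that both the $r_0$ condition and the vertex-separation condition hold.
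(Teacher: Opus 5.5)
Your proof is correct and follows the paper's argument step for step. The paper likewise gets the support inclusion from $\rho_K2^{-n}\le(\rho_K/c_*)\ell(R)<\kappa\ell(R)$, finds a point of $\partial\Omega$ in the enlargement, compares the minimizer $a$ with that point, chooses $L$ by the triangle inequality, and then chooses $n_0$ to enforce $r_0$ and vertex separation. The only difference is the extra $\sqrt2\,\ell(R)$ in your lower bound for $d_\Omega(a)$, which is unnecessary: taking the nearest point of $\overline R$ to $a$ gives $d_\Omega(a)\ge\dist(R,\mathcal V(\Omega))-\dist(a,R)$, as the paper uses. That slack only makes your $L$ slightly larger and is harmless.
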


\begin{proof}
The support assertion follows from
$\rho_K2^{-n}\leq(\rho_K/c_*)\ell(R)<\kappa\ell(R)$. If $R$ is a boundary
square, the convex set $R^{[\kappa]}$ meets both $\Omega$ and its complement,
and hence contains a point $q\in\partial\Omega$. This proves
\eqref{eq:curved-boundary-proximity}. Compactness of $\partial\Omega$ gives
a minimizer $a$, while $q$ is a competitor and
\[
 \int_R|x-q||g_R(x)|\,\dd x
 \leq(\kappa+\sqrt2)\ell(R)M_R.
\]
Since the minimizing moment is at least $M_R\dist(a,R)$, the first estimate
in \eqref{eq:curved-anchor-separation} follows.

Choose $L>C_{\rm anc}$ so large that
\begin{equation}\label{eq:curved-L-choice}
 c_0c_*(L-C_{\rm anc})>1.
\end{equation}
If the vertex set is nonempty and $R$ is smooth, then
\[
 d_\Omega(a)\geq\dist(R,\mathcal V(\Omega))-\dist(a,R)
 >(L-C_{\rm anc})\ell(R),
\]
so $a$ cannot be a vertex and, by \eqref{eq:curved-L-choice},
$2^{-n}\leq c_*^{-1}\ell(R)<c_0d_\Omega(a)$. If the vertex set is empty,
$d_\Omega(a)=+\infty$ and this restriction is simply absent; no strict
inequality between extended real numbers is used.

Finally choose $n_0$ so that $2^{-n_0}<r_0$ and, when there are at least two
vertices,
\[
 (2L+\sqrt2)C_*2^{-n_0}
 <\min_{v\ne w}|v-w|.
\]
Two distinct vertices cannot then both lie within $L\ell(R)$ of the same
square. This proves all assertions, with $L$ fixed before $n_0$.
\end{proof}

\begin{proposition}[Annular estimate on finitely cornered domains]
\label{prop:curved-annular}
Assume signed cancellation on every tangent model of $\Omega$. Then a
constant $C_{\Omega,K,\Phi}$, independent of $f$ and the dyadic scale,
satisfies
\begin{equation}\label{eq:curved-annular}
 \sum_{n\in\mathbb Z}
 \left|\int_\Omega\Phi(K_n*f)\,\dd x\right|
 \leq C_{\Omega,K,\Phi}\|f\|_1^p,
 \qquad f\in C_c^\infty(\R^2;\R).
\end{equation}
\end{proposition}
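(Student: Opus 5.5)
We split the sum over $n$ into coarse scales $n<n_0$ and fine scales $n\geq n_0$, with $n_0$ and $L$ as in \cref{lem:curved-arc} for a fixed $\kappa>\rho_K/c_*$.

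\emph{Coarse scales.} Here the bound comes from boundedness of $\Omega$. Since $\|K_n*f\|_\infty\lesssim2^{(2-\alpha)n}\|f\|_1$ and $p(2-\alpha)=2$, we get
\[
 \Bigl|\int_\Omega\Phi(K_n*f)\Bigr|\lesssim|\Omega|\,2^{2n}\|f\|_1^p .
\]
Summing over $n<n_0$ is a geometric series, so the total is $\lesssim|\Omega|2^{2n_0}\|f\|_1^p$. This is a constant depending only on the fixed data.

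\emph{Fine scales: localization and classification.} For $n\geq n_0$ we apply \cref{lem:two-d-local}. This writes $\int_\Omega\Phi(K_n*f)$ as $\sum_{Q\in\cQ_n}\epsilon_Q\int_\Omega\Phi(K_n*g_Q)$, so it suffices to bound $\sum_{n\geq n_0}\sum_Q|\int_\Omega\Phi(K_n*g_Q)|$. Each assigned square $R_Q$ falls into one of the classes of \cref{lem:curved-arc}.

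\begin{itemize}
\item \emph{Exterior squares} contribute zero, because $\supp(K_n*g_Q)\subset R_Q^{[\kappa]}$.
\item \emph{Interior squares.} The integral over $\Omega$ equals the integral over $\R^2$. Apply \cref{lem:local-model-defect} with $D=\R^2$ and $G^0=\R^2$. The defect term vanishes, and $a$ is chosen to minimize $\int_R|y-a||g_Q|$. This yields the moment term $2^nM_Q^{p-1}\inf_a\int_R|y-a||g_Q|$, which is $\simeq\mathcal P_Q$ since $\ell(R_Q)\simeq2^{-n}$. This is the class $F=\R^2$, $A=0$ of \cref{prop:global-bookkeeping}, justified via \eqref{eq:interior-projection-identity}.
\item \emph{Smooth squares.} Take $a\in\partial\Omega$ a minimizer as in \cref{lem:curved-arc} and $G^0=H_a^0$. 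By \eqref{eq:curved-anchor-separation} the radius condition holds. The set $a+\supp K_n$ lies in $B(a,2^{-n})$ and $2^{-n}<\min\{r_0,c_0d_\Omega(a)\}$, so \eqref{eq:smooth-model} bounds the defect term by $M_Q^p2^{2n}C_02^{-n(2+\beta)}=C_0 2^{-n\beta}M_Q^p$. The moment term is handled by \cref{prop:global-bookkeeping} with $F=\partial\Omega$ and $A=\kappa$, using \eqref{eq:curved-boundary-proximity}.
\item \emph{Vertex squares.} Use $a=v(R)$ and $G^0=C_v^0$. Since $2^{-n}<r_0$, \eqref{eq:vertex-model} again gives the defect $\lesssim2^{-n\beta}M_Q^p$. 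The moment term is $2^nM_Q^{p-1}\int_R|y-v||g_Q|$ with $\dist(R,v)\leq L\ell(R)$. This is the singleton class $F_\lambda=\{v\}$, $A=L$, handled by \cref{lem:fixed-vertex-sum}. There is one such class per vertex, so the number of classes is finite.
\end{itemize}

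\emph{Summing the fine-scale errors.} The moment terms from all three nonzero classes sum to $\lesssim\|f\|_1^p$ by \eqref{eq:global-bookkeeping}, using the assignment bound (v) of \cref{lem:two-d-local}. The defect terms sum by \eqref{eq:global-model-errors} with $\epsilon_n=2^{-n\beta}\one_{n\geq n_0}$, which is summable. The constant $N_{\rm loc}=9$ comes from property (iv).

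\emph{Main obstacle.} The delicate point is that every square with a nonzero model error is matched with a model whose defect estimate actually holds at scale $2^{-n}$. For smooth squares this requires that the minimizing anchor stay a distance $\gtrsim 2^{-n}/c_0$ away from the vertices. For vertex squares it requires that the cone comparison ball $B(v,2^{-n})$ be admissible, even though the square may sit up to $L\ell(R)$ from $v$. This does not hurt, because the defect in \cref{lem:local-model-defect} is measured on $a+\supp K_n$, not near $R$.

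The dependence on the choice of anchor is absorbed entirely in the moment term. For vertex squares the anchor is fixed at $v$ rather than chosen as a minimizer, and this is exactly why the fixed-point summation of \cref{lem:fixed-vertex-sum} is needed. All of these compatibility requirements are already guaranteed by \cref{lem:curved-arc}.
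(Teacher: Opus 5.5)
Your proposal is correct and follows the paper's proof essentially step for step: the geometric bound $|\Omega|\,2^{2n}\|f\|_1^p$ for $n<n_0$, exact localization, the exterior/interior/smooth/vertex classification of \cref{lem:curved-arc}, and summation through \cref{prop:global-bookkeeping} with reference sets $\R^2$, $\partial\Omega$, $\{v\}$, remainders $\epsilon_n=2^{-\beta n}\one_{\{n\geq n_0\}}$ and $N_{\rm loc}=9$. The only cosmetic difference is for interior squares: you pass to $D=\R^2$, using $\Phi(0)=0$ and $\supp(K_n*g_Q)\subset R^{[\kappa]}\subset\Omega$, whereas the paper keeps $D=\Omega$ and notes that $a+\supp K_n$ also lies in $R^{[\kappa]}\subset\Omega$.
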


\begin{proof}
Fix $\kappa>\rho_K/c_*$ and choose $L$, then $n_0$, as in
\cref{lem:curved-arc}. For every $n<n_0$, homogeneity, the annular size
bound, and $p(2-\alpha)=2$ give
\[
 \left|\int_\Omega\Phi(K_n*f)\,\dd x\right|
 \lesssim|\Omega|\,\|K_n*f\|_\infty^p
 \lesssim|\Omega|\,2^{2n}\|f\|_1^p.
\]
Consequently,
\[
 \sum_{n<n_0}\left|\int_\Omega\Phi(K_n*f)\,\dd x\right|
 \lesssim|\Omega|\,2^{2n_0}\|f\|_1^p.
\]

For $n\geq n_0$, localization and the triangle inequality give
\begin{equation}\label{eq:curved-localization-bridge}
 \left|\int_\Omega\Phi(K_n*f)\,\dd x\right|
 \leq\sum_{Q\in\cQ_n}
 \left|\int_\Omega\Phi(K_n*g_Q)\,\dd x\right|.
\end{equation}
For each index $Q\in\cQ_n$, use its assigned pair
$(\omega_Q,R_Q)$; in the following local estimates, $R$, $g_R$, and $M_R$
refer to $R_Q$, $g_Q$, and $\int|g_Q|$ for that index. Different indices assigned to the same pair
are counted separately. Exterior terms vanish.
For an interior square, use \cref{lem:local-model-defect} with
$G^0=\R^2$ and a moment minimizer $a\in\overline R$; both
$\supp(K_n*g_R)$ and $a+\supp K_n$ lie in $R^{[\kappa]}\subset\Omega$, so the model
defect is zero. By \eqref{eq:interior-projection-identity}, its principal
term is the interior one in \cref{prop:global-bookkeeping}.

For a smooth square, let $a$ be a global boundary-moment minimizer. The
scale conditions in \eqref{eq:curved-anchor-separation} allow
\eqref{eq:smooth-model}. Hence \cref{lem:local-model-defect} and
\eqref{eq:assigned-scale-comparison} give
\[
 \left|\int_\Omega\Phi(K_n*g_R)\,\dd x\right|
 \lesssim\frac{M_R^{p-1}}{\ell(R)}
 \inf_{b\in\partial\Omega}\int_R|x-b||g_R(x)|\,\dd x
 +2^{-\beta n}M_R^p.
\]
For a vertex square, use $a=v(R)$, $G^0=C_{v(R)}^0$, and
\eqref{eq:vertex-model}; the same argument gives
\[
 \left|\int_\Omega\Phi(K_n*g_R)\,\dd x\right|
 \lesssim\frac{M_R^{p-1}}{\ell(R)}
 \int_R|x-v(R)||g_R(x)|\,\dd x
 +2^{-\beta n}M_R^p.
\]
The factor $2^{-\beta n}$ is exactly
$2^{2n}2^{-(2+\beta)n}$.

The disjoint union $\mathscr I=\bigsqcup_{n\geq n_0}\cQ_n$ is countable.
All terms in these upper bounds are nonnegative, so the rearrangements by
scale, lattice, and class below are justified by Tonelli. Apply
\cref{prop:global-bookkeeping} with the fixed reference sets
\[
\begin{aligned}
 F&=\R^2&&\text{for interior squares},\\
 F&=\partial\Omega&&\text{for smooth squares},\\
 F&=\{v\}&&\text{for squares attached to }v.
\end{aligned}
\]
Their proximity constants are $0$, $\kappa$, and $L$.
In particular, each vertex class uses \cref{lem:fixed-vertex-sum},
with constant $2(L+\sqrt2)$; the interior and smooth classes use
the general fixed-set estimate. Moreover,
\cref{lem:two-d-local} gives
\[
 \sum_{Q\in\cQ_n}\int|g_Q|\leq9\|f\|_1.
\]
Thus
\eqref{eq:global-model-errors} with
$\epsilon_n=2^{-\beta n}\one_{\{n\geq n_0\}}$ sums the remainders; in
particular, the required sequence is summable on all of $\mathbb Z$. This
proves \eqref{eq:curved-annular}.
\end{proof}

For fixed $f\in C_c^\infty(\R^2;\R)$, put $T_n=K_{\leq n}*f$, so that
$T_{n+1}=T_n+K_{n+1}*f$. On the
bounded set $\Omega$,
\[
 \|T_N\|_{L^p(\Omega)}
 \lesssim |\Omega|^{1/p}2^{(2-\alpha)N}\|f\|_1
 \longrightarrow0\qquad(N\to-\infty),
\]
whereas, with $K_{>M}=K-K_{\leq M}$, one has almost everywhere
\[
 K_{>M}(x)=K(x)\one_{\{0<|x|<2^{-M-1}\}},\qquad
 \|K_{>M}\|_1
 \leq\|\wtK\|_{L^1(\Sph^1)}\,
       \int_0^{2^{-M-1}}r^{\alpha-1}\,\dd r
 \lesssim_K2^{-\alpha M}.
\]
Consequently,
\[
 \|T_M-K*f\|_{L^p(\Omega)}
 \leq |\Omega|^{1/p}\|K_{>M}\|_1\|f\|_\infty
 \lesssim_f2^{-\alpha M}\longrightarrow0\qquad(M\to+\infty).
\]
Thus \cref{lem:annular-summation}, \eqref{eq:curved-annular}, and
\cref{thm:stolyarov-mixed} prove the full estimate for smooth data; the
mixed integrals over $\Omega$ are bounded by their nonnegative full-space
counterparts.
Now \cref{lem:density-approx} proves sufficiency in
\cref{thm:curved-density}; necessity is \cref{lem:tangent-concentration}.

Every bounded Lipschitz polygon considered here is a finitely cornered
piecewise-$C^{1,1}$ domain, and its tangent-model family in the two
definitions is the same. Therefore \cref{thm:polygon-density} is the
$\beta=1$ specialization of \cref{thm:curved-density}. This replaces a
separate polygonal localization and dyadic summation argument.

\vskip0.1in
\vskip0.1in
\vskip0.1in
\vskip0.1in
\vskip0.1in

\section{\bf The infinite-sector theorem}

Fix $0<\theta<2\pi$ and assume that $(K,\Phi)$ satisfies
$(\mathrm{TC}_\theta)$. We prove the sufficiency direction of
\cref{thm:sector-density}; necessity was proved above for each distinct
tangent model and each sign.

\vskip0.3in

\subsection{Absolute integrability and truncation limits}

Let $f\in C_c^\infty(\R^2;\R)$ satisfy \eqref{eq:zero-mean} and choose
$R_f>0$ with $\supp f\subset B_{R_f}$.  Note that  $K\in L^1_{\rm loc}(\R^2)$,
$K*f$ is locally bounded. For $|x|>2R_f$, zero mean gives
\[
 K*f(x)=\int_{\R^2}[K(x-y)-K(x)]f(y)\,\dd y,
\]
and hence
\begin{equation}\label{eq:far-field}
 |K*f(x)|
 \lesssim |x|^{\alpha-3}\int|y||f(y)|\,\dd y.
\end{equation}
Since $p(3-\alpha)>2$, it follows that $K*f\in L^p(\R^2)$ and
$\Phi(K*f)\in L^1(W_\theta)$.  The same conclusion holds for every
compactly supported $L^1\cap L^\infty$ datum of zero mean.

By \eqref{eq:l1-translation}, the annular size bound, and
$\|v\|_p^p\leq\|v\|_\infty^{p-1}\|v\|_1$,
\begin{equation}\label{eq:lp-scale}
 \|K_n(\cdot-y)-K_n\|_{L^p}^p
 \lesssim \min\{1,2^n|y|\}.
\end{equation}
Indeed, the translation bound has scale factor
$2^{[(2-\alpha)(p-1)+1-\alpha]n}=2^n$, while the second bound follows from
the scale-independent norm $\|K_n\|_p=\|K_0\|_p$.
Consequently, zero mean and Minkowski's inequality give
\[
 \|K_n*f\|_p
 \lesssim2^{n/p}\int_{\R^2}|y|^{1/p}|f(y)|\,\dd y.
\]
Consequently, for finite integers $J\leq N$, Minkowski's inequality and
the disjoint annular partition give
\[
 \left\|\sum_{n=J}^N K_n*f\right\|_p
 \leq\sum_{n=J}^N\|K_n*f\|_p
 \lesssim\left(\sum_{n=J}^N2^{n/p}\right)
     \int_{\R^2}|y|^{1/p}|f(y)|\,\dd y.
\]
Letting $J\to-\infty$ in the complete space $L^p$ and summing the
geometric series therefore give a limit with the following bound. For
the chosen representatives of the sharp annular truncations, compactness of $\supp f$
identifies this limit pointwise (hence almost everywhere) with
$K_{\leq N}*f(x)$: indeed,
$\sum_{n=J}^NK_n*f=(K_{\leq N}-K_{\leq J-1})*f$, and sufficiently
remote outer annuli do not meet $x-\supp f$. Hence
\begin{equation}\label{eq:sector-low-tail}
 \|K_{\leq N}*f\|_p
 \lesssim2^{N/p}\int_{\R^2}|y|^{1/p}|f(y)|\,\dd y
 \longrightarrow0\qquad(N\to-\infty).
\end{equation}

\vskip0.3in

\subsection{All-scale annular estimate}

Retain the two boundary rays $\Gamma_0,\Gamma_\theta$. Put
\begin{equation}\label{eq:sector-angle-constant}
 s_\theta:=\sin\!\left(\min\left\{\theta,2\pi-\theta,\frac\pi2\right\}\right)>0,
 \qquad c_\theta:=\frac14s_\theta.
\end{equation}
For $a\in\partial W_\theta\setminus\{0\}$, let
$j\in\{0,\theta\}$ be the unique index with $a\in\Gamma_j$ and set
$H_a^0:=H_j^0$. Then
\[
 B(a,c_\theta|a|)\cap\partial W_\theta
 =B(a,c_\theta|a|)\cap\Gamma_j,
 \qquad 0\notin B(a,c_\theta|a|).
\]
Consequently
\begin{equation}\label{eq:sector-local-half-plane}
 W_\theta\cap B(a,c_\theta|a|)
 =(a+H_a^0)\cap B(a,c_\theta|a|).
\end{equation}
This elementary statement also covers $\theta=\pi$ and reentrant sectors.

\begin{lemma}[Sector classification at all scales]\label{lem:sector-classification}
Choose, in this order,
\[
 \kappa>\rho_K/c_*,\qquad C_{\rm anc}=\kappa+\sqrt2,
 \qquad L>C_{\rm anc}+\frac{\rho_K}{c_*c_\theta}.
\]
For every assigned scale-$n$ square $R$ with $n\in\mathbb Z$ and every
$g_R\in L^1(\R^2;\R)$ supported in $R$ up to a null set,
$\supp(K_n*g_R)\subset R^{[\kappa]}$. Classify $R$ as interior or exterior
when this enlargement is contained in $W_\theta$ or disjoint from it. Every
remaining square is a boundary square; call it central when
$\dist(R,0)\leq L\ell(R)$ and a ray square otherwise.

If $M_R=\int_R|g_R|>0$ and $R$ is a ray square, every minimizer
$a\in\partial W_\theta$ of
$b\mapsto\int_R|x-b||g_R(x)|\,\dd x$ satisfies
\begin{equation}\label{eq:sector-anchor-bounds}
 \dist(a,R)\leq C_{\rm anc}\ell(R),\qquad
 |a|>(L-C_{\rm anc})\ell(R),\qquad
 \supp K_n(\cdot-a)\subset B(a,c_\theta|a|).
\end{equation}
In particular, \eqref{eq:sector-local-half-plane} is valid throughout
$\supp K_n(\cdot-a)$.
\end{lemma}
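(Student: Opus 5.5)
The plan mirrors the proof of \cref{lem:curved-arc}, except that the sector is exactly conic, so no lower scale cutoff $n_0$ is needed. All three assertions follow from elementary distance bookkeeping.

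\textbf{Support inclusion.} Since $\supp K_n\subset\overline{B(0,\rho_K2^{-n})}$ and $\ell(R)\geq c_*2^{-n}$ by \eqref{eq:assigned-scale-comparison}, we have
\[
\rho_K2^{-n}\leq(\rho_K/c_*)\ell(R)<\kappa\ell(R).
\]
Hence $\supp(K_n*g_R)\subset R^{[\kappa]}$, since $\supp g_R\subset\overline R$ and $\dist(\cdot,R)=\dist(\cdot,\overline R)$.

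\textbf{Classification.} The classes are exhaustive by definition. If $R$ is a boundary square, then the convex (hence connected) set $R^{[\kappa]}$ meets both $W_\theta$ and its complement. It therefore contains a point $q\in\partial W_\theta$, so $\dist(q,R)<\kappa\ell(R)$.

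\textbf{Anchor estimates for a ray square with $M_R>0$.} The set $\partial W_\theta$ is closed and the moment functional is continuous and coercive, so a minimizer $a$ exists.

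First, comparing with the competitor $q$ gives
\[
\int_R|x-q||g_R|\leq(\kappa+\sqrt2)\ell(R)M_R.
\]
The minimal moment is at least $M_R\dist(a,R)$, which yields $\dist(a,R)\leq C_{\rm anc}\ell(R)$.

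Second, because $R$ is a ray square, $\dist(R,0)>L\ell(R)$. Therefore
\[
|a|\geq\dist(R,0)-\dist(a,R)>(L-C_{\rm anc})\ell(R).
\]
In particular $a\neq0$, so $a$ lies on one open ray $\Gamma_j$ and $H_a^0$ is defined.

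Third, $\supp K_n(\cdot-a)\subset\overline{B(a,\rho_K2^{-n})}$, and
\[
\rho_K2^{-n}\leq(\rho_K/c_*)\ell(R)<c_\theta(L-C_{\rm anc})\ell(R)<c_\theta|a|.
\]
The middle inequality is exactly the choice $L>C_{\rm anc}+\rho_K/(c_*c_\theta)$. The closed ball of radius $\rho_K2^{-n}$ thus lies in the open ball $B(a,c_\theta|a|)$. The final claim then follows from \eqref{eq:sector-local-half-plane}.

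The only point needing care is that these strict inequalities hold at every scale $n\in\mathbb Z$, with no $r_0$-type restriction. This works because \eqref{eq:sector-local-half-plane} is scale-invariant: its radius $c_\theta|a|$ scales with $|a|$. So no step is a genuine obstacle; the main care is keeping the comparison strict so that the closed kernel support fits in the open ball.
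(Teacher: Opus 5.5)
Your proposal is correct and follows essentially the same route as the paper. The paper likewise gets the support inclusion from the choice of $\kappa$, and it uses a boundary point $q\in\partial W_\theta\cap R^{[\kappa]}$ as the competitor to obtain $\dist(a,R)\leq C_{\rm anc}\ell(R)$. It then bounds $|a|$ by the triangle inequality and uses the choice of $L$ to get $\rho_K2^{-n}<c_\theta|a|$. Your explicit remarks that $a\neq0$ and that the closed kernel support fits inside the open ball are details the paper leaves implicit.
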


\vskip0.2in

\begin{proof}
The support inclusion follows from the choice of $\kappa$. A boundary square
has a point $q\in\partial W_\theta\cap R^{[\kappa]}$. Coercivity of the
moment functional on the closed set $\partial W_\theta$ gives a minimizer,
and comparison with $q$ gives
$M_R\dist(a,R)\leq C_{\rm anc}\ell(R)M_R$. If $R$ is not central, then
$|a|\geq\dist(R,0)-\dist(a,R)>(L-C_{\rm anc})\ell(R)$. Therefore
\[
 \rho_K2^{-n}\leq\frac{\rho_K}{c_*}\ell(R)
 <c_\theta|a|,
\]
which proves the support inclusion.
\end{proof}

For every $n\in\mathbb Z$, localization and the triangle inequality give
\[
 \left|\int_{W_\theta}\Phi(K_n*f)\,\dd x\right|
 \leq\sum_{Q\in\cQ_n}
 \left|\int_{W_\theta}\Phi(K_n*g_Q)\,\dd x\right|.
\]
For each index $Q\in\cQ_n$, write $R=R_Q$, $g_R=g_Q$, and
$M_R=\int|g_Q|$; different indices assigned to the same pair $(\omega_Q,R_Q)$
are counted separately.
Exterior terms vanish. Interior
squares use \cref{lem:local-model-defect} with $G^0=\R^2$ and a moment
minimizer $a\in\overline R$. Since
$\rho_K2^{-n}<\kappa\ell(R)$, both $\supp(K_n*g_R)$ and
$a+\supp K_n$ lie in $R^{[\kappa]}\subset W_\theta$; hence the model error
is zero, and \eqref{eq:interior-projection-identity} identifies the
principal term with the interior term of \cref{prop:global-bookkeeping}.
For a ray square, take the global boundary-moment minimizer from
\cref{lem:sector-classification}; signed half-plane cancellation,
\eqref{eq:sector-local-half-plane}, and \cref{lem:local-model-defect} give
\[
 \left|\int_{W_\theta}\Phi(K_n*g_R)\,\dd x\right|
 \lesssim\frac{M_R^{p-1}}{\ell(R)}
 \inf_{a\in\partial W_\theta}\int_R|x-a||g_R(x)|\,\dd x.
\]
For a central square, use the fixed reference point $0$ and the exact model
$G^0=W_\theta$ to obtain
\[
 \left|\int_{W_\theta}\Phi(K_n*g_R)\,\dd x\right|
 \lesssim\frac{M_R^{p-1}}{\ell(R)}
 \int_R|x||g_R(x)|\,\dd x.
\]

The disjoint union $\mathscr I=\bigsqcup_{n\in\mathbb Z}\cQ_n$ is
countable, and the nonnegative terms in the resulting upper bounds may be
rearranged by Tonelli. Apply \cref{prop:global-bookkeeping} with the fixed reference sets
$F=\R^2$, $F=\partial W_\theta$, and $F=\{0\}$ for the interior, ray, and
central classes, with proximity constants $0$, $\kappa$, and $L$.
The central class uses \cref{lem:fixed-vertex-sum} at $v=0$,
with constant $2(L+\sqrt2)$ over all integer scales.
Summing the nine lattices over all integer scales gives
\begin{equation}\label{eq:fine-sector}
 \sum_{n\in\mathbb Z}
 \left|\int_{W_\theta}\Phi(K_n*f)\,\dd x\right|
 \leq C_{\theta,K,\Phi}\|f\|_1^p.
\end{equation}
Here the constant may depend on $c_\theta^{-1}$; no uniformity is claimed as
$\theta\to0$ or $\theta\to2\pi$.

\vskip0.3in

\subsection{Passage to the full kernel}

Put $T_n=K_{\leq n}*f$ and $K_{>n}=K-K_{\leq n}\in L^1$. Then
$T_{n+1}=T_n+K_{n+1}*f$. Since
$T_n=K*f-K_{>n}*f$, every $T_n$ belongs to $L^p(\R^2)$.
The low endpoint is \eqref{eq:sector-low-tail}, while
\[
 \|T_M-K*f\|_p
 \leq\|K_{>M}\|_1\|f\|_p
 \lesssim2^{-\alpha M}\|f\|_p\longrightarrow0
 \qquad(M\to+\infty).
\]
Thus \cref{lem:annular-summation}, \eqref{eq:fine-sector}, and
\cref{thm:stolyarov-mixed} prove sufficiency for smooth zero-mean data;
the sector mixed terms are bounded by their nonnegative full-space
counterparts. Then
\cref{lem:density-approx} gives the stated bounded data class.
\begin{remark}[Nonzero mean]
For a new datum $f\in L_c^\infty(\R^2;\R)$, now without imposing
\eqref{eq:zero-mean}, put $M_f:=\int_{\R^2}f$. If $M_f\neq0$, then, as
$|x|\to\infty$,
$K*f(x)=M_fK(x)+O_{K,f}(|x|^{\alpha-3})$. Thus the leading nonlinear term has the
critical $|x|^{-2}$ tail, and the ordinary integral over $W_\theta$ generally diverges. If the vertex
angular condition
\[
 \int_{W_\theta\cap\Sph^1}
   \Phi\!\left(\sgn(M_f)\wtK(\omega)\right)\,\dd\sigma(\omega)=0
\]
holds, the leading term integrates to zero over every vertex-centered annulus, while
\eqref{eq:phi-lip} gives, for all sufficiently large $|x|$,
\[
 |\Phi(K*f(x))-\Phi(M_fK(x))|
 \leq C_{K,\Phi,f}|x|^{-3}.
\]
Hence the radial truncations
$\int_{W_\theta\cap B_T}\Phi(K*f)$ converge as $T\to\infty$.
This is a specified improper radial limit; in general it is not an
absolutely convergent Lebesgue integral over $W_\theta$.
\end{remark}

\vskip0.1in
\vskip0.1in
\vskip0.1in

\section{\bf Approximation of bounded compactly supported densities}

\begin{lemma}[Density approximation]\label{lem:density-approx}
For a fixed domain $D$ among those above, suppose that the full-kernel
estimate holds for every $h\in C_c^\infty(\R^2;\R)$ with one constant $C$
independent of $h$ (and with $\int_{\R^2}h=0$ when $D$ is an infinite
sector). Then the same estimate, with the same constant, holds for every
bounded compactly supported real-valued $f$, subject to the same zero-mean
restriction. In particular, $\Phi(K*f)\in L^1(D)$.
\end{lemma}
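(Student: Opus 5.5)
The plan is to approximate by mollification and pass to the limit, taking the nonlinear continuity estimate \eqref{eq:phi-lp-continuity} as the main tool. Let $f\in L^\infty$ have support in $B_{R}$, with $\int f=0$ when $D$ is a sector. Fix a standard mollifier $\rho_\delta$ and put $h_\delta=f*\rho_\delta\in C_c^\infty(\R^2;\R)$. This family has the following properties:
\begin{itemize}
\item[] $\supp h_\delta\subset B_{R+1}$ for $\delta<1$;
\item[] $\|h_\delta\|_\infty\leq\|f\|_\infty$;
\item[] $\|h_\delta\|_1\leq\|f\|_1$;
\item[] $\int h_\delta=\int f$, so the zero-mean restriction is preserved;
\item[] $h_\delta\to f$ in every $L^q$, $q<\infty$.
\end{itemize}
The hypothesis then gives, with the same constant $C$,
\[
 \left|\int_D\Phi(K*h_\delta)\right|\leq C\|h_\delta\|_1^p\leq C\|f\|_1^p .
\]
It therefore suffices to prove $K*h_\delta\to K*f$ in $L^p(D)$, with $K*f\in L^p(D)$. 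Then \eqref{eq:phi-lp-continuity} yields $\Phi(K*f)\in L^1(D)$ and convergence of the integrals.

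\emph{Bounded domain.} Split $K=K\one_{B_\varrho}+K\one_{\R^2\setminus B_\varrho}$ with $\varrho=\diam D+R+1$. For $x\in D$ and $y\in B_{R+1}$, only the local part contributes to $K*g$. That part lies in $L^1$, so by Young's inequality
\[
 \|K*(h_\delta-f)\|_{L^p(D)}\leq\|K\one_{B_\varrho}\|_1\|h_\delta-f\|_p\to0 .
\]
This finishes the bounded case.

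\emph{Infinite sector.} Here the integrability of the tail is the only real issue, and it is the main obstacle: convergence must be uniform in the far field. I would argue as follows.
\begin{itemize}
\item[] \emph{Near field.} On $B_{4R+4}$, use the same local Young argument as in the bounded case.
\item[] \emph{Far field.} For $|x|>2R+2$, zero mean gives the representation in \eqref{eq:far-field}, applied to the zero-mean function $g=h_\delta-f$. This yields
\[
 |K*g(x)|\lesssim|x|^{\alpha-3}\int|y||g(y)|\,\dd y\lesssim_R|x|^{\alpha-3}\|h_\delta-f\|_1 .
\]
Since $p(3-\alpha)>2$, its $L^p$ norm on $\{|x|>2R+2\}$ is $\lesssim\|h_\delta-f\|_1\to0$.
\item[] \emph{Membership.} The same far-field bound, applied to $f$ itself, gives $K*f\in L^p(W_\theta)$.
\end{itemize}
This is exactly the integrability remark after \eqref{eq:far-field}, and it yields both the absolute integrability of $\Phi(K*f)$ on $W_\theta$ and the estimate with the unchanged constant.
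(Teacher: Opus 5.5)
Your proof is correct, but it handles the limit differently from the paper.

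The paper first shows $K*f\in L^p_{\mathrm{loc}}$. In the sector case it shows $K*f\in L^p(\R^2)$, using local boundedness together with the zero-mean far-field bound for $f$ itself. It then commutes the mollifier with the kernel by Fubini, $K*f_j=\rho_{1/j}*(K*f)$ almost everywhere. So $K*f_j\to K*f$ in $L^p$ is just the approximate-identity theorem applied to the single fixed function $K*f$. The paper also uses $\|f_j\|_1\to\|f\|_1$, whereas you use the bound $\|h_\delta\|_1\leq\|f\|_1$; either suffices.

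You instead estimate the difference $K*(h_\delta-f)$ directly. In the near field you apply Young's inequality with the truncated kernel $K\one_{B_\varrho}\in L^1$. In the far field you apply the zero-mean moment bound to $g=h_\delta-f$. This avoids the commutation identity and gives an explicit bound $\|K*(h_\delta-f)\|_{L^p(D)}\lesssim_R\|h_\delta-f\|_p+\|h_\delta-f\|_1$. The cost is a separate tail estimate for the differences; the paper's route needs the far-field bound only once, for $f$.

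One small slip. With $\varrho=\diam D+R+1$, the claim that only the local part contributes for $x\in D$ assumes $D$ lies near the origin. Take $\varrho=\sup_{x\in D}|x|+R+1$ instead, or enlarge $R$ so that $D\subset B_R$.
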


\begin{proof}
Let $\rho$ be a nonnegative compactly supported mollifier with $\int\rho=1$,
set $\rho_\delta(x):=\delta^{-2}\rho(x/\delta)$, and let
$f_j=\rho_{1/j}*f$. Then
$f_j\in C_c^\infty$, the supports lie in one fixed compact set, $f_j\to f$ in $L^1$, and
$\|f_j\|_1\to\|f\|_1$. In the sector case $\int f_j=\int f=0$.

$K*f\in L^p_{\rm loc}$. For every compact $E\subset\R^2$,
\[
 \int_E\int_{\R^2}\int_{\R^2}
 |K(x-y)|\rho_{1/j}(y-z)|f(z)|\,\dd z\,\dd y\,\dd x<\infty.
\]
Indeed, the $y$- and $z$-variables are confined to compact sets and
$K\in L^1_{\rm loc}$. Componentwise Fubini and a change of variables give
$K*f_j=K*(\rho_{1/j}*f)=\rho_{1/j}*(K*f)$ almost everywhere. Thus
$K*f_j\to K*f$ locally in $L^p$, which proves convergence of the
full-kernel integrals when $D$ is bounded. In the infinite sector,
$K*f$ is locally bounded because $f\in L_c^\infty$ and
$K\in L^1_{\rm loc}$, while zero mean and the $L^1$ version of
\eqref{eq:far-field} control the tail; hence $K*f\in L^p(\R^2)$. The same
approximate-identity argument is then global. Applying the smooth estimates
and letting $j\to\infty$ through \eqref{eq:phi-lp-continuity} completes the proof.
\end{proof}

\vskip0.36in

\section{\bf Quadratic vertex rigidity for the Newton kernel}

Now take
\begin{equation}\label{eq:newton-kernel}
 K(x)=\nabla\!\left(\frac1{2\pi}\log|x|\right)
 =\frac1{2\pi}\frac{x}{|x|^2},
 \qquad \alpha=1,\quad p=2.
\end{equation}

For $A=A^{\mathsf T}\in\R^{2\times2}$ set
$\Phi_A(\xi):=\xi^{\mathsf T}A\xi$.  For a bounded polygon $P$, define
\[
 \begin{aligned}
 \mathcal Q(P):=\bigl\{A=A^{\mathsf T}\in\R^{2\times2}:\;&
 (K,\Phi_A)\text{ satisfies}\\[-2pt]
 &\text{signed tangent-model cancellation on }P\bigr\}.
 \end{aligned}
\]
and call $P$ \emph{quadratically rigid} when $\mathcal Q(P)=\{0\}$.  Also set
\[
 \mathfrak B(P):=\{[b_v]:v\in\mathcal V(P)\}\subset\mathbb{RP}^1,
\]
where $[b]$ is the unoriented line spanned by $b$, and
$b_v\in\Sph^1$ is any unit representative of the line determined by the
internal angle bisector at $v$. Both $[b_v]$ and $\Phi_A(b_v)$ are
independent of the sign choice.

\begin{corollary}[Quadratic vertex-rigidity dichotomy]
\label{cor:quadratic-rigidity}\label{cor:quadratic}\label{cor:quadratic-dimension}
On a bounded polygon $P$, each of the following two assertions holds for
$\Phi_A$ if and only if $A\in\mathcal Q(P)$: the density inequality of
\cref{thm:polygon-density}, and the PDE inequality with a boundary term of
\cref{cor:generalized-circle-extension-pde}. Here
\[
 \mathcal Q(P)=\bigl\{A=A^{\mathsf T}\in\R^{2\times2}:
 \operatorname{tr}A=0,\ \Phi_A(b_v)=0
 \text{ for every }v\in\mathcal V(P)\bigr\}.
\]
Exactly one of the following alternatives occurs:
\begin{enumerate}[label=\textup{(\roman*)},leftmargin=2.2em]
\item $\mathfrak B(P)$ is not contained in an orthogonal pair
$\{L,L^\perp\}$, and $\mathcal Q(P)=\{0\}$;
\item $\mathfrak B(P)\subset\{L,L^\perp\}$ for some
$L\in\mathbb{RP}^1$, and $\dim\mathcal Q(P)=1$.
\end{enumerate}
Consequently, three distinct vertex-bisector lines force
$\mathcal Q(P)=\{0\}$; in particular, every nondegenerate triangle is
quadratically rigid, whereas every parallelogram has a one-dimensional
admissible space.  For an axis-parallel orthogonal polygon this space is
\[
 \Phi_A(\xi)=c(\xi_1^2-\xi_2^2),\qquad c\in\R.
\]
\end{corollary}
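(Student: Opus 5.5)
The plan is to reduce every tangent-model condition to an explicit angular integral. For the Newton kernel \eqref{eq:newton-kernel} one has $\wtK(\omega)=\omega/(2\pi)$. Since $\Phi_A$ is even and two-homogeneous,
\[
 \Phi_A\bigl(\varsigma\wtK(\omega)\bigr)=(4\pi^2)^{-1}\Phi_A(\omega)
\]
for both signs. So each condition \eqref{eq:signed-model-cancel} reduces to the vanishing of $\int_{G^0\cap\Sph^1}\Phi_A\,\dd\sigma$, with no sign dependence. The same integrals appear in condition (i) of \cref{thm:pde-mazya}.

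For the first assertion, the density part is \cref{thm:polygon-density} with $\alpha=1$, $p=2$. For the PDE part, a polygon is a finite generalized-circle domain. Hence \cref{cor:generalized-circle-extension-pde} applies, and it feeds into \cref{thm:pde-mazya}. In both cases the criterion is exactly $A\in\mathcal Q(P)$, defined via signed cancellation. It remains to compute $\mathcal Q(P)$.

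To compute it, decompose $A=\tfrac12(\operatorname{tr}A)I+B$ with $B$ trace-free, and write $\Phi_B(e_t)=b\cos(2t-2\phi)$ for some $b\geq0$ and $\phi\in\R$. The plane moment is $\pi\operatorname{tr}A$. Any half-plane moment is an integral over an arc of length $\pi$, where the oscillating part integrates to zero; it therefore equals $\tfrac\pi2\operatorname{tr}A$. So the plane and edge conditions are together equivalent to $\operatorname{tr}A=0$. For a vertex cone, rotated to the arc $[\gamma,\gamma+\theta_v]$, direct integration gives
\[
 \int_\gamma^{\gamma+\theta_v}b\cos(2t-2\phi)\,\dd t
 =b\sin\theta_v\cos\bigl(2(\gamma+\theta_v/2)-2\phi\bigr)
 =\sin\theta_v\,\Phi_B(b_v),
\]
where $b_v=e_{\gamma+\theta_v/2}$ is the internal bisector. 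Since $0<\theta_v<2\pi$ and $\theta_v\neq\pi$, we have $\sin\theta_v\ne0$. This yields the displayed description of $\mathcal Q(P)$, matching the introductory example \eqref{eq:intro-vertex-form}.

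For the dichotomy, identify the trace-free forms with $\R^2$ via $B\mapsto b(\cos2\phi,\sin2\phi)$. The condition at a vertex whose bisector line has angle $s$ then reads $(\cos2s,\sin2s)\cdot(b\cos2\phi,b\sin2\phi)=0$. This is a nonzero linear functional depending only on $2s$ modulo $2\pi$, that is, on the line $[b_v]$. Two such functionals are proportional exactly when $2s\equiv2s'\pmod\pi$, that is, when the lines coincide or are orthogonal. A polygon has at least one vertex, so the rank of the system is $1$ or $2$. Rank $1$ occurs exactly when $\mathfrak B(P)\subset\{L,L^\perp\}$, which gives $\dim\mathcal Q(P)=1$. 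Otherwise the rank is $2$ and $\mathcal Q(P)=\{0\}$. Three distinct lines cannot lie in one orthogonal pair, so they force rigidity.

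For the examples:
\begin{itemize}[leftmargin=2.2em]
\item In a triangle the three bisector lines pass through the incenter. Two of them coinciding would force one line through two vertices, namely a side, which is not a bisector. So they are three distinct lines, and every triangle is rigid.
\item In a parallelogram, adjacent angles $\vartheta$ and $\pi-\vartheta$ have perpendicular bisectors, and opposite bisectors are parallel. So $\mathfrak B(P)$ lies in an orthogonal pair, and $\dim\mathcal Q(P)=1$.
\item In an axis-parallel orthogonal polygon every bisector is along $(1,\pm1)$. The unique trace-free form, up to scaling, that vanishes on these lines is $c(\xi_1^2-\xi_2^2)$.
\end{itemize}

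The only delicate points are the triangle distinctness argument and checking that the PDE application legitimately passes through the generalized-circle corollary; the rest is the trigonometric computation above.
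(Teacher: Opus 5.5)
Your proposal is correct and follows essentially the paper's route: both inequalities are reduced to signed tangent-model cancellation through the polygon density theorem and the generalized-circle corollary, the plane gives $\operatorname{tr}A=0$, the half-planes are then automatic, the vertex condition comes from $\int_\gamma^{\gamma+\theta}\Phi_A(e_t)\,\dd t=\sin\theta\,\Phi_A(e_{\gamma+\theta/2})$, and the dichotomy follows from the zero sets of trace-free forms. Your rank argument with the functionals $(\cos2s,\sin2s)$ is only a coordinate version of the paper's eigenbasis argument, and your incenter argument usefully fills in the triangle-distinctness step, which the paper only asserts.
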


\begin{proof}
Every polygon is a finite generalized-circle domain. Hence
\cref{thm:polygon-density,cor:generalized-circle-extension-pde} reduce the
density inequality and the PDE inequality with a boundary term, respectively,
to the same signed tangent-model conditions. It remains only to identify
those conditions for the Newton kernel and $\Phi_A$.

The common positive factor $(2\pi)^{-2}$ from the Newton kernel may be
discarded.  Since $\Phi_A(-e)=\Phi_A(e)$, the two signed conditions coincide.
The full-circle condition is equivalent to $\operatorname{tr}A=0$, because
\[
 \int_0^{2\pi}e_t^{\mathsf T}Ae_t\,\dd t=\pi\operatorname{tr}A.
\]
Under this condition, $\Phi_A(e_t)=a\cos2t+b\sin2t$ for some
$a,b\in\R$. For
$\gamma\in\R$ and $0<\theta<2\pi$,
\[
 \int_\gamma^{\gamma+\theta}\Phi_A(e_t)\,\dd t
 =\sin\theta\,\Phi_A(e_{\gamma+\theta/2}).
\]
If
$C_v^0=\{re_t:r>0,\ \gamma<t<\gamma+\theta_v\}$, then
$[e_{\gamma+\theta_v/2}]=[b_v]$. Since $\Phi_A$ is even and
$\sin\theta_v\ne0$ for a nonflat vertex, the vertex integral vanishes
exactly when $\Phi_A(b_v)=0$. For an edge, $\theta=\pi$, and the condition
is automatic.

For $A\ne0$ trace-free,
$A$ has eigenvalues $\lambda$ and $-\lambda$.  In an orthonormal eigenbasis
its projective zero set is therefore the unique orthogonal pair spanned by
$e_1+e_2$ and $e_1-e_2$.  Conversely, an orthogonal pair is the zero set of a
nonzero trace-free symmetric form, unique up to scale.  This proves the
dichotomy and its dimension assertion.  The three internal angle-bisector
lines of a nondegenerate triangle are distinct, so they force $A=0$.  If
$u,v$ are unit side directions of a parallelogram, its bisectors are parallel
to $u+v$ and $v-u$, which are orthogonal.  The bisectors of an axis-parallel
orthogonal polygon are the two diagonal directions.
\end{proof}

\begin{theorem}[Generic quadratic rigidity in polygon moduli]
\label{thm:quadratic-polygon-moduli}
Fix $N\geq3$. Let $\mathscr P_N$ be the moduli space of labelled simple
nonflat $N$-gons, whose vertex labels $z_1,\ldots,z_N$ follow cyclic boundary
order, modulo orientation-preserving similarities. Identifying
$\R^2$ with $\mathbb C$, each class has the unique representative
\[
 z_1=0,\qquad z_2=1,
\]
so $\mathscr P_N$ is an open subset of
$\mathbb C^{N-2}\simeq\R^{2N-4}$. Here and below, Lebesgue measure on
$\mathscr P_N$ means the restriction of standard $(2N-4)$-dimensional
Lebesgue measure in these normalized coordinates. Put $z_{N+1}=z_1$ and,
cyclically,
\begin{equation}\label{eq:polygon-moduli-zeta}
 e_i:=z_{i+1}-z_i,\qquad
 \tau_i:=\frac{e_i}{|e_i|}\in\Sph^1,\qquad
 \zeta_i:=(\tau_{i-1}\tau_i)^2\in\Sph^1,
 \quad \tau_0:=\tau_N.
\end{equation}
Then
\begin{equation}\label{eq:polygon-moduli-criterion}
 \mathcal Q(P)\ne\{0\}
 \quad\Longleftrightarrow\quad
 \zeta_1=\cdots=\zeta_N,
\end{equation}
and in this case $\dim\mathcal Q(P)=1$.

Let $\mathscr R_N:=\{P\in\mathscr P_N:\mathcal Q(P)\ne\{0\}\}$. This
set is relatively closed, nowhere dense, and Lebesgue null in
$\mathscr P_N$. More precisely:
\begin{enumerate}[label=\textup{(\roman*)},leftmargin=2.2em]
\item if $N$ is odd, then $\mathscr R_N=\varnothing$;
\item if $N=2m$, then $P\in\mathscr R_N$ precisely when there are two
distinct projective directions $U,V\in\mathbb{RP}^1$ such that
\[
 [\tau_{2j-1}]=U,\qquad [\tau_{2j}]=V
 \qquad(j=1,\ldots,m).
\]
Near every $P\in\mathscr R_N$, choose local unit representatives $u$ of $U$
and $v$ of $V$. The signs in
$\tau_{2j-1}=\pm u$, $\tau_{2j}=\pm v$ are locally fixed; within that
edge-sign chamber, $\mathscr R_N$ is a smooth embedded submanifold of
dimension $N-2$, hence of codimension $N-2$ in $\mathscr P_N$;
\item $\mathscr R_4$ consists exactly of the parallelograms. More generally,
a convex simple nonflat polygon is quadratically nonrigid if and only if it
is a parallelogram.
\end{enumerate}
\end{theorem}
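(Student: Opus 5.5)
The plan is to reduce everything to \cref{cor:quadratic-rigidity}, which says $\mathcal Q(P)\ne\{0\}$ exactly when all bisector lines $[b_v]$ lie in one orthogonal pair, and that $\dim\mathcal Q(P)=1$ in that case. I will encode unoriented lines by squaring in $\mathbb C$: a line $[b]$ corresponds to $b^2\in\Sph^1$. An orthogonal pair $\{L,L^\perp\}$ then corresponds to a pair $\{w,-w\}$, that is, to a single value $b^4=w^2$.

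At the vertex $z_i$ the internal bisector is parallel to $\tau_i-\tau_{i-1}$, the sum of the two outgoing unit side directions $\tau_i$ and $-\tau_{i-1}$. Its argument is $\tfrac12(\arg\tau_{i-1}+\arg\tau_i)+\tfrac\pi2$ modulo $\pi$. Hence $b_{z_i}^2=-\tau_{i-1}\tau_i$, and therefore $b_{z_i}^4=(\tau_{i-1}\tau_i)^2=\zeta_i$. So all bisector lines lie in one orthogonal pair if and only if $\zeta_1=\cdots=\zeta_N$. This proves \eqref{eq:polygon-moduli-criterion}, and the dimension statement is inherited from \cref{cor:quadratic-rigidity}. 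Relative closedness of $\mathscr R_N$ is immediate, since each $\zeta_i$ depends continuously on the normalized coordinates.

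For (i) and (ii), observe that $\zeta_i=\zeta_{i+1}$ is equivalent to $\tau_{i-1}^2=\tau_{i+1}^2$, i.e.\ $[\tau_{i-1}]=[\tau_{i+1}]$. Equality of all the $\zeta_i$ therefore forces the edge lines to be $2$-periodic in the cyclic index. If $N$ is odd, this makes all edge lines coincide. But nonflatness at every vertex requires $[\tau_{i-1}]\ne[\tau_i]$, so $\mathscr R_N=\varnothing$. If $N=2m$, the same argument gives two lines $U\ne V$ with the stated alternation. Conversely, the alternation gives $\zeta_i=(uv)^2$ for every $i$, because the signs disappear after squaring.

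For the manifold structure I fix a sign chamber. The normalization $z_1=0$, $z_2=1$ forces $u=\tau_1=1$, so the free parameters are the angle $\varphi$ of $v$ and the lengths $\ell_2,\dots,\ell_N>0$ (with $\ell_1=1$), giving $N$ parameters. The vertices are partial sums $z_k=\sum_{i<k}\epsilon_i\ell_i w_i$, where $w_i\in\{u,v\}$ alternates. Since $u,v$ are independent, closure $z_{N+1}=z_1$ splits into two linear equations:
\[
 \sum_{i\ \mathrm{odd}}\epsilon_i\ell_i=0,\qquad \sum_{i\ \mathrm{even}}\epsilon_i\ell_i=0 .
\]
They are independent because $\ell_2$ appears only in the second and $\ell_3$ only in the first (here $N\ge4$). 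The parametrization $(\varphi,\ell)\mapsto(z_3,\dots,z_N)$ is an injective immersion: the lengths and the angle are recovered smoothly from consecutive differences of the $z_k$. Hence the chamber piece is an embedded submanifold of dimension $N-2$.

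Codimension $N-2\ge2$ in the open set $\mathscr P_N\subset\R^{2N-4}$ gives Lebesgue-null. Since $\mathscr R_N$ is closed and null, it has empty interior, hence is nowhere dense. Near each point only finitely many chambers occur, so local finiteness is harmless.

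For (iii), a convex nonflat polygon has positive exterior turning angles summing to $2\pi$, so the directions $\tau_1,\dots,\tau_N$ are pairwise distinct and cyclically increasing. If only two lines $U,V$ occur, at most four directions $\pm u,\pm v$ are available. Combined with $N$ even and $N\ge3$, this forces $N=4$ with edge directions $u,v,-u,-v$ in cyclic order, which is a parallelogram. Conversely, parallelograms are nonrigid by \cref{cor:quadratic-rigidity}. The $N=4$ case of $\mathscr R_4$ follows, because alternation of two lines with $N=4$ gives opposite sides parallel, and a simple quadrilateral with both pairs of opposite sides parallel is a parallelogram.

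I expect the main obstacle to be the careful verification that the chamber parametrization is an embedding into the normalized coordinates. The delicate points are positivity of the lengths, simplicity of the polygon being an open condition, and that different sign choices do not glue into a non-embedded set. I would handle this by working locally near a given $P\in\mathscr R_N$, where both simplicity and the sign chamber are locally constant.
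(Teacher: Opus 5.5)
Your proposal is correct and follows the paper's proof essentially step for step. The shared steps are the encoding $b_{z_i}^4=\zeta_i$ of the unordered bisector pair fed into \cref{cor:quadratic-rigidity}, the recurrence $[\tau_{i+1}]=[\tau_{i-1}]$ for the odd/even dichotomy, a chamberwise linear parametrization of the alternating edges cut out by the two real closure equations, and strict monotonicity of edge directions for convex polygons. Your angle-plus-lengths coordinates are the paper's $q=e^{i\vartheta}$ and signed coefficients $a_j,b_j$ in another form; the paper also proves $\mathscr R_{2m}\neq\varnothing$ by an orthogonal-staircase construction, which the statement as phrased does not strictly require.
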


\begin{proof}
At $z_i$, the two boundary rays have unit directions $-\tau_{i-1}$ and
$\tau_i$. If $b_i\in\Sph^1$ spans either angle-bisector line, then
\[
 b_i^2=\pm(-\tau_{i-1}\tau_i),
 \qquad b_i^4=(\tau_{i-1}\tau_i)^2=\zeta_i.
\]
Thus $\zeta_i$ encodes the unordered orthogonal pair of bisector lines and
is unchanged when the internal and external bisectors are exchanged at a
reentrant vertex. By \cref{cor:quadratic-rigidity}, a nonzero admissible
matrix exists exactly when all these pairs agree, proving
\eqref{eq:polygon-moduli-criterion} and the dimension assertion.

Equality of the $\zeta_i$ gives
\begin{equation}\label{eq:polygon-edge-recurrence}
 1=\frac{\zeta_{i+1}}{\zeta_i}
   =\left(\frac{\tau_{i+1}}{\tau_{i-1}}\right)^2,
 \qquad [\tau_{i+1}]=[\tau_{i-1}].
\end{equation}
For odd $N$, iteration by two visits every index, so all edge lines would
coincide, contrary to nonflatness. For $N=2m$,
\eqref{eq:polygon-edge-recurrence} is exactly the stated alternation between
two distinct projective directions; the converse is immediate. Nonemptiness
for every even $N$ follows by starting from a rectangle and repeatedly
replacing one corner by a sufficiently small orthogonal staircase with
three polygonal vertices in place of one. Each replacement is made in a
vertex neighborhood meeting no nonincident edge.

For the local dimension, $e_1=1$. Choose the unique
$q=e^{i\vartheta}$, $0<\vartheta<\pi$, spanning $V$. In a fixed sign chamber,
every polygon in $\mathscr R_{2m}$ is represented uniquely by
\[
 e_{2j-1}=a_j,\qquad e_{2j}=b_jq,
 \qquad a_1=1,\qquad a_j,b_j\in\R\setminus\{0\}.
\]
Since $1$ and $q$ are real-linearly independent, polygonal closure is
equivalent to $\sum_ja_j=\sum_jb_j=0$. The free parameters therefore have
dimension $1+(m-2)+(m-1)=2m-2=N-2$. Nonvanishing, nonflatness, and simplicity
are open conditions, and cumulative summation of the edges gives the claimed
local smooth embedding. Its inverse recovers $q$ as the unit representative
of $[e_2]$ with positive imaginary part, then
$a_j=\operatorname{Re}e_{2j-1}$ and
$b_j=\operatorname{Re}(e_{2j}\overline q)$; these depend smoothly on the
polygon within the fixed sign chamber.

The equality condition in \eqref{eq:polygon-moduli-criterion} and continuity
of the $\zeta_i$ show that $\mathscr R_N$ is relatively closed. The preceding
codimension holds in every local edge-sign chart. Since $\mathscr P_N$ is
second countable, $\mathscr R_N$ is covered by countably many such coordinate
neighborhoods. Its intersection with each is a positive-codimension smooth
submanifold and hence Lebesgue null. The countable union is therefore null
and has empty interior; relative closedness then gives nowhere density.
For $N=4$, alternation and $e_1+e_2+e_3+e_4=0$ imply
$e_3=-e_1$ and $e_4=-e_2$, so the polygon is a parallelogram; the converse
is immediate. For a convex nonflat polygon, the cyclic sequence of
edge-direction angles, lifted to an interval of length $2\pi$, is strictly
monotone. Hence each projective direction occurs at most twice, so a convex
polygon with the alternating property has four edges and is a parallelogram.
\end{proof}

See \cref{fig:quadratic-geometry}.
\FloatBarrier
\begin{figure}[!htbp]
\centering
\begin{minipage}[t]{.47\textwidth}
\centering
\begin{tikzpicture}[x=.82cm,y=.82cm,line cap=round,line join=round]
 \node[panel title] at (-2.9,2.65) {(a) Nonzero trace-free form};
 \coordinate (o) at (-.9,.8);
 \draw[construction line] (o) circle (1.35);
 \draw[selected branch] ($(o)+(-45:1.65)$)--($(o)+(135:1.65)$);
 \draw[selected branch] ($(o)+(45:1.65)$)--($(o)+(225:1.65)$);
 \node[figure point] at (o) {};
 \node[figure label] at (.35,2.05) {$L$};
 \node[figure label] at (.35,-.45) {$L^\perp$};
 \node[figure label,align=left] at (1.25,-1.0)
   {$A\ne0,\quad \Phi_A(e_t)=\lambda_A\cos2(t-t_0)$\\
    projective zero directions: $\{L,L^\perp\}$};
\end{tikzpicture}
\end{minipage}\hfill
\begin{minipage}[t]{.47\textwidth}
\centering
\begin{tikzpicture}[x=.82cm,y=.82cm,line cap=round,line join=round]
 \node[panel title] at (-2.9,2.65) {(b) Nondegenerate triangle};
 \coordinate (A) at (-1.8,-.3);
 \coordinate (B) at (1.8,-.3);
 \coordinate (C) at (.45,2.2);
 \draw[geometric boundary,fill=modelblue] (A)--(B)--(C)--cycle;
 \coordinate (I) at (.261,.618);
 \draw[construction line,-{Stealth[length=2mm]}] (A)--($(A)!.48!(I)$)
   node[midway,below,font=\scriptsize] {$b_1$};
 \draw[construction line,-{Stealth[length=2mm]}] (B)--($(B)!.48!(I)$)
   node[midway,below,font=\scriptsize] {$b_2$};
 \draw[construction line,-{Stealth[length=2mm]}] (C)--($(C)!.48!(I)$)
   node[midway,right,font=\scriptsize] {$b_3$};
 \node[figure label,align=center] at (0,-.78)
   {three directions $\Longrightarrow A=0$};
\end{tikzpicture}
\end{minipage}

\vspace{1ex}
\begin{minipage}[t]{.47\textwidth}
\centering
\begin{tikzpicture}[x=.82cm,y=.82cm,line cap=round,line join=round]
 \node[panel title] at (-2.9,2.65) {(c) Parallelogram};
 \coordinate (A) at (-1.8,-.25);
 \coordinate (B) at (1.25,-.25);
 \coordinate (D) at (-.65,1.9);
 \coordinate (C) at (2.4,1.9);
 \draw[geometric boundary,fill=modelblue] (A)--(B)--(C)--(D)--cycle;
 \draw[map arrow] (A)--++(0:1.0) node[midway,below,font=\scriptsize] {$u$};
 \draw[map arrow] (A)--++(61.87:1.0) node[midway,left,font=\scriptsize] {$v$};
 \draw[construction line,-{Stealth[length=2mm]}] (A)--++(30.94:1.25)
   node[near end,above,font=\scriptsize] {$b_+$};
 \draw[construction line,-{Stealth[length=2mm]}] (B)--++(120.94:1.15)
   node[near end,right,font=\scriptsize] {$b_-$};
 \node[figure label,align=center] at (.3,-.82)
   {$b_+\parallel u+v,\quad b_-\parallel v-u$\\
    $b_+\perp b_-$};
\end{tikzpicture}
\end{minipage}\hfill
\begin{minipage}[t]{.47\textwidth}
\centering
\begin{tikzpicture}[x=.82cm,y=.82cm,line cap=round,line join=round]
 \node[panel title] at (-2.9,2.65) {(d) Orthogonal polygon};
 \path[draw=black,line width=.9pt,fill=modelblue]
   (-1.8,-.35)--(2.3,-.35)--(2.3,.55)--(.15,.55)--(.15,2.2)
   --(-1.8,2.2)--cycle;
 \foreach \x/\y/\ang in {-1.8/-.35/45,2.3/-.35/135,2.3/.55/225,.15/.55/225,.15/2.2/225,-1.8/2.2/315}{
   \draw[construction line] (\x,\y)--++(\ang:.58);
 }
 \node[figure label,align=center] at (.25,-.85)
   {two diagonal directions};
\end{tikzpicture}
\end{minipage}
\caption[Geometry of the quadratic classification]{Quadratic cancellation
geometry for \cref{cor:quadratic-dimension}.  A nonzero trace-free form has one
orthogonal pair of projective zero directions.  A triangle has three distinct
bisector directions and forces $A=0$.  A parallelogram has the orthogonal
bisectors $u+v$ and $v-u$ for unit side directions $u,v$, while every bisector
of an axis-parallel orthogonal polygon is diagonal.}
\label{fig:quadratic-geometry}
\end{figure}
\FloatBarrier

\begin{proposition}[The Newton-kernel admissible angular class is infinite-dimensional]
\label{prop:nonquadratic-abundance}
For every bounded polygon $P$, the positively two-homogeneous functions
$\Phi\in C^\infty(\R^2\setminus\{0\})\cap C^1(\R^2)$ satisfying signed tangent-model cancellation for
the Newton kernel form an infinite-dimensional vector space.
\end{proposition}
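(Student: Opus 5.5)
The plan is to reduce everything to finitely many linear conditions on the angular profile. Write $\Phi(re_t)=r^2\phi(t)$ with $\phi\in C^\infty(\R/2\pi\mathbb Z)$. For the Newton kernel, $\wtK(\omega)=\omega/(2\pi)$, so $\Phi(\varsigma\wtK(\omega))=(2\pi)^{-2}\phi(t+\pi\one_{\{\varsigma=-1\}})$ for $\omega=e_t$. Hence signed tangent-model cancellation says that the integrals of $\phi$ and of $\phi(\cdot+\pi)$ vanish over the arcs $G^0\cap\Sph^1$ for all $G^0\in\mathfrak T(P)$. Because $P$ has finitely many edges and vertices, $\mathfrak T(P)$ is finite: the plane, at most $\#\mathcal E(P)$ half-planes, and at most $\#\mathcal V(P)$ vertex cones. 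The admissible set is therefore the common kernel of finitely many linear functionals $\ell_{I}(\phi)=\int_I\phi\,\dd t$, with $I$ ranging over finitely many arcs and their $\pi$-rotations. It is clearly a vector space.

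Next I check the regularity class. Every $\phi\in C^\infty(\Sph^1)$ gives $\Phi\in C^\infty(\R^2\setminus\{0\})$. Two-homogeneity gives $|\Phi(x)|\lesssim|x|^2$ and $|\nabla\Phi(x)|\lesssim|x|$, so $\Phi$ is differentiable at $0$ with $\nabla\Phi(0)=0$, and $\nabla\Phi$ is continuous at $0$. Thus $\Phi\in C^1(\R^2)$ and $\Phi|_{\Sph^1}$ is Lipschitz. The admissible class is therefore isomorphic to
\[
 \mathcal Z:=\{\phi\in C^\infty(\Sph^1):\ell(\phi)=0\text{ for each of the finitely many functionals }\ell\}.
\]

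The key step is that a finite-codimension subspace of an infinite-dimensional space is infinite-dimensional. Let $k$ be the number of functionals. The map $C^\infty(\Sph^1)\to\R^k$, $\phi\mapsto(\ell(\phi))_\ell$, has kernel $\mathcal Z$ of codimension at most $k$. Restricting to the span of $\{\cos(jt),\sin(jt):1\le j\le d\}$ gives a subspace of dimension $2d$ meeting $\mathcal Z$ in dimension at least $2d-k$. Letting $d\to\infty$ shows that $\dim\mathcal Z=\infty$.

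For a more explicit family, I would use bump functions instead. Choose infinitely many disjoint small open arcs $J_1,J_2,\dots$ such that no $J_i$ contains an endpoint of any arc $I$ or of any rotation $I+\pi$. This is possible because there are only finitely many such endpoints. On each $J_i$ take $\psi_i\in C_c^\infty(J_i)$ with $\int\psi_i=0$ and $\psi_i\ne0$. Then every $\ell(\psi_i)$ is either $\int\psi_i=0$ or $0$, since each relevant arc contains $J_i$ or misses it. The $\psi_i$ have disjoint supports, so they are linearly independent.

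I see no real obstacle here. The only care needed is bookkeeping: the sign $\varsigma=-1$ corresponds to the rotation by $\pi$, and the endpoints of both the arcs and their rotations must be avoided.
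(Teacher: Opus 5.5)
Your proof is correct and follows the paper's argument: you write the signed tangent-model conditions as finitely many linear functionals on the angular profile in $C^\infty(\Sph^1)$, observe that their common kernel has finite codimension and is therefore infinite-dimensional, and use two-homogeneity ($|\Phi(x)|\lesssim|x|^2$, $|\nabla\Phi(x)|\lesssim|x|$) to get $\Phi\in C^1(\R^2)$. Your trigonometric dimension count and your zero-mean bump family, supported on arcs that avoid all arc endpoints and their $\pi$-rotations, are valid and more concrete versions of the paper's codimension step.
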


\begin{proof}
For $\psi\in C^\infty(\Sph^1)$ set $\Phi_\psi(0)=0$ and
$\Phi_\psi(re)=r^2\psi(e)$ for $r>0$. The full-plane, edge, and vertex
conditions, for both signs, form a finite family of linear functionals of
$\psi$; the factor $(2\pi)^{-2}$ from the Newton kernel is irrelevant.
Thus, for some finite $M$, they define a linear map
$L:C^\infty(\Sph^1)\to\R^M$. Since
$\operatorname{codim}\ker L\leq M$ and $C^\infty(\Sph^1)$ is
infinite-dimensional, $\ker L$ is infinite-dimensional. For every $\psi$
in this kernel,
\[
 |\Phi_\psi(x)|\lesssim_\psi|x|^2,\qquad
 |\nabla\Phi_\psi(x)|\lesssim_\psi|x|\quad(x\ne0),
\]
so $\Phi_\psi\in C^1(\R^2)$ with $\nabla\Phi_\psi(0)=0$. The map
$\psi\mapsto\Phi_\psi$ is injective, which proves the claim.
Moreover, angular traces of quadratic forms lie in the three-dimensional
space $\operatorname{span}\{1,\cos2t,\sin2t\}$. Hence the admissible class
contains infinitely many nonquadratic functions.
\end{proof}

\vskip0.36in

\section{\bf Logarithmic tangent obstructions for the PDE estimate}

In this section $\Omega$ is either a bounded polygon or, for some
$0<\beta\leq1$, a finitely cornered piecewise-$C^{1,\beta}$ domain, and
$\partial_n$ is always the derivative in the outward unit-normal direction.
Let $a\in\overline\Omega$ be an interior point, a smooth boundary point, or
a vertex, and let $G_a^0$ be its origin-based tangent model.  Fix a
nonnegative radial function $\rho\in C_c^\infty(B_1)$ with
$\int_{\R^2}\rho=1$, and set
\[
 \rho_{\eps,a}(x):=\eps^{-2}\rho\!\left(\frac{x-a}{\eps}\right).
\]
Choose $\eps_0>0$ below the local model radius at $a$; when $a\in\Omega$,
also require $2\eps_0<\dist(a,\partial\Omega)$.  For
$0<\eps<\eps_0$ define the ambient smooth function
\begin{equation}\label{eq:pde-tangent-test}
 u_{\eps,a}(x)
 :=(\log|\cdot|*\rho_{\eps,a})(x)
 =\int_{\R^2}\log|x-y|\,\rho_{\eps,a}(y)\,\dd y.
\end{equation}
Since $\log|\cdot|\in\mathcal D'(\R^2)$ and
$\rho_{\eps,a}\in C_c^\infty(\R^2)$, their distributional convolution is
smooth on $\R^2$ and is represented by the displayed integral. In
particular, its restriction belongs to $C^\infty(\overline\Omega;\R)$.

\begin{proposition}[PDE tangent test]\label{prop:pde-tangent-test}
Let $\Phi:\R^2\to\R$ be positively two-homogeneous and Lipschitz on
$\Sph^1$. After decreasing the previously chosen $\eps_0$ if necessary,
assume also that
\[
 0<\eps_0\leq\eps_{\{a\}},
\]
where $\eps_{\{a\}}$ is the threshold furnished by
\cref{lem:tangent-concentration} for the singleton stratum $A=\{a\}$,
the present $\Phi$, and the fixed mollifier $\rho$. For every
$\varsigma\in\{-1,1\}$ and
$0<\eps<\eps_0$,
\begin{equation}\label{eq:pde-tangent-log}
 \int_\Omega\Phi\bigl(\nabla(\varsigma u_{\eps,a})\bigr)\,\dd x
 =\log\frac1\eps
   \int_{G_a^0\cap\Sph^1}\Phi(\varsigma e)\,\dd\sigma(e)+O(1),
\end{equation}
and
\begin{equation}\label{eq:pde-tangent-rhs}
 \sup_{\substack{0<\eps<\eps_0\\
                  \varsigma\in\{-1,1\}}}
 \left(
  \|\Delta(\varsigma u_{\eps,a})\|_{L^1(\Omega)}
  +\|\partial_n(\varsigma u_{\eps,a})\|_{L^1(\partial\Omega)}
 \right)<\infty.
\end{equation}
The $O(1)$ term in \eqref{eq:pde-tangent-log} and the bound in
\eqref{eq:pde-tangent-rhs} may depend on $\Omega$, $a$, the fixed local
$C^{1,\beta}$ chart and model radius at $a$, $\Phi$, and $\rho$, but are
independent of $\eps$ and $\varsigma$.  In the polygonal case the chart
dependence is replaced by the corresponding exact local model data.
\end{proposition}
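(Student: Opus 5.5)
The estimate \eqref{eq:pde-tangent-log} will follow from \cref{lem:tangent-concentration} applied to the Newton kernel. Since $\nabla\log|x|=2\pi K(x)$ with $K$ as in \eqref{eq:newton-kernel}, we have
\[
 \nabla u_{\eps,a}=2\pi K*\rho_{\eps,a},
\]
because derivatives commute with convolution against the smooth compactly supported $\rho_{\eps,a}$. By two-homogeneity of $\Phi$,
\[
 \Phi\bigl(\nabla(\varsigma u_{\eps,a})\bigr)=\Phi\bigl(K'*(\varsigma\rho_{\eps,a})\bigr),
 \qquad K'(x)=x/|x|^2 .
\]
Here $K'$ satisfies \eqref{eq:standing-analytic-hypothesis} with $\alpha=1$, $p=2$, $m=2$, and $\wtK'(\omega)=\omega$. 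The lemma with $A=\{a\}$ and $\eps<\eps_{\{a\}}$ then gives the logarithm times $\int_{G_a^0\cap\Sph^1}\Phi(\varsigma\omega)\,\dd\sigma$, up to an $O(1)$ that is uniform in $\eps$ and $\varsigma$. There is no mismatch in normalization, since $\varsigma\wtK'(\omega)=\varsigma\omega$ exactly.

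For \eqref{eq:pde-tangent-rhs}, the sign $\varsigma$ is irrelevant, so treat $u_{\eps,a}$ alone. Since $\Delta\log|\cdot|=2\pi\delta_0$, we get $\Delta u_{\eps,a}=2\pi\rho_{\eps,a}$, and therefore $\|\Delta u_{\eps,a}\|_{L^1(\Omega)}\le2\pi$.

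The boundary term is the main point. We have $\partial_n u_{\eps,a}=2\pi(K*\rho_{\eps,a})\cdot n$ on the open boundary pieces, and we split $\partial\Omega$ into $\partial\Omega\cap B(a,2\eps)$ and its complement.

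On the complement, \eqref{eq:approx-delta} gives $|K*\rho_{\eps,a}(x)|\lesssim|x-a|^{-1}$. A crude bound $\int_{\partial\Omega\setminus B(a,2\eps)}|x-a|^{-1}\,\dd\mathcal H^1$ would give $\log(1/\eps)$, which is not acceptable. So we use the normal component. Write $K(x-y)\cdot n(x)=(2\pi)^{-1}(x-y)\cdot n(x)/|x-y|^2$, i.e.\ the double-layer kernel.

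Near $a$, inside the model radius $r_a$, the boundary consists of one or two $C^{1,\beta}$ graphs through $a$; for a polygon these are straight segments. For $x$ on such a branch and $|y-a|\le\eps$, the tangent-graph bound gives
\[
 |(x-a)\cdot n(x)|\lesssim|x-a|^{1+\beta}.
\]
The exact linear case has $0$ here. Hence
\[
 |(x-y)\cdot n(x)|\le|(x-a)\cdot n(x)|+\eps\lesssim|x-a|^{1+\beta}+\eps.
\]
Dividing by $|x-y|^2\simeq|x-a|^2$ and integrating in arclength over $2\eps<|x-a|<r_a$ gives
\[
 \int\bigl(r^{\beta-1}+\eps r^{-2}\bigr)\,\dd r\lesssim1.
\]
If $a$ is an interior point, $\partial\Omega$ stays at distance at least $2\eps_0$, and this step is trivial.

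Beyond the model radius, the field is uniformly bounded, because $\dist(\supp\rho_{\eps,a},x)\ge r_a/2$ there, and $\mathcal H^1(\partial\Omega)<\infty$.

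On $\partial\Omega\cap B(a,2\eps)$, use the uniform bound $|K*\rho_{\eps,a}|\lesssim\eps^{-1}$, which follows by scaling since $K*\rho$ is bounded on $\R^2$. This is paired with $\mathcal H^1(\partial\Omega\cap B(a,2\eps))\lesssim\eps$, valid for Lipschitz graphs (one or two branches), so this piece contributes $O(1)$.

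The expected obstacle is therefore the cancellation estimate for the normal component near a curved branch or a vertex: a full-modulus estimate diverges logarithmically, and the $C^{1,\beta}$ graph flatness must be used to gain the factor $|x-a|^{\beta}$. I would carry out this step using the finite graph atlas behind \cref{lem:piecewise-model-bounds}, treating each incident branch separately. All constants then depend only on $a$, the chart, $\Phi$, and $\rho$.
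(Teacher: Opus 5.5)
Your proposal is correct and follows the paper's proof. It uses the same reduction of \eqref{eq:pde-tangent-log} to \cref{lem:tangent-concentration} via $\nabla u_{\eps,a}=2\pi K*\rho_{\eps,a}$ and two-homogeneity, the same bound $\|\Delta u_{\eps,a}\|_{L^1(\Omega)}\leq2\pi$, and the same three-zone normal-trace estimate driven by $|(x-a)\cdot n(x)|\lesssim|x-a|^{1+\beta}$ on each incident branch. The one minor difference is how you handle $|x-a|\geq\eps$: the paper uses the radiality of $\rho$ and the shell theorem to get the exact field $(x-a)/|x-a|^2$ there, whereas you average the double-layer kernel over $y\in B(a,\eps)$. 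You absorb the displacement into an extra $\eps|x-a|^{-2}$ term, which integrates to $O(1)$, so your argument does not need $\rho$ to be radial.
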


\begin{proof}
For the Newton kernel in \eqref{eq:newton-kernel},
\[
 \Delta u_{\eps,a}=2\pi\rho_{\eps,a},
 \qquad
 \nabla(\varsigma u_{\eps,a})
 =2\pi K*(\varsigma\rho_{\eps,a}).
\]
Since $\wtK(e)=(2\pi)^{-1}e$ on $\Sph^1$, positive
two-homogeneity and \cref{lem:tangent-concentration}, applied with the
singleton stratum $A=\{a\}$, give
\begin{align*}
 \int_\Omega\Phi\bigl(\nabla(\varsigma u_{\eps,a})\bigr)\,\dd x
 &=(2\pi)^2
   \int_\Omega\Phi\bigl(K*(\varsigma\rho_{\eps,a})\bigr)\,\dd x\\
 &=\log\frac1\eps\,(2\pi)^2
   \int_{G_a^0\cap\Sph^1}
      \Phi\bigl(\varsigma(2\pi)^{-1}e\bigr)\,\dd\sigma(e)+O(1)\\
 &=\log\frac1\eps
   \int_{G_a^0\cap\Sph^1}\Phi(\varsigma e)\,\dd\sigma(e)+O(1).
\end{align*}
This proves \eqref{eq:pde-tangent-log}, including uniformity over the two
signs.

The Laplacian term is immediate:
\[
 \|\Delta(\varsigma u_{\eps,a})\|_{L^1(\Omega)}
 =2\pi\int_\Omega\rho_{\eps,a}\,\dd x\leq2\pi.
\]
It remains to estimate the normal trace. Radiality and unit mass give the
two-dimensional Newton identity
\begin{equation}\label{eq:mollified-log-field}
 \nabla u_{\eps,a}(x)=\frac{x-a}{|x-a|^2}
 \quad\text{if }|x-a|\geq\eps,
 \qquad
 |\nabla u_{\eps,a}(x)|\leq C_\rho\eps^{-1}
 \quad\text{if }|x-a|<\eps.
\end{equation}
The first identity is the two-dimensional Newton shell theorem: each radial
shell has constant logarithmic potential in its interior and the exterior
field of its total mass at the center. Integrating the shells and using
$\int\rho=1$ proves the identity. The second bound follows by scaling the smooth field
$\nabla(\log|\cdot|*\rho)$.

If $a\in\Omega$, our choice of $\eps_0$ and
\eqref{eq:mollified-log-field} make the boundary field independent of
$\eps$, and its normal trace is integrable.  Suppose now that
$a\in\partial\Omega$. At a vertex, work on each of the two incident
one-sided boundary branches; at a smooth point, work on each of the two
one-sided halves of the unique local boundary sheet. The tangent-normal pair
and remainder constants may depend on the chosen half-branch; their maximum
over the two halves is fixed independently of $\eps$ and $\varsigma$. On any
such half use arc length $s\geq0$ and write $x=\gamma(s)$, where
\[
 \gamma(0)=a,\qquad
 \gamma(s)=a+s\tau+O(|s|^{1+\beta}),\qquad
 n(\gamma(s))=n_0+O(|s|^\beta),\qquad \tau\cdot n_0=0.
\]
The constants are fixed by the chosen local chart, and
$|\gamma(s)-a|\simeq|s|$.  Consequently,
\[
 |(\gamma(s)-a)\cdot n(\gamma(s))|
 \leq C|s|^{1+\beta}.
\]
On the part of the branch where $|\gamma(s)-a|\geq\eps$,
\eqref{eq:mollified-log-field} therefore gives
\[
 |\partial_n u_{\eps,a}(\gamma(s))|
 \leq C|s|^{\beta-1},
\]
which is uniformly integrable because $\beta>0$.  On the remaining arc
$|\gamma(s)-a|<\eps$, its length is $O(\eps)$ and the second estimate in
\eqref{eq:mollified-log-field} gives an $O(1)$ contribution.  The portion
of $\partial\Omega$ outside the fixed chart stays a positive distance from
$a$ and contributes another $O(1)$. The same argument is applied to both
incident branches at a vertex and to both one-sided halves of the unique
smooth boundary sheet at a smooth point. Absolute values make all these
bounds unchanged by
$\varsigma=-1$, proving \eqref{eq:pde-tangent-rhs}.
\end{proof}

For the circularly truncated sector $\Omega_{\theta,1}$ with $a=0$, the
coefficient in \eqref{eq:pde-tangent-log} for the quadratic form
$\Phi_\theta$ from \eqref{eq:intro-vertex-form} is
\[
 \int_0^\theta\Phi_\theta(e_t)\,\dd t=\sin\theta.
\]
Thus the corner test is nonzero for every $0<\theta<2\pi$ with
$\theta\neq\pi$; no nonflat-corner conclusion is claimed at
$\theta=\pi$.

\vskip0.36in

\section{\bf Measure-valued Laplacian extension on sectors and exact sector-chart domains}

This section constructs extensions with controlled Laplacian measure for
use in the Newton-kernel PDE estimate. Its smooth-boundary antecedent
is \cite[Proposition~5.1]{StolyarovDomains}; the new task is to carry the same
scalar Laplacian-measure estimate through nonflat corners possessing exact
conformal-sector charts. The smooth pseudodifferential
Dirichlet-to-Neumann comparison is therefore replaced below by an exact strip
calculation, matching of Sobolev traces along the boundary rays, and removal of a possible vertex atom.
Simultaneous M\"obius straightening of transverse supporting lines and circles is
isolated in \cref{lem:simultaneous-mobius-straightening}, and
\cref{prop:generalized-circle-atlas} places the resulting concrete subclass
inside the exact-chart class. Normal derivatives of harmonic
extensions will be regarded as finite boundary measures; no absolute
continuity is assumed unless it has been proved.
We use the Radon-measure notation fixed in \cref{sec:kernels-domains}; thus
$\|\mu\|_{\cM(D)}=|\mu|(D)$ for scalar measures. When the ambient
scalar-measure space is clear, we abbreviate this norm by $\|\cdot\|_{\cM}$.

\vskip0.12in

We first construct the finite-energy strip harmonic extension and its distributional Dirichlet-to-Neumann
map. A weak Green identity shows that this distribution is represented by a finite measure;
comparison of strip widths then gives the complementary-sector extension, which is localized
and combined with a partition of unity in the fixed conformal or
anticonformal charts. Zero-frequency
compatibility, ray orientation, and the possible vertex atom are checked where they enter.

\vskip0.32in

\subsection{Conformal change of variables}

\vskip0.12in

For a measurable map \(F\), \(F_\#\mu\) denotes the pushforward of the
measure \(\mu\).

\begin{definition}[Localized Green pair on a boundary portion]\label{def:localized-green-pair}
Let $D\subset\R^2$ be a Lipschitz domain and let $\Gamma\subset\partial D$ be relatively open. Put
\[
 \mathcal T_\Gamma(D)=
 \left\{\zeta=\varphi|_{\overline D}:\varphi\in C_c^\infty(\R^2),\
 \supp\varphi\cap\partial D\Subset\Gamma\right\}.
\]
We write $v\in W^{1,2}_{\rm loc}(D\cup\Gamma)$ when $v\in W^{1,2}_{\rm loc}(D)$ and
$\chi v\in W^{1,2}(D)$ for every $\chi\in C_c^\infty(\R^2)$ with
$\supp\chi\cap\partial D\Subset\Gamma$. If $\mu\in\cM(D)$
and $\nu\in\cM(\Gamma)$, we write
\[
 \Delta v=\mu\quad\hbox{in }D,
 \qquad
 \partial_n v=\nu\quad\hbox{on }\Gamma
\]
in the localized Green sense when
\begin{equation}\label{eq:weak-green-measure}
 \int_D\nabla v\cdot\nabla\zeta\,\dd x
 =-\int_D\zeta\,\dd\mu
 +\int_\Gamma\operatorname{Tr}_\Gamma\zeta\,\dd\nu
 \qquad(\zeta\in\mathcal T_\Gamma(D)).
\end{equation}
Since each test function is the restriction of an ambient smooth function,
$\operatorname{Tr}_\Gamma\zeta$ denotes its continuous boundary restriction,
which represents its Sobolev trace. This is the representative used in the
pairing with $\nu$, including when $\nu$ is singular with respect to arclength.
This definition makes no assertion at the relative endpoints of $\Gamma$.
\end{definition}

\vskip0.12in

\begin{lemma}[Conformal change of variables and extension by zero]
\label{lem:conformal-bookkeeping}
Let $D,D'$ be Lipschitz planar domains and let
$F:D\to D'$ be a conformal or anticonformal diffeomorphism. Suppose
that $F$ extends smoothly as a diffeomorphism across a relatively open
portion $\Gamma\subset\partial D$, and put $\Gamma'=F(\Gamma)$.
If $v\in W^{1,2}_{\rm loc}(D\cup\Gamma)$ satisfies
\[
 \Delta v=\mu\quad\text{in }D,\qquad
 \partial_n v=\nu\quad\text{on }\Gamma
\]
in the localized Green sense, then
$\widetilde v=v\circ F^{-1}$ satisfies
\[
 \Delta\widetilde v=F_\#\mu\quad\text{in }D',
 \qquad
 \partial_{n'}\widetilde v=F_\#\nu\quad\text{on }\Gamma'
\]
in the localized Green sense. Moreover,
\[
 |F_\#\mu|=F_\#|\mu|,
 \qquad
 |F_\#\nu|=F_\#|\nu|.
\]

In addition, let $U,U'$ be planar domains, let
$F:U\to U'$ be a conformal or anticonformal diffeomorphism, and suppose that
\[
 w\in C(U')\cap W^{1,2}(U'),\qquad
 \Delta w\in\cM(U'),\qquad
 \supp w\Subset U'.
\]
Define
\[
 \mathscr Z_Fw=
 \begin{cases}
  w\circ F,&\text{in }U,\\
  0,&\text{in }\mathbb R^2\setminus U.
 \end{cases}
\]
Then
\[
 \mathscr Z_Fw\in C_c(\mathbb R^2)\cap W^{1,2}(\mathbb R^2),
 \qquad
 \Delta(\mathscr Z_Fw)=(F^{-1})_\#(\Delta w),
\]
where the measure on the right is extended by zero outside $U$.
Furthermore,
\[
 \|\nabla\mathscr Z_Fw\|_{L^2(\mathbb R^2)}
 =\|\nabla w\|_{L^2(U')},
 \qquad
 \|\Delta(\mathscr Z_Fw)\|_{\cM}
 =\|\Delta w\|_{\cM}.
\]
\end{lemma}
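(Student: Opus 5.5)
The plan is to prove both parts from two facts about a conformal or anticonformal diffeomorphism $F$: the Dirichlet integral is conformally invariant, and the Laplacian transforms by the Jacobian. Write $J=|\det DF|=|F'|^2$, where $F'$ is the complex derivative of $F$ or of $\overline F$. For a conformal map, $DF^{\mathsf T}DF=J\,\mathrm{Id}$ holds pointwise. Hence for $\widetilde v=v\circ F^{-1}$ and a test function $\widetilde\zeta$ on $D'$, the substitution $y=F(x)$ gives
\[
 \int_{D'}\nabla\widetilde v\cdot\nabla\widetilde\zeta\,\dd y=\int_D\nabla v\cdot\nabla(\widetilde\zeta\circ F)\,\dd x .
\]
The Jacobian factor is cancelled exactly by the two factors of $|DF^{-1}|$.

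\textbf{First part.} I will first check that $\widetilde\zeta\in\mathcal T_{\Gamma'}(D')$ implies $\zeta:=\widetilde\zeta\circ F\in\mathcal T_\Gamma(D)$. The support of $\widetilde\zeta$ meets $\partial D'$ in a compact subset of $\Gamma'$. Since $F$ extends as a diffeomorphism across $\Gamma$, the function $\zeta$ is the restriction of a smooth function near $\overline D\cap F^{-1}(\supp\widetilde\zeta)$. I then cut off with a smooth function equal to $1$ on that compact set, which yields an ambient $C_c^\infty$ representative whose support meets $\partial D$ compactly inside $\Gamma$.

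\emph{Caveat:} the compactly supported ambient extension needs $F^{-1}(\supp\widetilde\zeta\cap\overline{D'})$ to be compact in $\overline D$. I will read the hypothesis as giving this, with $F$ proper up to $\Gamma$, as in the chart setting where $F$ is a diffeomorphism of an ambient open set.

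Next, $W^{1,2}_{\rm loc}(D'\cup\Gamma')$ membership of $\widetilde v$ follows from the same invariance of the Dirichlet integral. The $L^2$ part is handled by local boundedness of $J$ on compact pieces. Applying \eqref{eq:weak-green-measure} for $v$ with the test function $\zeta$ gives $-\int\zeta\,\dd\mu+\int_\Gamma\zeta\,\dd\nu$. By the definition of pushforward, this equals $-\int\widetilde\zeta\,\dd F_\#\mu+\int_{\Gamma'}\widetilde\zeta\,\dd F_\#\nu$, since the continuous trace of $\zeta$ is the trace of $\widetilde\zeta$ composed with $F$. This is the claimed Green identity on $D'$; no normal-derivative computation is needed, because orientation issues are absorbed into the weak definition. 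The identity $|F_\#\mu|=F_\#|\mu|$ holds because $F$ is a Borel bijection: the Hahn decomposition pushes forward to a Hahn decomposition.

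\textbf{Second part.} Since $\supp w\Subset U'$, the compact set $F^{-1}(\supp w)$ lies in $U$. So $\mathscr Z_Fw$ vanishes in a neighbourhood of $\partial U$, and it is continuous with compact support. It lies in $W^{1,2}$ because the energy identity above applies and $J$ is bounded on $F^{-1}(\supp w)$. For $\varphi\in C_c^\infty(\R^2)$, I pick $\chi\in C_c^\infty(U)$ equal to $1$ near $F^{-1}(\supp w)$. Then
\[
 \langle\Delta\mathscr Z_Fw,\varphi\rangle=-\int_U\nabla(w\circ F)\cdot\nabla(\chi\varphi).
\]
By the invariance this equals $-\int_{U'}\nabla w\cdot\nabla\bigl((\chi\varphi)\circ F^{-1}\bigr)$. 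Here $(\chi\varphi)\circ F^{-1}\in C_c^\infty(U')$, so this is $\int(\chi\varphi)\circ F^{-1}\,\dd\Delta w$. Because $\Delta w$ is supported in $\supp w$, where the cutoff $\chi\circ F^{-1}$ equals $1$, this is $\int\varphi\,\dd (F^{-1})_\#\Delta w$. The two norm equalities then follow from the energy identity and from the pushforward of total variation, as in the first part.

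The main obstacle is only the bookkeeping of test-function classes, namely that composition with $F$ preserves $\mathcal T_\Gamma$. I will handle it by the cutoff argument above rather than by any global extension of $F$.
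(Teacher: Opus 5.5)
Your proposal is correct and follows essentially the paper's own argument: you show that composition with $F$ carries $\mathcal T_{\Gamma'}(D')$ into $\mathcal T_\Gamma(D)$ by a cutoff near $\Gamma$, combine conformal invariance of the Dirichlet integral with the definition of pushforward, and, for the zero extension, insert a cutoff equal to one near $F^{-1}(\supp w)$. The properness you flag as a caveat is automatic under the stated hypotheses. Because $D'$ lies locally on one side of its Lipschitz boundary, the local inverse of the smooth extension across $\Gamma'$ agrees with $F^{-1}$ on $D'$ near $\Gamma'$, so $F^{-1}$ extends continuously to $D'\cup\Gamma'$; the paper asserts exactly this in one line.
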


\vskip0.12in
\begin{proof}
We first verify that the test class is preserved. Let
$\zeta'=\Xi|_{\overline{D'}}\in\mathcal T_{\Gamma'}(D')$, with
$\Xi\in C_c^\infty(\mathbb R^2)$. Set
$\mathcal K=\supp\Xi\cap\overline{D'}$. The support condition gives
$\mathcal K\subset D'\cup\Gamma'$. The local inverses across $\Gamma'$
agree with the interior inverse, so $F^{-1}$ extends continuously to
$D'\cup\Gamma'$. Thus $F^{-1}(\mathcal K)$, with these boundary values
understood, is a compact subset of $D\cup\Gamma$.
Choose an open neighborhood in $\R^2$ of the compact set
$F^{-1}(\supp\Xi\cap\Gamma')$ on which $F$ has a smooth extension
$\widetilde F$, with the neighborhood meeting $\partial D$ only in
$\Gamma$. Choose $\eta\in C_c^\infty(\R^2)$ supported in that
neighborhood and equal to one near this compact set. The support of
$\Xi\circ F$ is contained in $F^{-1}(\mathcal K)$, and multiplication
by $1-\eta$ removes a neighborhood of its boundary portion. Hence the function
\[
 (1-\eta)(\Xi\circ F)
\]
has compact support in the interior of $D$ and hence has a smooth
zero extension. Therefore
\[
 \eta(\Xi\circ\widetilde F)
 +\operatorname{Ext}_0\!\bigl((1-\eta)(\Xi\circ F)\bigr)
\]
is an ambient compactly supported smooth function whose restriction
to $\overline D$ equals $\Xi\circ F$. Its boundary support is
compactly contained in $\Gamma$. Thus
$(\zeta'\circ F)|_{\overline D}\in\mathcal T_\Gamma(D)$.

Two-dimensional conformal invariance of the Dirichlet form now gives
\[
 \int_{D'}\nabla(v\circ F^{-1})\cdot\nabla\zeta'
 =\int_D\nabla v\cdot\nabla(\zeta'\circ F)
 =-\int_{D'}\zeta'\,\dd(F_\#\mu)
   +\int_{\Gamma'}\operatorname{Tr}\zeta'\,\dd(F_\#\nu).
\]
This proves the localized Green assertion. The variation identities
follow from injectivity of $F$.

\vskip0.12in
For the second assertion, $\supp(w\circ F)\Subset U$, so there is an
open neighborhood $N$ of $\partial U$ in $\R^2$ such that
$w\circ F=0$ on $U\cap N$.
Consequently its zero extension is continuous and belongs to
$W^{1,2}(\mathbb R^2)$. Since
$\supp(\Delta w)\subset\supp w$, one may insert a cutoff equal to one
near $\supp w$ in every distributional pairing. Conformal invariance
then gives
\[
 \langle\Delta(\mathscr Z_Fw),\varphi\rangle
 =\langle\Delta w,\varphi\circ F^{-1}\rangle
 =\langle(F^{-1})_\#(\Delta w),\varphi\rangle.
\]
Vanishing near the artificial boundary excludes every additional measure
supported there, including atomic terms. Energy and total variation are preserved by the same
change of variables and injectivity.
\end{proof}

\vskip0.32in
\begin{lemma}[Removability of an isolated point for a finite-energy potential]\label{lem:removable-point}
Let $B\subset\R^2$ be an open ball and let $q\in B$. Suppose
$V\in W^{1,2}_{\mathrm{loc}}(B\setminus\{q\})\cap L^2(B)$ has a continuous representative on
$B\setminus\{q\}$ with a finite limit at $q$, $\nabla V\in L^2(B)$, and
\[
 \Delta V=\mu\quad\text{in }B\setminus\{q\},
\]
where $\mu$ is a signed Radon measure on $B\setminus\{q\}$ satisfying
$|\mu|(B\setminus\{q\})<\infty$.
After defining $V(q)$ by the limit, one has $V\in W^{1,2}(B)$ and
\[
 \Delta V=\widetilde\mu\quad\text{in }B,
\]
where $\widetilde\mu$ is the extension of $\mu$ assigning zero mass to $\{q\}$. In particular, no
additional Laplacian atom is created at $q$.
\end{lemma}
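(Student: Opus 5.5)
The plan is the standard logarithmic cutoff argument, which is available because points have zero capacity in two dimensions. Since $\{q\}$ is Lebesgue null and $\nabla V\in L^2(B)$ is assumed, the first step is to show that the pointwise gradient on $B\setminus\{q\}$ is also the distributional gradient on $B$. The second step is to test the equation against cutoffs vanishing near $q$ and pass to the limit.

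\emph{Step 1 (Sobolev membership).} Let $\eta_k$ be the logarithmic cutoffs
\[
 \eta_k(x)=\min\Bigl\{1,\max\Bigl\{0,\tfrac{\log(|x-q|/r_k^2)}{\log(1/r_k)}\Bigr\}\Bigr\},\qquad r_k\downarrow0.
\]
Then $\eta_k=0$ on $B(q,r_k^2)$, $\eta_k=1$ outside $B(q,r_k)$, and
\[
 \|\nabla\eta_k\|_{L^2}^2\simeq 2\pi/\log(1/r_k)\to0.
\]
For $\psi\in C_c^\infty(B)$, the function $\eta_k\psi$ is an admissible Lipschitz test function in $B\setminus\{q\}$. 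This gives
\[
 \int V\,\partial_i(\eta_k\psi)=-\int\partial_iV\,\eta_k\psi .
\]
Boundedness of $V$ near $q$ (from the finite limit) gives
\[
 \Bigl|\int V\psi\,\partial_i\eta_k\Bigr|\le \|V\psi\|_{L^\infty(B(q,r_k))}\,\|\nabla\eta_k\|_{L^1}\to0.
\]
Letting $k\to\infty$ with dominated convergence then yields $\partial_iV$ as the weak derivative on $B$, so $V\in W^{1,2}(B)$. Continuity at $q$ after the redefinition is immediate.

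\emph{Step 2 (no atom).} Fix $\varphi\in C_c^\infty(B)$. By hypothesis,
\[
 -\int\nabla V\cdot\nabla(\eta_k\varphi)=\int\eta_k\varphi\,\dd\mu .
\]
Strictly, the hypothesis only covers smooth test functions in $B\setminus\{q\}$, but $\eta_k\varphi$ is Lipschitz with compact support away from $q$. Mollifying it slightly, and using that $\nabla V\in L^2$ and $\mu$ is Radon, extends the identity to this test function. As $k\to\infty$ the right side tends to $\int_{B\setminus\{q\}}\varphi\,\dd\mu=\int\varphi\,\dd\widetilde\mu$ by dominated convergence, since $|\mu|$ is finite and $\eta_k\to\one_{B\setminus\{q\}}$ pointwise. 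On the left, $\int\eta_k\nabla V\cdot\nabla\varphi\to\int\nabla V\cdot\nabla\varphi$. The error term is controlled by Cauchy--Schwarz:
\[
 \Bigl|\int\varphi\,\nabla V\cdot\nabla\eta_k\Bigr|\le\|\varphi\|_\infty\,\|\nabla V\|_{L^2(B(q,r_k))}\,\|\nabla\eta_k\|_{L^2}\to0 .
\]
Both factors vanish in the limit. Hence $-\int\nabla V\cdot\nabla\varphi=\int\varphi\,\dd\widetilde\mu$, that is, $\Delta V=\widetilde\mu$ in $\mathcal D'(B)$ with no mass at $q$.

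\emph{Main obstacle.} The key point is the vanishing of this last commutator term. It relies on two facts used together: the two-dimensional capacity of a point is zero, so $\|\nabla\eta_k\|_2\to0$, and the finite energy of $V$ localizes, so $\|\nabla V\|_{L^2(B(q,r_k))}\to0$. The continuity and finite limit of $V$ are used only in Step 1. Without finite energy, $\log|x-q|$ would be a counterexample, since its Laplacian has an atom at $q$. The remaining bookkeeping is routine.
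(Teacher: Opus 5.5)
Your proposal is correct and follows the paper's proof essentially step for step. Both arguments use logarithmic cutoffs $\eta$ with $\|\nabla\eta\|_2\to0$, test both the punctured weak-gradient identity and the punctured Laplacian identity with $\eta\varphi$, kill the two cutoff error terms, and pass to the limit by dominated convergence for the Lebesgue terms and for the finite measure $|\mu|$.

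The differences are cosmetic. In Step~1 the paper bounds the cutoff error by $\|V\|_2\|\nabla\eta\|_2$ instead of using boundedness of $V$ near $q$. In Step~2 the global bound $\|\nabla V\|_{L^2(B)}<\infty$ already suffices, so localizing the energy near $q$ is not needed. Your mollification remark justifying the Lipschitz test function $\eta\varphi$ is a detail the paper leaves implicit.
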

\vskip0.12in

\begin{proof}
For $0<\eps<\min\{1,\dist(q,\partial B)\}$, choose radial logarithmic
cutoffs $\eta_\eps$ which vanish on $B(q,\eps^2)$, equal one outside
$B(q,\eps)$, satisfy $0\leq\eta_\eps\leq1$, and obey
$\|\nabla\eta_\eps\|_2\to0$.
For $\varphi\in C_c^\infty(B)$ and $i\in\{1,2\}$, testing the punctured weak-gradient
identity and the punctured Laplacian identity with $\eta_\eps\varphi$
produces only the two cutoff errors
\[
 \left|\int_B V\varphi\,\partial_i\eta_\eps\right|
 +
 \left|\int_B\varphi\nabla V\cdot\nabla\eta_\eps\right|
 \leq\|\varphi\|_\infty(\|V\|_2+\|\nabla V\|_2)
       \|\nabla\eta_\eps\|_2=o(1).
\]
Dominated convergence for the remaining Lebesgue terms
and for the finite measure $\mu$ yields
\[
 \int_B V\,\partial_i\varphi
 =-\int_B(\partial_iV)\varphi,
 \qquad
 -\int_B\nabla V\cdot\nabla\varphi
 =\int_{B\setminus\{q\}}\varphi\,\dd\mu.
\]
Hence $V\in W^{1,2}(B)$ and $\Delta V=\widetilde\mu$ with
$\widetilde\mu(\{q\})=0$. The prescribed finite limit supplies the
continuous representative at $q$.
\end{proof}

\vskip0.32in

\subsection{\bf Comparison of Dirichlet-to-Neumann maps on strips}

For \(a>0\), put \(S_a=\mathbb R\times(0,a)\).  We use the unitary
Fourier transform
\[
 \widehat h(\xi)=(2\pi)^{-1/2}\int_{\mathbb R}e^{-it\xi}h(t)\,\dd t.
\]
Its inverse is
\[
 \check k(t)=(2\pi)^{-1/2}\int_{\mathbb R}e^{it\xi}k(\xi)\,\dd\xi;
 \qquad \widehat{\delta_0}=(2\pi)^{-1/2}.
\]
We write $v\in H^1_{\rm loc}(\overline{S_a})$ when
$\chi v\in H^1(S_a)$ for every $\chi\in C_c^\infty(\R^2)$.
Let \(\mathscr G\) be the following width-independent class of ordered
boundary pairs. A pair
\(g=(g_{\rm lo},g_{\rm up})\in C^\infty(\mathbb R;\mathbb R^2)\)
belongs to \(\mathscr G\) if there are \(L\in\mathbb R\) and \(T\in\mathbb R\)
such that
\[
 g(t)=0\quad(t\geq T),\qquad
 \partial_t^k\bigl(g(t)-(L,L)\bigr)=O_k(e^t)\quad(t\to-\infty)
\]
for every \(k\geq0\). The smooth compact-sector traces used below satisfy
these conditions; only this inclusion is needed. Set
\[
 g_+=\frac{g_{\rm lo}+g_{\rm up}}{2},\qquad
 g_-=\frac{g_{\rm lo}-g_{\rm up}}{2},
 \qquad P_+g=(g_+,g_+),\qquad P_-g=(g_-,-g_-).
\]
For ordered pairs of finite signed Radon measures we write
\[
 \mathcal M_\oplus(\mathbb R)
 :=\mathcal M(\mathbb R)\oplus_1\mathcal M(\mathbb R)
 =\bigl(C_0(\mathbb R)\oplus_\infty C_0(\mathbb R)\bigr)^*.
\]
For an ordered boundary pair $\psi=(\psi_{\rm lo},\psi_{\rm up})$,
we use $\|\psi\|_\infty:=\max\{\|\psi_{\rm lo}\|_\infty,
\|\psi_{\rm up}\|_\infty\}$. Both \(P_+\) and \(P_-\) are contractions
on $\mathcal M_\oplus(\mathbb R)$.

\vskip0.2in

\begin{lemma}[Comparison of strip Dirichlet-to-Neumann maps]\label{lem:direct-strip-transfer}
For every \(a>0\) and \(g\in\mathscr G\), there is a unique weakly harmonic
function \(H_ag\) on \(S_a\) having trace \(g\) and
\(H_ag\in H^1_{\rm loc}(\overline{S_a})\),
\(\nabla H_ag\in L^2(S_a)\). It is the classical strip Poisson extension,
is bounded in absolute value by
\(\max\{\|g_{\rm lo}\|_\infty,\|g_{\rm up}\|_\infty\}\), and converges uniformly in
\(0\leq s\leq a\) to \(L\) as \(t\to-\infty\) and to zero as
\(t\to+\infty\).

For \(\psi\in C_c^\infty(\mathbb R;\mathbb R^2)\), define
\[
 \langle\Lambda_ag,\psi\rangle
 :=\int_{S_a}\nabla H_ag\cdot\nabla H_a\psi.
\]
Then \(\Lambda_ag\in\mathcal S'(\mathbb R;\mathbb R^2)\), and
\begin{equation}\label{eq:direct-dtn-diagonal}
 \widehat{\Lambda_ag}(\xi)
 =m_{a,+}(\xi)\widehat{P_+g}(\xi)
  +m_{a,-}(\xi)\widehat{P_-g}(\xi),
 \quad
 \begin{cases}
  m_{a,+}(\xi)=|\xi|\tanh(a|\xi|/2),\\
  m_{a,-}(\xi)=|\xi|\coth(a|\xi|/2).
 \end{cases}
\end{equation}
Here the symbols have smooth even extensions at zero, with respective
values \(0\) and \(2/a\).

For every \(a,b>0\), there is a translation-invariant operator
\(\mathscr C_{a,b}\) on ordered pairs of tempered distributions such that
\begin{equation}\label{eq:direct-width-transfer}
 \Lambda_bg=\mathscr C_{a,b}\Lambda_ag\qquad(g\in\mathscr G),
\end{equation}
and
\begin{equation}\label{eq:direct-width-measure}
 \|\mathscr C_{a,b}\lambda\|_{\mathcal M_\oplus}
 \leq C_{a,b}\|\lambda\|_{\mathcal M_\oplus}
 \qquad\bigl(\lambda\in\mathcal M_\oplus(\mathbb R)\bigr).
\end{equation}
\end{lemma}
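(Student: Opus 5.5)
I will prove the lemma in three stages: existence and uniqueness with the qualitative properties, the Fourier diagonalization of $\Lambda_a$, and the width comparison.

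\emph{Existence and uniqueness.} For existence, I take the classical strip Poisson extension. The strip Poisson kernels for the lower and upper edges are positive and sum to one. Hence $H_ag$ is bounded by $\max\{\|g_{\rm lo}\|_\infty,\|g_{\rm up}\|_\infty\}$ by the maximum principle. It converges uniformly to $L$ as $t\to-\infty$ and to $0$ as $t\to+\infty$, because the kernels have exponentially decaying tails and $g$ has the stated end behaviour.

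For finite energy, I split $g=L\chi_-(t)(1,1)+g^\sharp$. Here $\chi_-$ is a smooth step equal to $1$ near $-\infty$ and to $0$ for $t\geq T$, and $g^\sharp$ decays exponentially with all derivatives. The function $L\chi_-(t)$ itself extends to the strip with $\nabla\in L^2$. The harmonic correction $H_ag-L\chi_-$ solves the Poisson problem with source $L\chi_-''$, which has exponential decay, and with boundary data $g^\sharp\in H^{1/2}$. It therefore has $\nabla\in L^2$ by the standard strip energy estimate.

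Uniqueness follows because a difference $w$ is weakly harmonic with zero trace and $\nabla w\in L^2(S_a)$. Odd reflection across both edges makes $w$ periodic in $s$ and harmonic in the plane with $\nabla w\in L^2$ on every period strip. Its expansion in $\sin(k\pi s/a)$ consists of modes $e^{\pm k\pi t/a}$, and none of these has finite energy, so $w=0$.

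\emph{Fourier diagonalization.} Next I compute $\Lambda_a$ in Fourier variables. For $\psi\in C_c^\infty$, Green's identity gives $\langle\Lambda_ag,\psi\rangle=\int(-\partial_sH_ag|_{s=0})\psi_{\rm lo}+\int(\partial_sH_ag|_{s=a})\psi_{\rm up}$. To justify this, I move all derivatives onto $H_a\psi$, which decays rapidly, so the boundary terms are paired against the Schwartz-type function $H_a\psi$.

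I then decompose $g$ into its symmetric part $P_+g$ and antisymmetric part $P_-g$ about $s=a/2$. In Fourier variables the harmonic extensions are $\cosh(\xi(s-a/2))/\cosh(\xi a/2)$ and $\sinh(\xi(a/2-s))/\sinh(\xi a/2)$. Their outward normal derivatives produce the multipliers $|\xi|\tanh(a|\xi|/2)$ and $|\xi|\coth(a|\xi|/2)$.

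The symmetric part $P_+g$ tends to the constant $L$ at $-\infty$, so $\widehat{P_+g}$ is singular at $\xi=0$. However, $m_{a,+}$ vanishes to second order there, because $|\xi|\tanh(a|\xi|/2)=a\xi^2/2+O(\xi^4)$ is smooth and even. The product therefore defines a tempered distribution. I pair against $\widehat\psi$ and use the Plancherel identity for the energy.

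\emph{Width comparison.} I define $\mathscr C_{a,b}$ as the Fourier multiplier $(m_{b,+}/m_{a,+})P_++(m_{b,-}/m_{a,-})P_-$. Both ratios extend smoothly at $0$, with values $b/a$ and $a/b$, because the double zeros cancel. Then \eqref{eq:direct-width-transfer} holds on $\mathscr G$ by \eqref{eq:direct-dtn-diagonal}, since $P_\pm$ commute with the multipliers and are idempotent on the respective parts.

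For \eqref{eq:direct-width-measure}, it suffices, since $P_\pm$ are contractions on $\mathcal M_\oplus$, to show that each ratio $r(\xi)$ is the Fourier transform of a finite measure. I write $r=r(\infty)+(r-r(\infty))$, where $r(\infty)=1$ as $|\xi|\to\infty$. The difference $r-1$ is smooth, even, and decays exponentially together with all derivatives; the relevant expressions are $\tanh(b|\xi|/2)/\tanh(a|\xi|/2)-1$ and its $\coth$ analogue. In particular $r-1$ is a Schwartz function, so its inverse transform is in $L^1$. Hence $\mathscr C_{a,b}$ acts on $\mathcal M_\oplus$ as $\delta_0$ plus convolution with an $L^1$ kernel, which gives $C_{a,b}$.

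The main obstacle is the zero-frequency issue. The data $g$ carry the constant $L$ at $-\infty$, so $\Lambda_ag$ is only tempered, and the transfer identity must be checked distributionally. The key point, on which I would spend the most care, is that $m_{a,+}$ and $m_{b,+}$ vanish to the same order at zero. This makes the ratio smooth and lets the multiplier identity pass through the singularity of $\widehat{P_+g}$ at $\xi=0$.
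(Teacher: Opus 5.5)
Your proof is correct, and its third stage coincides with the paper's: the quotient symbols $r_\pm$ satisfy $r_\pm-1\in\mathcal S(\R)$, so $\mathscr C_{a,b}$ acts on each range of $P_\pm$ as $\delta_0$ plus an $L^1$ convolution, with no division at $\xi=0$.

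The first two stages run in the opposite direction from the paper. The paper starts variationally: it uses the affine lift $g_++(1-2s/a)g_-$ and Lax--Milgram, and proves uniqueness by the vertical Poincar\'e inequality and the zero-trace characterization. Finite energy is therefore free. The work goes into two transfers, both done by the approximation $\rho_N\to\rho$, energy convergence, and continuity of smooth multipliers on $\mathcal S'$:
\begin{itemize}
\item identifying $H_ag$ with the Poisson extension;
\item extending the Fourier formula from Schwartz data to the common step $\rho(1,1)$.
\end{itemize}

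You start from the Poisson extension instead. The sup bound and the end limits are then free, and finite energy is what must be earned. Your step ``solves a Poisson problem with nice data, hence $\nabla\in L^2$'' needs one more sentence: you must identify the bounded function $H_ag-L\chi_-$ with the finite-energy variational solution. Your own mode expansion supplies this. Each sine coefficient of the difference has the form $A_ke^{k\pi t/a}+B_ke^{-k\pi t/a}$ and is bounded, hence zero. Your reflection-and-modes uniqueness argument is likewise a valid substitute for the paper's Poincar\'e argument.

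For the diagonalization, the cleanest rigorous form of your computation is the symmetric Green identity $\langle\Lambda_ag,\psi\rangle=\langle g,\Lambda_a\psi\rangle$ on rectangles $[-N,N]\times(0,a)$. The side terms vanish because $H_ag$ is bounded and $\nabla H_a\psi$ decays exponentially. This reduces the formula to the Schwartz computation for $\psi$ plus the definition of a real even multiplier on $\mathcal S'$ (note that $P_\pm$ are symmetric matrices). It bypasses the paper's limiting argument entirely. In exchange, the paper's route stays inside the energy space and obtains $\Lambda_ag\in\mathcal S'$ from a single Cauchy--Schwarz bound.

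Two small corrections:
\begin{itemize}
\item $m_{a,+}\widehat{P_+g}$ is tempered simply because $m_{a,+}$ is smooth with polynomially bounded derivatives. The double zero does something stronger: in each component it makes the product the Schwartz function $-(\tanh(a\xi/2)/\xi)\,\widehat{g_+''}$.
\item $r_-=m_{b,-}/m_{a,-}$ is smooth at $0$ not through cancelling zeros, but because $m_{a,-}(0)=2/a\neq0$.
\end{itemize}
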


\begin{proof}
The affine lift
\[
 G(t,s)=g_+(t)+\left(1-\frac{2s}{a}\right)g_-(t)
\]
has trace \(g\) and
\[
 \|\nabla G\|_2^2
 =a\|g_+'\|_2^2+\frac a3\|g_-'\|_2^2+\frac4a\|g_-\|_2^2<\infty.
\]
The vertical Poincar\'e inequality makes the gradient norm coercive on
\(H^1_0(S_a)\), so Lax--Milgram gives the unique correction \(v\) satisfying
\[
 \int_{S_a}\nabla v\cdot\nabla\phi
 =-\int_{S_a}\nabla G\cdot\nabla\phi
 \qquad(\phi\in H^1_0(S_a)).
\]
Thus \(H_ag=G+v\) is weakly harmonic. If \(w\) is the difference of two
such extensions, then its two horizontal traces vanish. Fubini's theorem
and the one-dimensional vertical Poincar\'e inequality give
\[
 \|w\|_{L^2(S_a)}\leq C_a\|\partial_sw\|_{L^2(S_a)}.
\]
Hence \(w\in H^1(S_a)\), and the zero-trace characterization on the
Lipschitz strip gives \(w\in H^1_0(S_a)\). Testing harmonicity by \(w\)
proves uniqueness.

For $g\in\mathcal S(\mathbb R;\mathbb R^2)$, the same affine lift $G$
belongs to $H^1(S_a)$, so the same variational construction defines
$H_ag\in H^1(S_a)$. On $\mathscr G\cap\mathcal S(\mathbb R;\mathbb R^2)$,
the two definitions agree by uniqueness. The same Dirichlet pairing
defines $\Lambda_ag$ for Schwartz data. We use this extension of the
notation in the Fourier computation below.

For $g\in\mathscr G$, it remains to identify this variational solution
with the classical strip Poisson extension for boundary data with a
possibly nonzero common limit at \(-\infty\). Choose
\(\rho\in C^\infty(\mathbb R)\) with \(0\leq\rho\leq1\), \(\rho=1\) on
\((-\infty,-1]\) and \(\rho=0\) on \([0,\infty)\). Then
\(g-L\rho(1,1)\in\mathcal S(\mathbb R;\mathbb R^2)\). For \(N\geq2\),
choose \(\chi_N\in C^\infty(\mathbb R)\) with \(0\leq\chi_N\leq1\),
\(\chi_N=0\) on
\((-\infty,-2N]\), \(\chi_N=1\) on
\([-N,\infty)\), and \(\|\chi_N'\|_\infty\leq C/N\), and put
\(\rho_N=\chi_N\rho\in C_c^\infty(\mathbb R)\). The transition lies where
\(\rho\equiv1\), and therefore
\[
\begin{gathered}
 \rho_N\to\rho\quad\hbox{locally uniformly and in }\mathcal S',\\
 \|\nabla H_a((\rho-\rho_N)(1,1))\|_2^2
 \leq a\|(\rho-\rho_N)'\|_2^2\lesssim_a N^{-1}.
\end{gathered}
\]
To identify the functions as well as their gradients, fix a bounded
interval \(I\subset\mathbb R\) and set
\(w_N=H_a((\rho-\rho_N)(1,1))\). For all sufficiently large \(N\),
\(\rho-\rho_N=0\) on \(I\), so both horizontal traces of \(w_N\) on
\(I\times(0,a)\) vanish. The one-dimensional vertical Poincar\'e inequality
therefore gives
\[
 \|w_N\|_{L^2(I\times(0,a))}
 \leq a\|\partial_s w_N\|_{L^2(I\times(0,a))}
 \leq a\|\nabla w_N\|_{L^2(S_a)}\longrightarrow0.
\]
Together with the displayed energy convergence, this proves convergence in
\(H^1(I\times(0,a))\). By linearity, the variational extensions of
\(\rho_N(1,1)\) thus converge locally to \(H_a(\rho(1,1))\).
Their classical Poisson formulas converge locally uniformly to the Poisson
extension of \(\rho(1,1)\): the difference of the boundary data is bounded
by one, vanishes for \(t\geq-N\), and the strip Poisson kernels have
exponentially decaying tails. The two limits agree, identifying the
variational solution with the Poisson extension.
The two harmonic-measure kernels are positive, their masses add to one,
and their tails are tight uniformly for \(0\leq s\leq a\); away from the
boundary delta limits, for example,
\[
 P_{a,0}(x,s)+P_{a,a}(x,s)\leq C_a e^{-\pi|x|/a}
 \qquad(|x|\geq1,\ 0<s<a).
\]
Since both boundary components tend to the same \(L\) at \(-\infty\) and
to zero at \(+\infty\), this proves the maximum estimate and both endpoint
limits uniformly in \(s\). Moreover, Cauchy--Schwarz and minimality against
the affine lift give
\[
 |\langle\Lambda_ag,\psi\rangle|
 \leq \|\nabla H_ag\|_2\|\nabla H_a\psi\|_2
 \leq C_{a,g}\bigl(\|\psi_+'\|_2+\|\psi_-'\|_2
                         +\|\psi_-\|_2\bigr).
\]
Thus \(\psi\mapsto\langle\Lambda_ag,\psi\rangle\) is continuous in the
Schwartz topology.

Solving the Fourier-mode equation
\(\partial_s^2\widehat H=\xi^2\widehat H\), and taking the outward
derivatives at \(s=0,a\), gives
\[
 |\xi|\begin{pmatrix}
  \coth(a|\xi|)&-\operatorname{csch}(a|\xi|)\\
 -\operatorname{csch}(a|\xi|)&\coth(a|\xi|)
 \end{pmatrix}.
\]
The matrix extends at \(\xi=0\) as
\(a^{-1}\left(\begin{smallmatrix}1&-1\\-1&1\end{smallmatrix}\right)\).
Diagonalizing on the ranges of \(P_+\) and \(P_-\) proves
\eqref{eq:direct-dtn-diagonal} for Schwartz boundary data. It also holds
for the common smooth step function used above. Indeed, for every
\(\psi\in C_c^\infty(\mathbb R;\mathbb R^2)\),
\[
 \bigl|\langle\Lambda_a((\rho-\rho_N)(1,1)),\psi\rangle\bigr|
 \leq \|\nabla H_a((\rho-\rho_N)(1,1))\|_2
      \|\nabla H_a\psi\|_2\longrightarrow0.
\]
Thus the left-hand side converges in \(\mathcal D'\). On the Fourier side,
\(\rho_N\to\rho\) in \(\mathcal S'\), and multiplication by the smooth
symbols \(m_{a,\pm}\), whose derivatives have polynomial growth, is
continuous on \(\mathcal S'\).
Passing to the limit proves the formula in \(\mathcal D'\); since both
sides are tempered, it is also an identity in \(\mathcal S'\). This proves
the formula for \(\rho(1,1)\). By linearity and
\(g-L\rho(1,1)\in\mathcal S(\mathbb R;\mathbb R^2)\), it therefore holds
for every \(g\in\mathscr G\). In particular, the vanishing plus symbol
handles the common zero mode without any division by it.

On the two ranges the quotient symbols from width \(a\) to width \(b\) are
\[
 r_+(\xi)=\frac{\tanh(b|\xi|/2)}{\tanh(a|\xi|/2)},
 \qquad
 r_-(\xi)=\frac{\tanh(a|\xi|/2)}{\tanh(b|\xi|/2)}.
\]
They extend smoothly and evenly through zero, with values \(b/a\) and
\(a/b\), and \(r_\pm-1\in\mathcal S(\mathbb R)\); the last assertion follows
from the exponential decay, with all derivatives, of
$\tanh(cx)-1$ as $x\to+\infty$ for $c>0$, together with evenness and
smoothness at zero.  If
\[
 \varphi_\pm=(2\pi)^{-1/2}\check{(r_\pm-1)},\qquad
 \mu_\pm=\delta_0+\varphi_\pm(t)\,\dd t,
\]
define, for \(T\in\mathcal S'(\mathbb R;\mathbb R^2)\),
\[
 \mathscr C_{a,b}T
 =T+\varphi_+*(P_+T)+\varphi_-*(P_-T).
\]
Here convolution of a Schwartz function with a tempered distribution is
well-defined.  For $T\in\mathcal M_\oplus(\mathbb R)$, the same expression is the
ordinary measure convolution
\(\mu_+*(P_+T)+\mu_-*(P_-T)\).
The unitary Fourier convention gives
\(\sqrt{2\pi}\,\widehat{\mu_\pm}=r_\pm\), so
the smooth symbol identities
\(m_{b,+}=r_+m_{a,+}\) and \(m_{b,-}=r_-m_{a,-}\) prove
\eqref{eq:direct-width-transfer}; in particular, no division at the zero
mode is being made.
Convolution of finite measures and contraction of \(P_\pm\) give
\eqref{eq:direct-width-measure}, for example with
\(C_{a,b}=\|\mu_+\|_{\mathcal M}+\|\mu_-\|_{\mathcal M}\).
Set $r=b/a$ and $\widetilde r_\pm(\eta):=r_\pm(\eta/a)$.
These rescaled symbols depend only on $r$. When $r$ ranges in a compact
subset of $(0,\infty)$, the functions $\widetilde r_\pm-1$ are bounded
in every Schwartz seminorm, so their inverse Fourier kernels have
uniformly bounded $L^1$ norms. The change of scale from
$\widetilde r_\pm-1$ to $r_\pm-1$ preserves these $L^1$ norms.
Thus $C_{a,b}$ depends only on $b/a$ and is locally bounded on $(0,\infty)$.
\end{proof}

\vskip0.2in

\subsection{\bf Extension from an infinite sector}

Put \(e_s=(\cos s,\sin s)\) and
\[
 F_\theta(t,s)=e^{t+is},\qquad
 F_\theta^c(t,s)=e^{t+i(\theta+s)},\qquad b=2\pi-\theta.
\]
Thus \(F_\theta:S_\theta\to W_\theta\) and
\(F_\theta^c:S_b\to W_\theta^{\mathrm c}\).  The exchange operator
\(\mathcal R(g_0,g_\theta)=(g_\theta,g_0)\) satisfies
\[
 \mathcal RP_+=P_+\mathcal R=P_+,\qquad
 \mathcal RP_-=P_-\mathcal R=-P_-,
\]
and consequently commutes with the operators comparing different strip widths and with
every \(\Lambda_a\).

\vskip0.2in

\begin{theorem}[Infinite-sector extension with measure-valued Laplacian]
\label{thm:sector-extension}
For every \(0<\theta<2\pi\), there exists a linear operator \(E_\theta\) on
data \(u=V|_{W_\theta}\), where \(V\in C_c^\infty(\mathbb R^2;\mathbb R)\),
such that
\[
 \begin{aligned}
 E_\theta u&=u&&\text{in }W_\theta,\\
 E_\theta u&\in C(\mathbb R^2)\cap W^{1,2}_{\rm loc}(\mathbb R^2),
 \qquad \nabla E_\theta u\in L^2(\mathbb R^2),\\
 \Delta(E_\theta u)&\in\mathcal M(\mathbb R^2),
 \end{aligned}
\]
and
\begin{equation}\label{eq:sector-extension}
 \|\Delta(E_\theta u)\|_{\mathcal M(\mathbb R^2)}
 \leq C_\theta\left(
  \|\Delta u\|_{L^1(W_\theta)}
  +\|\partial_n u\|_{L^1(\partial W_\theta)}
 \right).
\end{equation}
The extension has the common finite limit of the two traces at the vertex
and tends to zero at infinity.
\end{theorem}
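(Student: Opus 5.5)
The plan is to pull everything back to strips by the logarithmic charts $F_\theta$ and $F_\theta^c$ and to extend across the complementary sector by the harmonic function with the matching traces. Put $g=(u\circ F_\theta)|_{s=0},(u\circ F_\theta)|_{s=\theta}$, i.e.\ $g_{\rm lo}(t)=V(e^t)$ and $g_{\rm up}(t)=V(e^{t}e_\theta)$. Since $V\in C_c^\infty$, $g$ vanishes for large $t$ and $g(t)-(V(0),V(0))=O_k(e^t)$ with all derivatives as $t\to-\infty$, so $g\in\mathscr G$ with $L=V(0)$. The complementary sector $W_\theta^{\mathrm c}$ is traversed from the ray $\Gamma_\theta$ to $\Gamma_0$ as $s$ runs over $(0,b)$, so its lower and upper traces are $\mathcal R g$. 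Define
\[
 E_\theta u=u\ \text{on }\overline{W_\theta},\qquad
 E_\theta u=(H_b\mathcal Rg)\circ(F_\theta^c)^{-1}\ \text{on }W_\theta^{\mathrm c},
 \qquad E_\theta u(0)=V(0).
\]
By \cref{lem:direct-strip-transfer}, $H_b\mathcal Rg$ is the bounded Poisson extension, tends uniformly to $V(0)$ as $t\to-\infty$ and to $0$ as $t\to+\infty$. Hence $E_\theta u$ is continuous on $\R^2$, including at the vertex, and tends to zero at infinity. Conformal invariance of the Dirichlet integral gives $\|\nabla E_\theta u\|_{L^2(W_\theta^{\mathrm c})}=\|\nabla H_b\mathcal Rg\|_{L^2(S_b)}<\infty$. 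Linearity in $V$ is clear.

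Next I would compute the Laplacian on the punctured plane. On each open sector, apply \cref{lem:conformal-bookkeeping} to obtain localized Green identities on $\partial W_\theta\setminus\{0\}$. In $W_\theta$, $\Delta u$ is an $L^1$ function and the normal derivative is the smooth $\partial_nu$. In $W_\theta^{\mathrm c}$ the function is harmonic, and its normal derivative is the pushforward of the strip Dirichlet-to-Neumann distribution $\Lambda_b\mathcal Rg$. Adding the two identities for a test function $\varphi\in C_c^\infty(\R^2\setminus\{0\})$, the traces agree, so
\[
 \Delta E_\theta u=\Delta u\,\one_{W_\theta}-(F_\theta)_\#\bigl(\Lambda_\theta g\bigr)\;+\;\text{(the ray contribution from }W_\theta^{\mathrm c})
\]
is a jump of outward normal derivatives along the rays. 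The interior Neumann datum pulled back to the strip is exactly $\Lambda_\theta g$ plus a correction. The correction arises because $u\circ F_\theta$ is not harmonic: it is the pullback of the interior Laplacian measure $e^{2t}\Delta u\circ F_\theta\,\dd t\,\dd s$, which has the same total variation as $\Delta u$ on $W_\theta$. Concretely, let $w$ solve the Dirichlet problem with zero trace on $S_\theta$ and Laplacian equal to that pulled-back measure. Then $\Lambda_\theta g$ equals the Neumann trace of $u\circ F_\theta$ minus that of $w$. The Neumann trace of $w$ is a finite measure with mass at most the mass of the Laplacian datum, because harmonic-measure densities on the strip are positive with total mass one. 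Thus $\|\Lambda_\theta g\|_{\mathcal M_\oplus}\lesssim\|\Delta u\|_{L^1(W_\theta)}+\|\partial_nu\|_{L^1(\partial W_\theta)}$, using invariance of the $L^1$ boundary norm under the conformal change. \Cref{lem:direct-strip-transfer} then gives $\Lambda_b\mathcal Rg=\mathcal R\,\mathscr C_{\theta,b}\Lambda_\theta g$ with measure norm $\le C_{\theta,b}\|\Lambda_\theta g\|$. So the ray part of $\Delta E_\theta u$ is a finite measure on the open rays, controlled by the right side of \eqref{eq:sector-extension}.

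The remaining issue, which I expect to be the main obstacle, is the vertex and the passage from the punctured identity to $\R^2$. I will also need to make rigorous that the strip Dirichlet-to-Neumann distribution is genuinely a measure. It is defined by pairing with harmonic extensions of compactly supported $\psi$, whereas the Green identity needs it tested against traces of general ambient test functions. This requires a weak Green identity on the strip for $H^1_{\rm loc}$ functions with $L^2$ gradient. I would handle it by density, approximating $\psi$ through $H_b\psi$ and noting that $H_b\psi$ minimizes the energy among extensions. Once the ray measure is established, $E_\theta u\in W^{1,2}_{\rm loc}(\R^2\setminus\{0\})$ has continuous limit $V(0)$ at $0$, locally $L^2$ gradient, and finite Laplacian measure off $0$. \Cref{lem:removable-point} then removes the puncture with no atom at the vertex, which yields $\Delta E_\theta u\in\cM(\R^2)$ and the bound \eqref{eq:sector-extension}.
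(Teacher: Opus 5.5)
Your proposal is correct and follows the paper's proof essentially step for step. It uses the logarithmic pullback with $g\in\mathscr G$ and $L=V(0)$, the complementary extension $H_b\mathcal Rg$, and measure control of $\Lambda_\theta g$. It then applies the width transfer $\Lambda_b\mathcal Rg=\mathcal R\,\mathscr C_{\theta,b}\Lambda_\theta g$ and \cref{lem:removable-point} at the vertex. Your splitting $u\circ F_\theta=H_\theta g+w$ with the Poisson-kernel bound for $\partial_nw$ is the kernel form of the paper's duality identity \eqref{eq:direct-sector-green}, tested with $\Psi=H_\theta\psi$ and the maximum principle.

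Two small corrections. First, the interior ray term in your displayed formula for $\Delta E_\theta u$ should be $-(\partial_nu)\,\mathcal H^1\lfloor\partial W_\theta$, not $-(F_\theta)_\#(\Lambda_\theta g)$. Second, the ray Green identity on $W_\theta^{\mathrm c}$ holds because $\zeta-H_b\psi\in H^1_0(S_b)$ and $H_b\mathcal Rg$ is weakly harmonic against $H^1_0(S_b)$. That orthogonality is what is needed, not the energy minimality of $H_b\psi$.
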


\vskip0.2in

\begin{proof}
In logarithmic coordinates the ordered boundary trace is
\[
 g(t)=\bigl(u(e^te_0),u(e^te_\theta)\bigr).
\]
Taylor expansion of \(V\) at the origin and compact support show that
\(g\in\mathscr G\), with \(L=V(0)\).  Let
\(\lambda_\theta=\Lambda_\theta g\).  We first prove that this distribution
is an ordered pair of finite signed Radon measures.  Given
\(\psi\in C_c^\infty(\mathbb R;\mathbb R^2)\), put
\[
 \Psi(re_s)=(H_\theta\psi)(\log r,s),
 \qquad r>0,\quad0<s<\theta.
\]
The maximum principle gives \(\|\Psi\|_\infty\leq\|\psi\|_\infty\).
The difference $H_\theta g-u\circ F_\theta$ has zero trace and finite
Dirichlet energy. The vertical Poincar\'e inequality and the zero-trace
characterization, as in the uniqueness part of
\cref{lem:direct-strip-transfer}, place it in $H_0^1(S_\theta)$.
Conformal invariance of the Dirichlet form, harmonic orthogonality, and
Green's identity give
\begin{equation}\label{eq:direct-sector-green}
 \langle\lambda_\theta,\psi\rangle
 =-\int_{W_\theta}\Psi\,\Delta u\,\dd x
  +\sum_{j\in\{0,\theta\}}\int_0^\infty
    \psi_j(\log r)\,\partial_n u(re_j)\,\dd r.
\end{equation}
Indeed, for \(\widetilde u=u\circ F_\theta\),
\[
 \Delta_{t,s}\widetilde u=e^{2t}(\Delta u)\circ F_\theta,
 \qquad
 \partial_{n_{S_\theta}}\widetilde u(t,j)
 =e^t\partial_{n_{W_\theta}}u(e^te_j),\quad j\in\{0,\theta\};
\]
these factors cancel respectively against
\(\dd x=e^{2t}\dd t\dd s\) and \(\dd r=e^t\dd t\).
The identity follows first on rectangles obtained by truncating the
$t$-variable. The additional terms on the two vertical sides in Green's
formula applied to \(\widetilde u\) have the form
\[
 \int_0^\theta(H_\theta\psi)(t,s)\,
       \partial_t\widetilde u(t,s)\,\dd s,
\]
with the endpoint orientation signs. It vanishes as \(t\to+\infty\)
because \(\widetilde u\) is eventually zero, and as \(t\to-\infty\)
because \(H_\theta\psi\) decays exponentially while Taylor expansion at
the origin gives \(\partial_t\widetilde u=O(e^t)\). Hence
\[
 |\langle\lambda_\theta,\psi\rangle|
 \leq\|\psi\|_\infty
 \left(\|\Delta u\|_{L^1(W_\theta)}
       +\|\partial_n u\|_{L^1(\partial W_\theta)}\right).
\]
Density in \(C_0(\mathbb R)\oplus_\infty C_0(\mathbb R)\) and
Riesz--Markov therefore give
\begin{equation}\label{eq:direct-interior-dtn-measure}
 \lambda_\theta\in\mathcal M_\oplus(\mathbb R),\qquad
 \|\lambda_\theta\|_{\mathcal M_\oplus}
 \leq \|\Delta u\|_1+\|\partial_n u\|_1.
\end{equation}

Define the complementary harmonic function by
\[
 U\circ F_\theta^c=H_b(\mathcal Rg).
\]
The comparison of strip Dirichlet-to-Neumann maps and the commutation relations give
\[
 \lambda_U:=\Lambda_b(\mathcal Rg)
 =\mathcal R\mathscr C_{\theta,b}\lambda_\theta,
 \qquad
 \|\lambda_U\|_{\mathcal M_\oplus}
 \leq C_{\theta,b}\|\lambda_\theta\|_{\mathcal M_\oplus}.
\]
The lower and upper sides of \(S_b\) map, respectively, to
\(\Gamma_\theta\) and \(\Gamma_0\), with outward normals
\(-e_\theta^\perp\) and \(e_0^\perp\).  Thus, with
\[
 \jmath_0(t)=e^te_\theta,\qquad \jmath_b(t)=e^te_0,
\]
the outward normal measure of \(U\) on the physical rays is
\[
 \nu_U=(\jmath_0)_\#(\lambda_U)_0
       +(\jmath_b)_\#(\lambda_U)_b,\qquad
 \|\nu_U\|_{\mathcal M}\leq\|\lambda_U\|_{\mathcal M_\oplus}.
\]
This follows by pulling a test function supported away from the vertex back
to the strip. More explicitly, for every
\(\varphi\in C_c^\infty(\mathbb R^2\setminus\{0\})\), set
\(\zeta=\varphi\circ F_\theta^c\) and
\(\psi=\operatorname{Tr}\zeta\). Then $\zeta\in H^1(S_b)$ and
$\psi\in C_c^\infty(\R;\R^2)$. Its affine lift
$G_\psi(t,s)=\psi_+(t)+(1-2s/b)\psi_-(t)$ belongs to $H^1(S_b)$, and the
variational construction in \cref{lem:direct-strip-transfer} gives
$H_b\psi-G_\psi\in H_0^1(S_b)$. Thus $H_b\psi\in H^1(S_b)$ and
$\zeta-H_b\psi\in H_0^1(S_b)$ by the zero-trace characterization.
Harmonic orthogonality gives
\[
 \int_{W_\theta^{\rm c}}\nabla U\cdot\nabla\varphi
 =\langle\lambda_U,\psi\rangle
 =\int_{\partial W_\theta}\varphi\,\dd\nu_U.
\]
In particular, \(\jmath_0^{-1}(\{0\})=\jmath_b^{-1}(\{0\})=\varnothing\)
and therefore \(\nu_U(\{0\})=0\).

Along $\Gamma_0$ and $\Gamma_\theta$, the outward unit normals of
$W_\theta$ are $-e_0^\perp$ and $e_\theta^\perp$, respectively. On the same
ordered rays, the outward unit normals of the physical complementary
sector $W_\theta^{\rm c}$ are $e_0^\perp$ and $-e_\theta^\perp$,
respectively.

Let $E=u$ on $W_\theta$ and $E=U$ on $W_\theta^{\rm c}$; on the two open
rays use their common trace, and at the origin use the common finite limit.
Equality of traces gives
$E\in W^{1,2}_{\rm loc}(\R^2\setminus\{0\})$ by Sobolev gluing
across the open rays.
Adding the two Green identities, with outward normals on both sides, yields
\[
 \Delta E=(\Delta u)\mathbf1_{W_\theta}\,\mathcal L^2
           -\nu_u-\nu_U
 \quad\text{in }\mathcal D'(\mathbb R^2\setminus\{0\}),
\]
where
\(\nu_u=(\partial_n u)\mathcal H^1\lfloor_{\partial W_\theta}\).
The two minus signs come from
\(\langle\Delta E,\varphi\rangle=-\int\nabla E\cdot\nabla\varphi\).
Indeed, for every test function supported away from the vertex, the two
Green formulas read
\[
 -\int_{\mathbb R^2}\nabla E\cdot\nabla\varphi
 =\int_{W_\theta}\varphi\,\Delta u\,\dd x
  -\int_{\partial W_\theta}\varphi\,\dd\nu_u
  -\int_{\partial W_\theta}\varphi\,\dd\nu_U.
\]
The endpoint limits in the strip lemma show that \(E\) is continuous and
bounded at the origin, tends to zero at infinity, and has finite total
Dirichlet energy. On every ball \(B\ni0\), boundedness gives
\(E\in L^2(B)\), while the punctured identity above has a finite-measure
right-hand side. Thus all hypotheses of \cref{lem:removable-point} hold,
and that lemma removes the only possible additional vertex distribution.
The ray pushforwards themselves do not charge the origin, since the
preimage of \(0\) under \(t\mapsto e^te_j\) is empty. Now
\eqref{eq:direct-interior-dtn-measure} and the preceding variation bounds
prove \eqref{eq:sector-extension}, for example with
\(C_\theta=1+C_{\theta,2\pi-\theta}\).

The construction also shows that \(C_\theta\) is locally bounded when
\(\theta\) ranges over a compact subset of \((0,2\pi)\); no uniformity is
asserted as the angle degenerates.

If two ambient representatives have the same restriction \(u\) on the
open sector, continuity gives the same values on the two rays and at the
vertex. Thus the construction uses only \(u\) and its traces and is
independent of the representative. Uniqueness of the harmonic strip
extension also proves linearity. Its endpoint limits give the final
assertion.
\end{proof}

\vskip0.36in

\subsection{\bf Localization by the global Newton field}

The cutoff step needs only a local \(L^1\) bound for a global finite-energy
Newton field.  The following elementary form also allows Laplacian measures without
compact support.

\vskip0.2in

\begin{lemma}[Local \(L^1\) control for a global Newton field]
\label{lem:global-newton-local}
Let \(E\in W^{1,2}_{\rm loc}(\mathbb R^2)\) satisfy
\(\nabla E\in L^2(\mathbb R^2)\) and
\(\mu:=\Delta E\in\cM(\mathbb R^2)\). Then
\begin{equation}\label{eq:global-newton-representation}
 \nabla E=K*\mu\qquad\text{a.e. in }\mathbb R^2,
\end{equation}
where \(K\) is the Newton kernel in \eqref{eq:newton-kernel}. In particular,
for every measurable \(A\subset\mathbb R^2\) of finite area,
\begin{equation}\label{eq:global-newton-local}
 \int_A|\nabla E(x)|\,\dd x
 \leq \pi^{-1/2}|A|^{1/2}\|\mu\|_{\cM(\mathbb R^2)}.
\end{equation}
\end{lemma}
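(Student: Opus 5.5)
The plan is to prove the representation \eqref{eq:global-newton-representation} by showing that $G:=\nabla E-K*\mu$ is a harmonic vector field in $L^2+L^{2,\infty}$ type classes that must vanish, and then to deduce \eqref{eq:global-newton-local} from a pointwise bound on $|K|*|\mu|$ over sets of finite area.

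\emph{Step 1: the field $K*\mu$ is well defined.} Since $|K(x)|=(2\pi|x|)^{-1}$, for every measurable $A$ of finite area
\[
 \int_A|K(x-y)|\,\dd x\leq\int_{B(y,r_A)}\frac{\dd x}{2\pi|x-y|}=r_A=\pi^{-1/2}|A|^{1/2},
\]
where $\pi r_A^2=|A|$; this is the bathtub (symmetric decreasing rearrangement) principle. Tonelli then gives $\int_A|K|*|\mu|\leq\pi^{-1/2}|A|^{1/2}\|\mu\|_{\cM}$. So $K*\mu$ converges absolutely almost everywhere, lies in $L^1_{\rm loc}$, and is a tempered distribution. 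The same computation yields \eqref{eq:global-newton-local} once \eqref{eq:global-newton-representation} is known.

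\emph{Step 2: distributional identities.} For $\varphi\in C_c^\infty$, Fubini gives
\[
 \langle K*\mu,\varphi\rangle=\int(\check K*\varphi)\,\dd\mu,
\]
with $\check K*\varphi=-K*\varphi$, which is smooth and bounded. Here $K=\nabla\Gamma$ with $\Gamma=(2\pi)^{-1}\log|x|$, and $\Delta\Gamma=\delta_0$. From this one checks $\operatorname{curl}(K*\mu)=0$ and $\operatorname{div}(K*\mu)=\mu$ in $\mathcal D'$. The field $\nabla E$ has the same curl and divergence. Hence $G=\nabla E-K*\mu$ is curl-free and divergence-free, so each component of $G$ is harmonic and $G$ is smooth.

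\emph{Step 3: Liouville.} We show $G\equiv0$. Averaging over a ball $B(x,R)$ and using the mean value property,
\[
 |G(x)|\leq\frac{1}{\pi R^2}\int_{B(x,R)}\bigl(|\nabla E|+|K*\mu|\bigr).
\]
The first term is at most $(\pi R^2)^{-1/2}\|\nabla E\|_2$. By Step 1 with $A=B(x,R)$, the second is at most $(\pi R^2)^{-1}R\|\mu\|_{\cM}$. Both bounds tend to $0$ as $R\to\infty$, so $G(x)=0$ for every $x$.

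The main obstacle is Step 2: justifying the Fubini exchange and the identities $\operatorname{div}(K*\mu)=\mu$, $\operatorname{curl}(K*\mu)=0$ for a merely finite measure without compact support. I would handle it via the absolute integrability from Step 1, applied on $\supp\varphi$, together with the pointwise bound $|K*\varphi|\lesssim_\varphi(1+|y|)^{-1}$. With $\varphi$ fixed, this makes every exchange legitimate.
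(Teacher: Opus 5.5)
Your proposal is correct and is essentially the paper's own proof. Both arguments obtain the set estimate from the same rearrangement bound $\int_A|K(x-y)|\,\dd x\leq r_A$ and Tonelli. Both justify $\operatorname{div}(K*\mu)=\mu$ and $\operatorname{curl}(K*\mu)=0$ by Fubini using that local bound, and both show the harmonic difference $\nabla E-K*\mu$ vanishes via the mean-value property on $B(x,R)$ with $R\to\infty$. Your explicit pairing $\langle K*\mu,\varphi\rangle=-\int(K*\varphi)\,\dd\mu$ simply writes out the Fubini step that the paper states in one line.
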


\vskip0.2in

\begin{proof}
Put \(G=K*\mu\). If \(\pi r_A^2=|A|\), symmetric decreasing
rearrangement and \(|K(x)|=(2\pi|x|)^{-1}\) give, uniformly in \(y\),
\[
 \int_A|K(x-y)|\,\dd x
 \leq\int_{B(0,r_A)}|K(z)|\,\dd z
 =r_A.
\]
Thus Tonelli's theorem shows that \(G\) is defined almost everywhere,
belongs to \(L^1_{\rm loc}\), and satisfies
\begin{equation}\label{eq:newton-convolution-set-bound}
 \int_A|G|
 \leq\pi^{-1/2}|A|^{1/2}\|\mu\|_{\cM}.
\end{equation}
The distributional identities
\(\operatorname{div}K=\delta_0\) and \(\operatorname{curl}K=0\), justified
under convolution by the same local bound and Fubini's theorem, imply
\[
 \operatorname{div}G=\mu,\qquad \operatorname{curl}G=0.
\]
Hence \(F:=\nabla E-G\) has zero divergence and curl. Its components are
entire harmonic distributions and therefore smooth harmonic functions.
For \(x\in\mathbb R^2\) and \(R>0\), the mean-value property,
Cauchy--Schwarz, and \eqref{eq:newton-convolution-set-bound} give
\[
 |F(x)|
 \leq\frac1{|B_R|}\int_{B(x,R)}|F|
 \leq\frac{C}{R}
 \bigl(\|\nabla E\|_2+\|\mu\|_{\cM}\bigr).
\]
Letting \(R\to\infty\) proves \(F=0\); the asserted estimate follows from
\eqref{eq:newton-convolution-set-bound}.
\end{proof}

We shall also use the standard smooth $L^1$ duality formula
\begin{equation}\label{eq:smooth-l1-duality}
 \|f\|_{L^1(O)}
 =\sup_{\substack{X\in C_c^\infty(O;\mathbb R^2)\\
                  \|X\|_\infty\leq1}}
   \int_O X\cdot f,
 \qquad f\in L^1(O;\mathbb R^2).
\end{equation}
\begin{theorem}[Compact local sector extension]\label{thm:compact-sector-extension}
For every $0<\theta<2\pi$ there is a finite constant $C_\theta$ such that,
for every $\varrho>0$, there exists a linear operator
$E^{\mathrm{loc}}_{\theta,\varrho}$ on the vector space
\[
 \mathcal X_{\theta,\varrho}
 :=\{V|_{W_\theta}:V\in C_c^\infty(\R^2;\R),\ \supp V\subset B_\varrho\}
\]
with the following properties for every $u\in\mathcal X_{\theta,\varrho}$:
\[
 E^{\mathrm{loc}}_{\theta,\varrho}u=u\quad\text{in }W_\theta,
 \qquad
 E^{\mathrm{loc}}_{\theta,\varrho}u\in C_c(\R^2)\cap W^{1,2}(\R^2),
 \qquad
 \supp E^{\mathrm{loc}}_{\theta,\varrho}u\subset B_{3\varrho},
\]
and
\begin{equation}\label{eq:compact-sector-extension}
 \|\Delta(E^{\mathrm{loc}}_{\theta,\varrho}u)\|_{\cM(\R^2)}
 \leq C_\theta
 \left(
  \|\Delta u\|_{L^1(W_\theta)}+
  \|\partial_n u\|_{L^1(\partial W_\theta)}
 \right).
\end{equation}
The same statement holds after translations and rotations of the sector;
if the translated sector has vertex $a$, the support conclusion is
$\supp E^{\mathrm{loc}}u\subset B(a,3\varrho)$.
\end{theorem}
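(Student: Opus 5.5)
The plan is to localize the infinite-sector extension of \cref{thm:sector-extension} by a cutoff. By scaling we may take $\varrho=1$: the dilation $x\mapsto\varrho x$ preserves $W_\theta$, and it maps $\|\Delta u\|_{L^1(W_\theta)}$, $\|\partial_nu\|_{L^1(\partial W_\theta)}$ and $\|\Delta E\|_{\cM}$ all to the same power $\varrho^{0}$ in two dimensions. Indeed, $\Delta$ gains $\varrho^{-2}$ and $\dd x$ gains $\varrho^{2}$, while $\partial_n$ gains $\varrho^{-1}$ and arclength gains $\varrho$. Hence one $C_\theta$ serves every $\varrho$. Translations and rotations are handled by composition with rigid motions.

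For $u\in\mathcal X_{\theta,1}$, let $E=E_\theta u$ from \cref{thm:sector-extension}. Fix a radial cutoff $\chi\in C_c^\infty(B_3)$ with $\chi=1$ on $B_2$, and set
\[
 E^{\mathrm{loc}}_{\theta,1}u:=\chi E .
\]
Since $u$ vanishes on $W_\theta\setminus B_1$, this equals $u$ in $W_\theta$. It also lies in $C_c(\R^2)\cap W^{1,2}(\R^2)$ with support in $B_3$, because $E$ is continuous, bounded and has $\nabla E\in L^2$. Linearity is inherited from $E_\theta$. The Leibniz rule for a distribution whose Laplacian is a measure gives
\[
 \Delta(\chi E)=\chi\,\Delta E+2\nabla\chi\cdot\nabla E+E\,\Delta\chi .
\]
The first term is bounded by $\|\Delta E\|_{\cM}$, which \eqref{eq:sector-extension} controls by $N:=\|\Delta u\|_1+\|\partial_nu\|_1$. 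The second term is at most $C\int_{B_3}|\nabla E|$. By \cref{lem:global-newton-local}, this is at most $C\pi^{-1/2}|B_3|^{1/2}\|\Delta E\|_{\cM}\lesssim_\theta N$.

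The main obstacle is the zeroth-order term $E\,\Delta\chi$, which is supported in the annulus $2\le|x|\le3$. Its size is $\int_{2<|x|<3}|E|$, and this is not obviously controlled by $N$: constants have $N=0$ but are not compactly supported. The saving fact is that $u$ is supported in $B_1$. Hence $E$ vanishes on $W_\theta\cap\{|x|>1\}$, and $E\to0$ at infinity. Outside $B_1$, $E$ is the harmonic function $U$ on $W_\theta^{\mathrm c}$ with zero boundary values on the rays for $|x|>1$. I would bound $E$ on the annulus by an oscillation, or Poincaré-type, estimate tied to the decay at infinity. Integrate $\nabla E$ along radial segments from $|x|=R$ out to $\infty$; equivalently, in strip coordinates, write $U(t,\cdot)=-\int_t^\infty\partial_tU$. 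Then estimate $\nabla E$ on $\{|x|>2\}$ in $L^1$ with a weight, using $\nabla E=K*\Delta E$ together with the fact that $\Delta E$ is supported in $\overline{B_1}$.

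This support claim holds because $\nu_U$ vanishes where the trace is zero. More precisely, for $|x|>1$ the function $U$ solves a strip Dirichlet problem with zero data, and the outward normal measure there is the boundary derivative of that solution. This is not automatically zero, so the support claim needs checking. If it fails, I would instead use the exponential decay of $H_b(\mathcal Rg)$ for $t>0$. The data vanish for $t\ge0$, so $|U(t,s)|+|\nabla U(t,s)|\lesssim e^{-\pi t/b}\sup_{t'\le 1}|\partial U|$ in a boundary layer, and the remaining task is to bound that quantity by $N$.

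The cleaner route I would try first is to avoid $E$ itself and use $E-E(\infty)=E$ with the representation $E(x)=\frac1{2\pi}\int\log|x-y|\,\dd\Delta E(y)$ plus a constant. This representation is justified by \cref{lem:global-newton-local} and the zero limit at infinity, and it requires $\Delta E(\R^2)=0$. That total mass vanishes because $E\to0$ with finite energy, since a nonzero total mass would give logarithmic growth. Then $\sup_{2\le|x|\le3}|E|$ is controlled by $\|\Delta E\|_{\cM}$ once the part of $\Delta E$ far from the annulus is handled via the zero-mass cancellation $\log|x-y|-\log|y|$. Balancing that far-field term is where I expect the real work.
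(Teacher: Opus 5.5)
Your reduction to $\varrho=1$, the choice $E^{\mathrm{loc}}_{\theta,1}u=\chi E_\theta u$, the Leibniz formula, and the bounds for $\chi\,\Delta E$ and $2\nabla\chi\cdot\nabla E$ via \eqref{eq:sector-extension} and \cref{lem:global-newton-local} are exactly right. The gap is the zeroth-order term $E\,\Delta\chi$. You correctly isolate it as the crux, but none of your routes closes it.

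\begin{itemize}
\item \emph{Support claim.} The claim $\supp\Delta E\subset\overline{B_1}$ is false in general, as you suspected. If it held, $E$ would be harmonic on $\R^2\setminus\overline{B_1}$ and zero on its open subset $W_\theta\setminus\overline{B_1}$. Unique continuation would then give $U\equiv0$ and force $u$ to vanish on $\partial W_\theta$.
\item \emph{Exponential decay.} This route leaves the essential estimate unproved, namely control of $U$ near the unit circle by your $N$.
\item \emph{Radial integration and logarithmic potential.} These cannot succeed if they use only $\|\Delta E\|_{\cM}$, zero total mass, finite energy, and decay at infinity. Take a radial unit bump $\rho$ and $|y_0|=R$, $|y_1|=R^2$. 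Then $E=(2\pi)^{-1}\log|\cdot|*(\rho(\cdot-y_0)-\rho(\cdot-y_1))$ has all these properties and $\|\Delta E\|_{\cM}=2$. Yet $E\approx-(2\pi)^{-1}\log R$ on $\{2<|x|<3\}$.
\end{itemize}

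In particular, the weighted bound $\int_{|y|>2}|\nabla E(y)|\,|y|^{-1}\,\dd y\lesssim\|\Delta E\|_{\cM}$ that your radial integration needs is false in general. So is any bound of $\sup_{2\leq|x|\leq3}|E|$ by $\|\Delta E\|_{\cM}$. A valid argument must use the specific structure of $E_\theta u$.

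The missing idea is an angular Poincar\'e inequality on the circles of $A=\{2<|x|<3\}$. It uses the vanishing of $E$ on $W_\theta\setminus B_1$, which you already noted. For $r>1$, the function $\varphi\mapsto E(re_\varphi)$ vanishes for $0\leq\varphi\leq\theta$, since $E=u=0$ there (on the rays by continuity). In particular it vanishes at both ends of the complementary arc $[\theta,2\pi]$, which has length $b=2\pi-\theta$. Hence, for a.e.\ $2<r<3$,
\[
 \int_\theta^{2\pi}|E(re_\varphi)|\,\dd\varphi
 \leq b\int_\theta^{2\pi}|\partial_\varphi E(re_\varphi)|\,\dd\varphi .
\]
Using $|\partial_\varphi E|\leq r|\nabla E|$ with $r\leq3$ and integrating in $r$ gives $\int_A|E|\,\dd x\leq3b\int_A|\nabla E|\,\dd x$. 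The right side is bounded by \cref{lem:global-newton-local} and \eqref{eq:sector-extension} exactly like your gradient term. This completes \eqref{eq:compact-sector-extension}, with no decay rate, far-field cancellation, or support information about $\Delta E$ needed.
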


\vskip0.2in

\begin{proof}
By dilation it is enough to treat \(\varrho=1\).  Fix once and for all a
radial cutoff \(\chi\in C_c^\infty(B_3)\) such that
\[
 0\leq\chi\leq1,\qquad \chi=1\ \hbox{on }B_2,
 \qquad |\nabla\chi|+|\Delta\chi|\leq C.
\]
Let \(E=E_\theta u\) be the infinite-sector extension from
\cref{thm:sector-extension}.
We define
\[
 E^{\mathrm{loc}}_{\theta,1}u:=\chi E.
\]
Since \(u=0\) on \(W_\theta\setminus B_1\), this function agrees with
\(u\) throughout \(W_\theta\).

It remains to control the two cutoff terms.  Put
\[
 A:=\{2<|x|<3\},\qquad b:=2\pi-\theta,
 \qquad \mu:=\Delta E.
\]
By \cref{lem:global-newton-local,thm:sector-extension},
\begin{equation}\label{eq:compact-sector-gradient-local}
 \int_A|\nabla E|\,\dd x
 \leq C\|\mu\|_{\cM}
 \leq C_\theta\left(
   \|\Delta u\|_{L^1(W_\theta)}
   +\|\partial_n u\|_{L^1(\partial W_\theta)}
 \right).
\end{equation}
On \(A\cap W_\theta\) one has \(E=u=0\).  On the complementary
sector, the traces of \(E\) on both bounding rays vanish because the
corresponding traces of \(u\) vanish for \(r>1\).  Thus, for almost every
\(2<r<3\), the angular Poincar\'e inequality gives
\[
 \int_\theta^{2\pi}|E(r,\varphi)|\,\dd\varphi
 \leq b\int_\theta^{2\pi}|\partial_\varphi E(r,\varphi)|\,\dd\varphi.
\]
Since \(|\partial_\varphi E|\leq r|\nabla E|\) and \(r\leq3\) on
\(A\), integration in \(r\) yields
\begin{equation}\label{eq:compact-sector-function-local}
 \int_A|E|\,\dd x
 \leq 3b\int_A|\nabla E|\,\dd x.
\end{equation}

The distributional product rule reads
\[
 \Delta(\chi E)
 =\chi\,\Delta E
  +\bigl(2\nabla\chi\cdot\nabla E+E\Delta\chi\bigr)\mathcal L^2.
\]
The derivatives of \(\chi\) vanish outside \(A\), and on
\(A\cap W_\theta\) one has \(E=u=0\).  Thus
\eqref{eq:sector-extension}, \eqref{eq:compact-sector-gradient-local},
and \eqref{eq:compact-sector-function-local} prove
\eqref{eq:compact-sector-extension}.  Continuity, \(W^{1,2}\)-regularity,
and the support inclusion in \(B_3\) follow from the corresponding
properties of \(E\) and the cutoff.  The construction is linear.  Scaling
back is legitimate because both terms on the right of
\eqref{eq:compact-sector-extension} and the total variation of the
Laplacian are invariant under the planar dilation used here; it changes
the support inclusion to \(B_{3\varrho}\).  Translations and rotations
preserve all the asserted quantities.
\end{proof}

\vskip0.32in

\subsection{\bf A low-order estimate}

Recall $u_\Omega$ and $N_\Omega(u)$ from \eqref{eq:neumann-budget},
and the outward-normal convention from \cref{sec:kernels-domains}.
The finitely many corner points are $\mathcal H^1$-null.

\begin{proposition}[Low-order control]\label{prop:low-order}
For every exact sector-chart domain $\Omega$ there exists
$C_\Omega^{\rm low}>0$ such that every
$u\in C^\infty(\overline\Omega;\R)$ satisfies
\begin{align}
 &\diam(\Omega)^{-1}\|\nabla u\|_{L^1(\Omega)}
 +\diam(\Omega)^{-2}\|u-u_\Omega\|_{L^1(\Omega)} \notag\\
 &\qquad
 +\diam(\Omega)^{-1}\|u-u_\Omega\|_{L^1(\partial\Omega)}
 \leq C_\Omega^{\rm low}N_\Omega(u).
 \label{eq:low-order}
\end{align}
\end{proposition}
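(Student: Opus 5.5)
We prove \eqref{eq:low-order} by duality against the Neumann problem, using only that $\Omega$ is a bounded Lipschitz domain; exact sector-chart domains are such domains. By dilation we normalize $\diam(\Omega)=1$. Both sides of \eqref{eq:low-order} have the same scaling, so this loses nothing.

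\textbf{Gradient term.} We use \eqref{eq:smooth-l1-duality} on $O=\Omega$. Fix $X\in C_c^\infty(\Omega;\R^2)$ with $\|X\|_\infty\leq1$. Let $w$ be the weak Neumann solution of
\[
 -\Delta w=\operatorname{div}X\ \text{in }\Omega,\qquad
 \partial_nw=0\ \text{on }\partial\Omega,\qquad
 \int_\Omega w=0.
\]
This problem is solvable because $\int_\Omega\operatorname{div}X=0$. Green's identity, applied to the smooth function $u$ on the Lipschitz domain, gives
\[
 \int_\Omega X\cdot\nabla u
 =-\int_\Omega u\,\operatorname{div}X
 =\int_\Omega\nabla w\cdot\nabla u
 =-\int_\Omega w\,\Delta u+\int_{\partial\Omega}w\,\partial_nu.
\]
The last equality has to be justified for weak $w$. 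We do this by approximating $w$, or by using that $\nabla w$ is continuous up to the boundary away from the corners; the corners are $\mathcal H^1$-null. Everything then reduces to the uniform bound
\[
 \|w\|_{L^\infty(\Omega)}\leq C_\Omega\|X\|_\infty,
\]
which must include the boundary values of $w$. This is the main obstacle.

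\textbf{The $L^\infty$ bound.} Our first attempt goes through the Neumann function $N(x,y)$ of $\Omega$. For a bounded Lipschitz planar domain, \cite{ChoiKim} gives the pointwise bound $|\nabla_yN(x,y)|\lesssim|x-y|^{-1}$, and this bound persists up to the boundary. Then
\[
 w(x)=\int_\Omega\nabla_yN(x,y)\cdot X(y)\,\dd y,
\]
and local integrability of $|x-y|^{-1}$ in two dimensions yields the bound uniformly in $x\in\overline\Omega$. If the pointwise gradient bound is not available in the needed form, we use an alternative route. Since $\operatorname{div}X\in W^{-1,q}$ for every $q<\infty$, we invoke the $W^{1,q}$ Neumann theory on Lipschitz domains for some $q>2$ close to $2$ \cite{FabesMendezMitrea,Geng}. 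Morrey's embedding then gives a $C^{0,\gamma}(\overline\Omega)$ bound, and in particular the required sup bound. With either route,
\[
 \|\nabla u\|_{L^1(\Omega)}\lesssim N_\Omega(u).
\]

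\textbf{Interior term.} The Poincar\'e inequality on the bounded Lipschitz (connected) domain gives
\[
 \|u-u_\Omega\|_{L^1(\Omega)}\lesssim\|\nabla u\|_{L^1(\Omega)}.
\]

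\textbf{Boundary term.} The $W^{1,1}$ trace inequality on Lipschitz domains gives
\[
 \|u-u_\Omega\|_{L^1(\partial\Omega)}\lesssim\|u-u_\Omega\|_{W^{1,1}(\Omega)},
\]
which the two previous bounds control. Summing the three estimates and undoing the dilation proves \eqref{eq:low-order}. The constant depends only on the Lipschitz character of $\Omega$, consistent with the constant convention.
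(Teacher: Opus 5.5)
Your second route is the paper's proof. After normalizing $\diam(\Omega)=1$, the paper solves the zero-mean Neumann problem $\Delta\varphi_F=\operatorname{div}F$, $\partial_n\varphi_F=0$, and uses Geng's $W^{1,q}$ estimate with $q=3$, together with Poincar\'e and Morrey, to get $\|\varphi_F\|_{L^\infty(\overline\Omega)}\lesssim\|F\|_\infty$. It justifies Green's identity by extending $\varphi_F$ to $W^{1,3}(\R^2)$ and approximating by smooth functions that converge uniformly on $\overline\Omega$. It then concludes with $L^1$ duality, the $L^1$ Poincar\'e inequality and the Lipschitz trace inequality, exactly as you do.

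You should drop the first route. The pointwise bound $|\nabla_yN(x,y)|\lesssim|x-y|^{-1}$ up to the boundary is false at reentrant corners, and these are allowed here ($\theta_j>\pi$). Near such a corner $v$, Neumann-harmonic functions generically have gradients of size $|y-v|^{\pi/\theta-1}$, which is unbounded. Also fix a harmless sign slip: with $-\Delta w=\operatorname{div}X$ one gets $\int_\Omega X\cdot\nabla u=-\int_\Omega\nabla w\cdot\nabla u$, not $+\int_\Omega\nabla w\cdot\nabla u$.
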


\vskip0.2in

\begin{proof}
By scaling assume $\diam(\Omega)=1$. For
$F\in C_c^\infty(\Omega;\R^2)$, let $\varphi_F$ be the zero-mean solution of
\[
 \int_\Omega\nabla\varphi_F\cdot\nabla\psi
 =\int_\Omega F\cdot\nabla\psi
 \qquad(\psi\in W^{1,2}(\Omega)).
\]
This is the compatible Neumann problem
$\Delta\varphi_F=\operatorname{div}F$ with
$\partial_n\varphi_F=F\cdot n=0$. In dimension two,
\cite[Theorem~1.2]{Geng} gives the VMO Neumann estimate throughout
\[
 \frac43-\varepsilon_\Omega<q<4+\varepsilon_\Omega
\]
for some $\varepsilon_\Omega>0$, so fix $q=3$. Apply the estimate to the scalar equation with coefficient matrix
$A=I$ and datum $f_{\rm G}=-F$. Then $I$ is real symmetric, uniformly
elliptic, and in $\mathrm{VMO}(\R^2)$, while Geng's
convention
\[
 -\operatorname{div}(A\nabla v)=\operatorname{div}f_{\rm G},
 \qquad \partial_{\nu_A}v:=(n\cdot A\nabla v)=-n\cdot f_{\rm G},
\]
becomes exactly
$\Delta v=\operatorname{div}F$ and
$\partial_n v=F\cdot n=0$. The cited interval contains exponents strictly
larger than $2$. Uniqueness after zero-mean normalization identifies its
$W^{1,3}$ solution with the preceding Lax--Milgram solution. Hence
\[
 \|\nabla\varphi_F\|_{L^3(\Omega)}
 +\|\varphi_F\|_{L^\infty(\overline\Omega)}
 \leq C_\Omega^{\rm low}\|F\|_{L^3(\Omega)}
 \leq C_\Omega^{\rm low}\|F\|_\infty,
\]
where zero mean, Poincar\'e's inequality, and the planar Sobolev--Morrey embedding were used for
the second term. Since $q=3$ and $A=I$ are fixed, after the diameter
normalization the constant depends only on the Lipschitz character of
$\Omega$. Since $\Omega$ is bounded and Lipschitz, the Sobolev extension
theorem \cite{AdamsFournier} extends $\varphi_F$ to some
$\widetilde\varphi_F\in W^{1,3}(\R^2)$. Choose
$\widetilde\varphi_{F,k}\in C_c^\infty(\R^2)$ with
$\widetilde\varphi_{F,k}\to\widetilde\varphi_F$ in $W^{1,3}(\R^2)$. Then
$\widetilde\varphi_{F,k}|_\Omega\to\varphi_F$ in $W^{1,3}(\Omega)$ and, since $3>2$,
uniformly on $\overline\Omega$ by the planar Sobolev--Morrey embedding.
Classical Green identities for $\widetilde\varphi_{F,k}|_\Omega$ and passage to the
limit give
\[
 \int_\Omega F\cdot\nabla u
 =-\int_\Omega\varphi_F\Delta u
  +\int_{\partial\Omega}\varphi_F\,\partial_n u\,\dd\mathcal H^1.
\]
Consequently,
\[
 \left|\int_\Omega F\cdot\nabla u\right|
 \leq C_\Omega^{\rm low}\|F\|_\infty N_\Omega(u).
\]
The gradient estimate follows from \eqref{eq:smooth-l1-duality}; the other
two terms follow from the $L^1$ Poincar\'e and Lipschitz trace inequalities
applied to $u-u_\Omega$. Rescaling proves \eqref{eq:low-order}.
\end{proof}

The proof of \cref{prop:low-order} uses only boundedness, connectedness,
and the planar Lipschitz-domain hypotheses. Exact sector charts enter the
subsequent extension construction.

\vskip0.32in

\subsection{\bf Global extension on exact sector-chart domains}

\begin{lemma}[Simultaneous straightening of transverse generalized circles]
\label{lem:simultaneous-mobius-straightening}
Write $\widehat{\mathbb C}:=\mathbb C\cup\{\infty\}$, and regard each
line as containing $\infty$ in its spherical completion. Let $C_1,C_2$ be
distinct generalized circles through $a\in\mathbb C$, with distinct tangent
lines at $a$. On the Riemann sphere there is a unique
$q\in\widehat{\mathbb C}\setminus\{a\}$ such that
$C_1\cap C_2=\{a,q\}$. If both $C_1$ and $C_2$ are lines, then $q=\infty$.
For a line and a circle, or for two circles, $q$ is their second finite
intersection.
Define
\[
 T_{a,q}(z)=
 \begin{cases}
  (z-a)/(z-q),&q\in\mathbb C,\\
  z-a,&q=\infty.
 \end{cases}
\]
Then $T_{a,q}(C_1\setminus\{q\})$ and
$T_{a,q}(C_2\setminus\{q\})$ are distinct lines through the origin.

Suppose, moreover, that near $a$ the boundary of a Lipschitz domain $\Omega$
consists precisely of two one-sided arc germs
$\gamma_k\subset C_k$ ending at $a$. Then there are a bounded neighborhood
$\mathcal O\ni a$ avoiding $q$, a rotation-dilation possibly followed by
complex conjugation, a radius $\rho>0$, and $0<\theta<2\pi$ such that the
resulting conformal or anticonformal map $F$ satisfies, for
$U=F^{-1}(B_\rho)$,
\[
 \overline U\Subset\mathcal O,\qquad
 F(\Omega\cap U)=W_\theta\cap B_\rho.
\]
\end{lemma}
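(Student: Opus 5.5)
The plan is to prove the intersection statement first, then the straightening claim, and finally the local sector chart.

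\emph{Intersection.} Two distinct generalized circles on $\widehat{\mathbb C}$ meet in at most two points. Two lines through $a$ meet exactly at $a$ and $\infty$. For a line and a circle, or for two circles, tangency at $a$ would mean a double contact. Since the tangent lines at $a$ are distinct, the intersection is transverse, and the second intersection $q$ exists and differs from $a$. It is finite in the line--circle case, because a circle does not pass through $\infty$. In the circle--circle case, the radical axis through $a$ is not the common tangent line, so it meets each circle again at the same point $q\neq a$.

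\emph{Straightening.} The M\"obius map $T_{a,q}$ sends $a\mapsto0$ and $q\mapsto\infty$. Every generalized circle through $q$ is mapped to a generalized circle through $\infty$, that is, to a line. Both $C_k$ contain $a$ and $q$, so their images are lines through $0$. These lines are distinct because $T_{a,q}$ is injective on $\widehat{\mathbb C}$.

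\emph{Local chart.} Choose $\mathcal O$ to be a small disc around $a$ with $q\notin\overline{\mathcal O}$. Then $T_{a,q}$ is a conformal diffeomorphism of $\mathcal O$ onto a neighborhood of $0$, and $T_{a,q}'(a)\neq0$. The arc germs $\gamma_k$ are mapped to two rays $\{re^{i\phi_k}:r\ge0\}$, intersected with $T_{a,q}(\mathcal O)$. Indeed, $T_{a,q}(\gamma_k)$ is a connected subarc of a line through $0$ ending at $0$, hence a segment of one ray.

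Near $0$, the set $T_{a,q}(\mathcal O\cap\partial\Omega)$ is exactly the union of these two rays. Consequently $T_{a,q}(\Omega\cap\mathcal O)$ is, near $0$, one of the two open sectors bounded by them. This uses that $\Omega$ is Lipschitz, so that locally it lies on one side of its boundary; connectedness of each complementary sector within a small ball then selects one of the two.

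Compose with a rotation $z\mapsto e^{-i\phi}z$ so that the chosen sector becomes $\{0<\arg z<\theta\}$. If the orientation requires it, namely if the sector is traversed clockwise from the first ray, first apply complex conjugation; this yields an anticonformal $F$. Here $0<\theta<2\pi$, and $\theta\ne\pi$ only matters elsewhere.

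Finally, choose $\rho>0$ so small that $\overline{B_\rho}\subset F(\mathcal O')$ for a slightly smaller disc $\mathcal O'\Subset\mathcal O$. Choose it also small enough that within $B_\rho$ no other boundary pieces of $\Omega$ appear; this is possible because near $a$ the boundary is assumed to consist only of $\gamma_1$ and $\gamma_2$. Then $U=F^{-1}(B_\rho)$ satisfies $\overline U\Subset\mathcal O$ and $F(\Omega\cap U)=W_\theta\cap B_\rho$.

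The dilation factor in the statement can be absorbed into the choice of $\rho$ or used to normalize $|F'(a)|$. It is not needed essentially.

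The main obstacle I expect is the side-selection step: showing rigorously that $F(\Omega\cap U)$ is an entire open sector, and not a subset or the complementary sector. I would argue as follows. The set $B_\rho\setminus(\text{two rays})$ has exactly two components, each of which is connected. Each component is either contained in $F(\Omega\cap U)$ or disjoint from it, since $F(\partial\Omega\cap U)$ equals the two rays within $B_\rho$. The Lipschitz hypothesis rules out both components lying in $\Omega$, which would make $a$ an interior slit point, and rules out neither lying in $\Omega$, which would contradict $a\in\partial\Omega$. Exactly one component is therefore chosen, and the rotation and conjugation normalize it to $W_\theta\cap B_\rho$.
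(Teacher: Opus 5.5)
Your proposal is correct and follows essentially the same route as the paper. You count intersections on the Riemann sphere, straighten both circles with $T_{a,q}$, shrink the neighborhood to avoid $q$ and the nonincident boundary, and use constancy of the indicator on the two sector components together with Lipschitz one-sidedness to select exactly one of them, which you then normalize by a rotation and, if necessary, conjugation. One phrasing fix in the circle--circle step: the point is that the radical axis is not tangent to either circle at $a$. Otherwise $a$ and both centers would be collinear, and the two tangent lines would coincide.
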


\vskip0.2in

\begin{proof}
Two transverse lines have the second intersection $\infty$ on the Riemann
sphere. A line and a circle, or two circles, meeting transversely at $a$
have one further finite intersection; tangency is precisely the
double-intersection case. M\"obius maps preserve generalized circles.
Because $T_{a,q}$ sends $a$ to $0$ and $q$ to $\infty$, each image is a line
through $0$, and the two image lines are distinct.

Choose $\mathcal O$ small enough to avoid $q$ and every nonincident boundary
piece. The two incident arc germs then map to two distinct ray germs from
$0$. After the stated postcomposition, put
$D=F(\Omega\cap\mathcal O)$. Decrease $\rho$ so that
$\overline B_\rho\Subset F(\mathcal O)$ and
$\partial D\cap B_\rho$ is exactly the union of the two radial segments.
Their complement in $B_\rho$ has two sector components. The indicator of
$D$ is constant on each component, and the local one-sided graph property of
a Lipschitz domain makes exactly one component interior. Relabelling the rays
and, if necessary, conjugating identifies it with $W_\theta\cap B_\rho$;
this includes the reentrant case $\theta>\pi$. All nonincident boundary
pieces have already been excluded by the choice of $\mathcal O$; whether the
incident component is outer or inner only determines which sector component
is interior. Thus
\[
 F(\Omega\cap\mathcal O)\cap B_\rho=W_\theta\cap B_\rho.
\]
Shrink $\rho$ so that $F^{-1}(\overline B_\rho)\Subset\mathcal O$ and take
$U=F^{-1}(B_\rho)$.
\end{proof}

\begin{proposition}[Generalized-circle atlases]
\label{prop:generalized-circle-atlas}
Every finite generalized-circle domain admits a finite exact
conformal-sector atlas. In particular, bounded polygons and circularly
truncated sectors admit such atlases.
\end{proposition}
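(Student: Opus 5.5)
The atlas will be assembled by working on one boundary component at a time, using \cref{lem:simultaneous-mobius-straightening} at the corners and M\"obius (or rigid) straightening at smooth points. Each chart must be checked against \eqref{eq:exact-sector-atlas}. Compactness of $\partial\Omega$ then gives a finite subcover.

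I distinguish three kinds of boundary points $a$.
\begin{enumerate}[label=\textup{(\alph*)},leftmargin=2.2em]
\item \emph{Listed endpoints.} Here two supporting generalized circles $C_1,C_2$ meet with distinct tangent lines. \Cref{lem:simultaneous-mobius-straightening} supplies a neighborhood $\mathcal O_a$, a conformal or anticonformal map $F_a$, a radius $\rho_a$ and an angle $\theta_a$ with $F_a(\Omega\cap U)=W_{\theta_a}\cap B_{\rho_a}$ for $U=F_a^{-1}(B_{\rho_a})$. I take $R_a=\rho_a/2$ and $U_a=F_a^{-1}(B_{\rho_a})$, so that $\overline{B_{R_a}}\subset F_a(U_a)$. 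Intersecting with $F_a(U_a)$ gives $F_a(\Omega\cap U_a)=W_{\theta_a}\cap F_a(U_a)$. Finally $V_a:=F_a^{-1}(B_{R_a/6})$ is an open set containing $a$ with $V_a\Subset U_a\Subset\mathcal O_a$.
\item \emph{Relative interior points of segments or arcs, and points of full circle components.} These lie on exactly one supporting generalized circle $C$. I first translate $a$ to $0$. If $C$ is a line, I rotate it to the real axis. If $C$ is a circle, I apply $T(z)=(z-a)/(z-q)$ with $q\in C$ the antipodal point, followed by a rotation, which sends $C\setminus\{q\}$ to the real line. I then pick $\mathcal O_a$ avoiding $q$ and all other boundary pieces. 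Near $0$ the image domain lies on one side of the real line, by the one-sided Lipschitz property. After a possible conjugation it is the upper half-plane, which is $W_\pi$. The radii are chosen exactly as in case (a), and the sign-fixing argument is the one from the proof of the lemma, using the two components of $B_\rho\setminus\R$.
\item \emph{Globalization.} The sets $V_a$ cover the compact set $\partial\Omega$, so finitely many suffice. These finitely many charts are the atlas $\mathcal A$.
\end{enumerate}

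The regularity hypothesis also holds: a finite generalized-circle domain is finitely cornered and piecewise-$C^{1,1}$. Arcs and segments extend regularly to their endpoints, and distinct tangent lines give nonflat corners. Circle components are closed $C^\infty$ sheets.

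Polygons are the case with only line segments. A circularly truncated sector is bounded by two segments and one arc; its central angle $\pi$ case uses the diameter as a single segment. Both are therefore covered.

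The main obstacle is the exactness condition, which requires equality $F(\Omega\cap U)=W_\theta\cap F(U)$ on all of $U$, not just near the point. It is secured by shrinking $\mathcal O$ so that it meets only the incident arcs, and then choosing $U$ as the preimage of a ball. This uses the positive distance from $a$ to the nonincident closed boundary pieces, and the fact that distinct arc closures meet only at their prescribed endpoints.
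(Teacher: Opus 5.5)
Your proposal is correct and follows essentially the same route as the paper. You use \cref{lem:simultaneous-mobius-straightening} at listed endpoints, and at smooth points (including full circle components) a similarity or the M\"obius map $(z-a)/(z-q)$ with $q\in C$ away from $a$, followed by the one-sided half-disk selection; you then set $U_a=F_a^{-1}(B_{\rho_a})$ with $R_a<\rho_a$ and $V_a\subset F_a^{-1}(B_{R_a/6})$ (your $R_a=\rho_a/2$ is one admissible choice) and finish with a finite subcover of $\partial\Omega$.
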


\vskip0.2in

\begin{proof}
The finite arc decomposition makes $\Omega$ a finitely cornered
piecewise-$C^{1,1}$ domain. Fix $a\in\partial\Omega$. At a smooth point of a
boundary sheet, the arc germ is two-sided. A similarity handles a straight
sheet; for a circular sheet with supporting circle $C$, choose
$q\in C\setminus\{a\}$ outside a small neighborhood and use
$(z-a)/(z-q)$. In either case the germ becomes a diameter, and the local
one-sided property selects a half-disk, hence $W_\pi$.
At a listed endpoint, apply
\cref{lem:simultaneous-mobius-straightening}. Shrinking the model
neighborhood in either case, we obtain a bounded open set $\mathcal O_a$,
a conformal or anticonformal diffeomorphism $F_a$ on $\mathcal O_a$, an
angle $0<\theta_a<2\pi$, and $\rho_a>0$ such that
\[
 F_a(a)=0,\qquad
 \overline{B_{\rho_a}}\Subset F_a(\mathcal O_a),\qquad
 F_a\bigl(\Omega\cap F_a^{-1}(B_{\rho_a})\bigr)
 =W_{\theta_a}\cap B_{\rho_a}.
\]
Put $U_a:=F_a^{-1}(B_{\rho_a})$; then
$U_a\Subset\mathcal O_a$. Choose $0<R_a<\rho_a$ and an open neighborhood
$a\in V_a\Subset F_a^{-1}(B_{R_a/6})$. Thus
\[
 F_a(\Omega\cap U_a)=W_{\theta_a}\cap F_a(U_a),\qquad
 \overline{B_{R_a}}\subset F_a(U_a),
 \qquad F_a(V_a)\subset B_{R_a/6}.
\]
A finite subcover of the compact boundary gives the atlas in
\cref{def:exact-sector-atlas}. The compact inclusions and finiteness also
bound $DF_j$ and $(DF_j)^{-1}$ on the chosen chart closures. A complete
circular component is covered by finitely many smooth-point charts, none of
which contains its M\"obius pole.
\end{proof}
\vskip0.2in

\begin{proof}[Proof of \cref{thm:global-extension}]
Use the fixed atlas $\mathcal A$. Since
\[
 \overline\Omega\setminus\bigcup_{j=1}^JV_j\Subset\Omega,
\]
choose, once and for all, an open set $U_0\Subset\Omega$ and a nonnegative
ambient smooth partition
\[
 1=\chi_0+\sum_{j=1}^J\chi_j
 \quad\hbox{on a neighborhood of }\overline\Omega,\qquad
 \chi_0\in C_c^\infty(U_0),\quad
 \chi_j\in C_c^\infty(V_j).
\]
For every $j\geq1$, also fix one of the linear operators
$E^{\mathrm{loc}}_{\theta_j,R_j/4}$ supplied by
\cref{thm:compact-sector-extension}, including the radial cutoff used in
its construction. Thus the atlas, partition, local extension operators,
and all their cutoffs are fixed independently of $u$.

Now let $u\in C^\infty(\overline\Omega;\R)$ and put
$\bar u:=u-u_\Omega$. Choose a smooth representative of $\bar u$ on some
open neighborhood $\mathcal O_u\supset\overline\Omega$, and choose
$\eta_u\in C_c^\infty(\mathcal O_u)$ equal to one on a neighborhood of
$\overline\Omega$. Extend $\eta_u\bar u$ by zero and denote the resulting
function by $V_u\in C_c^\infty(\R^2;\R)$. Set $w_j:=\chi_j\bar u$ for
$0\leq j\leq J$. The product identities
\[
 \Delta w_j=\chi_j\Delta u+2\nabla\chi_j\cdot\nabla u
              +\bar u\Delta\chi_j,
 \qquad
 \partial_n w_j=\chi_j\partial_n u+\bar u\,\partial_n\chi_j,
\]
together with \cref{prop:low-order} and finite overlap imply
\begin{equation}\label{eq:patch-control}
 \sum_{j=0}^J
 \bigl(\|\Delta w_j\|_{L^1(\Omega)}
       +\|\partial_n w_j\|_{L^1(\partial\Omega)}\bigr)
 \leq C_{\Omega,\mathcal A}N_\Omega(u).
\end{equation}

For $j\geq1$, define an ambient model representative by
\[
 \widehat w_j(y)=
 \begin{cases}
  (\chi_jV_u)(F_j^{-1}(y)),&y\in F_j(U_j),\\
  0,&y\notin F_j(U_j),
 \end{cases}
 \qquad
 \widetilde w_j:=\widehat w_j|_{W_{\theta_j}}.
\]
Because $\supp\chi_j\Subset V_j$,
\[
 \widehat w_j\in C_c^\infty(\R^2),\qquad
 \supp\widehat w_j\subset F_j(V_j)\subset B_{R_j/6}.
\]
If $y\in W_{\theta_j}\cap F_j(U_j)$ and $x=F_j^{-1}(y)$, then
$x\in\Omega\cap U_j$, so $(\chi_jV_u)(x)=w_j(x)$; outside the chart image
the model representative is zero. Hence $\widetilde w_j$ depends only on
$w_j$, not on the chosen $V_u$. Define
\[
 \widetilde E_j=E^{\mathrm{loc}}_{\theta_j,R_j/4}\widetilde w_j
\]
and let $\mathcal E_jw_j$ equal $\widetilde E_j\circ F_j$ on $U_j$ and
zero outside $U_j$.  Thus
\begin{equation}\label{eq:global-extension-arrow}
 w_j\ \longmapsto\ \widetilde w_j
 \ \xrightarrow{\ E^{\mathrm{loc}}_{\theta_j,R_j/4}\ }\ \widetilde E_j
 \ \longmapsto\ \mathcal E_jw_j.
\end{equation}
The construction has the four explicit properties
\begin{align}
 \mathcal E_jw_j&=w_j\quad\hbox{in }\Omega,\qquad
 \mathcal E_jw_j\in C_c(\R^2)\cap W^{1,2}(\R^2),
 \label{eq:patch-properties-a}\\
 \supp\widetilde E_j&\subset B_{3R_j/4}\Subset F_j(U_j),
 \label{eq:patch-properties-b}\\
 \supp\mathcal E_jw_j&\subset
 F_j^{-1}(\overline B_{3R_j/4})\Subset U_j,\notag\\
 \|\Delta\mathcal E_jw_j\|_{\cM}
 &\leq C_{\Omega,\mathcal A}
       \bigl(\|\Delta w_j\|_{L^1(\Omega)}
       +\|\partial_n w_j\|_{L^1(\partial\Omega)}\bigr).
 \label{eq:patch-properties-c}
\end{align}
In particular, $\widetilde E_j=0$ on
$B_{R_j}\setminus\overline{B_{3R_j/4}}$, a fixed annular region
separating its support from the artificial boundary of $F_j(U_j)$.
For the first arrow, use the localized domains
\[
 D_j:=\Omega\cap F_j^{-1}(B_{R_j/2}),
 \qquad
 D'_j:=W_{\theta_j}\cap B_{R_j/2},
\]
and $F_j:D_j\to D'_j$ is a conformal or anticonformal diffeomorphism of
Lipschitz domains. Hence \cref{lem:conformal-bookkeeping} applies to these domains and
the open ray portions of $\partial D'_j$, together with their
$F_j^{-1}$-preimages in $\partial D_j$. Writing $DF_j=\lambda_jO_j$, the
factors $\lambda_j^{-2}$ and $\lambda_j^{-1}$ in the transformed Laplacian
and normal derivative cancel $dy=\lambda_j^2\,dx$ and
$ds_y=\lambda_j\,ds_x$, respectively.
The remaining part $W_{\theta_j}\cap\partial B_{R_j/2}$ of
$\partial D'_j$, and its $F_j^{-1}$-preimage, are artificial and contribute
zero: $\supp\widetilde w_j\subset B_{R_j/6}$ implies that
$\widetilde w_j$ vanishes on the part in $W_{\theta_j}$ of a
neighborhood of that artificial boundary.
Thus the boundary norms below contain precisely the physical boundary pieces,
extended by zero along the rest of the rays.
Moreover, $\supp\chi_j\Subset F_j^{-1}(B_{R_j/2})$. Thus $w_j$ and
its derivatives vanish on $\Omega\setminus D_j$, while their boundary
traces vanish near the artificial boundary and on
$\partial\Omega\setminus F_j^{-1}(B_{R_j/2})$. The volume norms over
$D_j$ and the boundary norms over $\partial D_j$ therefore agree with
those over $\Omega$ and $\partial\Omega$, respectively.
The three arrows transport the relevant quantities explicitly:
\begin{align*}
 \|\Delta\widetilde w_j\|_{L^1(W_{\theta_j})}
 +\|\partial_n\widetilde w_j\|_{L^1(\partial W_{\theta_j})}
 &=\|\Delta w_j\|_{L^1(\Omega)}
   +\|\partial_n w_j\|_{L^1(\partial\Omega)},\\
 \|\nabla\mathcal E_jw_j\|_{L^2(\R^2)}
 &=\|\nabla\widetilde E_j\|_{L^2(\R^2)},\\
 \Delta(\mathcal E_jw_j)
 &=(F_j^{-1})_\#(\Delta\widetilde E_j),\qquad
 \|\Delta\mathcal E_jw_j\|_{\cM}
 =\|\Delta\widetilde E_j\|_{\cM}.
\end{align*}
These identities and
\cref{thm:compact-sector-extension,lem:conformal-bookkeeping}
give these assertions.  The compact support inclusion in
\eqref{eq:patch-properties-b} ensures that extension by zero after
pullback creates no additional measure on $\partial U_j$. The exact
sector identity for $F_j(\Omega\cap U_j)$ ensures that the portion of
$\partial\Omega$ meeting $\supp\mathcal E_jw_j$ consists only of the
boundary branches represented by the chart.

Since $\supp w_0\subset\supp\chi_0\Subset U_0\Subset\Omega$, the function
$w_0$ vanishes on $\Omega\cap N$ for some open neighborhood $N$ of
$\partial\Omega$ in $\R^2$. Its zero
extension $\mathcal E_0w_0$ therefore belongs to
$C_c^\infty(\R^2)$ and creates no additional measure on $\partial\Omega$; explicitly,
\[
 \Delta\mathcal E_0w_0
 =\operatorname{Ext}_0\!\bigl((\Delta w_0)
   \mathcal L^2\lfloor_\Omega\bigr).
\]
Finally set
\[
 E_{\Omega,\mathcal A}u
 =u_\Omega+\sum_{j=0}^J\mathcal E_jw_j.
\]
The partition identity gives $E_{\Omega,\mathcal A}u=u$ in $\Omega$.
The preceding interior-patch observation together with
\eqref{eq:patch-properties-a} gives continuity, compact support of the
centered extension, and its $W^{1,2}(\R^2)$ membership. The displayed
interior-patch identity, \eqref{eq:patch-properties-c} for $j\geq1$, and
\eqref{eq:patch-control} give the Laplacian-measure bound.
Because $\widetilde w_j$ depends only linearly on $w_j$, no linear choice of
the auxiliary $V_u$ is required. Thus the construction is linear; if $u$ is
constant, then $\bar u=0$, so the constant is preserved.
All constants depend only on the fixed domain and atlas, the finite overlap,
the $C^2$ norms of the fixed partition, the sector widths and width-transfer
constants, the fixed chart and inverse-chart bounds, and the low-order
constant in \cref{prop:low-order}. They are independent of $u$, of the
ambient representative $V_u$, and of every approximation index.
\end{proof}

See \cref{fig:conformal-patching}.
\FloatBarrier
\begin{figure}[!htbp]
\centering
\begin{minipage}[t]{.37\textwidth}
\vspace{0pt}
\centering
\begin{tikzpicture}[x=.8cm,y=.8cm,line cap=round,line join=round]
 \node[panel title] at (-2.15,3.35) {(a) A local boundary chart};
 \coordinate (aj) at (-1.2,.1);
 \path[fill=modelblue]
   (-2.05,.55)--(aj)--(1.9,.08)--(2.0,3.0)--(-2.05,3.0)--cycle;
 \draw[geometric boundary]
   (-2.05,.55)--(aj)--(1.9,.08);
 \node[figure point,label=below:$a_j$] at (aj) {};
 \draw[construction line,rounded corners=5pt] (-1.88,-.05) rectangle (1.25,2.68);
 \node[figure label] at (1.02,2.45) {$U_j$};
 \draw[draw=black,dash pattern=on 4pt off 1.4pt on .8pt off 1.4pt,
       line width=.65pt,rounded corners=5pt] (-1.58,-.0) rectangle (.65,2.03);
 \node[figure label] at (.43,1.81) {$V_j$};
 \node[figure label] at (.45,.8) {$\Omega\cap U_j$};
 \draw[map arrow] (1.45,1.45)--(2.45,1.45)
   node[midway,above,font=\scriptsize] {\(F_j\)};
 \node[figure label,align=center] at (0,-.75)
   {\(F_j(a_j)=0\)\\
    conformal or anticonformal};
\end{tikzpicture}
\end{minipage}\hfill
\begin{minipage}[t]{.57\textwidth}
\vspace{0pt}
\centering
\begin{tikzpicture}[x=.78cm,y=.78cm,line cap=round,line join=round]
 \node[panel title] at (-2.75,3.05) {(b) Support of the model extension};
 \coordinate (o) at (-.6,.2);
 \begin{scope}
  \clip[rounded corners=8pt] (-2.75,-1.85) rectangle (2.2,2.65);
  \path[fill=modelblue]
    (o)--++(0:8) arc[start angle=0,end angle=105,radius=8]--cycle;
 \end{scope}
 \draw[construction line,rounded corners=8pt] (-2.75,-1.85) rectangle (2.2,2.65);
 \node[figure label] at (1.72,2.4) {$F_j(U_j)$};
 \draw[construction line] (o) circle (1.75);
 \path[pattern=north east lines,pattern color=black!35] (o) circle (1.31);
 \draw[draw=black,dash pattern=on 4pt off 1.4pt on .8pt off 1.4pt,
       line width=.7pt] (o) circle (1.31);
 \draw[construction line] (o) circle (.29);
 \node[figure point,label=below:$0$] at (o) {};
 \begin{scope}
  \clip[rounded corners=8pt] (-2.75,-1.85) rectangle (2.2,2.65);
  \draw[geometric boundary] (o)--++(0:8) (o)--++(105:8);
 \end{scope}
 \node[figure label] at (-1.45,1.85) {$W_{\theta_j}\cap F_j(U_j)$};
 \draw[<->,line width=.6pt] ($(o)+(8:1.37)$)--($(o)+(8:2.72)$)
   node[midway,above,sloped,font=\scriptsize] {zero region};
 \node[figure label,align=center] at (-.2,-2.08)
   {$F_j(V_j)\subset B_{R_j/6},\quad
     \supp\widetilde E_j\subset B_{3R_j/4}$\\
    $\overline{B_{R_j}}\subset F_j(U_j)$};
\end{tikzpicture}
\end{minipage}\par\vspace{.5ex}
\begin{minipage}[t]{.56\textwidth}
\centering
\begin{tikzpicture}[x=.77cm,y=.77cm,line cap=round,line join=round]
 \node[panel title] at (-2.05,3.35) {(c) Pullback and partition of unity};
 \path[fill=modelblue]
   (-1.9,.05)--(-.85,-.32)--(.1,.02)--(1.78,-.12)--(1.95,1.55)
   --(1.15,2.85)--(-.2,2.55)--(-1.65,2.92)--cycle;
 \draw[geometric boundary]
   (-1.9,.05)--(-.85,-.32)--(.1,.02)--(1.78,-.12)--(1.95,1.55)
   --(1.15,2.85)--(-.2,2.55)--(-1.65,2.92)--cycle;
 \draw[construction line] (-1.4,.15) circle (1.05);
 \draw[draw=black,dash pattern=on 4pt off 1.4pt on .8pt off 1.4pt,line width=.65pt]
   (.75,.55) circle (1.12);
 \draw[kernel annulus] (.35,2.05) circle (.88);
 \node[figure label] at (-1.2,.45) {$\chi_1$};
 \node[figure label] at (1.0,.65) {$\chi_2$};
 \node[figure label] at (.4,2.05) {$\chi_3$};
 \node[figure label] at (-.45,1.3) {$\chi_0$};
 \node[figure label,align=center] at (0,-1.0)
   {$1=\chi_0+\sum_{j=1}^J\chi_j\quad\text{near }\overline\Omega$\\
    $E_{\Omega,\mathcal A}u=u_\Omega+\sum_{j=0}^J\mathcal E_jw_j$};
\end{tikzpicture}
\end{minipage}
\caption[Compact conformal localization and global patching]{Boundary charts
map $\Omega\cap U_j$ exactly onto $W_{\theta_j}\cap F_j(U_j)$.
The ambient model representatives and their extensions are compactly
supported in $F_j(U_j)$, so extension by zero after pullback creates no
additional measure on $\partial U_j$. The finite partition gives
$E_{\Omega,\mathcal A}u=u_\Omega+\sum_{j=0}^J\mathcal E_jw_j$, with $\chi_0$ interior and
$\chi_j$, $j\geq1$, boundary-chart cutoffs.}
\label{fig:conformal-patching}
\end{figure}
\FloatBarrier

\vskip0.32in

\section{\bf Maz'ya inequalities with a boundary term}

\begin{proof}[Proof of \cref{thm:pde-mazya}]
Assume \textup{(i)}. For the Newton kernel in \eqref{eq:newton-kernel},
$\wtK(e)=(2\pi)^{-1}e$ and positive two-homogeneity gives
\[
 \Phi\bigl(\varsigma\wtK(e)\bigr)
 =(2\pi)^{-2}\Phi(\varsigma e).
\]
Since $(2\pi)^{-2}>0$, condition \textup{(i)} is equivalent to the signed
tangent-model cancellation condition in the density theorem. By
\cref{def:exact-sector-atlas}, $\Omega$ is a finitely cornered
piecewise-$C^{1,1}$ domain, so \cref{thm:curved-density} applies.

Fix one exact atlas $\mathcal A$ for $\Omega$. Given
$u\in C^\infty(\overline\Omega;\R)$, put
\[
 V=E_{\Omega,\mathcal A}u-u_\Omega,
 \qquad \mu=\Delta V.
\]
By \cref{thm:global-extension},
\[
 V\in C_c(\R^2)\cap W^{1,2}(\R^2),\qquad
 \mu\in\cM(\R^2),\qquad
 \|\mu\|_{\cM}\leq C_{\Omega,\mathcal A}N_\Omega(u),
\]
and $\nabla V=\nabla u$ almost everywhere in $\Omega$.
Choose a nonnegative $\rho\in C_c^\infty(\R^2)$ with
$\int\rho=1$, and set
\[
 \rho_\eps(x)=\eps^{-2}\rho(x/\eps),\qquad
 V_\eps=\rho_\eps*V,\qquad
 f_\eps=\Delta V_\eps=\rho_\eps*\mu.
\]
Then $V_\eps,f_\eps\in C_c^\infty(\R^2;\R)$,
$\int f_\eps=0$, and Tonelli's theorem gives
$\|f_\eps\|_1\leq\|\mu\|_{\cM}$.
With $\Gamma(x)=(2\pi)^{-1}\log|x|$, one has
$K=\nabla\Gamma$ and $\Delta\Gamma=\delta_0$ in distributions.
Since $V_\eps$ is smooth and compactly supported, differentiation under
distributional convolution yields
\[
 K*f_\eps
 =\nabla(\Gamma*\Delta V_\eps)
 =\nabla((\Delta\Gamma)*V_\eps)
 =\nabla V_\eps.
\]
Consequently \cref{thm:curved-density}, with its constant
$C_{\rm dens}$, gives
\[
 \left|\int_\Omega\Phi(\nabla V_\eps)\,\dd x\right|
 \leq C_{\rm dens}\|\mu\|_{\cM}^2.
\]
The approximate-identity theorem gives
$\nabla V_\eps\to\nabla V$ in $L^2(\R^2;\R^2)$.
Since $|\Phi(z)|\leq C_\Phi|z|^2$, all these nonlinear integrals
are absolutely convergent. Moreover, \eqref{eq:phi-lp-continuity}
with $p=2$ implies
\[
 \|\Phi(\nabla V_\eps)-\Phi(\nabla V)\|_{L^1(\Omega)}
 \leq C_\Phi\|\nabla V_\eps-\nabla V\|_2
       \bigl(\|\nabla V_\eps\|_2+\|\nabla V\|_2\bigr)
 \longrightarrow0.
\]
Passing to the limit and using $\nabla V=\nabla u$ in $\Omega$ gives
\[
 \left|\int_\Omega\Phi(\nabla u)\,\dd x\right|
 \leq C_{\rm dens}C_{\Omega,\mathcal A}^2N_\Omega(u)^2.
\]
Taking square roots proves \textup{(ii)} with
$C_{\Omega,\Phi}=C_{\rm dens}^{1/2}C_{\Omega,\mathcal A}$,
after the atlas has been fixed once for the domain.

Conversely, assume \textup{(ii)}. Fix $G^0\in\mathfrak T(\Omega)$ and
$\varsigma\in\{-1,1\}$, and choose $a\in\overline\Omega$ whose tangent
model is $G_a^0=G^0$. Set
\[
 c_{G^0,\varsigma}
 :=\int_{G^0\cap\Sph^1}\Phi(\varsigma e)\,\dd\sigma(e).
\]
By \eqref{eq:pde-tangent-log}, with its $O(1)$ uniform as
$\eps\downarrow0$,
\begin{equation}\label{eq:pde-necessity-growth}
 \int_\Omega
   \Phi\bigl(\nabla(\varsigma u_{\eps,a})\bigr)\,\dd x
 =c_{G^0,\varsigma}\log\frac1\eps+O(1).
\end{equation}
On the other hand, \eqref{eq:pde-tangent-rhs} and \textup{(ii)} imply
that the absolute value of the left-hand side of
\eqref{eq:pde-necessity-growth} is bounded uniformly in $\eps$: after
squaring \eqref{eq:pde-mazya}, it is at most
$C_{\Omega,\Phi}^2M_{a,\varsigma}^2$, where
\[
 M_{a,\varsigma}:=
 \sup_{0<\eps<\eps_0}
 \left(
  \|\Delta(\varsigma u_{\eps,a})\|_{L^1(\Omega)}
  +\|\partial_n(\varsigma u_{\eps,a})\|_{L^1(\partial\Omega)}
 \right)<\infty.
\]
Letting $\eps\downarrow0$ in \eqref{eq:pde-necessity-growth} forces
$c_{G^0,\varsigma}=0$.

\end{proof}

\vskip0.32in

\section{\bf The remaining curved-corner extension problem}

The density and PDE criteria have the geometric ranges specified in
\cref{sec:kernels-domains}; \cref{rem:exact-atlas-scope} explains why
the exact-atlas class need not contain all smooth domains.
The extension result is independent of $K$ and $\Phi$. Extending its
quantitative bound beyond exact sector charts is the remaining step
needed for the corresponding Newton PDE criterion.

\paragraph{Curved-corner extension problem.}
For $0<\beta\leq1$ and a finitely cornered piecewise-$C^{1,\beta}$ domain
$\Omega$ without an exact conformal-sector atlas, determine whether there is
a constant-preserving linear operator
\[
 E_\Omega:C^\infty(\overline\Omega;\R)
 \longrightarrow C(\R^2)\cap W^{1,2}_{\rm loc}(\R^2)
\]
such that $E_\Omega u=u$ in $\Omega$,
$E_\Omega u-u_\Omega\in W^{1,2}(\R^2)$ is compactly supported,
$\Delta E_\Omega u\in\cM(\R^2)$ is compactly supported, and
\[
 \|\Delta E_\Omega u\|_{\cM(\R^2)}
 \leq C_\Omega\bigl(
 \|\Delta u\|_{L^1(\Omega)}+
 \|\partial_n u\|_{L^1(\partial\Omega)}\bigr).
\]
For the Newton kernel and a positively two-homogeneous map
$\Phi:\R^2\to\R$ whose restriction to $\Sph^1$ is Lipschitz, an affirmative
answer, combined with \cref{thm:curved-density,prop:pde-tangent-test}
and the direct mollification argument in the proof of
\cref{thm:pde-mazya}, would extend the exact PDE criterion to the same curved-corner class as the Newton
density theorem. In the present proof, the remaining
step is therefore this quantitative extension estimate. The present
conformal construction supplies it for all exact conformal-sector charts, including
transverse line--line, line--circle, and circle--circle junctions. Tangential
switches between distinct supporting circles are not covered by the
simultaneous M\"obius construction, and general curved corners need not admit
exact conformal-sector charts. Unless an exact chart is supplied independently, the
current localization scheme would require an extension estimate for
the transformed variable-coefficient operator at the level of finite
Radon measures there.

\vskip0.2in

\subsection{\bf Limitations}\label{subsec:limitations}

The density criterion concerns bounded finitely cornered
piecewise-$C^{1,\beta}$ planar domains with $0<\beta\leq1$.
Arbitrary Lipschitz boundaries, accumulating corners, and nonunique
tangent models lie outside its scope. On infinite sectors, the theorem
requires bounded compactly supported densities of mean zero; it does not
assert the corresponding ordinary integral estimate for nonzero-mean data.

The extension and Newton PDE criterion require the finite exact ambient
conformal-sector atlas of \cref{def:exact-sector-atlas}. This is an
analytic restriction even along smooth boundary arcs, so the class does
not contain all smooth domains; see \cref{rem:exact-atlas-scope}.
The broader curved-corner extension problem above remains open here.
The extension controls the total variation of the scalar Laplacian;
it supplies neither a $W^{2,1}$ estimate nor a bound for the full Hessian
measure. The membership of $E_{\Omega,\mathcal A}u-u_\Omega$ in
$W^{1,2}(\R^2)$ is qualitative: no bound for
$\|\nabla E_{\Omega,\mathcal A}u\|_{L^2(\R^2)}$ or
$\|E_{\Omega,\mathcal A}u-u_\Omega\|_{L^\infty(\R^2)}$ in terms of
$N_\Omega(u)$ is asserted. The measure closure in
\cref{app:finite-energy-closure} is restricted to measures with finite
Newton-field energy.

\vskip0.2in

For each fixed domain and chosen atlas, the constants are independent of
the function being estimated, with the dependencies specified in \cref{sec:kernels-domains}.
No uniformity is asserted as sector widths approach $0$ or $2\pi$,
or as the distance from a fixed cutoff support to the boundary of its chart tends to zero.
Although the dyadic moment estimate for a fixed closed set
extends to higher dimensions, a polyhedral density or PDE theorem would
require a separate analysis of edges and higher-codimension strata.

\vskip0.2in

\appendix

\section{\bf A direct proof in the quadratic and superquadratic range}
\label{app:superquadratic-mixed}

This appendix gives a short unified proof of the mixed estimate for
$p\geq2$. It develops the positive-kernel overlap argument in
\cite[proof of Theorem~2.3 for $p=2$, equations~(3.7)--(3.10)]
{StolyarovWhole}; the modifications for $p>2$ are already discussed in
\cite[Section~5, after equation~(5.4)]{StolyarovWhole}.
The proof uses Jensen's inequality and exact separation of the radial
shells. Angular boundedness suffices by domination with the positive
scalar kernel $|x|^{-2/p}$.

Put
\[
 q:=2-\alpha=\frac2p\in(0,1],
 \qquad \Lambda_K:=\|\wtK\|_{L^\infty(\Sph^1)}.
\]
For $r>0$, introduce the scalar radial kernels
\[
 U_r(x):=
 \begin{cases}
  |x|^{-q},&|x|\geq r,\\
  0,&|x|<r,
 \end{cases}
 \qquad
 V_r(x):=
 \begin{cases}
  |x|^{-q},&r/2\leq|x|<r,\\
  0,&\text{otherwise}.
 \end{cases}
\]

\vskip0.2in

\begin{lemma}[Two-centre shell overlap]
\label{lem:superquadratic-shell-overlap}
For $a\in\R^2$ and $r>0$, set
\[
 \Gamma_r(a):=\int_{\R^2}
 \Big(U_r(x)^{p-1}V_r(x-a)
      +U_r(x)V_r(x-a)^{p-1}\Big)\,\dd x.
\]
There is $C_p<\infty$ such that
\begin{equation}\label{eq:superquadratic-shell-profile}
 \Gamma_r(a)\leq C_p\vartheta\!\left(\frac{|a|}{r}\right),
 \qquad
 \vartheta(t):=
 \begin{cases}
  t,&0\leq t\leq\frac14,\\
  1,&\frac14<t<4,\\
  t^{-q},&t\geq4.
 \end{cases}
\end{equation}
Consequently,
\begin{equation}\label{eq:superquadratic-shell-sum}
 \sup_{a\in\R^2}
 \sum_{j\in\mathbb Z}\Gamma_{2^{-j-1}}(a)<\infty.
\end{equation}
The constants depend only on $p$.
\end{lemma}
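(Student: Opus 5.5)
By scaling, I would first reduce to $r=1$. Substituting $x=ry$ multiplies each integrand by $r^{-qp}=r^{-2}$ and the Jacobian by $r^2$. Hence $\Gamma_r(a)=\Gamma_1(a/r)$, and it suffices to bound $\Gamma(a):=\Gamma_1(a)$ by $C_p\vartheta(|a|)$. Write $t=|a|$. Both terms involve $V_1(\cdot-a)$, which is supported in the annulus $A_a=\{1/2\le|x-a|<1\}$ of area $3\pi/4$. On that annulus $V_1(x-a)\simeq1$, so $\Gamma(a)\lesssim_p\int_{A_a}\bigl(U_1^{p-1}+U_1\bigr)\,\dd x$. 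Since $0\le U_1\le1$, this is already bounded by a constant. That gives the middle range $1/4<t<4$ at once.

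For $t\ge4$, every $x\in A_a$ satisfies $|x|\ge t-1\ge 3t/4\ge1$. Hence $U_1(x)\simeq t^{-q}$ there, and $\Gamma(a)\lesssim t^{-q(p-1)}+t^{-q}\le 2t^{-q}$ because $p-1\ge1$ and $t\ge1$. This is the one place where $p\ge2$ enters: it gives $q(p-1)\ge q$.

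The small range $t\le1/4$ is where I expect the main work, since the bound must vanish linearly in $t$. The idea is cancellation between the centred and shifted configurations. At $a=0$, the supports $\{|x|\ge1\}$ of $U_1$ and $\{1/2\le|x|<1\}$ of $V_1$ are disjoint, so $\Gamma(0)=0$ exactly. For general $a$, the integrand is nonzero only on $\{|x|\ge1\}\cap\{|x-a|<1\}$. If $|x|\ge1$ and $|x-a|<1$, then $1\le|x|<1+t$, so this region lies in a lens of area at most $|B(a,1)\setminus B(0,1)|\lesssim t$. On it all kernels are bounded by one, which gives $\Gamma(a)\lesssim_p t$. This exact shell separation, $|x|\ge r$ against $|x-a|<r$, is what produces the linear profile. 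It is also the point to check carefully: the strict and non-strict inequalities in the definitions of $U_r$ and $V_r$ must make the supports meet only on a null set when $a=0$.

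For the summed bound \eqref{eq:superquadratic-shell-sum}, fix $a\ne0$; the case $a=0$ gives zero. Set $r_j=2^{-j-1}$ and $t_j=|a|/r_j=2^{j+1}|a|$. The values $t_j$ run over a geometric progression of ratio $2$. Split the sum into three parts.
\begin{itemize}
\item The indices with $t_j\le1/4$ contribute a geometric series $\sum t_j\le 2\cdot\frac14$.
\item The middle range $1/4<t_j<4$ contains at most four indices.
\item The indices with $t_j\ge4$ contribute $\sum t_j^{-q}\le 4^{-q}(1-2^{-q})^{-1}$, which is finite since $q>0$.
\end{itemize}
These bounds are independent of $a$. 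Hence the supremum is finite, with a constant depending only on $p$.
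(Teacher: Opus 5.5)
Your proof is correct and follows the paper's argument essentially step for step: scaling to $r=1$, the exact disjointness of the shells at $a=0$, containment of the interaction region in $\{1\le|x|<1+t\}$ for small $t$, the bound $|x|\ge 3t/4$ with $q(p-1)\ge q$ for large $t$, and the two geometric series for the summed bound. One harmless slip: on the lens, $V_1(x-a)=|x-a|^{-q}$ lies in $(1,(4/3)^q]$, so it is not bounded by one; the kernels are still bounded by a constant depending only on $p$, which is all the argument needs.
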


\vskip0.2in

\begin{proof}
Because $qp=2$, the change of variables $x=rw$ gives
$\Gamma_r(a)=\Gamma_1(a/r)$.  We therefore take $r=1$ and write
$t=|a|$.  Both integrands vanish outside
\[
 D_a:=\{x:|x|\geq1,\ \tfrac12\leq|x-a|<1\}.
\]
If $a=0$, this set is empty, so $\Gamma_1(0)=0$.
If $0<t\leq1/4$, then
\[
 D_a\subset\{x:1\leq|x|<1+t\}.
\]
Both weights are uniformly bounded on $D_a$, and hence
\(
 \Gamma_1(a)\lesssim_p |D_a|\lesssim t.
\)
If $1/4<t<4$, the same integration region, together with
$|x|\geq1$ and $|x-a|\geq1/2$, gives $\Gamma_1(a)\lesssim_p1$.
Finally, if $t\geq4$, then $|x|\geq t-1\geq3t/4$ on $D_a$.  It follows
that
\[
 \int_{D_a}|x|^{-q(p-1)}|x-a|^{-q}\,\dd x
 \lesssim_p t^{-q(p-1)},
 \qquad
 \int_{D_a}|x|^{-q}|x-a|^{-q(p-1)}\,\dd x
 \lesssim_p t^{-q}.
\]
Since $p-1\geq1$, the first right-hand side is at most a constant multiple
of $t^{-q}$.  This proves \eqref{eq:superquadratic-shell-profile}.

For $a\neq0$, the arguments of $\vartheta$ in
\eqref{eq:superquadratic-shell-profile}, with $r=2^{-j-1}$, are
$2^{j+1}|a|$.  The part below the unit scale is bounded by a geometric
series with ratio $1/2$, and the part above it by a geometric series with
ratio $2^{-q}$.  Only boundedly many indices lie in the intermediate range.
For $a=0$, every summand vanishes.  This proves
\eqref{eq:superquadratic-shell-sum}.
\end{proof}

\vskip0.2in

\begin{theorem}[Mixed estimate for $p\geq2$]
\label{thm:superquadratic-mixed-direct}
Let $p\geq2$, put $q=2/p$, let $m\geq1$, and suppose
\[
 K(x)=|x|^{-q}\wtK(x/|x|)\quad(x\ne0),\qquad
 \wtK\in L^\infty(\Sph^1;\R^m).
\]
Choose a measurable representative of $\wtK$ satisfying
$|\wtK(\omega)|\leq\|\wtK\|_{L^\infty(\Sph^1)}$ for every
$\omega\in\Sph^1$, set $K(0)=0$, define the sharp annular truncations by
\eqref{eq:annular-kernels}, and set
$\Lambda_K=\|\wtK\|_{L^\infty(\Sph^1)}$. Then every real-valued
$h\in L^1(\R^2)$ satisfies the stronger all-scale estimate
\begin{equation}\label{eq:superquadratic-mixed-all-scales}
 \sum_{j\in\mathbb Z}\int_{\R^2}
 \Mp\bigl(|K_{\leq j}*h|,|K_{j+1}*h|\bigr)\,\dd x
 \leq C_p\Lambda_K^p\|h\|_1^p.
\end{equation}
In particular, the constant is independent of $h$, $\Phi$, the scale, and
all domain parameters. No cancellation, parity, or spherical-moment
condition is required. Among the angular assumptions on $K$, the proof
uses only $\wtK\in L^\infty(\Sph^1)$; homogeneity and the sharp
annular truncation are retained.
\end{theorem}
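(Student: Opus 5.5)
We reduce to a positive scalar problem and then use the shell-overlap lemma together with Jensen's inequality.

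\textbf{Reduction to a scalar kernel.} Since $|K_{j+1}*h|\leq\Lambda_K V_{2^{-j-1}}*|h|$ pointwise, we may dominate the vector kernel by the positive scalar kernels of \cref{lem:superquadratic-shell-overlap}. The shell $2^{-j-2}\leq|x|<2^{-j-1}$ of $K_{j+1}$ is exactly the support of $V_{2^{-j-1}}$. Similarly $|K_{\leq j}*h|\leq\Lambda_K U_{2^{-j-1}}*|h|$, because $K_{\leq j}$ is supported in $|x|\geq2^{-j-1}$. Moreover $\Mp$ is nondecreasing in each argument and $p$-homogeneous. Hence it suffices to treat $h\geq0$ with $\|h\|_1=1$, after replacing $h$ by $|h|$ and normalizing. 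The target is then
\[
 \sum_j\int\bigl((U_r*h)^{p-1}(V_r*h)+(U_r*h)(V_r*h)^{p-1}\bigr)\,\dd x\lesssim_p1,
 \qquad r=2^{-j-1},
\]
where for $p=2$ the two terms coincide.

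\textbf{Jensen step.} With $h\,\dd y$ a probability measure and $p-1\geq1$, convexity of $s\mapsto s^{p-1}$ gives
\[
 (U_r*h)^{p-1}(x)\leq\int U_r(x-y)^{p-1}h(y)\,\dd y,
\]
and likewise for $V_r$. Writing $V_r*h$ and $U_r*h$ as integrals against $h(z)\,\dd z$, each term becomes
\[
 \iint h(y)h(z)\int\Bigl(U_r(x-y)^{p-1}V_r(x-z)+U_r(x-y)V_r(x-z)^{p-1}\Bigr)\dd x\,\dd y\,\dd z
 =\iint h(y)h(z)\,\Gamma_r(z-y)\,\dd y\,\dd z
\]
after the translation $x\mapsto x+y$.

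\textbf{Summation.} Summing over $j$ by Tonelli, the left side is at most
\[
 \iint h(y)h(z)\sum_j\Gamma_{2^{-j-1}}(z-y)\,\dd y\,\dd z\leq\sup_a\sum_j\Gamma_{2^{-j-1}}(a),
\]
which is finite by \eqref{eq:superquadratic-shell-sum}. Undoing the normalization gives $C_p\Lambda_K^p\|h\|_1^p$.

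The only delicate point is the Jensen step for the factor raised to the power $p-1$. It requires $p-1\geq1$, which is exactly why the range is $p\geq2$. It also requires the probability normalization, and this is handled by homogeneity (the case $h=0$ is trivial). One should also check that $(U_r*h)(x)$ may be infinite only on a null set, or else just accept infinite values in the Tonelli inequalities. Both points are routine, since $U_r\in L^\infty$ and the inequalities are among nonnegative quantities.
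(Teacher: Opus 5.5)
Your proposal is correct and follows the paper's proof essentially step for step. It uses pointwise domination by the scalar majorants $U_{r_j}*|h|$ and $V_{r_j}*|h|$, Jensen's inequality for the power $p-1\geq1$, the translation $w=x-y$ that produces $\Gamma_{r_j}(z-y)$, and summation through the uniform shell bound \eqref{eq:superquadratic-shell-sum}; the only cosmetic difference is that you normalize $\|h\|_1=1$ by homogeneity, whereas the paper carries the factor $M^{p-2}$ from Jensen's inequality explicitly.
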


\vskip0.2in

\begin{proof}
Let $\mu=|h|\,\mathcal L^2$ and $M=\mu(\R^2)=\|h\|_1$.  The claim is
immediate when $M=0$, so assume $M>0$.  Set $r_j=2^{-j-1}$.  The
annuli are disjoint and exhaust the indicated radial regions; hence, up to
null boundary circles,
\[
 K_{\leq j}(x)=K(x)\one_{\{|x|\geq r_j\}},
 \qquad
 K_{j+1}(x)=K(x)\one_{\{r_j/2\leq|x|<r_j\}}.
\]
These truncated kernels are bounded.  Thus both convolutions below are
absolutely defined at every point for $h\in L^1$, and changes on the null
boundary circles do not affect them.  We have
\begin{equation}\label{eq:superquadratic-positive-majorants}
 |K_{\leq j}*h|\leq\Lambda_K U_{r_j}*\mu,
 \qquad
 |K_{j+1}*h|\leq\Lambda_K V_{r_j}*\mu.
\end{equation}

Since $p-1\geq1$, Jensen's inequality with respect to the probability
measure $M^{-1}\mu$ gives, for every nonnegative bounded measurable $W$,
\begin{equation}\label{eq:superquadratic-jensen}
 (W*\mu)(x)^{p-1}
 \leq M^{p-2}\int_{\R^2}W(x-y)^{p-1}\,\dd\mu(y).
\end{equation}
Applying \eqref{eq:superquadratic-jensen} first with $W=U_{r_j}$ and
then with $W=V_{r_j}$, and using Tonelli's theorem, yields
\begin{align*}
 &\int_{\R^2}\Big[
  (U_{r_j}*\mu)^{p-1}(V_{r_j}*\mu)
  +(U_{r_j}*\mu)(V_{r_j}*\mu)^{p-1}
 \Big]\,\dd x\\
 &\qquad\leq
 M^{p-2}\iint_{\R^2\times\R^2}
 \Gamma_{r_j}(z-y)\,\dd\mu(y)\,\dd\mu(z).
\end{align*}
Indeed, after expanding the convolutions and setting $w=x-y$, the two
inner $x$-integrals are
\[
 \int U_{r_j}(w)^{p-1}V_{r_j}(w-(z-y))\,\dd w,
 \qquad
 \int U_{r_j}(w)V_{r_j}(w-(z-y))^{p-1}\,\dd w,
\]
whose sum is $\Gamma_{r_j}(z-y)$.
Sum this inequality over $j\in\mathbb Z$.  Another application of
Tonelli's theorem and \eqref{eq:superquadratic-shell-sum} give
\[
 \sum_{j\in\mathbb Z}\int_{\R^2}\Big[
  (U_{r_j}*\mu)^{p-1}(V_{r_j}*\mu)
  +(U_{r_j}*\mu)(V_{r_j}*\mu)^{p-1}
 \Big]\,\dd x
 \lesssim_p M^{p-2}M^2=M^p.
\]
Finally, use \eqref{eq:superquadratic-positive-majorants} and the
monotonicity of $\Mp$ in both variables. At $p=2$, the minimum in \eqref{eq:Mp} and the
symmetric average below both equal $st$; hence for every $p\geq2$,
\[
 \Mp(s,t)=\frac12\bigl(s^{p-1}t+st^{p-1}\bigr)
 \qquad(p\geq2).
\]
This proves \eqref{eq:superquadratic-mixed-all-scales}. Since its summands
are nonnegative, restricting the sum to $j\geq0$ also proves
\eqref{eq:threshold-mixed}, with
$C_{K,p}=C_p\|\wtK\|_\infty^p$.
\end{proof}

\begin{remark}[Role of the sharp truncation]
The exact separation of the annular regions in \eqref{eq:annular-kernels} is essential
to this short proof: it gives $\Gamma_r(0)=0$.  For overlapping smooth
cutoffs the corresponding same-centre interaction need not vanish, so the
argument above does not apply without an additional idea.
\end{remark}

\vskip0.2in

\section{\bf Finite Newton-field energy measures and closure}
\label{app:finite-energy-closure}

The direct mollification argument in \cref{thm:pde-mazya} uses a compactly supported
potential supplied by \cref{thm:global-extension}. For completeness, we
record the corresponding closure theorem for compactly supported measures
whose Newton fields have finite energy, without assuming a compactly supported
potential. The passage to the nonlinear integral below uses strong
$L^2$ convergence of mollifications of a fixed field; no continuity under
arbitrary weak $L^2$ convergence of fields or weak-$*$ convergence of
measures is asserted. This appendix is not needed in the proof of the PDE criterion.

\begin{definition}[Finite Newton-field energy]\label{def:finite-energy-measure}
Let $\mathcal E_c^2(\R^2)$ be the class of compactly supported finite
signed Radon measures $\mu$ admitting $G\in L^2(\R^2;\R^2)$ with
\[
 \operatorname{div}G=\mu,\qquad \operatorname{curl}G=0
 \quad\text{in }\mathcal D'(\R^2).
\]
With the curl convention fixed in \cref{sec:kernels-domains} understood,
\cref{lem:finite-energy-properties} shows that $G$ is unique, denoted
$\Nfield\mu$,
and $\mu(\R^2)=0$ automatically; $\|\Nfield\mu\|_2^2$ is its Newton-field
energy.

Let $\cM_{\Delta,\mathrm{cpot}}^2(\R^2)\subset\mathcal E_c^2(\R^2)$
consist of the measures $\mu=\Delta U$ for which
$U-c\in W^{1,2}(\R^2)$ has compact support for some $c\in\R$.
Thus the potential $U$ has compact support modulo an additive constant.
\end{definition}

In this appendix we use the nonunitary two-dimensional Fourier transform
\[
 \widehat h(\xi)=\int_{\R^2}e^{-ix\cdot\xi}h(x)\,\dd x,
 \qquad
 h(x)=(2\pi)^{-2}\int_{\R^2}e^{ix\cdot\xi}\widehat h(\xi)\,\dd\xi
\]
initially for Schwartz functions and then by duality on tempered
distributions; for a finite signed measure,
\[
 \widehat\mu(\xi)=\int_{\R^2}e^{-ix\cdot\xi}\,\dd\mu(x).
\]
Thus
$\widehat{\partial_jh}=i\xi_j\widehat h$,
$\widehat{\Delta h}=-|\xi|^2\widehat h$, and Plancherel reads
$\|h\|_2^2=(2\pi)^{-2}\|\widehat h\|_2^2$.

\begin{lemma}[Fourier characterization of finite Newton-field energy]\label{lem:finite-energy-properties}
Let $\mu$ be a compactly supported finite signed Radon measure. The following are equivalent:
\begin{enumerate}[label=\textup{(\roman*)},leftmargin=2.2em]
\item $\mu\in\mathcal E_c^2(\R^2)$;
\item
\begin{equation}\label{eq:newton-energy-fourier-condition}
 \int_{\R^2}\frac{|\widehat\mu(\xi)|^2}{|\xi|^2}\,\dd\xi<\infty.
\end{equation}
\end{enumerate}
In either case $\mu(\R^2)=0$, the field is unique, and
\begin{equation}\label{eq:newton-field-multiplier}
 \widehat{\Nfield\mu}(\xi)
 =-\frac{i\xi}{|\xi|^2}\widehat\mu(\xi)
 \quad\text{for a.e. }\xi\neq0,
\end{equation}
with the exact energy identity
\begin{equation}\label{eq:newton-field-energy}
 \|\Nfield\mu\|_2^2
 =(2\pi)^{-2}\int_{\R^2}\frac{|\widehat\mu(\xi)|^2}{|\xi|^2}\,\dd\xi.
\end{equation}
If $\mu\in\cM_{\Delta,\mathrm{cpot}}^2(\R^2)$, any
potential $U$ for $\mu$ with compact support modulo an additive constant
is unique modulo constants and satisfies
$\Nfield\mu=\nabla U$. If $f\in C_c^\infty(\R^2;\R)$ has zero integral and
$G\in L^2(\R^2;\R^2)$ satisfies $\operatorname{div}G=f$ and $\operatorname{curl}G=0$, then
$G=K*f$.
\end{lemma}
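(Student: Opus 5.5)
The plan is to work entirely on the Fourier side, exploiting that $\mu$ is compactly supported, so $\widehat\mu$ is real-analytic with bounded derivatives. This makes the behaviour at $\xi=0$ explicit.

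\emph{(i)$\Rightarrow$(ii) and the formulas.} Let $G\in L^2$ with $\operatorname{div}G=\mu$ and $\operatorname{curl}G=0$. Taking Fourier transforms in $\mathcal S'$ gives
\[
 i\xi\cdot\widehat G=\widehat\mu,\qquad \xi_1\widehat G_2-\xi_2\widehat G_1=0 .
\]
Here $\widehat G\in L^2$. For a.e.\ $\xi\neq0$ the curl relation forces $\widehat G(\xi)=c(\xi)\xi$. The divergence relation then gives $c=-i\widehat\mu/|\xi|^2$, which is \eqref{eq:newton-field-multiplier}. Plancherel now yields \eqref{eq:newton-field-energy}, and finiteness of $\|G\|_2$ gives \eqref{eq:newton-energy-fourier-condition}.

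Uniqueness of the field follows from the same algebra. The difference $D$ of two fields is $L^2$, divergence- and curl-free, so its Fourier transform vanishes off the origin. A tempered distribution supported at $0$ that lies in $L^2$ must be zero; alternatively, argue as in \cref{lem:global-newton-local}, since the components are harmonic and in $L^2$.

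The zero-mass claim $\mu(\R^2)=\widehat\mu(0)=0$ follows from continuity of $\widehat\mu$. If $\widehat\mu(0)\neq0$, then $|\widehat\mu|\geq c>0$ near $0$. In that case $\int_{|\xi|<\delta}|\xi|^{-2}\,\dd\xi=\infty$ in two dimensions, which contradicts \eqref{eq:newton-energy-fourier-condition}.

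\emph{(ii)$\Rightarrow$(i).} Define $G$ by inverse Fourier transform of $-i\xi\widehat\mu/|\xi|^2$. This symbol lies in $L^2$ by (ii), so $G\in L^2$. The identities $i\xi\cdot\widehat G=\widehat\mu$ and $\xi_1\widehat G_2-\xi_2\widehat G_1=0$ then hold as $L^1_{\rm loc}$ functions, hence in $\mathcal S'$. The only subtlety is that these are genuine identities of tempered distributions with no mass at the origin: both sides are locally integrable functions, and $\xi\widehat G$ is locally integrable, so no delta term can arise.

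\emph{Potentials and the case $f\in C_c^\infty$.} If $\mu=\Delta U$ with $U-c\in W^{1,2}$ compactly supported, then $\nabla U\in L^2$ is curl-free with divergence $\mu$. By uniqueness, $\nabla U=\Nfield\mu$. If $U,U'$ are two such potentials, then $\nabla(U-U')=0$, so $U-U'$ is constant.

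For $f\in C_c^\infty$ with $\int f=0$, the field $K*f$ satisfies $\operatorname{div}=f$ and $\operatorname{curl}=0$, since $K=\nabla\Gamma$ and $\Delta\Gamma=\delta_0$. The far-field bound \eqref{eq:far-field} with $\alpha=1$ gives $|K*f(x)|\lesssim|x|^{-2}$, so $K*f\in L^2$. Uniqueness then gives $G=K*f$.

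The main obstacle I expect is the careful $\mathcal S'$ bookkeeping at $\xi=0$. This means excluding distributions supported at the origin both in the uniqueness step and in verifying the equations in (ii)$\Rightarrow$(i). I would handle it with the $L^2$/$L^1_{\rm loc}$ observation above, falling back on the harmonic Liouville argument of \cref{lem:global-newton-local} if needed.
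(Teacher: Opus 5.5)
Your proposal is correct and follows the paper's proof essentially step for step: a.e. Fourier div--curl identities, Plancherel, continuity of $\widehat\mu$ at $0$, uniqueness because an $L^2$ function vanishing off a null set is zero, and $L^2$ decay of $K*f$. Your derivation of uniqueness of potentials directly from uniqueness of the field is even slightly shorter than the paper's Weyl-lemma argument.

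The one step you skip is real-valuedness of the field you construct in (ii)$\Rightarrow$(i), which is required because $\mathcal E_c^2(\R^2)$ asks for $G\in L^2(\R^2;\R^2)$. The paper obtains it from $\widehat\mu(-\xi)=\overline{\widehat\mu(\xi)}$, hence $\widehat G(-\xi)=\overline{\widehat G(\xi)}$. Alternatively, you can simply replace $G$ by $\operatorname{Re}G$, which satisfies the same real divergence and curl equations.
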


\vskip0.2in

\begin{proof}
All Fourier identities below are understood in $\mathcal S'(\R^2)$.
Since $\widehat G\in L^2$ and $\widehat\mu$ is bounded and continuous, the
relevant identities are identities of regular distributions and hence hold
almost everywhere. Suppose first that
$G\in L^2(\R^2;\R^2)$ has divergence $\mu$ and zero curl. Then
\[
 i\xi\cdot\widehat G(\xi)=\widehat\mu(\xi),
 \qquad i\xi^\perp\cdot\widehat G(\xi)=0,
\]
so, for almost every $\xi\ne0$,
\[
 \widehat G(\xi)=-\frac{i\xi}{|\xi|^2}\widehat\mu(\xi).
\]
Plancherel gives \eqref{eq:newton-field-energy} and hence
\eqref{eq:newton-energy-fourier-condition}. Since $\widehat\mu$ is
continuous and $|\xi|^{-2}$ is not locally integrable in two dimensions,
this condition forces $\widehat\mu(0)=\mu(\R^2)=0$. The multiplier formula
also proves uniqueness: if $G_1,G_2$ are admissible, then the div--curl
identities give $\widehat{G_1-G_2}=0$ for almost every $\xi\ne0$. Since
$\widehat{G_1-G_2}\in L^2$ and the singleton $\{0\}$ is null, it vanishes
as an $L^2$ class, so $G_1=G_2$.

Conversely, \eqref{eq:newton-energy-fourier-condition} forces
$\widehat\mu(0)=0$ by the same continuity argument. Define
\[
 \widehat G(\xi)=-\frac{i\xi}{|\xi|^2}\widehat\mu(\xi)
 \quad(\xi\ne0),\qquad \widehat G(0)=0.
\]
The energy condition and Plancherel give
$G\in L^2(\R^2;\mathbb C^2)$. Because $\mu$ is real,
$\widehat\mu(-\xi)=\overline{\widehat\mu(\xi)}$, and therefore
$\widehat G(-\xi)=\overline{\widehat G(\xi)}$; hence $G$ is real-valued.
Moreover,
\[
 i\xi\cdot\widehat G=\widehat\mu,
 \qquad i\xi^\perp\cdot\widehat G=0
 \quad\text{in }\mathcal S'(\R^2),
\]
because the identities hold almost everywhere and the value at $\xi=0$
is irrelevant. Taking inverse Fourier transforms gives
$\operatorname{div}G=\mu$ and $\operatorname{curl}G=0$, so
$\mu\in\mathcal E_c^2(\R^2)$.

For $\mu\in\cM_{\Delta,\mathrm{cpot}}^2(\R^2)$, choose $U$ as in the
definition. Then $\nabla U$ is an admissible field, so uniqueness gives
$\Nfield\mu=\nabla U$. If two such potentials have the same Laplacian, their difference $H$ is distributionally harmonic on $\R^2$
and $\nabla H\in L^2$. By Weyl's lemma $H$ is smooth; each component of
$\nabla H$ is an $L^2$ entire harmonic function and therefore vanishes by
the mean-value inequality as the averaging radius tends to infinity. Hence
$H$ is constant. Finally, let $f\in C_c^\infty(\R^2;\R)$ have zero
integral. Local integrability of the Newton kernel gives
$K*f\in L^\infty_{\rm loc}(\R^2)$. Choose $R>0$ with $\supp f\subset B_R$.
For $|x|>2R$, the zero integral and the bound $|DK(z)|\lesssim|z|^{-2}$ give
\[
 (K*f)(x)=\int_{\R^2}[K(x-y)-K(x)]f(y)\,\dd y,
 \qquad
 |(K*f)(x)|\lesssim |x|^{-2}\int_{\R^2}|y|\,|f(y)|\,\dd y.
\]
Thus $K*f\in L^2(\R^2;\R^2)$. The distributional identities
$\operatorname{div}K=\delta_0$ and $\operatorname{curl}K=0$ imply
$\operatorname{div}(K*f)=f$ and $\operatorname{curl}(K*f)=0$.
The $L^2$ div--curl uniqueness proved above therefore gives $G=K*f$.
\end{proof}

\vskip0.2in

\begin{theorem}[Closure for finite Newton-field energy measures]\label{thm:measure-closure}
Let $K$ be the Newton kernel in \eqref{eq:newton-kernel}, let
$\Phi:\R^2\to\R$ be positively two-homogeneous and Lipschitz on
$\Sph^1$, and let $\Omega\subset\R^2$ be measurable. Assume that, for some $A\geq0$,
\begin{equation}\label{eq:density-for-closure}
 \left|\int_\Omega\Phi(K*f)\,\dd x\right|
 \leq A\|f\|_1^2
\end{equation}
for every real-valued $f\in C_c^\infty(\R^2)$ satisfying $\int_{\R^2}f=0$. Then every
$\mu\in\mathcal E_c^2(\R^2)$ satisfies
\begin{equation}\label{eq:measure-closure}
 \left|\int_\Omega\Phi(\Nfield\mu)\,\dd x\right|
 \leq A\|\mu\|_{\cM}^2.
\end{equation}
\end{theorem}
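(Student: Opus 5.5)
The plan is to approximate $\mu$ by mollification, exactly as in the proof of \cref{thm:pde-mazya}, but now working at the level of the field $\Nfield\mu$ rather than a compactly supported potential. Fix a nonnegative radial $\rho\in C_c^\infty(\R^2)$ with $\int\rho=1$, put $\rho_\eps(x)=\eps^{-2}\rho(x/\eps)$, and set $f_\eps:=\rho_\eps*\mu$. Then $f_\eps\in C_c^\infty(\R^2;\R)$. Since $\mu(\R^2)=0$ by \cref{lem:finite-energy-properties}, we also have $\int f_\eps=0$. Tonelli's theorem gives $\|f_\eps\|_1\leq\|\mu\|_{\cM}$. Hence $f_\eps$ is admissible in \eqref{eq:density-for-closure}, and
\[
 \left|\int_\Omega\Phi(K*f_\eps)\,\dd x\right|\leq A\|\mu\|_{\cM}^2 .
\]

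The next step is to identify $K*f_\eps$ with $\rho_\eps*\Nfield\mu$. Put $G_\eps:=\rho_\eps*\Nfield\mu$. It lies in $L^2(\R^2;\R^2)$ by Young's inequality. Convolution commutes with distributional derivatives, so $\operatorname{div}G_\eps=\rho_\eps*\mu=f_\eps$ and $\operatorname{curl}G_\eps=0$. The final assertion of \cref{lem:finite-energy-properties} therefore gives $G_\eps=K*f_\eps$. One could also see this on the Fourier side from \eqref{eq:newton-field-multiplier}, since $\widehat{G_\eps}=\widehat\rho(\eps\xi)\widehat{\Nfield\mu}$.

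To pass to the limit, note that the approximate-identity theorem in $L^2$ gives $G_\eps\to\Nfield\mu$ in $L^2(\R^2;\R^2)$, with $\|G_\eps\|_2\leq\|\Nfield\mu\|_2$. We then apply \eqref{eq:phi-lp-continuity} with $p=2$ and $D=\Omega$; it rests only on \eqref{eq:phi-lip}, which holds for any measurable $\Omega$. This yields
\[
 \|\Phi(G_\eps)-\Phi(\Nfield\mu)\|_{L^1(\Omega)}\lesssim\|G_\eps-\Nfield\mu\|_2\,\|\Nfield\mu\|_2\to0 .
\]
In particular, $\Phi(\Nfield\mu)\in L^1(\Omega)$ because $|\Phi(z)|\leq C|z|^2$, so the left side of \eqref{eq:measure-closure} is an absolutely convergent integral even when $\Omega$ is unbounded. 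Letting $\eps\downarrow0$ in the displayed inequality gives \eqref{eq:measure-closure} with the same constant $A$.

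The only delicate point is the identification $K*f_\eps=\rho_\eps*\Nfield\mu$. It requires $K*f_\eps\in L^2$, which is guaranteed only because $f_\eps$ has zero mean; that is why the zero-mean property $\mu(\R^2)=0$ must be extracted from the finite-energy hypothesis first. Everything else is routine, since the field itself converges strongly, and no weak-$*$ continuity in $\mu$ is needed.
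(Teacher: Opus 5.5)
Your proposal is correct and follows essentially the same route as the paper. You mollify $\mu$, obtain $\mu(\R^2)=0$ from \cref{lem:finite-energy-properties} so that $f_\eps=\rho_\eps*\mu$ is admissible in \eqref{eq:density-for-closure}, identify $\rho_\eps*\Nfield\mu=K*f_\eps$ through the div--curl uniqueness in that lemma, and pass to the limit via strong $L^2$ convergence of the mollified field together with \eqref{eq:phi-lp-continuity} for $p=2$.
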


\vskip0.2in

\begin{proof}
Set $G=\Nfield\mu$. Since $|\Phi(z)|\leq C_\Phi|z|^2$, all integrals below
converge absolutely. Choose a nonnegative $\rho\in C_c^\infty(\R^2)$ with
$\int\rho=1$ and put
\[
 f_\eps=\rho_\eps*\mu,\qquad G_\eps=\rho_\eps*G,
 \qquad \rho_\eps(x)=\eps^{-2}\rho(x/\eps).
\]
By \cref{lem:finite-energy-properties}, $\mu(\R^2)=0$; hence
$f_\eps\in C_c^\infty$, $\int f_\eps=0$, and
$\|f_\eps\|_1\leq\|\mu\|_{\cM}$. Convolution preserves the divergence and
curl equations, so the same lemma gives $G_\eps=K*f_\eps$, and
\eqref{eq:density-for-closure} applies. Since $G_\eps\to G$ in $L^2$,
\eqref{eq:phi-lp-continuity} with $p=2$ gives
\[
 \|\Phi(G_\eps)-\Phi(G)\|_1
 \leq C_\Phi\|G_\eps-G\|_2(\|G_\eps\|_2+\|G\|_2)\longrightarrow0.
\]
Passing to the limit proves \eqref{eq:measure-closure}.
\end{proof}

\vskip0.38in

\section*{Data Availability}
Data sharing is not applicable to this article as no datasets were
generated or analyzed during the current study.

\vskip0.2in

\section*{Declarations}

\noindent\textbf{Funding.}
W.~Zou is funded by the National Key R\&D Program of China\newline
(Grant 2023YFA1010001),
NSFC (12571123) and NSFC(12671139).

\vskip0.2in

\smallskip
\noindent\textbf{Statements and Declarations.}
The authors have no relevant financial or non-financial interests to disclose.

\vskip0.2in

\smallskip
\noindent\textbf{Conflict of Interest Statement.}
The authors declare that they have no conflict of interest.

\vskip0.2in

\section*{Acknowledgements}

AI tools (DeepSeek) were used during the preparation of the paper to assist with figure drawing and grammar checking, locating relevant references, and improving the exposition. All mathematical statements, proofs, and conclusions were developed and verified by the authors.

\renewcommand{\bibliofont}{\fontsize{9}{10.5}\selectfont}


\begin{thebibliography}{99}

\bibitem{AdamsFournier}
R.~A. Adams and J.~J.~F. Fournier,
\emph{Sobolev Spaces}, 2nd ed.,
Pure and Applied Mathematics, vol.~140, Academic Press, Amsterdam, 2003.

\bibitem{BourgainBrezis}
J.~Bourgain and H.~Brezis,
\emph{On the equation $\operatorname{div}Y=f$ and application to control of phases},
J. Amer. Math. Soc. \textbf{16} (2003), no.~2, 393--426,
\href{https://doi.org/10.1090/S0894-0347-02-00411-3}{doi:10.1090/S0894-0347-02-00411-3}.

\bibitem{ChoiKim}
J.~Choi and S.~Kim,
\emph{Neumann functions for second order elliptic systems with measurable coefficients},
Trans. Amer. Math. Soc. \textbf{365} (2013), no.~12, 6283--6307,
\href{https://doi.org/10.1090/S0002-9947-2013-05886-2}{doi:10.1090/S0002-9947-2013-05886-2}.

\bibitem{Demengel}
F.~Demengel,
\emph{Fonctions \`a hessien born\'e},
Ann. Inst. Fourier (Grenoble) \textbf{34} (1984), no.~2, 155--190.

\bibitem{FabesMendezMitrea}
E.~B. Fabes, O.~Mendez, and M.~Mitrea,
\emph{Boundary layers on Sobolev--Besov spaces and Poisson's equation for the Laplacian in Lipschitz domains},
J. Funct. Anal. \textbf{159} (1998), no.~2, 323--368,
\href{https://doi.org/10.1006/jfan.1998.3316}{doi:10.1006/jfan.1998.3316}.

\bibitem{Geng}
J.~Geng,
\emph{$W^{1,p}$ estimates for elliptic problems with Neumann boundary conditions in Lipschitz domains},
Adv. Math. \textbf{229} (2012), no.~4, 2427--2448,
\href{https://doi.org/10.1016/j.aim.2012.01.004}{doi:10.1016/j.aim.2012.01.004}.

\bibitem{GmeinederRaitaDomains}
F.~Gmeineder and B.~Rai{\c t}\u{a},
\emph{Embeddings for $\mathbb A$-weakly differentiable functions on domains},
J. Funct. Anal. \textbf{277} (2019), no.~12, Article 108278, 33~pp.,
\href{https://doi.org/10.1016/j.jfa.2019.108278}{doi:10.1016/j.jfa.2019.108278}.

\bibitem{GmeinederRaitaVanSchaftingen}
F.~Gmeineder, B.~Rai{\c t}\u{a}, and J.~Van Schaftingen,
\emph{Boundary ellipticity and limiting $L^1$-estimates on halfspaces},
Adv. Math. \textbf{439} (2024), Paper No.~109490, 25~pp.,
\href{https://doi.org/10.1016/j.aim.2024.109490}{doi:10.1016/j.aim.2024.109490}.

\bibitem{Grisvard}
P.~Grisvard,
\emph{Elliptic Problems in Nonsmooth Domains},
Monographs and Studies in Mathematics, vol.~24, Pitman, Boston, MA, 1985.

\bibitem{Kondratev1967}
V.~A. Kondrat'ev,
\emph{Boundary value problems for elliptic equations in domains with conical or angular points},
Trans. Moscow Math. Soc. \textbf{16} (1967), 227--313
(English translation of Trudy Moskov. Mat. Obshch. \textbf{16} (1967), 209--292),
\href{https://www.mathnet.ru/eng/mmo186}{MathNet: mmo186}.

\bibitem{KozlovMazyaRossmann}
V.~A. Kozlov, V.~G. Maz'ya, and J.~Rossmann,
\emph{Elliptic Boundary Value Problems in Domains with Point Singularities},
Mathematical Surveys and Monographs, vol.~52, American Mathematical Society, Providence, RI, 1997,
\href{https://doi.org/10.1090/surv/052}{doi:10.1090/surv/052}.

\bibitem{Mazya2010}
V.~G. Maz'ya,
\emph{Estimates for differential operators of vector analysis involving $L^1$-norm},
J. Eur. Math. Soc. (JEMS) \textbf{12} (2010), no.~1, 221--240,
\href{https://doi.org/10.4171/JEMS/195}{doi:10.4171/JEMS/195}.

\bibitem{MazyaProblems}
V.~Maz'ya,
\emph{Seventy five (thousand) unsolved problems in analysis and partial differential equations},
Integral Equations Operator Theory \textbf{90} (2018), no.~2, Paper No.~25, 44~pp.

\bibitem{MazyaShaposhnikova}
V.~Maz'ya and T.~Shaposhnikova,
\emph{A collection of sharp dilation invariant integral inequalities for differentiable functions},
in \emph{Sobolev Spaces in Mathematics I: Sobolev Type Inequalities},
International Mathematical Series, vol.~8, Springer, New York, 2009, pp.~223--247,
\href{https://doi.org/10.1007/978-0-387-85648-3_8}{doi:10.1007/978-0-387-85648-3\_8}.

\bibitem{Ornstein}
D.~Ornstein,
\emph{A non-inequality for differential operators in the $L^1$ norm},
Arch. Rational Mech. Anal. \textbf{11} (1962), 40--49.

\bibitem{Spector}
D.~Spector,
\emph{New directions in harmonic analysis on $L^1$},
Nonlinear Anal. \textbf{192} (2020), Paper No.~111685, 20~pp.,
\href{https://doi.org/10.1016/j.na.2019.111685}{doi:10.1016/j.na.2019.111685}.

\bibitem{StolyarovWhole}
D.~Stolyarov,
\emph{Fractional integration of summable functions: Maz'ya's $\Phi$-inequalities},
Ann. Sc. Norm. Super. Pisa Cl. Sci. (5) \textbf{25} (2024), no.~3, 1727--1752,
\href{https://doi.org/10.2422/2036-2145.202110_001}{doi:10.2422/2036-2145.202110\_001}.
Author preprint: \href{https://arxiv.org/abs/2109.08014}{arXiv:2109.08014}.

\bibitem{StolyarovDomains}
D.~Stolyarov,
\emph{Maz'ya's $\Phi$-inequalities on domains},
Zap. Nauchn. Sem. POMI \textbf{537} (2024), 128--150; English translation:
J. Math. Sci. \textbf{295} (2025), no.~4, 458--472,
\href{https://doi.org/10.1007/s10958-026-08192-x}{doi:10.1007/s10958-026-08192-x}.
Author preprint: \href{https://arxiv.org/abs/2407.14052}{arXiv:2407.14052}.

\bibitem{VanSchaftingen2008}
J.~Van Schaftingen,
\emph{Estimates for $L^1$-vector fields under higher-order differential conditions},
J. Eur. Math. Soc. \textbf{10} (2008), no.~4, 867--882,
\href{https://doi.org/10.4171/JEMS/133}{doi:10.4171/JEMS/133}.

\bibitem{VanSchaftingen2013}
J.~Van Schaftingen,
\emph{Limiting Sobolev inequalities for vector fields and canceling linear differential operators},
J. Eur. Math. Soc. \textbf{15} (2013), no.~3, 877--921,
\href{https://doi.org/10.4171/JEMS/380}{doi:10.4171/JEMS/380}.

\end{thebibliography}
\end{document}